\documentclass[english,11pt]{smfart}

\usepackage{etex}

\usepackage{amsbsy}
\usepackage{amsmath,amsfonts,amssymb,amsthm,mathrsfs,mathtools}

\usepackage{bm}

\usepackage[a4paper,vmargin={3cm,3cm},hmargin={3cm,3cm}]{geometry}
\usepackage[font=sf, labelfont={sf,bf}, margin=1cm]{caption}
\usepackage{graphicx}
\usepackage{epsfig}
\usepackage{latexsym}
\usepackage{xcolor}
\usepackage{ae,aecompl}
\usepackage{soul,framed}
\usepackage{comment}

\usepackage{xcolor}
\usepackage[pdfpagemode=UseNone,bookmarksopen=false,colorlinks=true,urlcolor=blue,citecolor=blue,citebordercolor=blue,linkcolor=blue]{hyperref}
\usepackage{smfhyperref}
\usepackage[capitalize]{cleveref}

\usepackage{pstricks}
\usepackage{enumerate}
\usepackage{tikz,animate,media9}						
\usepackage{todonotes}
\usepackage{pifont}
\usepackage{bm,marvosym}
\usepackage{algorithm}
\usepackage{algorithmic}

\usepackage{bbm}

\usepackage{tcolorbox}

\usepackage{natbib}  

\definecolor{aleacolor}{rgb}{0.16,0.59,0.78}

\hypersetup{
	breaklinks,
	colorlinks=true,
	linkcolor=aleacolor,
	urlcolor=aleacolor,
	citecolor=aleacolor}

\newcount\colveccount
\newcommand*\colvec[1]{
	\global\colveccount#1
	\begin{pmatrix}
		\colvecnext
	}
	\def\colvecnext#1{
		#1
		\global\advance\colveccount-1
		\ifnum\colveccount>0
		\\
		\expandafter\colvecnext
		\else
	\end{pmatrix}
	\fi
}

\newcommand{\ndN}{\mathbb{N}}
\newcommand{\ndZ}{\mathbb{Z}}

\newcommand{\ndR}{\mathbb{R}}

\newcommand{\ndB}{\mathbb{B}}

\renewcommand{\Pr}[1]{\mathbb{P}(#1)}

\newcommand{\Prb}[1]{\mathbb{P}\left(#1\right)}

\newcommand{\Ex}[1]{\mathbb{E}[#1]}

\newcommand{\Exb}[1]{\mathbb{E}\left[#1\right]}

\newcommand{\Va}[1]{\mathbb{V}[#1]}

\newcommand{\one}{{\mathbbm{1}}}

\newcommand{\convd}{\,{\buildrel \mathrm{d} \over \longrightarrow}\,}

\newcommand{\convp}{\,{\buildrel \mathrm{p} \over \longrightarrow}\,}

\newcommand{\convas}{\,{\buildrel \mathrm{a.s} \over \longrightarrow}\,}

\newcommand{\eqdist}{\,{\buildrel \mathrm{d} \over =}\,}

\newcommand{\atv}{\,{\buildrel \mathrm{d} \over \approx}\,}
\newcommand{\dtv}{d_{\mathrm{TV}}}

\newcommand{\He}{\mathrm{H}}

\newcommand{\Di}{\mathrm{D}}

\newcommand{\cA}{\mathcal{A}}
\newcommand{\cB}{\mathcal{B}}
\newcommand{\cC}{\mathcal{C}}

\newcommand{\cE}{\mathcal{E}}
\newcommand{\cF}{\mathcal{F}}

\newcommand{\cH}{\mathcal{H}}
\newcommand{\cI}{\mathcal{I}}
\newcommand{\cJ}{\mathcal{J}}
\newcommand{\cK}{\mathcal{K}}

\newcommand{\cM}{\mathcal{M}}

\newcommand{\cR}{\mathcal{R}}
\newcommand{\cS}{\mathcal{S}}

\newcommand{\cV}{\mathcal{V}}
\newcommand{\cW}{\mathcal{W}}

\newcommand{\cZ}{\mathcal{Z}}

\newcommand{\mD}{\mathsf{D}}

\newcommand{\mH}{\mathsf{H}}

\newcommand{\mM}{\mathsf{M}}

\newcommand{\mQ}{\mathsf{Q}}
\newcommand{\mR}{\mathsf{R}}
\newcommand{\mS}{\mathsf{S}}
\newcommand{\mT}{\mathsf{T}}

\newcommand{\mV}{\mathsf{V}}

\newcommand{\intervalle}[4]{\mathopen{#1}#2
	\mathclose{}\mathpunct{},#3
	\mathclose{#4}}
\newcommand{\intervalleff}[2]{\intervalle{[}{#1}{#2}{]}}

\newcommand{\enstq}[2]{\left\lbrace#1\mathrel{}\middle|\mathrel{}#2\right\rbrace}

\newcommand{\co}{\mathrm{c}}
\newcommand{\ve}{\mathrm{v}}
\newcommand{\ed}{\mathrm{e}}
\newcommand{\he}{\mathrm{h}}

\newtheorem{theorem}{Theorem}[section]

\newtheorem{corollary}[theorem]{Corollary}
\newtheorem{proposition}[theorem]{Proposition}
\newtheorem{lemma}[theorem]{Lemma}

\newtheorem{definition}[theorem]{Definition}

\numberwithin{equation}{section}

\title[Non-bijective scaling limits and phase transitions of planar maps]{\textbf{Non-bijective scaling limits and phase transitions of planar maps}}
\date{}

\author{Benedikt Stufler}

\address[Benedikt Stufler]{Vienna University of Technology}
\email{benedikt.stufler [at] tuwien.ac.at}

\begin{document}

\begin{abstract}
	We prove that the uniform random non-separable planar map with $n$ edges admits the Brownian sphere as Gromov--Hausdorff--Prokhorov scaling limit as $n$ tends to infinity. Our proof introduces a non-bijective ``common-core transfer method'' that constitutes a novel and universal proof strategy for scaling limits of random discrete structures. As an application, we complete the phase diagram for limiting shapes of block-weighted planar maps by Stufler~(2019). We describe phases with limits given by the Brownian sphere, stable trees, and Brownian sphere decorated stable trees recently introduced by S{\'e}nizergues, Stef{\'a}nsson and Stufler~(2023).
\end{abstract}


\maketitle

\section{Introduction and main results}

\subsection{Universality of the Brownian sphere}

Scaling limits of sequences of  random connected graphs are random measured metric spaces that describe their asymptotic geometric shape and mass distribution. Roughly speaking, they tell us what we see asymptotically when we look at the graphs from ``far away''.

Prominent examples of such limit objects are the Brownian continuum random tree, see \cite{MR1085326, MR1166406,MR1207226}, and the Brownian sphere, also called the Brownian map~\cite{MR2294979, MR3112934, MR3070569}. They are known to describe the asymptotic shape of different sequences of random graphs arising naturally in probability and combinatorics. This phenomenon is referred to as \emph{universality}. 

The universality of the Brownian sphere has been a subject of intense study, see for example~\cite{MR3256874, abraham2016, MR3706731, MR3729639, zbMATH06964619, zbMATH07787923}. Our first main result extends its universality class. We show that the uniform random non-separable planar map $\mV_n$ with $n$ edges, equipped with the  graph distance $d_{\mV_n}$ and the stationary distribution $\mu_{\mV_n}$, admits the Brownian sphere $(\mathbf{M}, d_{\mathbf{M}}, \mu_{\mathbf{M}})$ illustrated in Figure~\ref{fi:bm} as scaling limit with respect to the Gromov--Hausdorff--Prokhorov metric.

\begin{theorem}
	\label{te:main1}
	As $n$ tends to infinity,
	\begin{align*}
		\left(\mV_n, \left(\frac{3}{8n}\right)^{1/4} d_{\mV_n}, \mu_{\mV_n}\right) \convd (\mathbf{M}, d_{\mathbf{M}}, \mu_{\mathbf{M}}).
	\end{align*}
\end{theorem}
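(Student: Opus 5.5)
We will deduce the statement from the known Brownian sphere limit of uniform random planar maps, using the decomposition of a general planar map into non-separable blocks. The transfer goes through a ``common core'' and needs no bijection. Let $\mM_n$ be the uniform random planar map with $n$ edges. By the theorem of Bettinelli--Jacob--Miermont (see also Le Gall and Miermont), $(\mM_n, (9/(8n))^{1/4} d_{\mM_n}, \mu_{\mM_n})$ converges to $(\mathbf{M}, d_{\mathbf{M}}, \mu_{\mathbf{M}})$. The map $\mM_n$ decomposes along its cut vertices into a tree-like arrangement of non-separable blocks. It is classical (Banderier--Flajolet--Schaeffer--Soria, and the author's earlier work on block-weighted maps) that this arrangement is in a condensation regime: with high probability there is a unique giant block, the \emph{core} $\mathsf{C}_n$, with $\alpha n + O_p(n^{2/3})$ edges, where $\alpha = 1/3$. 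All other blocks have $O_p(n^{2/3})$ edges. Conditionally on its number of edges $k$, the core is distributed as $\mV_k$.

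\textbf{Step 1 (core captures the metric).} We show that $d_{\mathrm{GHP}}\bigl(n^{-1/4}\mM_n,\ n^{-1/4}(\mathsf{C}_n, \kappa\, d_{\mathsf{C}_n})\bigr) \to 0$ in probability for some constant $\kappa>0$. Two facts are needed here.
\begin{enumerate}[(a)]
\item The pieces hanging off the core are forests of small maps. Their maximal height is $o_p(n^{1/4})$, since each piece has at most $O(n^{2/3})$ edges, by a union bound using tail bounds for the diameter of $\mM_m$. A crude bound $\mathrm{diam}(\mM_m)=O_p(m^{1/4+\epsilon})$ suffices only if summed carefully, so we instead use that the pendant structures are subcritical-type Galton--Watson decorations.
\item For vertices $x,y$ of the core, $d_{\mM_n}(x,y)=d_{\mathsf{C}_n}(x,y)$, because every shortest path in $\mM_n$ between core vertices can be rerouted through the core, the decorations being attached at single cut vertices.
\end{enumerate}
So the core distance is the exact metric, and $\kappa=1$. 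The measures also match: the stationary measure of $\mM_n$ projected to the core (mass of each decoration sent to its attachment vertex) is close in Prokhorov distance to $\mu_{\mathsf{C}_n}$. This holds by a law of large numbers for the decoration masses, which are i.i.d.-like given the core degrees, after checking that the stationary law (proportional to degree) behaves well.

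\textbf{Step 2 (invert the transfer).} Conditioning on $|\mathsf{C}_n|=k$, we have $\mathsf{C}_n\eqdist\mV_k$ with $k= n/3+O_p(n^{2/3})$. Step 1 together with the limit for $\mM_n$ gives
\[
\left(\mV_{k_n}, (9/(8\cdot 3k_n))^{1/4} d, \mu\right)\convd \mathbf{M}
\]
along a random, concentrated index $k_n$. Rescaling gives $(3/(8k))^{1/4}$, which matches the stated constant. To upgrade to convergence along every deterministic $k$, we use a local limit theorem for $|\mathsf{C}_n|$ on the scale $n^{2/3}$: the law of $|\mathsf{C}_n|$ is a mixture over a window of width $n^{2/3}$ with bounded density ratios. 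If a subsequence $\mV_{k}$ stayed away from $\mathbf{M}$ in the Lévy--Prokhorov sense on a set of $k$ of positive relative density in such windows, the mixture could not converge. To rule out bad $k$ of small density, we need a monotonicity/coupling argument, or a tightness-plus-uniqueness argument that every subsequential limit of $\mV_k$ equals $\mathbf{M}$. Tightness comes from the core being a subset, and uniqueness from the limit law being identified by averaging.

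The main obstacle is this de-mixing in Step 2 together with the measure comparison in Step 1. We plan to handle de-mixing by proving the claim for all $k$ via the local limit theorem. For each target $k$, choose $n$ with $n/3\approx k$; the probability that $|\mathsf{C}_n|=k$ is of order $n^{-2/3}$, but comparable to that of neighbouring values. We then show that the family $\{\mV_{k'}: |k'-k|\le n^{2/3}\}$ varies continuously in $k'$. This follows by coupling $\mV_{k'}$ and $\mV_{k'+1}$ through a local edge insertion/deletion, which changes the GHP distance by $o(k^{1/4})$ with high probability.
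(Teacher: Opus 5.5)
Your Step~1 is essentially what the paper does in Section~\ref{sec:nscorepm} (Lemmas~\ref{le:2corepart1} and~\ref{le:2corepart2}, Corollary~\ref{eq:corfppvcore}). Step~2, however, has a genuine gap, and it is exactly the obstacle the paper's method is built to avoid.

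Step~1 only tells you that the \emph{mixture} $\sum_k \mathbb{P}(\ed(\cV(\mM_n))=k)\,\mathcal{L}\bigl(\mV_k,(3/(8k))^{1/4}d_{\mV_k},\mu_{\mV_k}\bigr)$ converges to the law of $\mathbf{M}$. This does not force the components to converge, not even on sets of $k$ of positive density. The law of $\mathbf{M}$ is not an extreme point of the convex set of probability measures on the GHP space. So components alternating between laws close to $\mathcal{L}(\mathbf{M}\mid A)$ and $\mathcal{L}(\mathbf{M}\mid A^c)$ would still average out correctly. Bounded density ratios only give contiguity, hence tightness, as the paper's introduction remarks, and ``uniqueness by averaging'' fails for the same reason.

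Varying $n$ does not rescue this. For bounded continuous $F$ set $f(k)=\mathbb{E}[F(\mV_k,(3/(8k))^{1/4}d_{\mV_k},\mu_{\mV_k})]-\mathbb{E}[F(\mathbf{M},d_{\mathbf{M}},\mu_{\mathbf{M}})]$. What you know is that $f$ averaged against translates of the $3/2$-stable density on the scale $n^{2/3}$ tends to zero. Wiener's theorem then gives convergence for every mixture whose mixing law has an $L^1$ density on that scale. It never gives convergence for a point mass at a deterministic $k$, unless $f$ satisfies an additional slow-oscillation (Tauberian) condition.

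Your coupling of $\mV_{k'}$ and $\mV_{k'+1}$ is meant to supply that condition, but it is not available. You would need a local insertion or deletion that maps the uniform law on non-separable maps with $k'$ edges exactly onto the uniform law with $k'+1$ edges. Inserting an edge at a uniform place biases the output by its number of admissible deletions, which is not constant, and the paper notes that no such growth procedure is known for non-separable maps. Even granting one, a per-step bound of $o(k^{1/4})$ says nothing after chaining the $\asymp \epsilon n^{2/3}$ steps needed to compare $\mV_k$ with $\mV_{k+\lfloor \epsilon n^{2/3}\rfloor}$.

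The missing idea is to avoid de-mixing altogether by going one core level deeper.
\begin{itemize}
\item Both $\cS(\mV_n)$ and $\cS(\mM_m)$ are mixtures of uniform simple non-separable maps $\mS_k$. Their sizes satisfy $3/2$-stable local limit theorems on the $n^{2/3}$ scale (Propositions~\ref{pro:edsvm1} and~\ref{pro:lltsourcesimplecore}).
\item $\mV_n$ is GHP-close to its simple core, by Lemma~\ref{le:simplecorepart1} together with the exchangeability argument of Lemma~\ref{le:sourcesimplecorepart2}.
\item Now the target is itself a smooth mixture. Since the Fourier transform of $h$ has no zeros, its mixing density can be approximated in $L^1$ by signed linear combinations of translates of the source mixing densities. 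These come from $\cS(\mM_{m_n(\theta)})$ with $m_n(\theta)=\lfloor 3n+\tfrac{15\theta}{4}n^{2/3}\rfloor$ (Lemma~\ref{lem:stabletr} and Corollary~\ref{co:targetsico}).
\end{itemize}
Convergence along deterministic sizes is never needed. A single core level cannot work this way, because at the level of non-separable cores the target $\mV_n$ has a point-mass mixing law.
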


\begin{figure}[h]
	\centering
	\begin{minipage}{\textwidth}
		\centering
		\includegraphics[width=1\linewidth]{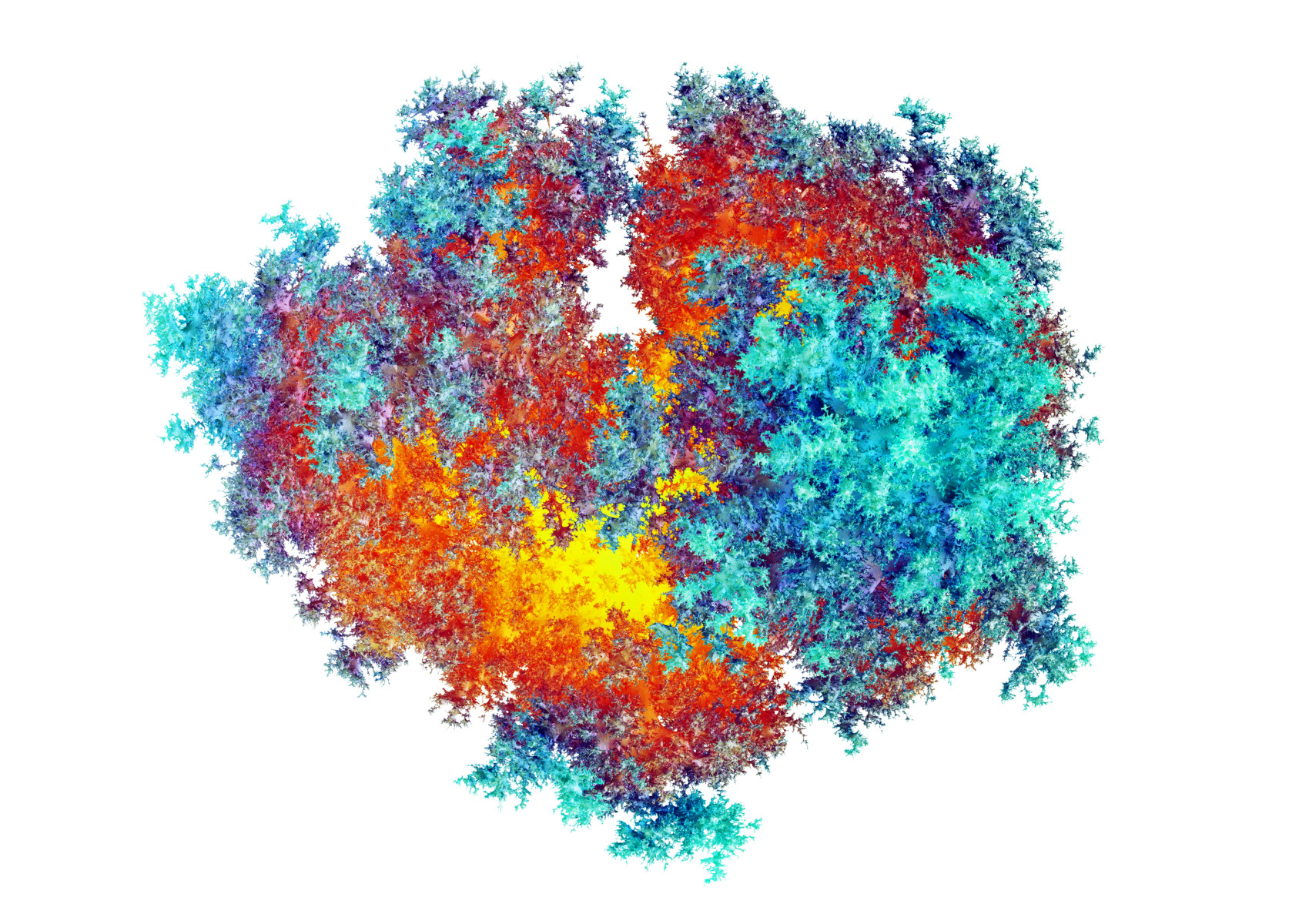}
		\caption[]{The Brownian sphere, approximated by a random simple quadrangulation  of the two-dimensional sphere with 64 million quadrangles.\footnotemark}
		\label{fi:bm}
	\end{minipage}
\end{figure}

The ``common-core transfer method'' we introduce to prove Theorem~\ref{te:main1} is a new and universal technique for proving scaling limits of random graphs, with potential applications to a wide range of maps and graphs with constraints. We provide a list of concrete settings below.

\footnotetext{The coordinates in the drawing are determined by a spring-electrical embedding method. Quadrangles are coloured according to their closeness centrality in the dual map.  The tools used for the simulation and visualisation are \texttt{simquad} (\url{https://github.com/BenediktStufler/simquad}), \texttt{scent} (\url{https://github.com/BenediktStufler/scent}), \texttt{Mathematica},  \texttt{Blender}, and \texttt{GIMP}.}

The classical approach for proving convergence for a model of random planar maps towards the Brownian sphere is to use a suitable combinatorial bijection to some class of trees endowed with a label process and show that these tree structures converge towards the Brownian snake. The downside to this approach is that combinatorial constraints on the models do not always translate to manageable constraints on the encoding trees. Further approaches include verifying that specific combinatorial transformations such as taking dual maps are  suitable for transferring convergence directly between models of random planar maps~\cite{zbMATH07144469}, and that cores of maps may serve as an approximation: Addario-Berry and Wen~\cite{MR3729639} established joint convergence of random quadrangulations and their non-separable cores and simple cores, conditional on the edge count of each object. For a quadrangulation with $n$ edges, these cores have sizes linear in $n$, with fluctuations of order $n^{2/3}$ governed by a $3/2$-stable limiting law. Conditional on its size, the simple core is a uniform simple quadrangulation. The scaling limit of this model was established in~\cite{MR3706731} via convergence of its blossoming-tree encoding to the Brownian snake. Addario-Berry and Wen then transferred this convergence from the simple model to the less restrictive  models.

In a certain sense, the proof of Theorem~\ref{te:main1} proceeds in the opposite direction, by passing convergence of the unrestricted planar map $\mM_n$ with $n$ edges established by Bettinelli, Jacob, and Miermont~\cite{MR3256874} to the non-separable case, instead of starting with unconstrained $3$-connected maps (for which scaling limits have not been obtained so far) and passing to the less restrictive non-separable case.  The arguments of Addario-Berry and Wen~\cite{MR3729639} may be adapted to show that the non-separable core $\cV(\mM_n)$ of $\mM_n$ is close to $\mM_n$ on a global scale. The challenge, however, is to deduce properties of the random non-separable planar map $\mV_n$ with precisely $n$ edges from the  core $\cV(\mM_n)$ whose number of edges fluctuates around $n/3$ at the characteristic order $n^{2/3}$ governed by a $3/2$-stable limiting law.

A first step towards a solution is given in the proof of~\cite[Thm. 1.3]{zbMATH07665039}, which showed local convergence of non-separable maps \[
\mV_n \convd \widehat{\mV}, \quad n \to \infty\]
in the annealed and quenched sense towards a limit $\widehat{\mV}$ named the uniform infinite $2$-connected planar map. The proof strategy solves precisely the described issue in the context of local convergence. First, the proof  extends the annealed local convergence of $\mM_n$ to quenched convergence and then transfers this quenched convergence from $\mM_n$ to the non-separable core $\cV(\mM_n)$. Next, it transfers local convergence from $\cV(\mM_n)$ to an even more constrained (essentially $3$-connected up to contracting paths to edges) $\cK$-core $\cK(\mM_n) := \cK(\cV(\mM_n))$. The same argument shows that in order to prove local convergence of $\mV_n$ it suffices to prove local convergence of the $\cK$-core $\cK(\mV_n)$. This reduces the initial task, that is, transferring convergence from $\cV(\mM_n)$ to $\mV_n$,  to transferring local convergence from  $\cK(\mM_n)$ to $\cK(\mV_n)$. Both cores $\cK(\mM_n)$ and $\cK(\mV_n)$  have random numbers of edges that concentrate at a linear fraction of $n$ with the characteristic $n^{2/3}$-fluctuations governed by different $3/2$-stable laws. With $\cV(\mM_n)$ having about $n/3$ edges, it follows that $\cK(\mM_{3n})$ has about the same linear number of edges as $\cK(\mV_n)$. The quotient of the densities for the limiting laws of the fluctuation is bounded away from zero and infinity on any compact interval. This results in a contiguity relation between $\cK(\mM_{3n})$ and $\cK(\mV_n)$. Roughly speaking, this means that  anything that is very likely for one model is also very likely for the other. Since quenched local convergence essentially reduces to laws of large numbers for the proportion of points with a given neighbourhood, it may be transferred from $\cK(\mM_{3n})$ to $\cK(\mV_n)$, hence completing the proof chain.

Thus, for notions of convergence that are preserved by contiguity such as quenched local limits, convergence may be passed from $\mM_n$ to $\mV_n$ by solving the general issue of passing from a randomly sized planar map $\cV(\mM_n)$ to an $n$-edge model $\mV_n$. In general, however, contiguity is not sufficient for transferring Gromov--Hausdorff--Prokhorov convergence. In this case, it only yields tightness. Hence we need to strengthen the approximation.  First, we simplify the approach by using simple non-separable cores $\cS(\mV_n)$ and $\cS(\mM_n) := \cS(\cV(\mM_n))$ instead of the more involved $\cK$-cores. Both cores $\cS(\mM_{3n})$ and $\cS(\mV_n)$ are mixtures of uniform simple non-separable maps. Their mixing distributions differ asymptotically only through the limiting $3/2$-stable laws governing the $n^{2/3}$ fluctuation around their common asymptotic mean which is linear in $n$. By the core approximation arguments introduced by Addario-Berry and Wen~\cite{MR3729639}, the Gromov--Hausdorff--Prokhorov scaling limit for $\mM_n$ yields a scaling limit for the core $\cS(\mM_{3n})$, and if we can transfer this to convergence to $\cS(\mV_n)$ we may deduce a scaling limit for $\mV_n$. For each constant $\theta \in \ndR$ we consider the perturbation $\cS(\mM_{3n + \lfloor n^{2/3} \theta \rfloor})$ of the mixing measure.  This changes the limiting $3/2$-stable law for the fluctuations of the number of edges, but $\cS(\mM_{3n + \lfloor n^{2/3} \theta \rfloor})$ still satisfies the same Gromov--Hausdorff--Prokhorov scaling limit as $\cS(\mM_{3n})$. We obtain a family of $3/2$-stable limiting laws, indexed by $\theta \in \ndR$, for the fluctuations of the number of edges. The linear span of the densities of these laws lies dense in $L^1(\ndR)$. This allows us to  approximate the mixing law for the number of edges of~$\cS(\mV_n)$ and pass Gromov--Hausdorff--Prokhorov convergence between the cores, hence completing the proof.

We call our proof strategy the \emph{common-core transfer method} because we pass from the source scaling limit of $\mM_n$  to the target scaling limit of $\mV_n$ through a transfer via common cores, that is, via $\cS(\mM_n)$ and $\cS(\mV_n)$. There is an abundance of different cores of all kinds of planar maps~\cite{MR1871555}, all exhibiting the characteristic linear number of edges with $n^{2/3}$-size Airy-type fluctuations. Thus, due to the well-established universality of Airy-type fluctuations for core sizes there are plenty of potential applications. Moreover, the choice of core, the $n^{2/3}$ fluctuation scale, the $3/2$-stable limiting distributions, and the fact that the scaling limit is the Brownian sphere are not essential to our approach. The key requirements are that the limiting distribution for the fluctuation has a density whose Fourier transform has no zeros, and that the cores function as Gromov--Hausdorff--Prokhorov approximations of the ambient structures. We may also work with the Gromov--Hausdorff distance instead if we are only interested in convergence with respect to this metric. 

 Here is a list of examples for potential applications to  scaling limits of various families of planar maps, for which suitable core decompositions are readily available.
\begin{itemize}
	\item \textbf{Simple triangulations and quadrangulations.} Recover the scaling limits by~\cite{MR3706731}. Use unrestricted triangulations and quadrangulations as source~\cite{MR3112934,MR3070569} and transfer through their irreducible cores.
	\item \textbf{Simple planar maps and loopless planar maps}.  Use planar maps as source~\cite{MR3256874} and transfer through simple non-separable cores. The required convergence of the core of the source is already treated in the present paper.
	\item \textbf{Bipartite non-separable maps and simple bipartite maps}. Use bipartite planar maps as source~\cite{abraham2016}. Transfer through the bipartite simple non-separable core. 
	\item \textbf{Simple non-separable planar maps.} Use Theorem~\ref{te:main1} as source and transfer through the $3$-connected core equipped with a first-passage-percolation distance instead of the graph distance. We do not require a direct comparison between the first-passage-percolation distance and the stretched graph distance. The arguments of~\cite{zbMATH07947540} that relate exchangeable link components with a first-passage-percolation distance suffice.
\end{itemize}

There is a plenitude of further potential applications, where suitable core decompositions might not be explicit in the literature, but may be obtained with manageable effort given the well-developed set of tools~\cite{zbMATH07004718,MR1871555,MR4132643,zbMATH07923679} for the study of core sizes and the proven universality of Airy-type phenomena in planar maps. Examples include simple $p$-angulations for $p \ge 5$, but also planar maps with a boundary as we describe below.

Let us highlight two related results.  Bettinelli, Curien, Fredes, and Sep\'ulveda~\cite{zbMATH08013060} developed a different non-bijective method for deducing a Brownian disc scaling limit of quadrangulations with a simple boundary from a scaling limit of quadrangulations with an unrestricted boundary. Roughly speaking, they approximate a restriction of the exact-size conditioned target model with a restriction of the randomly sized core. 

It is not clear whether the method of~\cite{zbMATH08013060} carries over to the present setting. At minimum, it would require genuinely new input on how to obtain suitable restrictions of non-separable planar maps. Conversely, our approach cannot readily be applied to the setting of~\cite{zbMATH08013060}. However, we have a clear strategy for making it work: We may view a planar map with a boundary as the result of taking an outerplanar map and inserting ``chordless'' planar maps with a simple boundary in its faces. That is, the components are planar maps with a simple boundary and no chords, that is, no non-boundary edges with both endpoints on the boundary. In particular, a planar map with a simple boundary is obtained from a dissection of a polygon by inserting chordless planar maps with a simple boundary in its faces. It stands to reason that in a general regime, exactly one of these components is macroscopic and yields a core candidate for transferring a scaling limit from the source model with an unrestricted boundary to the target model with a simple boundary. Specifically, such a phase exists for general face-weighted outerplanar maps as described in the phase diagram by Stef{\'a}nsson and Stufler~\cite{zbMATH07138334}, and face weights or face degree restrictions on a general planar map with a boundary precisely impose face-weights on the underlying outerplanar core. We aim to pursue this application in future work.

Another recent work by Fleurat~\cite{fleurat2025tauberianapproachmetricscaling} deduces a scaling limit of irreducible quadrangulations from the known scaling limit of unconstrained quadrangulations. The proof is based on the fact that irreducible quadrangulations of the hexagon admit a growth procedure established by Addario--Berry~\cite{zbMATH06349212}. This growth procedure essentially constructs a random irreducible quadrangulation with $n+1$ vertices from a random irreducible quadrangulation with $n$ vertices via a local modification. This allows Fleurat to control the change of geometry caused by the $n^{2/3}$-scale fluctuations of the number of vertices in the irreducible core of a source quadrangulation model.

The approach in~\cite{fleurat2025tauberianapproachmetricscaling} does not presently apply to the setting of Theorem~\ref{te:main1} since no suitable growth procedure for the target model of non-separable maps is known. Nevertheless, future work might establish such a procedure. Conversely, currently there is no known core of irreducible quadrangulations that would allow the application of the common-core transfer method, but an investigation in that direction might change that.  At present, growth procedures of planar map classes are known for irreducible quadrangulations of the hexagon~\cite{zbMATH06349212}, $2p$-angulations for $p \ge 2$, and simple triangulations~\cite{zbMATH07762022}. (See also~\cite{zbMATH05730489} for counterexamples of simply generated trees models that cannot admit growth procedures.) These constructions rely on model-specific bijective encodings via tree-like objects that admit compatible growth rules. In principle, these encodings might also facilitate the classical approach of proving convergence  of the underlying encoding tree with a suitable label process to the Brownian snake, but this would require a more technical analysis.

\subsection{Phase transition of block-weighted planar maps}

A block of a graph is a $2$-connected component, that is, a maximal $2$-connected subgraph. For planar maps, it is common to define a block as a maximal non-separable submap. The works~\cite{MR4132643,zbMATH06918043, zbMATH07431204} introduced block-weighted models of planar maps and graphs, which assume a given outcome with probability proportional to the product of weights assigned to its blocks. We consider a special case of such a model, where the weight of a block only depends on its size. Suppose that
\[
	f: \ndN_0 \to \ndR_{\ge 0}
\]
is a given function satisfying $f(0)=1$ and $f(k)>0$ for at least one $k \ge 1$. To each planar map $M$ we assign its weight
\begin{align}
	\label{eq:bw}
	\omega(M) = \prod_{B} f(\ed(B)),
\end{align}
with $B$ ranging over the non-separable components  of $M$. Set
\[
	d_\omega := \gcd\{i\ge 1\mid f(i)>0\}.
\]
This way, $\omega(M) = 0$ whenever the number $\ed(M)$ of edges satisfies $\ed(M) \not\equiv 0 \mod d_\omega$, and for each sufficiently large $n  \ge 1$ with $n \equiv 0 \mod d_\omega$ there exists at least one planar map with $n$ edges having positive weight. See~\cite{MR4132643} for details. For any such integer $n$ we let $\mM_n^\omega$ denote the random planar map with $n$ edges drawn with probability proportional to its weight. We set
\[
	\Phi(z) = 1 + \sum_{i \ge 1}  \frac{2(3i-3)!}{i!(2i-1)!} f(i) z^{2i}.
\]
Here, the factor $\frac{2(3i-3)!}{i!(2i-1)!}$ counts the number of non-separable planar maps with $i$ edges, all of which have the same weight by construction. Let $\rho_\Phi \in [0, \infty]$ denote the radius of convergence of $\Phi(z)$. We set $\nu:=0$ if $\rho_\Phi = 0$. If $\rho_\Phi >0$ then by~\cite[Sec. 3]{MR2908619} the limit
\[
	\nu := \lim_{t \uparrow \rho_\Phi} \frac{\Phi'(t) t}{\Phi(t)} \in ]0, \infty]
\]
exists. 

The constant $\nu$ is known to govern the asymptotic behaviour of $\mM_n^\omega$. Specifically, by~\cite[Thm. 6.57, Thm. 6.59]{MR4132643} and \cite[Thm. 1.3]{zbMATH07665039} there exists a random infinite planar map $\widehat{\mM}$ (which depends on the weights under consideration) such that in the local topology
\[
	\mM_n^\omega \convd \widehat{\mM}, \qquad n \to \infty, \qquad n \equiv 0 \mod d_\omega.
\]
See also prior work by Stephenson~\cite{MR3769811} who proved local convergence of regular critical Boltzmann planar maps, a class that partially overlaps with the present model. The phase diagram of block-weighted planar maps given in~\cite{MR4132643} distinguishes the following three phases defined in terms of $\nu$:

\textbf{Phase I ($\nu \ge 1$)}: the root of $\mM_n^\omega$ only observes small non-separable components. Consequently, $\widehat{\mM}$ consists of an infinite number of random finite non-separable maps glued together in a tree-like fashion. 

\textbf{Phase II ($0 <\nu < 1$)}: the root of  $\mM_n^\omega$ observes in its vicinity a large block as well as numerous smaller blocks. The limiting infinite planar map $\widehat{\mM}$ is assembled from the local limit $\widehat{\mV}$ of uniform non-separable maps and an infinite number of small non-separable planar maps in a way that is made explicit in~\cite{MR4132643}. 

\textbf{Phase~III ($\nu=0$)}: the root of $\mM_n^\omega$ observes in its vicinity only a single non-separable component. The limiting infinite planar map satisfies $\widehat{\mM} \eqdist \widehat{\mV}$ in this phase.

In order to  describe the behaviour of $\mM_n^\omega$ on a global scale in these phases we need to distinguish further regularity assumptions. We also need to define normalising sequences depending on these assumptions.

When $\nu>0$, let
\[
\tau := 
\begin{cases}
	\rho_\Phi,	& 0<\nu\le1,\\
	\text{the unique }t\in ]0,\rho_\Phi[ \text{ satisfying } t\Phi'(t)=\Phi(t), & \nu>1,
\end{cases}
\]
and define a random non-negative integer $\xi$ with probability generating function
\[
	\Ex{z^\xi} = \Phi(\tau z) / \Phi(\tau).
\]
This way, 
\[
	\Ex{\xi} = \frac{\tau\Phi'(\tau)}{\Phi(\tau)} = \min(1,\nu).
\]
If $\nu \ge 1$ we may define a random positive integer $\widehat\xi$ with size-biased distribution
\[
	\Prb{\widehat\xi=k} = k\Prb{\xi=k}, \qquad k\ge0.
\]
Conditionally on $\widehat\xi=2m$, let $\mV_{\widehat\xi}$ be a uniform non-separable planar map with $m$ edges. Let $d(\mV_{\widehat\xi})$ denote the distance between the origin of the root edge of $\mV_{\widehat\xi}$ and the vertex incident to a uniformly selected corner of $\mV_{\widehat\xi}$. We set
\[
	\kappa := \Exb{d(\mV_{\widehat\xi})} \in ]0, \infty].
\]
Conditionally on $(\mathbf M,d_{\mathbf M},\mu_{\mathbf M})$, let $U_1,U_2$ denote independent points with distribution $\mu_{\mathbf M}$ and define the deterministic constant
\[
	\mathfrak d_{\mathbf M} := \Exb{d_{\mathbf M}(U_1,U_2)}.
\]

We say that the finite-variance condition holds if
\begin{align}
	\label{eq:bwfvv}
	\nu > 1 \qquad\text{or}\qquad \left( \nu=1 \quad\text{and}\quad \Phi''(\tau)<\infty \right).
	\tag{$\mathrm{FV}$}
\end{align}
In this case, note that $\Va{\xi}=\tau^2\Phi''(\tau) / \Phi(\tau) < \infty$ and set $\alpha := 2$ and 
\[
	b_n := \sqrt{\Va{\xi} n /2}.
\]
For any $\alpha \ge 1$, we say that the corresponding regular-variation condition holds if there exists a slowly varying function $L$ such that
\begin{align}
	\label{eq:bwregvar}
	\Pr{\xi = 2kd_{\omega} } \sim L(k)k^{-1-\alpha}, \qquad k\to\infty.
	\tag{$\mathrm{RV}_\alpha$}
\end{align}
\begin{enumerate}[a)]
\item If $\nu=1$ and $1 < \alpha < 2$ we set
\begin{align*}
	b_n := \min \left\{ m \in \ndN \mid \Pr{\xi \ge m} n|\Gamma(1-\alpha)| \le  1 \right\}.
\end{align*}
This way, $b_n$ varies regularly with index $1/\alpha$.
\item If $\nu=1$ and $\alpha=2$ and $\Va{\xi}=\infty$, set
\[
	\upsilon(x) := \Exb{(\xi-\Ex{\xi})^2\one_{\{|\xi-\Ex{\xi}|\le x\}}}, \qquad x\ge0,
\]
and define
\begin{align*}
	b_n := \min \left\{m\in\ndN \mid n\upsilon(m)\le2m^2\right\}.
\end{align*}
Then $\upsilon$ is slowly varying, $b_n$ varies regularly with index $1/2$, and $ n\upsilon(b_n) / b_n^2 \to 2$. 
\item If $\nu=1$ and $\alpha=1$ set $X:=\xi-1$ and define
\begin{align*}
	a_n:=\min\left\{m\in\ndN\mid n\Prb{X\ge m}\le1\right\}, \qquad
	b_n:=n\Exb{X\one_{\{X>a_n\}}}.
\end{align*}
The sequences $(a_n)_{n\ge1}$ and $(b_n)_{n\ge1}$ vary regularly with index $1$. It holds that $ a_n=o(b_n)$ and $b_n = o(n)$.
\end{enumerate}

\noindent When $\nu=0$, we say that the complete-condensation condition holds if the truncations $\Phi_{\le k}(z) := \sum_{i=0}^{k}  z^i [x^i] \Phi(x)$ satisfy for each $0<\epsilon<1$
\begin{align}
	\label{eq:bwcc14}
	\frac{
		[z^{2n}]
		\left( \Phi_{\le  2n-\lfloor\epsilon n^{1/4}\rfloor}(z) \right)^{2n+1}
	}{
		[z^{2n}]\Phi(z)^{2n+1}
	}
	\to 0, \qquad n\to\infty, \qquad n \equiv 0 \mod d_\omega.
	\tag{$\mathrm{CC}_{1/4}$}
\end{align}
Sufficient conditions for this setting in terms of the coefficients of $\Phi(z)$ can be found in~\cite[Ex. 19.36]{MR2908619}.

We use the following notation for the limiting measured metric spaces. For $1<\alpha\le2$, let $(\mathbf{T}_\alpha, d_{\mathbf{T}_\alpha}, \mu_{\mathbf{T}_\alpha}) $ denote the $\alpha$-stable tree~\cite{MR1954248,MR1964956}. The case $\alpha=2$ corresponds to the celebrated Brownian continuum random tree constructed from $\sqrt{2}$ times a Brownian excursion of duration one, see~\cite{MR1085326,MR1166406,MR1207226}. For $1<\alpha<5/4$, let
$ \left(\mathbf{T}^{\mathrm{dec}}_\alpha(\mathbf{M}), d_{\mathbf{T}^{\mathrm{dec}}_\alpha(\mathbf{M})}, \mu_{\mathbf{T}^{\mathrm{dec}}_\alpha(\mathbf{M})}\right)$ denote the Brownian sphere decorated $\alpha$-stable tree, which is a special case of the stable decorated trees introduced by S{\'e}nizergues, Stef{\'a}nsson and Stufler~\cite{zbMATH07790315}. We refer to Section~\ref{sec:background} for the precise normalisations of all limiting objects.

\begin{theorem}
	\label{te:main2}
	All limits below are taken in the Gromov--Hausdorff--Prokhorov sense as $n\to\infty$, restricted to $n\equiv0 \mod d_\omega $.
	
	\begin{enumerate}
		\item Suppose that Condition~\eqref{eq:bwfvv} holds, or that $\nu=1$ and Condition~\eqref{eq:bwregvar} holds for some $\frac{5}{4} < \alpha \le 2$. Then $\kappa < \infty$ and
		\[
			\left( \mM_n^\omega, \frac{2^{1/\alpha-1}}{\kappa}\frac{b_{n}}{n} d_{\mM_n^\omega}, \mu_{\mM_n^\omega} \right) \convd (\mathbf{T}_\alpha, d_{\mathbf{T}_\alpha}, \mu_{\mathbf{T}_\alpha}).
		\]
		\item Suppose that $\nu=1$, that Condition~\eqref{eq:bwregvar} holds with $\alpha=5/4$, and that the limit  \[
			c_\omega := \lim_{k \to \infty} L(k) \in  ]0, \infty[
		\]
		exists. Then
		\[
		   \left( \mM_n^\omega, \frac{1}{\mathfrak d_{\mathbf{M}}} \left(\frac{6}{d_\omega}\right)^{1/4} \left(\frac{5\Gamma(3/4)^4}{c_\omega}\right)^{1/5} \frac{1}{n^{1/5}\log n} d_{\mM_n^\omega}, \mu_{\mM_n^\omega} \right)
		\convd \left( \mathbf T_{5/4}, d_{\mathbf T_{5/4}}, \mu_{\mathbf T_{5/4}} \right).
		\]
		\item Suppose that $\nu=1$ and that Condition~\eqref{eq:bwregvar} holds for some $1 < \alpha < \frac{5}{4}$.	 Then
		\[
			\left( \mM_n^\omega, \left( \frac{3}{2^{2 + 1/\alpha} b_{n}} \right)^{1/4} d_{\mM_n^\omega}, \mu_{\mM_n^\omega} \right) \convd \left(\mathbf{T}^{\mathrm{dec}}_\alpha(\mathbf{M}), d_{\mathbf{T}^{\mathrm{dec}}_\alpha(\mathbf{M})}, \mu_{\mathbf{T}^{\mathrm{dec}}_\alpha(\mathbf{M})}\right).
		\]
		\item Suppose that $\nu=1$ and that Condition~\eqref{eq:bwregvar} holds with $\alpha=1$. Then
		\[ 
			\left(\mM_n^\omega, \left(\frac{3}{8 b_{n}}\right)^{1/4} d_{\mM_n^\omega}, \mu_{\mM_n^\omega} \right) \convd (\mathbf{M}, d_{\mathbf{M}},\mu_{\mathbf{M}}).
		\]
		\item Suppose that either $0<\nu<1$ and Condition~\eqref{eq:bwregvar} holds for some $\alpha\ge1$, or $\nu=0$ and Condition~\eqref{eq:bwcc14} holds. Then
		\[
			\left( \mM_n^\omega, \left( \frac{3}{8(1-\nu)n} \right)^{1/4} d_{\mM_n^\omega}, \mu_{\mM_n^\omega} \right) \convd (\mathbf{M},d_{\mathbf{M}},\mu_{\mathbf{M}}).
		\]
	\end{enumerate}
\end{theorem}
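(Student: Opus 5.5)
The proof rests on the block decomposition of planar maps from~\cite{MR4132643}. The corner tree of $\mM_n^\omega$ is a simply generated tree $\mT_{2n}$ with $2n+1$ vertices and offspring law $\xi$. Each vertex with out-degree $2m$ carries an independent uniform non-separable map $\mV_m$, and these blocks are glued along the tree. The five regimes of Theorem~\ref{te:main2} correspond to the known behaviour of such trees: they are condensation-free, have a stable-type largest outdegree, or undergo complete condensation. I would treat each case by combining tree limits with block limits, using Theorem~\ref{te:main1} as the essential new input for the macroscopic blocks.

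\textbf{Cases (1) and (2): tree-like regimes.} Every block has size $o(n)$. The graph distance from the root to a vertex is a sum of distances across the blocks met along the ancestral line in $\mT_{2n}$. Along a typical spine these blocks are asymptotically i.i.d.\ copies of the size-biased block $\mV_{\widehat\xi}$, so by a law of large numbers each block contributes $\kappa$ per tree-generation on average. Together with the stable-tree limit for $\mT_{2n}$ with scaling $b_n/n$ (Duquesne, Kortchemski), this yields the $\mathbf T_\alpha$ limit. The factor $2^{1/\alpha-1}$ comes from the $2n$ versus $n$ conventions. The work lies in moment tail bounds: I would show $\kappa<\infty$ iff $\alpha>5/4$ using $\Pr{\widehat\xi=2m}\approx m^{-\alpha}$ and $d(\mV_m)\approx m^{1/4}$. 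I would then use a uniform-over-ancestral-lines concentration (in the style of Stufler's earlier results on decorated trees) to get uniform closeness of the distances. At $\alpha=5/4$ the expectation diverges logarithmically. Truncating at size $n^{4/5}$ gives the $\log n$ factor, and the sum of $\mathfrak d_{\mathbf M}\cdot(\text{block size})^{1/4}$-type contributions gives the constant. Theorem~\ref{te:main1} is needed here, with the bound $\Ex{d(\mV_m)}\sim c m^{1/4}$ supplied by convergence of first moments (via uniform integrability of rescaled distances).

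\textbf{Case (3): decorated stable trees.} This is an application of the invariance principle for decorated trees of S\'enizergues--Stef\'ansson--Stufler~\cite{zbMATH07790315}. Its hypotheses are a stable tree limit for $\mT_{2n}$, and decorations that converge after rescaling by $m^{1/4}$ with the required moment bounds. Theorem~\ref{te:main1} supplies the decoration convergence to $\mathbf M$. The tail bound $\Pr{\mathrm{diam}(\mV_m)>x m^{1/4}}$ decaying fast enough I would get from the same transfer (tightness plus known uniform estimates for $\mM_n$ passed through cores). The condition $\alpha<5/4$ is exactly the threshold in their theorem where decorations dominate.

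\textbf{Cases (4) and (5): condensation regimes.} A single giant block of size $n-\Theta(b_n)$ (respectively $(1-\nu)n+o(n)$) appears, and conditionally on its size it is uniform. By Theorem~\ref{te:main1} it converges to $\mathbf M$ with scaling $(3/(8\cdot\text{size}))^{1/4}$. For $\nu=1$, $\alpha=1$, the size of the giant block is of order $b_n$ rather than $n$, which explains the normalisation there. It remains to show that the remaining pieces are negligible. They form a forest of small maps attached to the giant block; I would show their heights are $o(n^{1/4})$ (resp.\ $o(b_n^{1/4})$) and that they carry mass spread according to $\mu$. I would use known results that the fringe subtrees in condensed simply generated trees have maximal height $O(\log n)$ or $n^{o(1)}$ in the regimes assumed, and that~\eqref{eq:bwcc14} in the case $\nu=0$ forces the small blocks to have size $<\epsilon n^{1/4}$. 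The Prokhorov part follows because the masses hanging at corners of the giant block are exchangeable, so a law of large numbers matches them with $\mu$ of the block.

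The main obstacle I expect is case (2) together with the tail estimates. These need quantitative uniform control of $d(\mV_m)/m^{1/4}$ beyond the distributional convergence of Theorem~\ref{te:main1}. I would first try to derive it from the corresponding estimates for $\mM_n$ by the core comparison used in the proof of Theorem~\ref{te:main1}.
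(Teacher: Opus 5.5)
Your overall architecture matches the paper: the enriched-tree encoding, a spinal law of large numbers in cases (1)--(2), the decorated-tree invariance principle in case (3), and condensation plus exchangeability in cases (4)--(5). The gap lies in the quantitative input on block diameters, which you rightly flag as the main obstacle: it is mis-specified, and your proposed route does not deliver it.

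\textbf{Per-size control is neither available nor needed.} You ask for control at each size $m$. At $\alpha=5/4$ you want first-moment convergence $\Ex{d(\mV_m)}\sim\mathfrak d_{\mathbf M}(8m/3)^{1/4}$ via uniform integrability. For case (3) you want uniform bounds $\Ex{\Di(\mV_m)^p}\le Cm^{p/4}$ (or tails) to meet the moment hypotheses of the small-decoration estimate in \cite{zbMATH07790315}.

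The core comparison only yields $\sum_m\Pr{\ed(\cV(\mM_N))=m}\Ex{\Di(\mV_m)^p}\le \Ex{\Di(\mM_N)^p}\le C_pN^{p/4}$. Since $\Pr{\ed(\cV(\mM_N))=m}\asymp N^{-2/3}$ only on windows of width $\asymp N^{2/3}$, this controls sums over windows. It gives the averaged bound $\sum_{m\le n}\Ex{\Di(\mV_m)^p}\le E_pn^{1+p/4}$ of Proposition~\ref{pro:andm}, while a single $m$ is bounded only up to a loss of $m^{2/3}$.

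The individual bounds that are available \cite{MR3318042} lose a factor $m^{\epsilon}$. That is harmless in case (1) but fatal elsewhere:
\begin{enumerate}
\item the negligibility of small-degree decorations required in \cite{zbMATH07790315} must hold at the exact scale $b_N^{1/4}$;
\item the constant at $\alpha=5/4$ comes from a logarithmically divergent first moment;
\item for $\alpha=1$ the available margins are only slowly varying.
\end{enumerate}

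The missing idea is that averaged bounds suffice, because block sizes always enter mixed by a regularly varying law.
\begin{itemize}
\item At $\alpha=5/4$ the paper obtains $\Ex{d(\mV_{\widehat\xi})\one_{\{\widehat\xi\le x\}}}\sim\kappa_{5/4}\log x$ by a Toeplitz argument. The part truncated at level $A$ is handled by Theorem~\ref{te:main1} together with re-rooting invariance. The excess is handled by averaged $p$-th moments over dyadic shells.
\item In case (3) the paper proves a version of the small-decoration proposition of \cite{zbMATH07790315} under averaged moment hypotheses (Lemma~\ref{le:smalldeco}, via Proposition~\ref{pro:promise}).
\end{itemize}

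\textbf{Cases (4) and (5): height bounds for the fringe trees do not control the attached maps.} Distances there are sums of heavy-tailed block diameters along ancestral lines. Bounding them by tree height times the largest small-block diameter loses a factor $\log N$ or $N^{o(1)}$. For $\alpha=1$ this is fatal:
\begin{itemize}
\item for $0<\nu<1$, the largest small block is smaller than $N$ only by a slowly varying factor;
\item in the Cauchy case, $a_N=o(b_N)$ holds only by a slowly varying factor.
\end{itemize}

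The paper handles these cases as follows, and both steps again rest on the sharp averaged moments.
\begin{itemize}
\item For $0<\nu<1$ it proves $\Ex{\Di(\mM^\omega)^4}<\infty$ for the unconditioned map, via the recursion $\Ex{\mR_h^4}\le C_\eta\Ex{\mD_\xi^4}+(1+\eta)\nu\Ex{\mR_{h-1}^4}$ with $(1+\eta)\nu<1$.
\item In the Cauchy case it proves the sharp tail $x^4\Pr{K(\mT,\beta)>x}\to0$ for the decorated height of the critical unconditioned tree, by smooth variation and an induction. This is combined with the Vervaat coupling, which makes the fringe trees at the giant vertex i.i.d., and with a re-rooting argument for the root-side piece.
\end{itemize}

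Two further points. In case (4) the giant block has about $b_n=o(n)$ edges, not $n-\Theta(b_n)$ as you first write (your later sentence is the correct one). So almost all the mass lies off the block. The exchangeability step therefore needs that neither the root-side piece nor any single fringe tree carries a positive fraction of the $N$ vertices, and this requires the behaviour of the \L{}ukasiewicz path after the big jump. For $\nu=0$, what Condition~\eqref{eq:bwcc14} gives, and what you need, is that the total size outside the giant block is $o_p(n^{1/4})$, not merely that each small block is small.
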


Extremal sizes of blocks were described in~\cite[Sec. 6.6]{MR4132643}. The scaling limit for $\nu>1$ was already given in~\cite[Thm. 6.62]{MR4132643}. The argument is that the diameter of the non-separable components is of smaller order than the height of an underlying simply generated tree. This behaviour extends to the stable tree case for $\alpha > 5/4$. The $\alpha=5/4$ phase transition is precisely the boundary case left open by~\cite[Rem. 1.1]{zbMATH07790315} and requires a particularly delicate analysis. At this value, the height of the underlying tree has order $n^{1-1/\alpha} = n^{1/5}$ which coincides with the order $(n^{1/\alpha})^{1/4} = n^{1/5}$ of the diameter of a large non-separable component. The cumulative metric contributions of non-separable components along a path in the underlying tree require a logarithmic correction, whose normalisation depends on the precise asymptotic behaviour of the slowly varying factor in~\eqref{eq:bwregvar}. The decorated stable tree phase makes use of an invariance principle given in~\cite{zbMATH07790315}. Verifying its assumptions requires us to establish cumulative diameter bounds for non-separable maps and provide a version of~\cite[Prop.~3.2]{zbMATH07790315} with less strict requirements to ensure that  small-degree decorations in blow-ups of random trees are negligible. The $\alpha=1$ cases benefit from numerous condensation results on conditioned Bienaym\'e--Galton--Watson trees in the Cauchy domain~\cite{MR3914558,zbMATH08114289}. See also recent work on their height~\cite{MR4912670}. The Brownian sphere regime for $0\le \nu <1$ follows the strategy introduced by~\cite{MR3729639} and likewise benefits from condensation results of random trees~\cite{MR3335012,MR2908619,MR4144886}.

The simplest special case of  block-weighted planar maps is the case $f(k) = u$ for all $k \ge 1$ with $u>0$ a fixed constant. The corresponding planar map $\mM_n^u$ is biased by the number of blocks, hence we call it block-biased in order to distinguish it from the known more general block-weighted models~\cite{MR4132643,zbMATH06918043, zbMATH07431204}.

It is immediate from the definitions and the known generating series of non-separable planar maps, see Section~\ref{sec:enum},  that for this case we have 
\begin{align*}
	\Phi_u(z) = 1+ u\sum_{k\ge1} \frac{2(3k-3)!}{k!(2k-1)!}z^{2k},
\end{align*}
with radius of convergence $\sqrt{4/27}$.
Hence
\[
	\nu_u = \frac{8u}{3(u+3)} \begin{cases}>1,  &u>9/5 \\ =1,  &u=9/5 \\< 1,  &u<9/5. \end{cases}
\]
Thus, for $u>9/5$ the scaling limit of the block-biased map $\mM_n^u$ is the  Brownian tree  by Stufler~\cite[Thm. 6.62]{MR4132643}. For $u \le 9/5$ we have~\eqref{eq:bwregvar} holding for $\alpha_u= 3/2$ since
\begin{align*}
	[z^{2k}] \Phi_u(z) \sim u\frac{2}{9\sqrt{3\pi}} k^{-5/2} \left(\frac{27}{4}\right)^k, \qquad k \to \infty.
\end{align*}
See Section~\ref{sec:enum}. Thus, for $u < 9/5$ the block-biased map $\mM_n^u$ has a unique macroscopic block by Stufler~\cite[Sec. 6.6]{MR4132643}. By Theorem~\ref{te:main1} this block admits the Brownian sphere as scaling limit, yielding that $\mM_n^u$ also admits the Brownian sphere as scaling limit as in Theorem~\ref{te:main2}. For $u=9/5$ we obtain the $\alpha_u$-stable tree as scaling limit as in Theorem~\ref{te:main2}. The proof is essentially  the same as in~Stufler~\cite[Thm. 6.62]{MR4132643}, but with slightly different bounds when $\xi$ lacks finite exponential moments. The main point is that the diameter of large non-separable components has a smaller order of growth than the height of an underlying simply generated tree.  Fleurat and Salvy~\cite[Thm. 5.4]{zbMATH07829961} also gave a stable tree scaling limit for $u=9/5$ whose proof is likewise based on~\cite[Thm. 6.62]{MR4132643}. Additionally,  Fleurat and Salvy~\cite{zbMATH07829961}~established a phase transition for block-biased quadrangulations, building on~\cite{MR3706731,MR3729639} for the required input scaling limit of non-separable quadrangulations. Block-biased planar maps have also received recent attention from Duplantier and Guitter~\cite{duplantier2025liouvillequantumdualityrandom,duplantier2026liouvillequantumdualityrandom}, who draw parallels between the associated scaling exponents and predictions stemming from Liouville quantum gravity.

\begin{corollary}
	\label{co:ma}
	All limits below are taken in the Gromov--Hausdorff--Prokhorov sense as $n\to\infty$. For $u\ge 9/5$, set $r_u:=\sqrt{1-\frac1u}$ and
	\[
		\kappa_u := r_u	\sum_{m\ge1} \frac{(3m-3)!}{(m-1)!(2m-1)!} (r_u^2(1-r_u))^{m-1} \Exb{d(\mV_m)},
	\]
	where $d(\mV_m)$ denotes the distance between the origin of the root edge of $\mV_m$ and the vertex incident to a
	uniformly selected corner.
	\begin{enumerate}
		\item If $u>9/5$, then
		\[
			\left( \mM_n^u,	\frac{1}{2\kappa_u} \sqrt{\frac{r_u}{3r_u-2}}	n^{-1/2}d_{\mM_n^u},\mu_{\mM_n^u}	\right)	\convd (\mathbf{T}_2,d_{\mathbf{T}_2},\mu_{\mathbf{T}_2}).
		\]
		\item If $u=9/5$, then
		\[
			\left( \mM_n^{9/5},	\frac{2^{2/3}}{3\kappa_{9/5}} n^{-1/3}d_{\mM_n^{9/5}}, \mu_{\mM_n^{9/5}} \right) \convd \left( \mathbf{T}_{3/2}, d_{\mathbf{T}_{3/2}}, \mu_{\mathbf{T}_{3/2}} \right).
		\]
		\item If $0<u<9/5$, then
		\[
			\left( \mM_n^u, \left(\frac{9(u+3)}{8(9-5u)n}\right)^{1/4} d_{\mM_n^u}, \mu_{\mM_n^u}\right)\convd (\mathbf{M},d_{\mathbf{M}},\mu_{\mathbf{M}}).
		\]
	\end{enumerate}
\end{corollary}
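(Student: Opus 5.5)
The corollary is Theorem~\ref{te:main2} specialised to $f\equiv u$ on $\ndN$; what remains is to compute the constants. With $f(k)=u$ for all $k\ge1$ we have $d_\omega=1$ and
\[
\Phi_u(z)=1+uA(z^2),\qquad A(x)=\sum_{k\ge1}\frac{2(3k-3)!}{k!(2k-1)!}x^k ,
\]
the generating series of non-separable maps by edges. Its closed form (Section~\ref{sec:enum}) comes from a parametrisation. I would take $x=r^2(1-r)$, $r\in[0,2/3]$, so that $x=4/27$ corresponds to $r=2/3$; in this parametrisation $A$ is an explicit rational function of $r$. I would check this against $\nu_u=8u/(3(u+3))$ at the singularity.

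\textbf{Case $u>9/5$, part (1).} Here $\nu>1$, so $\tau$ solves $t\Phi'(t)=\Phi(t)$. I expect the parametrisation to give exactly $\tau^2=r_u^2(1-r_u)$ with $r_u=\sqrt{1-1/u}$, that is, $1-1/u=r^2$. I would verify this by direct substitution. Then
\[
\Prb{\xi=2m}=\frac{u\,2(3m-3)!}{m!(2m-1)!}\,\frac{\tau^{2m}}{\Phi(\tau)} .
\]
Size-biasing by $2m$ gives
\[
\Prb{\widehat\xi=2m}=\frac{4u(3m-3)!\,\tau^{2m}}{(m-1)!(2m-1)!\,\Phi(\tau)} .
\]
Comparing with the definition of $\kappa_u$, the prefactor $4u\tau^2/\Phi(\tau)$ must equal $r_u$. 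I would check this identity using $\Phi(\tau)=\tau\Phi'(\tau)$.

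Next, $\Va{\xi}=\tau^2\Phi''(\tau)/\Phi(\tau)$ is computed from the parametrisation. The scaling constant is $\frac{2^{-1/2}}{\kappa}\frac{b_n}{n}$ with $b_n=\sqrt{\Va{\xi}n/2}$, which gives $\frac{\sqrt{\Va{\xi}}}{2\kappa}n^{-1/2}$. Matching with the stated formula requires $\Va{\xi}=r_u/(3r_u-2)$. This should drop out of differentiating the rational parametrisation. Note that it blows up at $r=2/3$, which is exactly $u=9/5$, a good consistency check.

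\textbf{Case $u=9/5$, part (2).} Here $\nu=1$, $\tau=\sqrt{4/27}$ and $r_u=2/3$. Condition~\eqref{eq:bwregvar} holds with $\alpha=3/2$. Using the stated asymptotics of $[z^{2k}]\Phi_u$, we get
\[
\Prb{\xi=2k}\sim \frac{u\,2}{9\sqrt{3\pi}\,\Phi(\tau)}\,k^{-5/2},
\]
so $L$ is constant. Then
\[
\Prb{\xi\ge m}\sim \frac{2}{3}\,c\,(m/2)^{-3/2},\qquad |\Gamma(-1/2)|=2\sqrt\pi ,
\]
which determines $b_n\sim C n^{2/3}$ explicitly. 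Substituting $\alpha=3/2$ into $\frac{2^{1/\alpha-1}}{\kappa}\frac{b_n}{n}$ should yield $\frac{2^{2/3}}{3\kappa}n^{-1/3}$. The identification of $\kappa$ with $\kappa_{9/5}$ is as before, now with $r=2/3$.

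\textbf{Case $u<9/5$, part (3).} Here $0<\nu<1$ and \eqref{eq:bwregvar} holds with $\alpha=3/2$, so part (5) of Theorem~\ref{te:main2} applies. The scaling is $(3/(8(1-\nu_u)n))^{1/4}$. Since
\[
1-\nu_u=\frac{9-5u}{3(u+3)},
\]
this equals $(9(u+3)/(8(9-5u)n))^{1/4}$, as claimed.

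\textbf{Main obstacle.} The hard part is the algebra in part (1): showing that $\tau^2=r_u^2(1-r_u)$, that the weight prefactor is $r_u$, and that $\Va{\xi}=r_u/(3r_u-2)$. I would do all of this in the $r$-parametrisation, writing $\Phi$ and its derivatives via the chain rule as rational functions of $r$. I would also need finiteness of $\kappa_u$, which Theorem~\ref{te:main2}(1) already provides.
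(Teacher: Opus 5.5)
Your route is the paper's: read the corollary off Theorem~\ref{te:main2} with $d_\omega=1$, and compute $\tau_u$, the law of $\widehat\xi_u$, $\Va{\xi_u}$, $b_n$ and $\nu_u$. Your targets are all correct. This includes $b_n\sim\frac23 n^{2/3}$ at $u=9/5$, which follows from your tail computation once you insert $\Phi(\tau)=8/5$, and $1-\nu_u=\frac{9-5u}{3(u+3)}$.

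There is, however, a concrete error in the tool you propose for the algebra you call the main obstacle: the parametrisation $x=r^2(1-r)$ with $r\in[0,2/3]$ is the wrong branch. Let $t\in[0,1/3]$ be the power-series root of $x=t(1-t)^2$ from Section~\ref{sec:enum}. On your branch, Vieta's relations give $t^2-(1+r)t+r^2=0$, so $t=\frac12\bigl(1+r-\sqrt{(1-r)(1+3r)}\bigr)$. Consequently $A=x+\cV^*(x)=t(2-3t)$ is \emph{not} rational in $r$ there. Moreover, the parameter of $\tau^2$ on your branch is not $r_u$; indeed $r_u=\sqrt{1-1/u}>2/3$ for $u>9/5$ lies outside your range. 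So the step ``$\tau^2=r_u^2(1-r_u)$, that is, $1-1/u=r^2$, checked by substitution'' fails as you set it up.

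The fix is to take $r:=1-t\in[2/3,1]$. This still gives $x=r^2(1-r)$, but now $r=1$ at $x=0$ and $r=2/3$ at $x=4/27$, and everything becomes polynomial:
\begin{itemize}
\item $A=(1-r)(3r-1)$ and $xA'(x)=2t(1-t)=2r(1-r)$;
\item the equation $\tau\Phi'(\tau)=\Phi(\tau)$ becomes $u(1-r^2)=1$, i.e.\ $r=r_u$;
\item $\Phi(\tau)=\frac{4r}{1+r}$ and $4u\tau^2/\Phi(\tau)=r$, giving the prefactor in $\kappa_u$;
\item $\tau^2\Phi''(\tau)=\frac{4ur^2(1-r)}{3r-2}$, hence $\Va{\xi}=\frac{r}{3r-2}$.
\end{itemize}
These are exactly the identities the paper records, via $\Phi_u(z)=1+u(z^2+\cV^*(z^2))$ and $\tau_u=r_u\sqrt{1-r_u}$. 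With them in hand, your substitutions into Theorem~\ref{te:main2} go through unchanged, and at $r=2/3$ they also give $\Phi(\tau)=8/5$ for part~(2).
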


Theorem~\ref{te:main2} and Corollary~\ref{co:ma} complete the phase diagram of limiting shapes by~\cite{MR4132643} in the case where the block weights depend only on the block size. Theorem~\ref{te:main1} supplies the required input at the non-separable level, since, under size-dependent block weights, every block is conditionally uniform given its size.

In its full generality, the framework of block-weighted planar maps and graphs developed in~\cite{MR4132643,zbMATH06918043, zbMATH07431204} permits arbitrary non-negative weights on non-separable maps and thus encompasses any constraint or decoration that factors over the maximal non-separable components.  It includes, in particular, uniform and block-weighted models of simple, bipartite, loopless, or bridgeless planar maps, as well as any intersection of these classes. Further examples include tree-rooted planar maps~\cite[Rem.~6.65]{MR4132643}, models in which the admissible block sizes are restricted to a prescribed set, and planar maps whose underlying graphs belong to block-stable classes defined by excluding $2$-connected minors.

Future local or global limits for weighted non-separable maps could therefore be combined with the transfer methods developed here to extend the existing picture beyond the local weak limits for $\nu\ge 1$ established in~\cite[Thm.~6.57]{MR4132643} and the scaling limits for $\nu>1$ established in~\cite[Thm.~6.62]{MR4132643}, potentially yielding rich phase diagrams for broad classes of block-weighted planar maps.

\subsection*{Notation}
We let $\ndN = \{1, 2, \ldots\}$ denote the set of positive integers and set $\ndN_0 = \ndN \cup \{0\}$.  All random variables occurring in this work are assumed to be defined on a common probability space whose measure we denote by $\mathbb{P}$. All unspecified limits are as $n \to \infty$. For two sequences $(X_n)_{n \ge 1}$ and $(Y_n)_{n \ge 1}$ of random variables with values in a common Polish space  we write $X_n \atv Y_n$ if their total variation distance $d_{\textsc{TV}}(X_n,Y_n)$ tends to zero. We say an event holds with high probability if its probability tends to $1$ as $n$ becomes large. Convergence in probability and distribution are denoted by $\convp$ and $\convd$. Almost sure convergence is denoted by $\convas$. For any sequence $a_n>0$ we let $o_p(a_n)$ denote an unspecified random variable $Z_n$ such that $Z_n / a_n \convp 0$. Likewise $O_p(a_n)$ is a random variable $Z_n$ such that $Z_n / a_n$ is stochastically bounded.

\section{Limiting spaces of planar maps}
\label{sec:background}

\paragraph*{Planar maps}
Planar maps are embeddings of connected multigraphs on the $2$-sphere that are viewed up to orientation-preserving homeomorphism. The number of such equivalence classes depends on the graph under consideration and may be equal to zero.

We consider planar maps that are rooted at a corner, or equivalently an oriented root edge. An exception is made only for the map consisting of a single vertex and no edges. This map has no corners to be rooted at, but we count it as rooted nevertheless.

Given a planar map $M$ we let  $\co(M)$, $\ed(M)$, and $\ve(M)$  denote its number of corners, edges, and vertices. We let $\He(M)$  denote its height with respect to the origin of the root edge and $\Di(M)$ its diameter. The height of a vertex $v$ is denoted by $\he_M(v)$.

\paragraph*{Gromov--Hausdorff--Prokhorov distance}
We let $d_{\mathrm{H}}$ denote the Hausdorff distance between compact subsets of a metric space, $d_{\mathrm{P}}$ the Prokhorov distance between Borel measures, $d_{\mathrm{GH}}$ the Gromov--Hausdorff distance between compact metric spaces, $d_{\mathrm{GP}}$ the Gromov--Prokhorov distance between measured Polish metric spaces, and $d_{\mathrm{GHP}}$  the Gromov--Hausdorff--Prokhorov distance. Pointed variants of these distances are denoted by adding a superscript $\bullet$, such as $d_{\mathrm{H}}^\bullet$.  We refer the reader to~\cite[Ch. 27]{zbMATH05306371}, \cite[Ch. 7]{MR1835418},  \cite[Sec. 6]{MR2571957}, and the recent survey~\cite{janson2020gromovprohorov} for a  detailed exposition of their definitions.

\paragraph*{Brownian sphere}

We recall the construction of the Brownian sphere following~\cite{MR3112934,MR3070569}. The coding pair $(\mathbf{e}, \mathbf{Z})$ of the  Brownian snake  is a random element of $\cC([0,1], \ndR^2)$. Here $\mathbf{e}$ denotes a Brownian excursion normalised to have duration one. Conditionally on $\mathbf{e}$, the random function $\mathbf{Z}$ is a continuous, centred Gaussian process with covariance \[
\mathrm{Cov}(\mathbf{Z}(s), \mathbf{Z}(t)) = \min \{ \mathbf{e}(x) \mid \min(s,t) \le x \le \max(s,t)\}, \qquad s,t \in [0,1].
\]
Let $d_{\mathbf{e}}$ denote the pre-metric on $[0,1]$ defined by
\[
	d_{\mathbf{e}}(s,t) = \mathbf{e}(s) + \mathbf{e}(t) - 2 \min \{ \mathbf{e}(x) \mid \min(s,t) \le x \le \max(s,t)\}, \qquad s,t \in [0,1].
\]
Similarly, let $d_{\mathbf{Z}}$ be defined by
\[
	d_{\mathbf{Z}}(s,t) = \mathbf{Z}(s) + \mathbf{Z}(t) - 2 \max\left(\min_{x \in [\min(s,t), \max(s,t)]} \mathbf{Z}(x), \min_{x \in [0,1] \, \setminus \, ]\min(s,t), \max(s,t)[ } \mathbf{Z}(x)     \right)
\]
for $s,t \in [0,1]$. Let $\mathbf{D} \in \cC([0,1]^2, \ndR)$ denote the largest pseudo-distance on $[0,1]$ that satisfies $\mathbf{D} \le d_{\mathbf{Z}}$ and $\{ d_{\mathbf{e}} = 0\} \subset \{ \mathbf{D} =0 \}$. The Brownian sphere $(\mathbf{M}, d_{\mathbf{M}})$ may be defined as the quotient metric space $[0,1]/\{\mathbf{D}=0\}$ corresponding to $\mathbf{D}$. The Borel probability measure $\mu_{\mathbf{M}}$ on  $(\mathbf{M}, d_{\mathbf{M}})$ is defined as the push-forward of the Lebesgue measure along the canonical surjection $[0,1] \to \mathbf{M}$.

\paragraph*{Scaling limit of unconstrained planar maps} By the main result of~\cite{MR3256874},
\begin{align}
	\label{eq:mapconverge}
	\left(\mM_n, (8n/9)^{-1/4} d_{\mM_n}, \mu_{\mM_n}\right) \convd (\mathbf{M}, d_{\mathbf{M}}, \mu_{\mathbf{M}})
\end{align}
holds in the Gromov--Hausdorff--Prokhorov sense. To be precise,~\cite{MR3256874} states only Gromov--Hausdorff convergence, but the proof given there also entails Gromov--Hausdorff--Prokhorov convergence: It is shown that there is an ordering  $c(0), c(1), \ldots, c(2n-1)$ of the corners of $\mM_n$ such that, with $c(2n) := c(0)$, a linear interpolation $\tilde{D}_n$ on $[0,1]^2$ of the function
\[
	\left\{\frac{0}{2n}, \frac{1}{2n}, \ldots, \frac{2n}{2n}\right\}^2 \to \ndR, \quad \left(\frac{i}{2n},\frac{j}{2n}\right) \mapsto (8n/9)^{-1/4}d_{\mM_n}(c(i), c(j))
\]
satisfies
\begin{align*}
	\tilde{D}_n \convd \mathbf{D}
\end{align*}
as random elements of $\cC([0,1]^2, \ndR)$. Here we let $d_{\mM_n}(c(i), c(j))$ denote the distance between the vertices incident to the corners $c(i)$ and $c(j)$. This yields the full Gromov--Hausdorff--Prokhorov convergence.

\paragraph*{Stable trees} Let $f: [0,1] \to [0, \infty[$ denote a continuous function satisfying $f(0) = f(1) = 0$ and $f(x) >0$ for $0<x<1$. We may define a pre-metric on $[0,1]$ by
\[
	d_{f}(x,y) = f(x) + f(y) - 2 \min_{\min(x,y) \le t \le \max(x,y)} f(t).
\]
The corresponding quotient metric space that identifies points having $d_f$-distance zero is compact and may be equipped with the push-forward measure of the Lebesgue measure. The result is a so-called measured real tree. 

Given $1<\alpha\le2$, let $X^{(\alpha)} = ( X_t^{(\alpha)})_{t \ge 0}$ denote the spectrally positive $\alpha$-stable L\'evy process with
\[
	\Exb{\exp\left(-\lambda X_t^{(\alpha)}\right)} = \exp\left(t\lambda^\alpha\right),
	\qquad
	t,\lambda \ge 0.
\]
Let $(X^{\mathrm{exc}, (\alpha)}_t, t \in [0,1])$ denote an excursion of $X^{(\alpha)}$ with duration one. For $0\le s \le t \le 1$ set
\begin{align*}
	I_s^t = \inf_{r \in [s,t]} X^{\text{exc},(\alpha)}_r.
\end{align*}
The associated height process is defined by
\[
	H_t^{\mathrm{exc}, (\alpha)} = \lim_{\epsilon \downarrow 0} \frac{1}{\epsilon} \int_{0}^t \one_{X^{\mathrm{exc}, (\alpha)}_s \,<\, I_s^t + \epsilon}\,\,\mathrm{d}s.
\]
The $\alpha$-stable tree $(\mathbf{T}_\alpha, d_{\mathbf{T}_\alpha}, \mu_{\mathbf{T}_\alpha})$ is defined to be the measured real tree constructed above for $f$ equal to a continuous modification of~$H_t^{\mathrm{exc}, (\alpha)}$, see~\cite{MR1954248,MR1964956}. For $\alpha=2$, we have $f \eqdist \sqrt{2} \mathbf{e}$, with $\mathbf{e}$ denoting a Brownian excursion of duration $1$. The tree $(\mathbf{T}_2, d_{\mathbf{T}_2}, \mu_{\mathbf{T}_2})$ corresponds to the Brownian tree~\cite{MR1085326,MR1166406,MR1207226}, which however often is normalised differently so that it is constructed either from $\mathbf{e}$ or $2\mathbf{e}$ instead.

\paragraph*{Brownian sphere decorated stable trees}

Decorated stable trees were introduced by~\cite{zbMATH07790315} and may be constructed from stable trees by blowing up branch points into rescaled independent copies of a decoration space. In the setting of Theorem~\ref{te:main2}, this decoration space is given by the Brownian sphere. We briefly recall the construction of~\cite{zbMATH07790315} and refer the reader to this source for more details. See also prior work on stable looptrees~\cite{MR3286462}.

Suppose throughout the following that $1<\alpha<5/4$. Let $\mathbf{p}$ denote the canonical projection from $[0,1]$ to $\mathbf{T}_\alpha$. Define a partial order $\preceq$ on $[0,1]$ as follows: for every $s,t\in[0,1]$ 
\begin{align*}
	s\preceq t \qquad \text{if} \qquad (\,\, s\le t \quad \text{and} \quad X^{\text{exc},(\alpha)}_{s^{-}} \le I_s^t \,\,).
\end{align*}
We write $s\prec t$ if $s\preceq t$ and $s\neq t$. For $s,t\in[0,1]$ with $s\preceq t$, define $x_s^t := I_s^t - X_{s^-}^{\text{exc}, (\alpha)}$, and $\Delta_s := X^{\text{exc},(\alpha)}_s-X^{\text{exc},(\alpha)}_{s^-}$, and  
\[
	u_s^t := \begin{cases} \frac{x_s^t}{\Delta_s},  &\Delta_s >0 \\ 0,  &\Delta_s = 0 \end{cases}.
\]
Here we use the convention $X_{0^-}^{\mathrm{exc},(\alpha)} := X_0^{\mathrm{exc},(\alpha)} =0$. The subset $\ndB=\enstq{v\in \intervalleff{0}{1}}{\Delta_v>0}$ of the compact unit interval corresponds to the set of branch points of $\mathbf{T}_\alpha$. Almost surely, these points in the tree all have an infinite degree, meaning that for any $v\in \ndB$, the space $\mathbf{T}_\alpha \setminus\{\mathbf{p}(v)\}$ has a countably infinite number of connected components. For any $v\in\ndB$, we let 
\begin{align*}
	\cA_v:= \enstq{u_v^t}{t\in\intervalleff{0}{1} \quad \text{and} \quad \exists s\in \intervalleff{0}{1}, v\prec s \prec t }.
\end{align*} 
This set $\cA_v$ is in one-to-one correspondence with the connected components of $\mathbf{T}_\alpha \setminus\{\mathbf{p}(v)\}$ above $\mathbf{p}(v)$, meaning not the one containing the root. 

Conditionally on $X^{\mathrm{exc},(\alpha)}$, let $\bigl((\mathbf{M}_v,d_v,\mu_v)\bigr)_{v\in\ndB}$
be independent copies of the Brownian sphere $(\mathbf{M},d_{\mathbf{M}},\mu_{\mathbf{M}})$, independent of $X^{\mathrm{exc},(\alpha)}$. Conditionally on the excursion and on these copies, sample, independently for all $v\in\ndB$, an ``inner root'' $\rho_v$ and a family of ``outer roots'' $ \bigl(Y_{v,a}\bigr)_{a\in\cA_v}$ such that the points $\rho_v$ and $(Y_{v,a})_{a\in\cA_v}$ are independent and each has distribution $\mu_v$. For $s\in[0,1]\setminus\ndB$, let $\mathbf{M}_s:=\{s\}$. For every $v\in\ndB$, equip $\mathbf{M}_v$ with the rescaled metric $\delta_v:=\Delta_v^{1/4}d_v$. Consider the disjoint union
\[
	\mathbf{T}^{\ast,\mathrm{dec}}_\alpha(\mathbf{M}) := \bigsqcup_{s\in[0,1]}\mathbf{M}_s.
\]
For $x\in\mathbf{T}^{\ast,\mathrm{dec}}_\alpha(\mathbf{M})$, let $\sigma(x)\in[0,1]$ denote the unique index such that $x\in\mathbf{M}_{\sigma(x)}$. For $v\in\ndB$ and $x\in\mathbf{T}^{\ast,\mathrm{dec}}_\alpha(\mathbf{M})$ satisfying $v\preceq\sigma(x)$, define
\[
	G_v(x) :=
\begin{cases}
	x, & \text{if }\sigma(x)=v,\\
	Y_{v,u_v^{\sigma(x)}}, & \text{if }v\prec\sigma(x) \text{ and }u_v^{\sigma(x)}\in\cA_v,\\
	\rho_v, &\text{if }v\prec\sigma(x) \text{ and } u_v^{\sigma(x)}\notin\cA_v.
\end{cases}
\]
For $x,y\in\mathbf{T}^{\ast,\mathrm{dec}}_\alpha(\mathbf{M})$, choose $x\wedge y\in[0,1]$ so that $\mathbf{p}(x\wedge y)$ is the most recent common ancestor of $\mathbf{p}(\sigma(x))$ and $\mathbf{p}(\sigma(y))$ in the genealogical order on $\mathbf{T}_\alpha$. We choose this representative so that
\[
	x\wedge y\preceq\sigma(x) \qquad\text{and} \qquad x\wedge y\preceq\sigma(y).
\]
When the most recent common ancestor is a branch point of $\mathbf{T}_\alpha$, $x\wedge y$ is chosen to be the unique element of $\ndB$ coding that branch point. For $r\in[0,1]$ and $x\in\mathbf{T}^{\ast,\mathrm{dec}}_\alpha(\mathbf{M})$ satisfying $r\preceq\sigma(x)$, define
\begin{align}
	\label{eq:d0sum}
	d_0(r,x) := \sum_{\substack{v\in\ndB\\ r\prec v\preceq\sigma(x)}} \delta_v\bigl(\rho_v,G_v(x)\bigr).
\end{align}
We verify below using \cite[Thm.~6.6]{zbMATH07790315} that the sum in~\eqref{eq:d0sum} is almost surely finite. For arbitrary $x,y\in\mathbf{T}^{\ast,\mathrm{dec}}_\alpha(\mathbf{M})$, set
\begin{align*}
	d_\alpha^{\mathrm{dec}}(x,y) :=  d_0(x\wedge y,x)+d_0(x\wedge y,y) +
	\begin{cases}
		\delta_{x\wedge y}
		\bigl(G_{x\wedge y}(x),G_{x\wedge y}(y)\bigr), & \text{if }x\wedge y\in\ndB,\\
		0, & \text{if }x\wedge y\notin\ndB.
	\end{cases}
\end{align*}
We declare $x \sim y$ whenever $d_\alpha^{\mathrm{dec}}(x,y)=0$ and define
\[
	\mathbf{T}^{\mathrm{dec}}_\alpha(\mathbf{M}) := \mathbf{T}^{\ast,\mathrm{dec}}_\alpha(\mathbf{M})/\sim.
\]
Let
\[
	\mathbf{p}^{\mathrm{dec}}: \mathbf{T}^{\ast,\mathrm{dec}}_\alpha(\mathbf{M}) \to \mathbf{T}^{\mathrm{dec}}_\alpha(\mathbf{M})
\]
denote the canonical projection, and let $d_{\mathbf{T}^{\mathrm{dec}}_\alpha(\mathbf{M})}$ be the induced metric. We may consider the metric space $(\mathbf{T}^{\mathrm{dec}}_\alpha(\mathbf{M}), d_{\mathbf{T}^{\mathrm{dec}}_\alpha(\mathbf{M})})$ as rooted at the point $\mathbf{p}^{\mathrm{dec}}(0)$. For each $s\in[0,1]\setminus\ndB$ let $\iota(s) \in  \mathbf{T}^{\ast,\mathrm{dec}}_\alpha(\mathbf{M})$ denote  the unique point of the singleton $\mathbf{M}_s$. Define the probability measure $\mu_{\mathbf{T}^{\mathrm{dec}}_\alpha(\mathbf{M})}$ on $\mathbf{T}^{\mathrm{dec}}_\alpha(\mathbf{M})$ as the push-forward of the Lebesgue measure $\mathrm{Leb}\big|_{[0,1]\setminus\ndB}$ along $\mathbf{p}^{\mathrm{dec}} \iota$. That is,
\[
	\mu_{\mathbf{T}^{\mathrm{dec}}_\alpha(\mathbf{M})} :=  \bigl(\mathrm{Leb}\big|_{[0,1]\setminus\ndB}\bigr) \circ  \bigl(\mathbf{p}^{\mathrm{dec}} \iota \bigr)^{-1}.
\]
Since $\ndB$ is almost surely countable, this is a probability measure. We write
\[
	\left( \mathbf{T}^{\mathrm{dec}}_\alpha(\mathbf{M}), d_{\mathbf{T}^{\mathrm{dec}}_\alpha(\mathbf{M})}, \mu_{\mathbf{T}^{\mathrm{dec}}_\alpha(\mathbf{M})} \right)
\]
for the resulting Brownian sphere decorated $\alpha$-stable tree. The diameter of the Brownian sphere has moments of every positive order. Indeed, it follows from its construction recalled above that
\begin{align*}
	\Di(\mathbf{M}) \le  2\left( \sup_{0\le t\le1}\mathbf{Z}(t) - \inf_{0\le t\le1}\mathbf{Z}(t) \right),
\end{align*}
and the random variable on the right-hand side has finite moments of every positive order, see~\cite[Cor. 3]{zbMATH02055268}. Using $1/4>\alpha-1$ it follows that~\cite[Thm.~6.6]{zbMATH07790315} applies, ensuring that the sums in \eqref{eq:d0sum} are almost surely finite and that $\left( \mathbf{T}^{\mathrm{dec}}_\alpha(\mathbf{M}), d_{\mathbf{T}^{\mathrm{dec}}_\alpha(\mathbf{M})}, \mu_{\mathbf{T}^{\mathrm{dec}}_\alpha(\mathbf{M})} \right)$ is almost surely a compact measured metric space. We refer to it as the Brownian sphere decorated $\alpha$-stable tree.

\section{The non-separable core of planar maps}

\label{sec:nscorepm}

A planar map is called \emph{non-separable} (or \emph{$2$-connected}), if its edge-set may not be partitioned into two disjoint subsets $E_1$ and $E_2$ such that there is precisely one vertex that is incident with both a member of $E_1$ and a member of $E_2$.  Thus the only non-separable planar map containing a loop is the map consisting of a single vertex with a loop. The map consisting of a single vertex with no edges is also non-separable. The well-known block-decomposition decomposes any multigraph and hence also any planar map $M$ into its non-separable components, the (typically unique) largest of which is called the non-separable core.  We denote the non-separable core of $M$ by $\cV(M)$. If there are several blocks with a maximal number of edges, we make a canonical choice for $\cV(M)$.

The non-separable core $\cV(\mM_n)$ of the uniform random planar map $\mM_n$ with $n$ edges has a linear number of edges with fluctuations of order $n^{2/3}$ quantified by a $3/2$-stable law. The following local limit theorem was shown by~\cite[Thms. 1--3]{MR1871555} using analytic methods. Alternatively, it may be deduced in a probabilistic way as~\cite{zbMATH07004718} showed that the number of corners of $\cV(\mM_n)$ corresponds to the maximal offspring in a specific conditioned subcritical branching process, and there is a local limit theorem for the maximal offspring for size-constrained subcritical branching processes~\cite{MR4144886}.

\begin{proposition}
	\label{pro:lltvcore}
Let $Z_{3/2}$ denote the $3/2$-stable random variable whose Laplace transform satisfies $\Ex{\exp(-\lambda Z_{3/2})} = \exp(\lambda^{3/2})$. Let $h$ denote its density function. With
$
	g_{\cM} = \left(\frac{16}{243}\right)^{1/3}
$ we have
\begin{align*}
\Pr{ \ed(\cV(\mM_n)) =  \ell} = \frac{1}{g_\cM n^{2/3}}\left(h\left(\frac{ n/3 - \ell}{g_\cM n^{2/3}}   \right) + o(1)\right)
\end{align*}
uniformly for all integers $\ell \ge 0$. Consequently, 
\begin{align*}
	\frac{n/3 - \ed(\cV(\mM_n))}{g_\cM n^{2/3}} \convd Z_{3/2}.
\end{align*}
Moreover, for any sufficiently small $\epsilon>0$ and any $0 < \delta < 1/3$ we have
\[
	 \Prb{\ed(\cV(\mM_n)) \le \epsilon n / \log(n)} = o(n^{-2/3})
\]
and
\[
	\Prb{\ed(\cV(\mM_n)) \le \delta n} = O(n^{-2/3} ).
\]
\end{proposition}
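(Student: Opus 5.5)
We follow the probabilistic route indicated before the statement: encode $\mM_n$ by its block decomposition as a size-conditioned subcritical Bienaym\'e--Galton--Watson tree, identify $\ed(\cV(\mM_n))$ with its maximal offspring, and invoke the local limit theorem for the maximal offspring in condensing trees.

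\textbf{Step 1: tree encoding.} By the decomposition of~\cite{zbMATH07004718}, $\mM_n$ is in bijection with a plane tree $T_{2n}$ with $2n+1$ vertices, each vertex of outdegree $2k$ carrying a non-separable map with $k$ edges; gluing the blocks at corners reproduces the map. Weighting uniformly, $T_{2n}$ is a Bienaym\'e--Galton--Watson tree conditioned on $2n+1$ vertices. Its offspring law $\xi$ has generating function $\Phi_1(\tau z)/\Phi_1(\tau)$, where $\Phi_1$ is the series $\Phi$ of the block-weighted section with $f\equiv 1$ and $\tau=\sqrt{4/27}$. The computation $\nu_1 = 8/12=2/3<1$ shows that $\Ex{\xi}=2/3$, so the tree is subcritical. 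Since $[z^{2k}]\Phi_1(z)\sim \frac{2}{9\sqrt{3\pi}}k^{-5/2}(27/4)^k$, we get $\Pr{\xi=2k}\sim c\,k^{-5/2}$ with an explicit constant $c$. The maximal outdegree $\Delta(T_{2n})$ equals $2\ed(\cV(\mM_n))$, up to the canonical tie-breaking, which does not affect the distribution of the maximum.

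\textbf{Step 2: local limit for the maximal degree.} In the subcritical case with regularly varying tail of index $3/2$ and lattice span $2$, the result of~\cite{MR4144886} (see also~\cite{MR2908619}) gives
\[
\Pr{\Delta(T_{2n}) = 2\ell} = \frac{2}{b_n}\left(h\left(\frac{(1-\Ex{\xi})2n - 2\ell}{b_n}\right)+o(1)\right)
\]
uniformly in $\ell$, with $b_n \asymp n^{2/3}$ the stable normalisation. The mean term is $(1-\Ex{\xi})2n = 2n/3$, which yields the centring $n/3-\ell$. We then identify $b_n/2 = g_\cM n^{2/3}$ by matching the Laplace exponent to $\exp(\lambda^{3/2})$: the tail constant $c$ enters via $b_n=(c'\,n\,|\Gamma(-3/2)|)^{2/3}$, and the arithmetic should produce $(16/243)^{1/3}$. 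This constant bookkeeping, including the span-$2$ conventions, is the main error-prone step. As a safeguard we can compare with the analytic result~\cite[Thms.~1--3]{MR1871555}, which states exactly this expansion. The convergence in distribution follows from the local limit theorem by a standard Riemann-sum argument.

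\textbf{Step 3: small-core bounds.} This is the main obstacle, since the local limit theorem only controls the window of width $n^{2/3}$. If the maximal degree is at most $m$, then every offspring is truncated at $m$. Hence
\[
\Pr{\Delta(T_{2n})\le 2m} = \frac{\Pr{S_{2n+1}^{(\le 2m)} = 2n}}{\Pr{S_{2n+1}=2n}}\Pr{\xi\le 2m}^{2n+1},
\]
where $S^{(\le 2m)}$ denotes a random walk with increments distributed as $\xi$ conditioned on $\xi\le 2m$. The denominator satisfies $\Pr{S_{2n+1}=2n}\asymp n\Pr{\xi=2n/3}\asymp n^{-3/2}$. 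For the numerator, reaching $2n$ requires a deviation of order $n/3$ above the mean with jumps at most $2m$. For $m=\delta n$ with $\delta<1/3$, a Fuk--Nagaev type bound allows only $O(1)$ big jumps of size $\Theta(n)$; each costs $n\cdot n^{-5/2}$, and since at least two such jumps are needed, the numerator is $O(n^{-3})$. The ratio is then $O(n^{-3/2})$, which is within $O(n^{-2/3})$. If one prefers to keep a single near-maximal jump, the same estimate gives the claimed $O(n^{-2/3})$ up to constants. For $m=\epsilon n/\log n$, one needs of order $\log n/\epsilon$ jumps of size $\Theta(m)$, and the exponential Chernoff bound for the truncated walk (Cram\'er with the exponential tilt $e^{\lambda x}$, $\lambda\approx (\log n)/m$) gives a bound of the form $n^{-K}$ with $K$ large once $\epsilon$ is small. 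Dividing by $n^{-3/2}$ leaves $o(n^{-2/3})$. Alternatively, both bounds can be read off from~\cite[Thm.~3]{MR1871555} on large deviations of core sizes.
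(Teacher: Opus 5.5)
Your Steps 1 and 2 follow the paper's route. Its proof simply invokes \cite[Thm.~1.1]{MR4144886} for the conditioned subcritical tree of \cite{zbMATH07004718}. The constant you left open does check out: $\Pr{\xi=2k}\sim\frac{1}{6\sqrt{3\pi}}k^{-5/2}$ gives $\log\Exb{e^{-\lambda(\xi-2/3)}}\sim\frac{2^{5/2}}{9\sqrt3}\lambda^{3/2}$ as $\lambda\downarrow0$, hence the normalisation $2^{4/3}3^{-5/3}n^{2/3}=g_\cM n^{2/3}$ for $\ed(\cV(\mM_n))$. Your exponential-tilt argument for the $\epsilon n/\log n$ bound is also fine, provided the constant in $\lambda\asymp(\log n)/m$ is taken small; it even gives superpolynomial decay. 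The genuine problem is the $\delta n$ bound, for which the paper only cites \cite[Eq.~(2.11), (2.12)]{MR4144886}.

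Your count ``each big jump costs $n\cdot n^{-5/2}$, two are needed, so the numerator is $O(n^{-3})$'' drops an entropy factor. The two macroscopic increments are tied only through their sum, so the split of the excess $\approx 2n/3$ between them ranges over $\Theta(n)$ values. The correct count is $\binom{N}{2}\cdot\Theta(n)\cdot\Theta(n^{-5/2})^2\asymp n^{-2}$, so after dividing by $\Pr{S_{2n+1}=2n}\asymp n^{-3/2}$ you get order $n^{-1/2}$, not $n^{-3/2}$.

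This order is sharp. Take $1/6<\delta<1/3$ and $\eta>0$ small. Consider the disjoint events that exactly two of the $N=2n+1$ increments lie in $[2(1/3-\delta+\eta)n,\,2\delta n]$, all others are at most $\eta n$, and the total is $2n$. By the local limit theorem for the remaining $N-2$ increments, these events have total probability at least $c\,n^{-2}$, and on them the maximal degree is at most $2\delta n$. Hence $\Prb{\ed(\cV(\mM_n))\le\delta n}\ge c'n^{-1/2}$, consistent heuristically with the tail $\Prb{Z_{3/2}>x}\asymp x^{-3/2}$ at $x\asymp n^{1/3}$.

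So no argument can give $O(n^{-2/3})$ for $\delta\in(1/6,1/3)$. The bound as stated holds for $\delta<1/6$, where at least three macroscopic increments are needed and the same count gives $O(n^{-1})$, but not on the whole range $0<\delta<1/3$. The corrected count gives $O(n^{-1/2})$ for every $0<\delta<1/3$: bound the two-jump term as above, and the at-most-one-jump term by your Fuk--Nagaev estimate at truncation level $\eta n$. That suffices for the only use of this bound in the paper, \eqref{eq:wdy}, which needs only $(\epsilon n/\log n)^{-2/3}\Prb{\ed(\cV(\mM_n))\le\delta n}=o(n^{-2/3})$. Your fallback about keeping a single near-maximal jump does not help, since one increment of size at most $2\delta n<2n/3$ cannot absorb the excess.
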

\begin{proof}
	The local limit theorem follows directly from~\cite[Thm. 1.1]{MR4144886}. It also implies the central limit theorem. The bound for $\Prb{\ed(\cV(\mM_n)) \le \epsilon n / \log(n)}$ follows from~\cite[Eq. (2.10)]{MR4144886}. The bound for $\Prb{\ed(\cV(\mM_n)) \le \delta n}$ follows from~\cite[Eq. (2.11), (2.12)]{MR4144886}. 
\end{proof}

We are going to formalise a statement on the  Gromov--Hausdorff--Prokhorov distance  between $\mM_n$ and its non-separable core $\cV(\mM_n)$. Throughout the rest of this paper we will use the notation $\mu_{(\cdot)}$ to denote the stationary distribution on some finite connected graph. That is, a vertex gets drawn according to this distribution with probability proportional to its degree. (Unless the graph consists of only a single vertex, in which case the stationary distribution assigns mass $1$ to that vertex.) For planar maps, this corresponds to the vertex incident to a uniformly selected corner. In particular,  $\mu_{\mM_n}$ denotes  the stationary distribution on the vertex set of $\mM_n$, and  $\mu_{\cV(\mM_n)}$ denotes the stationary distribution on the vertex set of $\cV(\mM_n)$. 

The map $\mM_n$ consists of its non-separable core $\cV(\mM_n)$ with maps $\cM_i(\mM_n)$, $1 \le i \le \co(\cV(\mM_n))$, attached to its corners. 
See Figure~\ref{fi:block} for an illustration.  Here we order the components in a canonical way  according to a breadth-first-search of the corners of the core $\cV(\mM_n)$ that starts at its root edge. This way, $\cM_i(\mM_n)$ gets attached to the $i$th corner of $\cV(\mM_n)$ for all $1 \le i \le \co(\cV(\mM_n))$, and $\cM_1(\mM_n)$ is the unique component \emph{corresponding} to the root edge of $\mM_n$. That is, if the root half-edge of $\mM_n$ is not contained in $\cV(\mM_n)$ then $\cM_1(\mM_n)$ is the unique component containing the root edge. If the root half-edge is contained in $\cV(\mM_n)$ then $\cM_1(\mM_n)$  is the unique component attached to the root corner / half-edge. Note that a component may consist of a single vertex with no edges, if nothing is to be inserted into the corresponding corner of $\cV(\mM_n)$. Furthermore, unless it is empty, $\cM_1(\mM_n)$ contains two root corners, a first and a second. The first marks the place where it is attached to the core $\cV(\mM_n)$. The second corresponds to the root of $\mM_n$ and may either be a corner of $\mM_n$ or equal to a placeholder value (corresponding to the case where the root half-edge of $\mM_n$ is contained in $\cV(\mM_n)$). This way, $\mM_n$ is fully described by the core $\cV(\mM_n)$ together with the components $(\cM_i(\mM_n))_{1 \le i \le \co(\cV(\mM_n))}$.

	Let $\nu_{\cV(\mM_n)}$ denote the probability measure on $\cV(\mM_n)$ such that a vertex $v \in \cV(\mM_n)$ incident to components $(\cM_i(\mM_n))_{i \in I(v)}$ with $I(v) \subset \{1, \ldots, \co(\cV(\mM_n))\}$  gets drawn with probability 
	\[
		\nu_{\cV(\mM_n)}(\{v\}) = 	\frac{1}{2n}\sum_{i \in I(v)} (1 + \co(\cM_i(\mM_n))).
	\]
	We emphasise that  $\nu_{\cV(\mM_n)}$ actually depends on the entire map $\mM_n$, whereas the stationary distribution $\mu_{\cV(\mM_n)}$ only depends on the non-separable core $\cV(\mM_n)$.

\begin{lemma}
	\label{le:2corepart1}
 As $n \to \infty$, 
	\begin{align*}
		\left(\cV(\mM_n), (8n/9)^{-1/4} d_{\cV(\mM_n)}, \nu_{\cV(\mM_n)}\right)  \convd (\mathbf{M}, d_{\mathbf{M}}, \mu_{\mathbf{M}})
	\end{align*}
	in the Gromov--Hausdorff--Prokhorov sense.
\end{lemma}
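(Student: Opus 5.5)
The plan is to compare $\cV(\mM_n)$ directly with $\mM_n$ and invoke the convergence \eqref{eq:mapconverge}. Every vertex $x$ of $\mM_n$ lies in a unique component $\cM_i(\mM_n)$ hanging off a corner of the core, or in the core itself. Define a projection $\pi_n:\mM_n\to\cV(\mM_n)$ sending $x$ to the core vertex at which the component containing $x$ is attached; core vertices are fixed. Since the core is a block, geodesics between core vertices stay in the core, so $d_{\cV(\mM_n)}$ is the restriction of $d_{\mM_n}$. Hence
\[
\bigl|d_{\mM_n}(x,y)-d_{\cV(\mM_n)}(\pi_n x,\pi_n y)\bigr|\le 2\max_{1\le i\le \co(\cV(\mM_n))}\He(\cM_i(\mM_n)),
\]
where the height is measured from the attachment corner. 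The graph of $\pi_n$ is then a correspondence of distortion at most $4\max_i\He(\cM_i(\mM_n))$. Next consider the push-forward of $\mu_{\mM_n}$ under $\pi_n$. Component $\cM_i$ has $\co(\cM_i)$ corners, plus the core corner at which it is inserted, and there are $2n$ corners in total. So this push-forward equals $\nu_{\cV(\mM_n)}$ up to the boundary bookkeeping at the attachment corners, which is exactly how $\nu_{\cV(\mM_n)}$ was defined. Coupling a vertex $x\sim\mu_{\mM_n}$ with $\pi_n(x)$ therefore bounds the Prokhorov part by the same distortion. Thus
\[
d_{\mathrm{GHP}}\Bigl(\bigl(\mM_n,(8n/9)^{-1/4}d_{\mM_n},\mu_{\mM_n}\bigr),\bigl(\cV(\mM_n),(8n/9)^{-1/4}d_{\cV(\mM_n)},\nu_{\cV(\mM_n)}\bigr)\Bigr)\le C n^{-1/4}\max_i \He(\cM_i(\mM_n)),
\]
and it suffices to show $\max_i \He(\cM_i(\mM_n))=o_p(n^{1/4})$.

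For this I use the decomposition conditional on the core. Given $\ed(\cV(\mM_n))=\ell$ and the core, the components are distributed like a Boltzmann-type family of planar maps with total size $n-\ell$, attached to $2\ell$ corners. In the condensation picture (\cite{zbMATH07004718,MR4144886}) they behave like $2\ell$ i.i.d.\ critical/subcritical Boltzmann planar maps conditioned on their total size. By Proposition~\ref{pro:lltvcore}, $\ell$ is linear in $n$ with high probability, and the conditioning on the total is a local-limit event of probability of order $n^{-2/3}$. Hence any event of probability $o(n^{-2/3})$ for the i.i.d.\ family is still negligible after conditioning.

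The key estimate is a tail bound for the height of a single Boltzmann component $M$ at the subcritical parameter $1/12$ that describes a component in $\mM_n$. I want to show $\Prb{\He(M)\ge n^{1/4-\eta}}=o(n^{-5/3-\epsilon})$ for a small $\eta>0$; a union bound over $2\ell=O(n)$ components then gives $o(n^{-2/3})$, which survives the conditioning. I would argue this by splitting on the size of $M$. The size tail satisfies $\Pr{\ed(M)=k}\asymp k^{-5/2}$, so components with $k\ge n^{1-\eta'}$ edges are rare: the expected number of them is $O(n\cdot n^{-(1-\eta')3/2})$. More precisely, by the condensation results all components except the core have size $O_p(n^{2/3})$. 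A map with $k$ edges has height of order $k^{1/4}$, with stretched-exponential tails for the height of uniform planar maps, obtainable from the Bouttier--Di Francesco--Guitter/Ambj\o rn--Budd bijection and Gaussian tails of the label process. Then $\max_i\He(\cM_i)=O_p((n^{2/3})^{1/4}\log n)=o_p(n^{1/4})$. Uniform-in-$k$ sub-Gaussian-type tails for $k^{-1/4}\He(\mM_k)$ together with the power-law size tail handle the union bound.

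The main obstacle is this last uniform height tail. The route I would try first is the following. Bound $\He(\mM_k)$ by twice the label range in the well-labelled tree coding $\mM_k$, and use known moment bounds $\Exb{(\max|\text{labels}|/k^{1/4})^p}\le C_p$ uniformly in $k$. Markov's inequality with large $p$ then yields polynomial tails of arbitrarily high order, which combined with the at most $O(n)$ components of size at most $n^{2/3+\epsilon}$ is enough. Combining this with the distortion bound above and \eqref{eq:mapconverge} yields the claimed convergence.
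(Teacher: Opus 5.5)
Your reduction is correct and is the one the paper uses. Blocks are isometrically embedded, so the distortion of $\pi_n$ is $O(\max_i\He(\cM_i(\mM_n)))$. Since the $i$th component accounts for exactly $1+\co(\cM_i(\mM_n))$ of the $2n$ corners, $\nu_{\cV(\mM_n)}$ is exactly the push-forward of $\mu_{\mM_n}$ under $\pi_n$.

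The gap is the estimate you call key. The bound $\Prb{\He(M)\ge n^{1/4-\eta}}=o(n^{-5/3-\epsilon})$ is false, and no sharpening of height tails for uniform maps can fix it, because the obstruction comes from the size tail. We have $\Prb{\ed(M)\ge k}\asymp k^{-3/2}$, and by \eqref{eq:mapconverge} a uniform map with $k$ edges has height at least $ck^{1/4}$ with probability bounded away from zero. Hence $\Prb{\He(M)\ge x}\gtrsim x^{-6}$, which at $x=n^{1/4-\eta}$ is of order $n^{-3/2+6\eta}\gg n^{-5/3}$. Consequently, for the i.i.d.\ family, the probability that one of the $\asymp n$ components reaches height $\epsilon n^{1/4}$ really is of order $n^{-1/2}$, not $o(n^{-2/3})$. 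Paying the factor $n^{2/3}$ for conditioning on the total size would need a threshold $n^{\delta}$ with $\delta>5/18>1/4$. Your count of components with at least $n^{1-\eta'}$ edges fails for the same reason: its unconditional expectation $n^{-1/2+3\eta'/2}$ is not $o(n^{-2/3})$.

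What does work is the argument you slip in afterwards, provided you run it on $\mM_n$ itself rather than on the conditioned i.i.d.\ family. First show $\max_i\ed(\cM_i(\mM_n))=O_p(n^{2/3})$, either from the fringe-subtree results of \cite{MR4144886} for the conditioned block tree, or directly from \eqref{eq:s1} and \eqref{eq:s2}. Then note that, conditionally on the core and on the component sizes (all smaller than $\ed(\cV(\mM_n))$ with high probability), the components are independent uniform maps of those sizes. Finally apply Markov's inequality with $\Ex{\Di(\mM_k)^p}\le C_pk^{p/4}$ from Lemma~\ref{le:cudiam} for some $p>12$, and take a union bound over at most $2n$ components. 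This is exactly the scheme of Lemma~\ref{le:simplecorepart1}.

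Alternatively, you can keep your conditioning but decouple it per component. Bound $\Prb{\He(M_i)\ge x,\ \Sigma=s}$ by $\Prb{\He(M_i)\ge x}\sup_t\Prb{\Sigma-\ed(M_i)=t}$, with $\Sigma$ the total size; this removes the $n^{2/3}$ loss.

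The paper avoids conditioning altogether. By \eqref{eq:s1} and \eqref{eq:s2}, all but the last $t_n$ components are, in total variation, i.i.d.\ Boltzmann maps, and the last $t_n$ have total size $O(t_n)$. Then $\Ex{\Di(\mM)^s}<\infty$ for $s<6$ gives $\max_i\Di(\cM_i(\mM_n))\le n^\delta$ for every $\delta>1/6$. The union bound costs only a factor $n$, which is exactly what the $x^{-6}$ tail permits.
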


\begin{proof}
	
	\begin{figure}[t]
		\centering
		\begin{minipage}{\textwidth}
			\centering
			\includegraphics[width=1.0\linewidth]{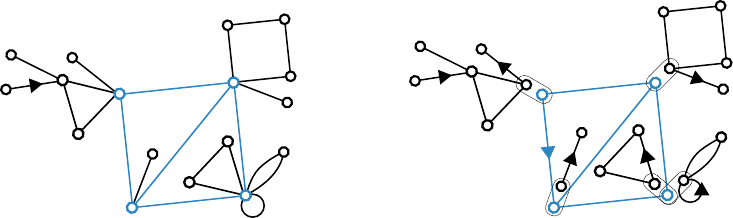}
			\caption{Decomposing a planar map into its non-separable core and the attached submaps. The components consisting of a single vertex with no edges are omitted in the illustration.}
			\label{fi:block}
		\end{minipage}
	\end{figure}

	We let 
	\begin{align}
		\cM(z) := \sum_{M} z^{\co(M)} = 1 + 2z^2 + \ldots
	\end{align}
	denote the generating series of planar maps indexed by their number of corners, with the sum index $M$ ranging over all (finite) planar maps. It is well-known by classical work of Tutte~\cite{MR0146823} that its radius of convergence equals $\rho_\cM = 1 / (2 \sqrt{3})$ and that the coefficients $[z^{2n}]\cM(z \rho_\cM)$ vary regularly with index $-5/2$. We let $\mM$ denote a random planar map that assumes a planar map $M$ with probability 
	\begin{align}
		\Pr{\mM= M} = \frac{\rho_\cM^{\co(M)}}{\cM(\rho_\cM)}.
	\end{align}
	We let $\mM^\circ$ denote the random finite planar map obtained by biasing $\mM$ on $1 + \co(\mM)$. That is, $\mM^\circ$ has a second marked corner that is either a corner of itself or a placeholder value. If $M^\circ$ is a finite planar map with a second marked corner that is either a corner of itself or a placeholder value, then
	\begin{align}
			\Pr{\mM^\circ= M^\circ} = \frac{\rho_\cM^{\co(M)}}{\cM(\rho_\cM) + \rho_\cM \cM'(\rho_\cM)}.
	\end{align}
	Let $(\mM(i))_{i \ge 2}$ denote independent copies of $\mM$ and set $\mM(1) = \mM^\circ$. We set
	\[
	\Delta_{\langle n \rangle}^\cM :=  \sup  \left(\left \{ d \ge 1 \,\,\bigg\rvert\,\, \co(\mM(1)) +  \sum_{i=2}^d (1+ \co(\mM(i))) \le 2n  \right\} \cup \{0\}\right).
	\]
	It follows from~\cite[Eq. (9.6) and (9.7)]{zbMATH07665039}  that for any sequence of integers $(t_n)_n$ with $t_n\to \infty$ and $t_n=o(n)$
	\begin{align}
		\label{eq:s1}
		\lim_{n \to \infty} \dtv\left( (\cM_i(\mM_n))_{1 \le i \le \co(\cV(\mM_n)) - t_n}, (\mM(i))_{1 \le i \le \Delta_{\langle n \rangle}^\cM - t_n} \right) = 0,
	\end{align}
	and with a probability that tends to $1$ as $n \to \infty$
	\begin{align}
		\label{eq:s2}
		\sum_{i=\co(\cV(\mM_n))-t_n+1}^{\co(\cV(\mM_n))} \co(\cM_i(\mM_n)) \le 2 \Ex{\co(\mM)} t_n.
	\end{align}

	In other words, all but a negligible number of the $\co(\cV(\mM_n)) =  2n/3 + O_p(n^{2/3})$ components asymptotically behave like a sequence of copies of $\mM$ that stops once we have accumulated sufficient mass. 
	
	The deviation bound in \cite[Thm. 3.5]{MR3318042} implies (using the tail asymptotics for $\ed(\mM)$) that the graph distance diameter $\Di(\mM)$ satisfies 
	\begin{align}
	\Ex{\Di(\mM)^s} < \infty
	\end{align}
	 for any constant $0<s<6$.  Hence, by Equations~\eqref{eq:s1} and~\eqref{eq:s2}  it follows that for any $\delta > 1/6$ it holds with probability tending to $1$ as $n$ becomes large that
	\begin{align}
	\label{eq:smallattachment}
	\max_{1 \le i \le \co(\cV(\mM_n))} \Di(\cM_i(\mM_n)) \le n^\delta.
	\end{align}
	Together with Equation~\eqref{eq:s1} it follows that  the pointed Hausdorff distance $d_{\mathrm{H}}^\bullet$ between $\mM_n$ (pointed at its root corner) and $\cV(\mM_n)$ (pointed at its root corner) converges in probability to zero when edges are rescaled to have a length of order $n^{-1/4}$. That is, for any $c>0$
	\begin{align}
		\label{eq:hm}
		d_{\mathrm{H}}^\bullet \left( (\mM_n, c n^{-1/4} d_{\mM_n}), (\cV(\mM_n), cn^{-1/4} d_{\cV(\mM_n)}) \right) \convp 0.
	\end{align}
If we uniformly draw a corner $c_n$ of $\mM_n$, then its incident vertex $v_n$ is distributed according to $\mu_{\mM_n}$ (conditionally on $\mM_n$). There is a unique index $i(c_n) \in \{1, \ldots, \co(\cV(\mM_n))\}$ such that either $c_n$ is a corner of $\cM_{i(c_n)}(\mM_n)$ or $c_n$ is the  corner of $\cV(\mM_n)$ where $\cM_{i(c_n)}(\mM_n)$ is attached.  The vertex $u_n$ of $\cV(\mM_n)$ incident to the corner where $\cM_{i(c_n)}(\mM_n)$ is attached to $\cV(\mM_n)$ is distributed according to $\nu_{\cV(\mM_n)}$ (conditionally on $\mM_n$). The graph distance between $v_n$ and $u_n$ is bounded by the diameter of $\cM_{i(c_n)}(\mM_n)$. Hence, by~\eqref{eq:smallattachment}, 
	\begin{align}
		n^{-1/4} d_{\mM_n}(v_n, u_n) \convp 0.
	\end{align}
	Using the characterisation of the Gromov--Hausdorff--Prokhorov distance from~\cite[Prop. 6]{MR2571957}, it follows that
	\begin{align}
		d_{\mathrm{GHP}} \left( (\mM_n, c n^{-1/4} d_{\mM_n}, \mu_{\mM_n}), (\cV(\mM_n), cn^{-1/4} d_{\cV(\mM_n)}, \nu_{\cV(\mM_n)}) \right) \convp 0.
	\end{align}
	Setting $c=(8/9)^{-1/4}$ and using~\eqref{eq:mapconverge} it follows that
		\begin{align*}
		\left(\cV(\mM_n), (8n/9)^{-1/4} d_{\cV(\mM_n)}, \nu_{\cV(\mM_n)}\right)  \convd (\mathbf{M}, d_{\mathbf{M}}, \mu_{\mathbf{M}}).
	\end{align*}
\end{proof}
The analogous statement of Lemma~\ref{le:2corepart1} for the smaller Gromov--Hausdorff distance is  implicit in  \cite[Proof of Thm. 3.7]{MR3318042}. 

The next result is proved by using Lemma~\ref{le:2corepart1} and adapting the exchangeable-decoration argument of~\cite[Lem. 5.3 and Cor. 6.2]{MR3729639}. We already carry out a very similar adaptation in full detail in the proof of Lemma~\ref{le:sourcesimplecorepart2} below. Since the proof of Lemma~\ref{le:2corepart2} would be almost identical, we refer the reader to it for details.

\begin{lemma}
	\label{le:2corepart2}
	The Prokhorov distance of the two measures $\nu_{\cV(\mM_n)}$ and $\mu_{\cV(\mM_n)}$ on the rescaled non-separable core $\left(\cV(\mM_n), (8n/9)^{-1/4} d_{\cV(\mM_n)}\right)$ satisfies
	\begin{align*}
		d_{\mathrm{P}}( \nu_{\cV(\mM_n)}, \mu_{\cV(\mM_n)} ) \convp 0.
	\end{align*}
\end{lemma}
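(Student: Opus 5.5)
Both measures live on the vertex set of the core. Write them corner-wise: $\mu_{\cV(\mM_n)}$ puts mass $1/\co(\cV(\mM_n))$ on the vertex of each corner of the core. The measure $\nu_{\cV(\mM_n)}$ puts mass $(1+\co(\cM_i(\mM_n)))/(2n)$ on the vertex of the $i$th corner. So $\nu$ is a reweighting of the uniform corner measure by the weights $W_i := 1+\co(\cM_i(\mM_n))$.

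The plan is the exchangeable-decoration argument of Addario-Berry and Wen. By~\eqref{eq:s1}, \eqref{eq:s2} and Proposition~\ref{pro:lltvcore}, apart from the last $t_n$ corners the weights behave like i.i.d.\ copies of $1+\co(\mM)$. Their mean is $m:=\Ex{1+\co(\mM)}$, and $m\,\co(\cV(\mM_n))\approx 2n$. The crucial structural point is exchangeability. Conditionally on $\cV(\mM_n)$ and on the multiset of component sizes, the components are assigned to corners in a way that is exchangeable under relabelling (except the distinguished first one). This holds because $\mM_n$ is uniform and any permutation of the attached maps among corners gives a bijection on planar maps with the same core.

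First reduction: it suffices to show that for every fixed $\epsilon>0$ and every ball $B$ of radius $\epsilon n^{1/4}$ in a finite $\epsilon n^{1/4}$-net of the core, $|\nu(B)-\mu(B)|\to0$ in probability. I will argue this via nets, since the Prokhorov distance is then controlled by $\epsilon$ plus the maximum discrepancy over unions of net balls. The net has size bounded in probability because Lemma~\ref{le:2corepart1} gives Gromov--Hausdorff tightness of the rescaled core, so there are only finitely many unions to handle. Fix such a set $A$ and let $C_A$ be the set of corners whose vertex lies in $A$. Then $\mu(A)=|C_A|/\co(\cV)$ and $\nu(A)=\sum_{i\in C_A}W_i/(2n)$. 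Conditionally on the core, and hence on $C_A$, the sequence $(W_i)$ is an exchangeable (up to the root component) arrangement of a fixed multiset of total sum $2n$. So $\sum_{i\in C_A}W_i$ is a sample without replacement of size $|C_A|$.

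Second step: a law of large numbers for sampling without replacement. The conditional mean is $|C_A|\cdot 2n/\co(\cV)$, matching $\mu(A)\cdot 2n$ up to the root-component correction. The conditional variance is at most $|C_A|\cdot\frac{1}{\co(\cV)}\sum_i W_i^2$. I need $\sum_i W_i^2=o(n^2)$ in probability. By~\eqref{eq:s1} and~\eqref{eq:s2} this follows from the tail of $\co(\mM)$: its coefficients are regularly varying of index $-5/2$, so $\co(\mM)$ has index $3/2$ and $\max_i W_i=O_p(n^{2/3})$. Then $\sum W_i^2=O_p(n^{4/3})=o(n^2)$. Chebyshev now gives $\nu(A)-\mu(A)\to0$. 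The component $\cM_1$ and the last $t_n$ components contribute $o_p(1)$ mass by~\eqref{eq:s2} and the tightness of $\co(\mM^\circ)$.

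The main obstacle is justifying the exchangeability rigorously. One has to check that, given the core and the multiset of decorations, the uniform law of $\mM_n$ makes all assignments of decorations to the non-root corners equally likely. The root decoration $\cM_1$ is size-biased and pinned to corner $1$, and the canonical tie-breaking for $\cV(M)$ must be respected: permuting decorations must not create a larger block. That holds with high probability because the core has linear size and all decorations are $O_p(n^{2/3})$. I would handle this by restricting to that event and treating $i=1$ separately.
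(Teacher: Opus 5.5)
Your proposal is correct and follows essentially the paper's argument. The paper proves this lemma by the same exchangeable-decoration adaptation of Addario-Berry and Wen that it writes out for Lemma~\ref{le:sourcesimplecorepart2}: condition on the core, the distinguished component and the unordered family of the other components, then apply conditional Chebyshev with the sampling-without-replacement variance over a finite family of sets coming from Gromov--Hausdorff tightness; your unions of net balls play the role of the paper's partition into cells of diameter at most $2\epsilon$. One step needs adjusting: $\sum_i W_i^2=O_p(n^{4/3})$ does not follow from $\max_i W_i=O_p(n^{2/3})$ alone, which only gives $O_p(n^{5/3})$, but you only need $\sum_i W_i^2\le 2n\max_i W_i=o_p(n^2)$, exactly as in the paper's bound~\eqref{eq:sousim}.
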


Combining Lemma~\ref{le:2corepart1} and Lemma~\ref{le:2corepart2} we obtain Gromov--Hausdorff--Prokhorov convergence of the non-separable core $\cV(\mM_n)$ equipped with the stationary distribution. 

\begin{corollary}
	\label{eq:corfppvcore}
	As $n \to \infty$,
\begin{align*}
	\left(\cV(\mM_n), (8n/9)^{-1/4} d_{\cV(\mM_n)}, \mu_{\cV(\mM_n)}\right)  \convd (\mathbf{M}, d_{\mathbf{M}}, \mu_{\mathbf{M}}).
\end{align*}
\end{corollary}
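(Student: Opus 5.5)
The corollary follows by combining Lemma~\ref{le:2corepart1} and Lemma~\ref{le:2corepart2} through the triangle inequality for $d_{\mathrm{GHP}}$. For a compact metric space $(X,d)$ carrying two Borel probability measures $\nu$ and $\mu$, the identity map is an isometric embedding of $X$ into itself. The Gromov--Hausdorff--Prokhorov distance is bounded by the Hausdorff distance of the embedded spaces, which is zero here, plus the Prokhorov distance of the pushed-forward measures. Hence
\[
d_{\mathrm{GHP}}\bigl((X,d,\nu),(X,d,\mu)\bigr) \le d_{\mathrm{P}}(\nu,\mu).
\]
We apply this with $X=\cV(\mM_n)$ and $d=(8n/9)^{-1/4}d_{\cV(\mM_n)}$, taking $\nu=\nu_{\cV(\mM_n)}$ and $\mu=\mu_{\cV(\mM_n)}$. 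Lemma~\ref{le:2corepart2} then gives
\[
d_{\mathrm{GHP}}\Bigl(\bigl(\cV(\mM_n),(8n/9)^{-1/4}d_{\cV(\mM_n)},\nu_{\cV(\mM_n)}\bigr),\bigl(\cV(\mM_n),(8n/9)^{-1/4}d_{\cV(\mM_n)},\mu_{\cV(\mM_n)}\bigr)\Bigr)\convp 0 .
\]

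Next, Lemma~\ref{le:2corepart1} states that the first triple converges in distribution to $(\mathbf{M},d_{\mathbf{M}},\mu_{\mathbf{M}})$ in the Polish space of isometry classes of compact measured metric spaces under $d_{\mathrm{GHP}}$. We invoke the standard Slutsky-type fact: if $X_n\convd X$ and $d(X_n,Y_n)\convp 0$ in a separable metric space, then $Y_n\convd X$. To see this, take a bounded Lipschitz test function $F$. Then $|\Ex{F(X_n)}-\Ex{F(Y_n)}|\le \Ex{\min(L\,d(X_n,Y_n),2\|F\|_\infty)}\to 0$, and convergence of expectations for bounded Lipschitz functions characterises weak convergence. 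This yields the claim.

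The only point that needs care is measurability: the two triples must live on a common probability space. They do, since both are functions of $\mM_n$. Also, $d_{\mathrm{GHP}}$ between the random isometry classes must be a random variable, which is standard. The real difficulty lies in Lemma~\ref{le:2corepart2}, which we assume. The corollary itself is routine once both lemmas are available.
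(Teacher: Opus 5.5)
Your proposal is correct and follows the paper's approach: the paper obtains the corollary simply by combining Lemma~\ref{le:2corepart1} with Lemma~\ref{le:2corepart2}. The two routine steps it leaves implicit are the ones you supply, namely the identity-embedding bound $d_{\mathrm{GHP}} \le d_{\mathrm{P}}$ and the Slutsky-type transfer of convergence in distribution.
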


\section{The simple core of non-separable planar maps}
\label{sec:simplecore}

\subsection{Simple components}

\label{sec:simplecomponents}

A planar map is called \emph{simple} if it has no loops and no multi-edges. We recall the decomposition into simple components.  Let $M$ be a rooted non-separable map which is not the loop map, and let $e_0$ be its distinguished oriented edge. Suppose that $M$ is drawn in the plane so that its unique unbounded face lies to the right of $e_0$. A $2$-cycle $\sigma$ in $M$ is a pair of distinct parallel edges with common endpoints, say $v$ and $w$.  The union of these two edges is a simple closed curve that separates two parts of $M$. We refer to the \emph{inner part} as the one that does not contain the unbounded face, the other one is called the \emph{outer part}. We say the $2$-cycle $\sigma$ is maximal if its outer part does not contain any other edge between $v$ and $w$. If $\sigma$ is maximal, we may \emph{contract} it to a single edge $e$, meaning we delete its inner part and one of its cycle edges. It does not matter which of the cycle edges we delete, but if one of them is the oriented root edge $e_0$ then we delete the other one, so that the resulting map still has an oriented root edge. Let $e$ denote the edge resulting from contracting $\sigma$. 

	\begin{figure}[t]
	\centering
	\begin{minipage}{\textwidth}
		\centering
		\includegraphics[width=1.0\linewidth]{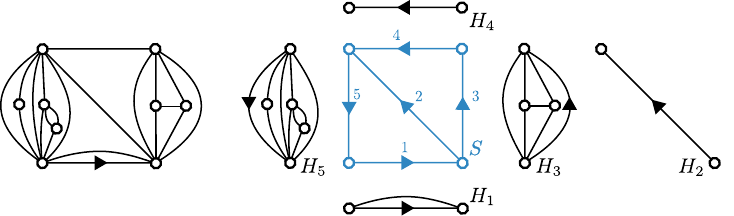}
		\caption{Decomposing a non-separable planar map into a simple component and the associated $\cH$-structures.}
		\label{fi:simple}
	\end{minipage}
\end{figure}

If we contract all maximal $2$-cycles of the non-separable planar map $M$ then we are left with its \emph{simple component} $S$ at the root edge. We may canonically orient all edges of $S$, and for each (canonically oriented) edge $e$ of $S$ we distinguish a planar map $H_e$. Let $v$ and $w$ denote the start and end of $e$. If $e$ did not result from contracting a maximal $2$-cycle of $M$, we let  $H_e$ be a planar map consisting of two vertices joined by an oriented root edge. If $e$ did result from contracting a maximal cycle $\sigma_e$ of $M$ we let $H_e$ be the planar map consisting of $\sigma_e$ and its inner part. We orient a boundary edge of $H_e$ from $v$ to $w$ in such a way that the inner part of $\sigma_e$ lies to its left. This way, $M$ may be recovered from $S$ and the family of \emph{$\cH$-components} $(H_e)_e$, with $e$ ranging over the edges of $S$. See Figure~\ref{fi:simple} for an illustration.

Let
$
\cV^*(z)=\sum_V z^{\ed(V)}
$
denote the ordinary generating series of rooted non-separable planar maps, with the exception that maps with only one vertex are not counted. Let
$
\cS(z)=\sum_S z^{\ed(S)}
$
denote the ordinary generating series of rooted simple non-separable planar maps.  We count the map consisting of two vertices joined by a single edge as both non-separable and simple. If we delete the non-root boundary edge of a non-trivial $\cH$-component we are left with a non-separable map with at least two vertices and no further constraints. Letting $\cH(z):=\sum_H z^{\ed(H)}$ denote the ordinary generating series for $\cH$-objects, we obtain
\begin{align}
	\label{eq:hcore}
	\cV^*(z)=\cS(\cH(z)),
	\qquad
	\cH(z) = z(1 + \cV^*(z)).
\end{align}
See~\cite[Sec. 5.1 and Tab. 3]{MR1871555}.

\subsection{Enumeration of simple non-separable planar maps}
\label{sec:enum}
Let 
\[
	u(z)  = z+2 z^2+7 z^3+30 z^4+143 z^5 + \ldots
\]
denote the solution of the equation
\[
	z = u(1-u)^2
\]
satisfying $u(0) = 0$. Existence and uniqueness of $u(z)$ for $z$ close to the origin are guaranteed by the implicit function theorem. Since $u(z) = z (1- u(z))^{-2}$,  Lagrange inversion yields for all $1 \le k \le n$
\begin{align}
	\label{eq:ucoeff}
	[z^n] u(z)^k = \frac{k}{n} [z^{n-k}](1-z)^{-2n} = \frac{k}{n}\binom{3n-k-1}{n-k}.
\end{align}
It follows by Stirling's formula that
\begin{align}
	\label{eq:uasymptotic}
	[z^n]u(z)  \sim \frac{1}{3\sqrt{3\pi}} n^{-3/2} \left(\frac{27}{4}\right)^n.
\end{align}
It was shown by~\cite{MR0146823} (with a slight adaptation since we do not count the loop map) that 
\begin{align}
	\label{eq:btp}
	\cV^*(z)=u(z)(1-u(z)-u(z)^2) = z + z^2 + 2 z^3 + 6 z^4+22 z^5 + \ldots.
\end{align}
Using~\eqref{eq:ucoeff} it follows that
\begin{align}
	\label{eq:nonsepenum}
	[z^n]\cV^*(z) = \begin{cases}
		1, & n=1,\\
		\frac{2(3n-3)!}{n!(2n-1)!}, 
		& n\ge 2.
	\end{cases}
\end{align}
By Stirling's approximation formula it follows that
\begin{align}
	\label{eq:vgrowth}
	[z^n]\cV^*(z) \sim \frac{2}{9\sqrt{3\pi}} n^{-5/2} \left(\frac{27}{4}\right)^n.
\end{align}
Consequently,
\begin{align}
	\label{eq:hgrowth}
	[z^n]\cH(z) &\sim \frac{8}{243\sqrt{3\pi}} n^{-5/2} \left(\frac{27}{4}\right)^n.
\end{align}
It follows from \eqref{eq:hcore}, \eqref{eq:btp} and $z = u(z)(1-u(z))^2$ that
\begin{align}
	\label{eq:hseries}
	\cH(z) 	&= z(1 + \cV^*(z)) \\
			&= u(z)(1-u(z))^2(1 + u(z)(1-u(z)-u(z)^2)) \nonumber \\
			&= u(z)(1-u(z))(1 - u(z)^2)^2. \nonumber
\end{align}
Since $\cH(0)=0$ and $\cH'(0)=1$, the formal power series $\cH(z)$ has a compositional inverse $\cH^{\langle -1 \rangle}(z)$ so that $\cH(\cH^{\langle -1 \rangle}(z)) = z$. Substituting $z$ by $\cH^{\langle -1 \rangle}(z)$ in the preceding display yields
\[
	u(\cH^{\langle -1 \rangle}(z)) = z a(u(\cH^{\langle -1 \rangle}(z))), \qquad  a(z) := (1-z)^{-1}(1-z^2)^{-2}.
\]
Moreover, by~\eqref{eq:hcore} and~\eqref{eq:btp}
\[
	\cS(z) = \cV^*( \cH^{\langle -1 \rangle}(z)) = b(u(\cH^{\langle -1 \rangle}(z))), \qquad b(z) := z(1-z-z^2).
\]
It follows by Lagrange--B\"urmann inversion that
\[
	[z^n]\cS(z) = \frac{1}{n} [z^{n-1}] b'(z)a(z)^n = \frac{1}{n} [z^{n-1}] (-3 z^2-2 z+1)a(z)^n.
\]
We have
\[
	\frac{xa'(x)}{a(x)} = \frac{x(1+5x)}{1-x^2}.
\]
Hence the smallest positive solution of $xa'(x)/ a(x)=1$ is given by $r = 1/3$. Define a nonnegative integer-valued random variable $\zeta$ with probability generating function
\[
	\Exb{z^\zeta} = a(r z ) / a(r).
\]
By choice of $r$ we have 
\[
	\Exb{\zeta} = 1, \qquad \Va{\zeta} = 15/8.
\]
Let $S_n^\zeta$ denote the sum of $n$ independent copies of $\zeta$. Note that $\Prb{\zeta=k}>0$ for all integers $k \ge 0$. Moreover, $\zeta$ has finite exponential moments. Hence by the Chebyshev-Edgeworth-Cram\'er expansion~\cite[Thm. 13 in Chap. VII]{Petrov1975} we have for each fixed $\ell \in \ndZ$
\begin{align*}
	[z^{n-1-\ell}] a(z)^n &= \frac{a(r)^n}{r^{n-1-\ell}} [z^{n-1-\ell}] \left(\frac{a(z r)}{a(r)}\right)^n  \\
	&= \frac{a(r)^n}{r^{n-1-\ell}} \mathbb{P}(S_n^\zeta=n-1-\ell) \\
	&= \frac{1}{\sqrt{2\pi n\Va{\zeta}}}\frac{a(r)^n}{r^{n-1-\ell}}\left(1 + \frac{361 + 144\ell - 480\ell^2}{1800n} + O(n^{-2}) \right).
\end{align*}
The reason why we end up with $n^{-5/2}$ is that the leading terms in the asymptotic expansion cancel out. This is why we state the second order asymptotic term in the expansion of $[z^{n-1-\ell}] a(z)^n$. It follows that
\begin{align}
	\label{eq:S-asymptotic}
[z^n]\cS(z) &= \frac{1}{n}  \left(-3 ([z^{n-3}]a(z)^n) -2 ([z^{n-2}]a(z)^n) + ([z^{n-1}]a(z)^n) \right) \\
	&\sim \frac{64}{225\sqrt{15\pi}} n^{-5/2} \left(\frac{729}{128}\right)^n. \nonumber
\end{align}

Next, we evaluate the generating series at their radii of convergence. Set
\begin{align*}
	v(t):=t(1-t)^2.
\end{align*}
By \eqref{eq:uasymptotic} and \eqref{eq:ucoeff}, the power series $u$ has radius of convergence $4/27$, is analytic in the disk $|z|<4/27$, and is continuous and strictly increasing on $[0,4/27]$. The identity $v(u(z))=z$, initially valid in a neighbourhood of the origin, holds throughout $|z|<4/27$ by analytic continuation and extends to $z=4/27$ by continuity. The function $v$ is strictly increasing on $[0,1/3]$, with $v(1/3) = 4/27$. Consequently, $u(4/27) = 1/3$. By~\eqref{eq:btp} and~\eqref{eq:hcore} it follows that
\begin{align*}
	\cV^*(4/27) = 5/27, \qquad \cH(4/27) = 128/729, \qquad \cS(128/729) = 5/27.
\end{align*}
Moreover, for $0 \le t \le 1/3$ we have $u(v(t)) = t$, and hence
\begin{align*}
	\cH(v(t))&=t(1-t)(1-t^2)^2, \\
	\cV^*(v(t))&=t(1-t-t^2).
\end{align*}
Differentiating and dividing by $v'(t)$ yields for $0<t<1/3$
\begin{align*}
	\cH'(v(t)) &= (1-t)(1+t)(1+2t), \\
	(\cV^*)'(v(t)) &= \frac{1+t}{1-t}.
\end{align*}
The coefficient estimates~\eqref{eq:vgrowth} and~\eqref{eq:hgrowth} ensure that the first derivatives converge at the corresponding radii of convergence. It follows that
\begin{align*}
	\cH'(4/27)=\frac{40}{27},
	\qquad
	(\cV^*)'(4/27)=2.
\end{align*}
Finally, differentiating $\cV^*(z)=\cS(\cH(z))$ for $0<z<4/27$ and letting $z \uparrow 4/27$ yields
\begin{align*}
	\cS'(128/729)=\frac{27}{20}.
\end{align*}

\subsection{Enriched tree encoding and core size}

As discussed in Section~\ref{sec:simplecomponents}, any non-separable planar map $V$ that is not the loop map may be uniquely described by its simple component at the root whose edges get substituted by the $\cH$-components of $V$. We let $\cS_0(V)$ denote the simple component at the root edge of $V$.  Each $\cH$-component $H$ that is not the link map consisting of two vertices joined by a single edge has exactly two distinct edges on the boundary of the outer face. The result of deleting the non-root edge on the boundary of $H$ is a non-separable map that may again be decomposed into a simple component with $\cH$-structures attached. Iterating this operation decomposes $V$ uniquely into simple components. We let $\cS(V)$ denote a canonically chosen simple component whose number of edges is maximal among all simple components of $V$, with ties broken according to the order induced by the root and the recursive decomposition. We call $\cS(V)$ the simple core of $V$. 

The described decomposition of $\cH$-structures is reflected in the following equation that results from~\eqref{eq:hcore} 
\begin{align}
	\label{eq:simpleenrichedtree}
	\cH(z)=z\bigl(1+\cS(\cH(z))\bigr)=z\cR(\cH(z)), \qquad 	\cR(z):=1+\cS(z).
\end{align}
We define $\cR$-objects as either simple non-separable planar maps or the map consisting of a single vertex and no edges. Equation~\eqref{eq:simpleenrichedtree} and the decomposition behind it identify the class of $\cH$-structures as the class of $\cR$-enriched trees~\cite{MR633783,MR1629341}. 

\begin{definition}
	An $\cR$-enriched plane tree is a pair $(T, \beta)$ so that $T$ is a (rooted) plane tree and $\beta$ assigns to each  vertex $v$ of $T$ with  outdegree denoted by $d^+_T(v)$ an $\cR$-structure $\beta(v)$ with $d^+_T(v)$ edges. We say $\beta(v)$ is the decoration of the vertex $v$, and $(T, \beta)$ is an $\cR$-enriched plane tree.
\end{definition}

The following structural result follows automatically by general combinatorial principles, see~\cite{MR633783,MR1629341}. We provide a justification for the reader's convenience.

\begin{proposition}
	There is a bijection between $\cH$-objects with $n \ge 1$ edges and  $\cR$-enriched plane trees with $n$ vertices.
\end{proposition}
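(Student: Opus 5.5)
We construct the bijection recursively on the number of edges, following the decomposition behind~\eqref{eq:simpleenrichedtree}. Let $H$ be an $\cH$-object with $n \ge 1$ edges. If $H$ is the link map (two vertices joined by a single oriented edge), we send it to the tree with a single vertex whose decoration is the $\cR$-structure consisting of a single vertex and no edges; this has outdegree $0$ and $0$ edges, as required. Otherwise $H$ has exactly two distinct edges on the boundary of its outer face. We delete the non-root boundary edge, which accounts for the factor $z$ in $\cH(z) = z\cR(\cH(z))$. The result is a non-separable map $V$ with at least two vertices and $n-1$ edges, and it is not the loop map.

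By Section~\ref{sec:simplecomponents}, $V$ decomposes uniquely into its simple component $S = \cS_0(V)$ at the root and the $\cH$-components $(H_e)_e$, where $e$ ranges over the $k := \ed(S)$ edges of $S$. Each $H_e$ has fewer edges than $H$, since $\sum_e \ed(H_e) = n-1$. By induction, each $H_e$ corresponds to an $\cR$-enriched plane tree $(T_e,\beta_e)$ with $\ed(H_e)$ vertices. We then define $(T,\beta)$ as follows. Give the root of $T$ outdegree $k$ and decoration $\beta(\mathrm{root}) := S$. Take as its children the roots of the trees $T_e$, ordered according to a canonical ordering of the edges of $S$; for instance, a breadth-first search started at the root edge. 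The number of vertices of $T$ is $1 + \sum_e \ed(H_e) = n$.

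The inverse map reads the root decoration $S$ and recursively reconstructs the $H_e$ from the subtrees. It substitutes each edge $e$ of $S$ by $H_e$, glued along its canonically oriented root edge with the inner part to the left. It then adds back a boundary edge parallel to the root edge to obtain an $\cH$-object. The case where the root decoration has no edges gives back the link map. The two maps are mutually inverse by induction on $n$, because each step (deleting or re-inserting the boundary edge, and the simple-component decomposition) is itself invertible.

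The main obstacle is ensuring that the decomposition of Section~\ref{sec:simplecomponents} is genuinely bijective. Concretely, we need the following. First, substituting arbitrary $\cH$-objects into the edges of an arbitrary simple non-separable map produces a non-separable map whose simple component at the root is exactly $S$, so that the maximal $2$-cycles are precisely the ones created by the substitution. Second, the orientation conventions, including the special handling of the root edge, are canonical. This is the content of~\eqref{eq:hcore} and of~\cite[Sec. 5.1]{MR1871555}; we would invoke it rather than reprove it. As a consistency check, the resulting generating-function identity matches $\cH(z) = z\cR(\cH(z))$.
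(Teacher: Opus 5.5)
Your proposal is correct and follows essentially the same route as the paper. The link map goes to a single vertex with the empty $\cR$-decoration. Otherwise you delete the non-root boundary edge, take the simple component at the root as the root decoration, and attach as children, recursively and in canonical edge order, the trees of the $\cH$-components. You then verify $\ed(H)=\ve(T)$ by induction and invert by substitution followed by re-adding the boundary edge. Like the paper, you rely on the invertibility of the simple-component decomposition from Section~\ref{sec:simplecomponents} rather than reproving it.
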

\begin{proof}
	Let $H$ denote an $\cH$-object. If $H$ is the link map, we represent it by a tree $T$ consisting of a single vertex, to which we assign a simple map consisting of a single vertex and no edges as decoration. In this case it trivially holds that $\ed(H) = 1 = \ve(T)$.
	
	If $H$ is not the link map, it decomposes into a simple component $S$ with a number $d$ of edges ordered in canonical manner (starting with the root edge of $S$), and $\cH$-components $H_1, \ldots, H_d$ attached to the ordered edges of $S$. We have
	\begin{align}
		\label{eq:edheran}
		\ed(H) = 1 + \sum_{i=1}^d \ed(H_i),
	\end{align}
	since the decomposition of $H$ deletes the non-root edge on the $2$-cycle boundary of the outer face. We recursively represent $H$ by a plane tree $T$ whose root $o$ has outdegree $d$ and whose decoration is given by $\beta(o)=S$, so that the $i$th child for $1 \le i \le d$ gets identified with the root of the $\cR$-enriched plane tree $(T_i, \beta_i)$ corresponding to $H_i$. By structural induction we have $\ed(H_i) = \ve(T_i)$ for all $1 \le i \le d$, hence we have by~\eqref{eq:edheran} 
	\[
		\ed(H) = 1 + \sum_{i=1}^d \ve(T_i) = \ve(T).
	\]
	
	This shows that any $\cH$-object with $n$ edges gives rise to an $\cR$-enriched plane tree $(T, \beta)$ with $n$ vertices. 
	
	Conversely, let $(T,\beta)$ be an $\cR$-enriched plane tree with root $o$. If $\beta(o)$ is the empty $\cR$-object, then we associate with $(T,\beta)$ the link map. Otherwise, we recursively construct the $\cH$-objects associated with the enriched fringe subtrees rooted at	the children of $o$ and substitute them at the corresponding ordered edges of $\beta(o)$. We then adjoin the non-root boundary edge of the outer $2$-cycle. The resulting map is an $\cH$-object.
	
	The two constructions are inverse to each other and hence bijective.
\end{proof}

It is important to note that the only constraint on a decoration of a vertex is the outdegree of the vertex.

\begin{proposition}[{\cite[Lem. 6.1]{MR4132643}}]
	Let $\mH_n$ denote a uniform random $\cH$-object with $n \ge 1$ edges. Let $(\mT^{\cS}_n, \beta^\cS_n)$ denote its corresponding $\cR$-enriched tree. Then $\mT^{\cS}_n$ is distributed like a simply generated tree with weight sequence $\omega_k = [z^k] \cR(z)$, $k \ge 0$. Conditionally on~$\mT^{\cS}_n$, the decorations $(\beta^\cS_n(v))_{v\in\mT^{\cS}_n}$ are independent, and for each $v\in\mT^{\cS}_n$, the decoration $\beta^\cS_n(v)$ is uniformly distributed among all $\cR$-objects with $d_{\mT^{\cS}_n}^+(v)$ edges.
\end{proposition}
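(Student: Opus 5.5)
The plan is to compute the law of the pair $(\mT^{\cS}_n,\beta^\cS_n)$ directly from the bijection of the preceding proposition and read off its product structure. Since the bijection is size-preserving, a uniform $\cH$-object $\mH_n$ with $n$ edges corresponds to a uniform $\cR$-enriched plane tree with $n$ vertices. For a fixed plane tree $T$ with $n$ vertices, the number of $\cR$-enriched trees with underlying tree $T$ is
\[
\prod_{v \in T} [z^{d^+_T(v)}]\cR(z) = \prod_{v\in T} \omega_{d^+_T(v)}.
\]
This holds because a decoration $\beta$ is an arbitrary choice, independently for each vertex $v$, of an $\cR$-object with exactly $d^+_T(v)$ edges. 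No other constraint appears: the bijection only requires the decoration's edge count to match the outdegree, as the paper remarks right after it.

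From this count it follows that
\[
\Pr{\mT^{\cS}_n = T} = \frac{\prod_{v} \omega_{d^+_T(v)}}{\sum_{T'} \prod_{v} \omega_{d^+_{T'}(v)}},
\]
where the sum runs over plane trees $T'$ with $n$ vertices. This is precisely the simply generated tree with weights $(\omega_k)_k$. One should note that $\omega_0=1$ and $\omega_k>0$ for some $k\ge1$, so the law is well defined; the normalising sum equals $[z^n]\cH(z)>0$.

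Conditionally on $\mT^{\cS}_n=T$, the pair $(T,\beta)$ is uniform over the product set $\prod_v \{\cR\text{-objects with } d^+_T(v) \text{ edges}\}$. A uniform law on a product set is the product of the uniform marginals, which gives both the conditional independence of the decorations and their uniformity.

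There is no real obstacle here. The only point needing care is to confirm that the bijection indeed identifies the $\cR$-structure at each vertex with an arbitrary element of the correct size class, with no hidden dependence between decorations such as rooting or orientation conventions. This is guaranteed by the canonical ordering of the edges of $S$ used in the recursive construction.
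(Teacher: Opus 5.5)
Your proof is correct and is the standard counting argument behind the cited \cite[Lem.~6.1]{MR4132643}; the paper gives no proof beyond that citation. Through the size-preserving bijection, the enriched trees with underlying tree $T$ form a product set of cardinality $\prod_{v} \omega_{d^+_T(v)}$, and a uniform law on this product set gives both the simply generated marginal and conditionally independent uniform decorations. Your closing worry about hidden dependencies is already settled by the preceding bijection proposition: it matches $\cH$-objects with \emph{all} $\cR$-enriched plane trees, which are constrained only by the outdegree, as the paper stresses right after that proposition.
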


The simply generated tree model assumes a given finite plane tree $T$ having $n$ vertices with probability proportional to its weight $ \prod_{v \in T} \omega_{d^+_T(v)}$.
The series
\begin{align*}
	\varphi(z):=\sum_{k\ge0}\omega_kz^k=1+\cS(z).
\end{align*}
is known as the weight generating series of the model. Since $\cS(\rho_\cS)< \infty$ for the radius of convergence $\rho_\cS = 128/729$ of $\cS(z)$,  we may define a non-negative integer-valued random variable $\xi^\cS$ with probability generating function
\begin{align}
	\label{eq:taxi}
	\Ex{z^{\xi^\cS}} = \frac{\varphi(\rho_\cS z)}{\varphi(\rho_\cS)}
	= \frac{1+\cS(\rho_\cS z)}{1+\cS(\rho_\cS)}.
\end{align}
The simply generated tree $\mT^{\cS}_n$ is distributed like a ${\xi^\cS}$-Bienaym\'e--Galton--Watson tree $\mT^{\cS}$ conditioned on having $n$ vertices. See~\cite[Sec. 4]{MR2908619} for details. Using the values computed in Subsection~\ref{sec:enum}, we obtain
\begin{align}
	\label{eq:muxi}
	\Ex{{\xi^\cS}} = \frac{\rho_\cS\varphi'(\rho_\cS)}{\varphi(\rho_\cS)} =
\frac{\rho_\cS\cS'(\rho_\cS)}{1+\cS(\rho_\cS)} = \frac{1}{5}.
\end{align}
Moreover,~\eqref{eq:S-asymptotic} yields
\begin{align}
	\label{eq:simpleoffspringtail}
	\Pr{{\xi^\cS}=k} \sim \frac{6}{25\sqrt{15\pi}}k^{-5/2}, \qquad k \to \infty.
\end{align}

The outdegrees of the vertices in $\mT^{\cS}_n$ correspond to the number of edges in the simple components of $\mH_n$. Specifically, the maximal outdegree $\Delta(\mT^{\cS}_n)$ corresponds to the maximal number of edges in a simple component of $\mH_n$. Moreover, for $n \ge 3$, if we delete the non-root edge from the boundary of $\mH_n$ we are left with a map that is distributed like $\mV_{n-1}$.  Let $Z_{3/2}$ and $h$ be as in Proposition~\ref{pro:lltvcore}. By~\eqref{eq:taxi} and~\eqref{eq:muxi} the local limit theorem~\cite{MR4144886} for the maximal degree in conditioned subcritical Bienaym\'e--Galton--Watson trees applies, yielding:

\begin{proposition}
	\label{pro:edsvm1}
	Set $ g_{\cV} = \left(\frac{64}{9375}\right)^{1/3}$. We have
	\begin{align*}
		\Pr{\ed(\cS(\mV_n))=\ell} = \frac{1}{g_\cV n^{2/3}} \left(  h\left(\frac{4n/5-\ell}{g_\cV n^{2/3}}  \right) + o(1) \right)
	\end{align*}
	uniformly for all integers $\ell \ge 0$. Consequently, 
	\begin{align*}
		\frac{4n/5 - \ed(\cS(\mV_n))}{g_\cV n^{2/3}} \convd Z_{3/2}.
	\end{align*}
	Moreover, for any sufficiently small $\epsilon>0$ and any $0 < \delta < 4/5$ we have
	\[
		\Prb{\ed(\cS(\mV_n)) \le \epsilon n / \log(n)} = o(n^{-2/3})
	\]
	and
	\[
		\Prb{\ed(\cS(\mV_n)) \le \delta n} = O(n^{-2/3} ).
	\]
\end{proposition}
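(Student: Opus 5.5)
The plan is to reduce the statement to the local limit theorem for the maximal outdegree of a conditioned subcritical Bienaym\'e--Galton--Watson tree from~\cite{MR4144886}, applied to $\mT^{\cS}_{n+1}$. The first step relates $\mV_n$ to the tree. For $n\ge 2$, adjoining a non-root boundary edge to $\mV_n$ gives a uniform $\cH$-object $\mH_{n+1}$ with $n+1$ edges. Its recursive decomposition into simple components yields the enriched tree $(\mT^{\cS}_{n+1},\beta^\cS_{n+1})$. The simple components of $\mV_n$ are exactly the non-empty decorations $\beta^\cS_{n+1}(v)$, and each has $d^+(v)$ edges. Hence
\[
\ed(\cS(\mV_n)) = \Delta(\mT^{\cS}_{n+1}),
\]
because the canonical tie-breaking does not affect the size of the core.

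The second step verifies the hypotheses of~\cite[Thm.~1.1]{MR4144886}. By~\eqref{eq:taxi} the offspring law $\xi^\cS$ has mean $\mu=1/5<1$ by~\eqref{eq:muxi}, and $\Pr{\xi^\cS=k}>0$ for all $k$ large enough. By~\eqref{eq:simpleoffspringtail} its tail is regularly varying:
\[
\Pr{\xi^\cS=k}\sim c\,k^{-5/2}, \qquad c=\frac{6}{25\sqrt{15\pi}}.
\]
Under these conditions the cited theorem says the following. The tree $\mT^{\cS}_{n}$ has a unique vertex of degree $(1-\mu)n+O_p(n^{2/3})$. Moreover
\[
\frac{(1-\mu)n-\Delta(\mT^{\cS}_n)}{g\,n^{2/3}}
\]
satisfies a local limit theorem whose limit $Z_{3/2}$ is the stable law of index $3/2$ attached to sums of i.i.d.\ copies of $\xi^\cS$. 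The deviation estimates~\cite[Eq.~(2.10)--(2.12)]{MR4144886} also give
\[
\Pr{\Delta\le\epsilon n/\log n}=o(n^{-2/3}), \qquad \Pr{\Delta\le\delta n}=O(n^{-2/3}) \quad \text{for } \delta<1-\mu.
\]
This is the same route as in Proposition~\ref{pro:lltvcore}. Here $1-\mu=4/5$, and shifting $n+1$ to $n$ changes the centring by $O(1)=o(n^{2/3})$. Since $h$ is uniformly continuous, this shift is absorbed into the $o(1)$ term.

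The only computational point, and the likely obstacle, is identifying the scale constant $g_\cV$ with the normalisation $\Ex{e^{-\lambda Z_{3/2}}}=e^{\lambda^{3/2}}$. The fluctuation of $\Delta$ comes from the sum of the $\approx n$ non-maximal outdegrees, centred at mean $\mu$. The standard stable domain of attraction computation applies to a tail $\Pr{\xi\ge k}\sim \frac{2c}{3}k^{-3/2}$. It gives a scaling $g\,n^{2/3}$ with
\[
g^{3/2}=\frac{2c}{3}\,|\Gamma(-1/2)|\cdot\frac{1}{\,\cdot\,}\text{(normalising factor)},
\]
equivalently $g=\bigl(c\,\Gamma(-3/2)\bigr)^{2/3}$ up to the sign conventions of the Laplace exponent. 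Substituting $\Gamma(-3/2)=4\sqrt{\pi}/3$ gives
\[
g^{3/2}=\frac{6}{25\sqrt{15\pi}}\cdot\frac{4\sqrt\pi}{3}=\frac{8}{25\sqrt{15}},
\]
and hence
\[
g^3=\frac{64}{625\cdot15}=\frac{64}{9375}.
\]
This matches $g_\cV$. As a check, the same computation reproduces $g_\cM$ in Proposition~\ref{pro:lltvcore}.

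The direction of the centring, $4n/5-\ell$, fits the spectrally positive convention. Large positive $Z_{3/2}$ corresponds to small cores, since more mass then sits in the remaining small components. Finally, the distributional convergence follows from the uniform local limit theorem by summing over $\ell$ in windows of size $g_\cV n^{2/3}$.
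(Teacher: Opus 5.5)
Your proposal is correct and follows the paper's own route: you identify $\ed(\cS(\mV_n))$ with the maximal outdegree of the conditioned subcritical tree $\mT^{\cS}_{n+1}$ (offspring mean $1/5$, tail $\Pr{\xi^\cS=k}\sim c\,k^{-5/2}$), then read off the local limit theorem and the bounds (2.10)--(2.12) from \cite{MR4144886}. Your explicit check of the scale, $g_\cV^{3/2}=c\,\Gamma(-3/2)=8/(25\sqrt{15})$, which the paper leaves implicit, is also right, but the garbled intermediate display with the ``normalising factor'' should be replaced by the identity $\int_0^\infty (e^{-sx}-1+sx)\,c\,x^{-5/2}\,\mathrm{d}x=c\,\Gamma(-3/2)\,s^{3/2}$.
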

\begin{proof}
	The local limit theorem follows  from~\cite[Thm. 1.1]{MR4144886}, and the central limit theorem is a direct consequence of it. The bound for $\Prb{\ed(\cS(\mV_n)) \le \epsilon n / \log(n)}$ follows from~\cite[Eq. (2.10)]{MR4144886}. The bound for $\Prb{\ed(\cS(\mV_n)) \le \delta n}$ follows from~\cite[Eq. (2.11), (2.12)]{MR4144886}. 
\end{proof}

Let $v^*$ denote the lexicographically first vertex of $\mT^{\cS}_{n+1}$ with maximal outdegree. Deleting the edges between $v^*$ and its children yields a connected component $F_0(\mT^{\cS}_{n+1})$ that contains the root vertex $o$, and components $F_i(\mT^{\cS}_{n+1})$, $1 \le i \le \Delta(\mT^{\cS}_{n+1})$ corresponding to the fringe subtrees dangling from the children of $v^*$. 

Throughout the following we let the uniform non-separable planar map $\mV_n$ with $n \ge 2$ edges be constructed by taking the $\cH$-object $\mH_{n+1}$ corresponding to $(\mT^{\cS}_{n+1}, \beta^\cS_{n+1})$, and deleting the non-root edge on its boundary. Furthermore, in case that there are multiple simple components attaining the maximal number of edges we  break the tie in such a way that the simple core $\cS(\mV_n)$ of $\mV_n$ always corresponds to the decoration $\beta^\cS_{n+1}(v^*)$ of the vertex~$v^*$.

We may view $\mV_n$ as consisting of the simple core $\cS(\mV_n)$ together with maps $\cV_i(\mV_n)$, $1 \le i \le \ed(\cS(\mV_n))$ that get substituted at its edges: 

In case that $v^*$ is equal to the root vertex of $\mT^{\cS}_{n+1}$ it holds that the simple component at the root edge of $\mV_n$ is equal to its simple core. We let $\cV_i(\mV_n)$ denote the $\cH$-components inserted at the edges of this simple component. This way, the root edge of $\cV_1(\mV_n)$ corresponds to the root edge of $\mV_n$. 

In case that $v^*$ is not equal to the root vertex of $\mT^{\cS}_{n+1}$, the tree $F_0(\mT^{\cS}_{n+1})$ with its decorations corresponds to a planar map $M_1$ that has a marked and canonically oriented edge $e^*$ apart from its root edge.  The enriched fringe subtree of $\mT^{\cS}_{n+1}$ at the vertex $v^*$ corresponds (with its decorations) to a planar map $M_2$, and $\mV_n$ is constructed by deleting $e^*$ from $M_1$ and inserting   $M_2$ instead. The planar map $M_2$ is an $\cH$-object and hence decomposes into a simple component - precisely $\cS(\mV_n)$ - and $\cH$-objects $H_1, \ldots, H_{\ed(\cS(\mV_n))}$ that correspond to the trees $F_i(\mT^{\cS}_{n+1})$, $1 \le i \le \Delta(\mT^{\cS}_{n+1})$ and their decorations. Specifically, $F_1(\mT^{\cS}_{n+1})$ corresponds to the component of the root edge of $\cS(\mV_n)$. Thus, for $2 \le i \le \Delta(\mT^{\cS}_{n+1})$ we let $\cV_i(\mV_n)$ denote the $\cH$-component $H_i$. We let $\cV_1(\mV_n)$ denote the map obtained by inserting the $\cH$-component $H_1$ at the edge $e^*$ of $M_1$. This way, $\cV_1(\mV_n)$ contains the root edge of $\mV_n$.

Applying the limits ~\cite[Thm. 1.2, Cor. 1.10]{MR4144886} on the sizes of $F_i(\mT^{\cS}_{n+1})$, $0 \le i \le \Delta(\mT^{\cS}_{n+1})$ readily yields:

\begin{proposition}
	\label{pro:edsvm2}
	We have $\ed(\cV_1(\mV_n)) = O_p(1)$ and
	\begin{align*}
		\max_{1 \le i \le \ed(\cS(\mV_n))} \ed(\cV_i(\mV_n)) = O_p(n^{2/3}).
	\end{align*}
\end{proposition}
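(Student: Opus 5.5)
We translate both claims into statements about the conditioned tree $\mT^{\cS}_{n+1}$ and its components $F_0(\mT^{\cS}_{n+1}), \ldots, F_{\Delta(\mT^{\cS}_{n+1})}(\mT^{\cS}_{n+1})$, and then read off the sizes from the condensation results of~\cite{MR4144886}.

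\textbf{Edge counts in terms of vertex counts.} By the bijection between $\cH$-objects and $\cR$-enriched trees, an $\cH$-object encoded by an enriched tree with $k$ vertices has exactly $k$ edges.

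\begin{itemize}
	\item For $2 \le i \le \ed(\cS(\mV_n))$, the map $\cV_i(\mV_n)$ is the $\cH$-component $H_i$. It is encoded by the enriched fringe subtree $F_i(\mT^{\cS}_{n+1})$, so $\ed(\cV_i(\mV_n)) = \ve(F_i(\mT^{\cS}_{n+1}))$.
	\item Consider $\cV_1(\mV_n)$ when $v^*$ is not the root. It is obtained from $M_1$, which is encoded by $F_0(\mT^{\cS}_{n+1})$, by replacing $e^*$ with $H_1$. Hence $\ed(\cV_1(\mV_n)) \le \ve(F_0(\mT^{\cS}_{n+1})) + \ve(F_1(\mT^{\cS}_{n+1})) + O(1)$. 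The $O(1)$ absorbs the root and boundary edges and the single deleted edge.
	\item When $v^*$ is the root, $\cV_1(\mV_n)$ is simply $H_1$, so its edge count is $\ve(F_1(\mT^{\cS}_{n+1}))$.
\end{itemize}

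\textbf{The $O_p(1)$ bound.} Since $\Ex{\xi^\cS}=1/5<1$ by~\eqref{eq:muxi} and the offspring tail is regularly varying by~\eqref{eq:simpleoffspringtail}, the tree is in the subcritical condensation regime. In this regime~\cite[Thm. 1.2, Cor. 1.10]{MR4144886} describe $\mT^{\cS}_{n+1}$ as follows. The part $F_0$ above the condensation vertex converges in distribution to an almost surely finite tree, namely the size-biased spine construction truncated at the marked vertex. The fringe subtrees $F_1, F_2, \ldots$ hanging from $v^*$, up to the last $o(n)$-many of them, are asymptotically i.i.d.\ unconditioned $\xi^\cS$-Bienaym\'e--Galton--Watson trees in total variation. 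In particular, $\ve(F_0(\mT^{\cS}_{n+1})) = O_p(1)$, and $\ve(F_1(\mT^{\cS}_{n+1}))$ converges in law to the size of a single subcritical tree, which is finite almost surely. Together with the previous paragraph this gives $\ed(\cV_1(\mV_n)) = O_p(1)$.

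\textbf{The maximum bound.} The maximum of $\ve(F_i(\mT^{\cS}_{n+1}))$ over $1 \le i \le \Delta(\mT^{\cS}_{n+1})$ is at most the second largest "mass" in the tree. The plan is to use the result in~\cite{MR4144886} that the total sizes of these subtrees behave like $\Delta \approx 4n/5$ i.i.d.\ copies of the total progeny $|\mT^{\cS}|$, conditioned on their sum being $n+1-\Delta - \ve(F_0)$.

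\begin{itemize}
	\item The total progeny of the subcritical tree has tail $\Pr{|\mT^{\cS}| = k} \asymp k^{-5/2}$. This follows from~\eqref{eq:simpleoffspringtail} by the standard one-big-jump principle for subcritical trees.
	\item For $m \asymp n$ i.i.d.\ variables with this tail, the maximum is of order $m^{2/3} = O(n^{2/3})$.
	\item The conditioning on the sum does not destroy this. By construction the sum deviates from its mean by $O_p(n^{2/3})$, since $\Delta = 4n/5 + O_p(n^{2/3})$ by Proposition~\ref{pro:edsvm1}. That deviation is exactly the Airy-type $3/2$-stable fluctuation, which is carried by many moderate summands rather than one giant one.
\end{itemize}

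\textbf{Main obstacle.} The delicate step is making the conditioning rigorous. I would first look for a statement in~\cite{MR4144886} giving the second largest outdegree or subtree size directly. If none exists, I would bound
\[
\Pr{\max_i \ve(F_i) > K n^{2/3}}
\]
by a union bound over $i$, combined with the local limit theorem for the remaining sum. The local limit theorem gives a ratio of densities of order $O(1)$ uniformly, so this probability is $O(K^{-3/2})$, which can be made small by choosing $K$ large.
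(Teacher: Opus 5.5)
Your proof of $\ed(\cV_1(\mV_n))=O_p(1)$ is the same as the paper's. The paper's entire proof reads off the sizes of $F_0(\mT^{\cS}_{n+1}),\ldots,F_{\Delta(\mT^{\cS}_{n+1})}(\mT^{\cS}_{n+1})$ from \cite[Thm.~1.2, Cor.~1.10]{MR4144886}. That citation also gives the $O_p(n^{2/3})$ bound on the maximum, so your first option for the second claim, looking for the statement directly in \cite{MR4144886}, is exactly the paper's route. Your fallback is a genuinely different, self-contained argument, and it works once made precise.

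\emph{Conditional structure.} Condition on $\Delta(\mT^{\cS}_{n+1})=d$ and on $F_0(\mT^{\cS}_{n+1})$ together with the position of $v^*$. The product form of the Bienaym\'e--Galton--Watson law then makes $F_1,\ldots,F_d$ exactly i.i.d.\ copies of $\mT^{\cS}$, conditioned on their total size and on all outdegrees being at most $d$. The outdegree constraint is vacuous once $d>n/2$. So this step is an identity, not an asymptotic statement that needs a citation.

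\emph{Progeny tail.} The tail is explicit: $\Pr{|\mT^{\cS}|=k}=\frac{729}{128}\left(\frac{4}{27}\right)^k[z^k]\cH(z)\asymp k^{-5/2}$ by \eqref{eq:hgrowth}.

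\emph{A slip to fix.} The sizes satisfy $\sum_{i=1}^{d}\ve(F_i(\mT^{\cS}_{n+1}))=s:=n+1-\ve(F_0(\mT^{\cS}_{n+1}))$. The value $n+1-d-\ve(F_0(\mT^{\cS}_{n+1}))$ you wrote is instead the sum of the $\ve(F_i(\mT^{\cS}_{n+1}))-1$. Conditioning the sizes themselves on that value would put the conditioning point about $4n/5$ away from the mean $5d/4$. There the local limit theorem gives no useful lower bound.

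\emph{Making the ratio bound precise.} With the correct $s$, Proposition~\ref{pro:edsvm1} and $\ve(F_0(\mT^{\cS}_{n+1}))=O_p(1)$ put $s$ within $Mn^{2/3}$ of $5d/4$ with probability close to one for large $M$. Write $S_j$ for a sum of $j$ independent copies of $|\mT^{\cS}|$. On that event, the stable local limit theorem, whose limiting density is positive on $\ndR$, gives $\Pr{S_d=s}\ge c_Md^{-2/3}$. Meanwhile $\Pr{|\mT^{\cS}|>Kn^{2/3}}\sup_j\Pr{S_{d-1}=j}=O(K^{-3/2}n^{-1}d^{-2/3})$. The union bound over the $d$ subtrees is therefore $O(K^{-3/2}/c_M)$. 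So your ``uniform $O(1)$ ratio'' holds only on this event, and $M$ must be fixed before letting $K\to\infty$.

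\emph{Comparison.} The citation route is shorter, but it rests on the full condensation analysis of \cite{MR4144886}. Yours needs only the exact conditional i.i.d.\ structure, the progeny tail and a stable local limit theorem. It also shows transparently why the scale is $n^{2/3}$: it is the maximum of about $4n/5$ i.i.d.\ sizes with tail $k^{-3/2}$, and conditioning on their sum inside the central window costs only a bounded factor.
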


\subsection{Approximating non-separable planar maps by their simple core}
\label{sec:approximationthroughcore}
For ease of notation, we set
\[
	s_n = \ed(\cS(\mV_n)).
\]
Let $e_1, \ldots, e_{s_n}$ denote the edges of $\cS(\mV_n)$ in the canonical order used for the components $(\cV_i(\mV_n))_{1 \le i \le s_n}$.  For a vertex $v \in \cS(\mV_n)$ let $I_n(v) \subset \{1, \ldots, s_n\}$ denote the set of indices $i$ for which $v$ is incident to $e_i$. We let $\nu_{\cS(\mV_n)}$ denote the probability measure on $\cS(\mV_n)$ given by
\[
	\nu_{\cS(\mV_n)}(\{v\}) = \frac{1}{2n}\sum_{i \in I_n(v)}\ed(\cV_i(\mV_n)).
\]
The fact that this is a probability measure follows from
\[
	\sum_{i=1}^{\ed(\cS(\mV_n))} \ed(\cV_i(\mV_n)) = n,
\]
since every edge of $\cS(\mV_n)$ has two distinct endpoints. We emphasise that $\nu_{\cS(\mV_n)}$ depends on the entire map $\mV_n$, whereas the stationary distribution $\mu_{\cS(\mV_n)}$ only depends on the simple core $\cS(\mV_n)$.

\begin{lemma}
	\label{le:simplecorepart1}
	For any constant $c>0$,
	\begin{align*}
		d_{\mathrm{GHP}}\left((\mV_n,cn^{-1/4}d_{\mV_n},\mu_{\mV_n}),
		(\cS(\mV_n),cn^{-1/4}d_{\cS(\mV_n)},\nu_{\cS(\mV_n)})\right)
		\convp 0.
	\end{align*}
\end{lemma}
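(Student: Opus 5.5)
We follow the template of the proof of Lemma~\ref{le:2corepart1}: first show that the simple core is Hausdorff-close to $\mV_n$ inside $\mV_n$, then couple the two measures, and conclude with the coupling characterisation of $d_{\mathrm{GHP}}$ from~\cite[Prop. 6]{MR2571957}. The new feature is that the components $\cV_i(\mV_n)$ are glued along edges, not at corners. Hence distances in $\cS(\mV_n)$ and in $\mV_n$ between core vertices need not coincide exactly, and the first step is to compare them.

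\textbf{Distance comparison.} Let $x,y$ be vertices of $\cS(\mV_n)$. Each edge $e_i$ of the core is replaced by $\cV_i(\mV_n)$. For $i\ge 2$ this is an $\cH$-component, which always contains the edge joining the endpoints of $e_i$, namely the root edge of the $2$-cycle. Likewise $\cV_1(\mV_n)$ contains a path realising $e_1$ of length $O_p(1)$, since $\ed(\cV_1(\mV_n))=O_p(1)$ by Proposition~\ref{pro:edsvm2}. This gives
\[
	d_{\mV_n}(x,y)\le d_{\cS(\mV_n)}(x,y)+O_p(1).
\]
For the converse inequality, take a geodesic in $\mV_n$ and project it. Whenever the geodesic enters $\cV_i(\mV_n)$ through one endpoint of $e_i$, it must leave through the other endpoint of $e_i$ or return to the same one, because the two endpoints separate the component from the rest of the map. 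We can therefore replace each excursion by the single edge $e_i$ or delete it, and this does not increase length. Hence $d_{\cS(\mV_n)}(x,y)\le d_{\mV_n}(x,y)$, and the core with the restricted metric is isometric to $\cS(\mV_n)$ up to an additive $O_p(1)$. One caveat concerns $\cV_1(\mV_n)$, which is not an $\cH$-component but a larger map containing $H_1$. It is still attached to the core through two vertices and has bounded size, so the same argument goes through.

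\textbf{Hausdorff bound (main obstacle).} Every vertex of $\mV_n$ lies in some $\cV_i(\mV_n)$ and is within $\Di(\cV_i(\mV_n))$ of a core vertex. We therefore need
\[
	\max_i \Di(\cV_i(\mV_n))=o_p(n^{1/4}).
\]
Proposition~\ref{pro:edsvm2} only bounds the sizes by $O_p(n^{2/3})$, so we argue as in~\eqref{eq:smallattachment}. Conditionally on their sizes, the components are independent and uniform. Through the enriched-tree coupling, the forest $(F_i(\mT^{\cS}_{n+1}))_i$ behaves like i.i.d.\ copies of the unconditioned $\xi^\cS$-Bienaym\'e--Galton--Watson tree, up to a negligible number of components handled by~\cite[Thm. 1.2, Cor. 1.10]{MR4144886}. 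So it suffices to have a tail bound for the diameter of a Boltzmann $\cH$-object, say $\Ex{\Di^s}<\infty$ for some $s>4$, combined with a union bound over $O(n)$ components.

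To get this tail bound, I would bound the diameter of a uniform non-separable map with $m$ edges, $\Pr{\Di(\mV_m)\ge x m^{1/4}}\le C e^{-cx^{\gamma}}$. One route is to embed it as the core of $\mM_{3m}$ with probability $\Theta(m^{-2/3})$ and use the deviation bound~\cite[Thm. 3.5]{MR3318042}, since diameters of cores are at most diameters of the ambient map. Combined with the $k^{-5/2}$ tail of $\ed(\cH)$ from~\eqref{eq:hgrowth}, this gives $\Ex{\Di^s}<\infty$ for $s<6$. Hence the maximum over $O(n)$ components is $O_p(n^{1/6+\epsilon})=o_p(n^{1/4})$. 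The two largest components, of size $O_p(n^{2/3})$, have diameter $O_p(n^{1/6})$ directly.

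\textbf{Measure coupling.} Pick a uniform oriented edge (equivalently a corner) of $\mV_n$; its endpoint $v_n$ has law $\mu_{\mV_n}$. Let $i$ be the index of the component containing the edge, and let $u_n$ be a uniformly chosen endpoint of $e_i$. Since every edge of $\cV_i(\mV_n)$ is selected with probability $\ed(\cV_i(\mV_n))/n$ and each endpoint with probability $1/2$, $u_n$ has law $\nu_{\cS(\mV_n)}$. Moreover $d_{\mV_n}(u_n,v_n)\le \Di(\cV_i(\mV_n))+O_p(1)=o_p(n^{1/4})$. Together with the near-isometry and the Hausdorff bound, \cite[Prop. 6]{MR2571957} yields the claimed convergence in probability of $d_{\mathrm{GHP}}$ to $0$.
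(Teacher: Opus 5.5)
Your skeleton matches the paper's proof. You project geodesics onto $\cS(\mV_n)$ using that $\cV_i(\mV_n)$ meets the rest of $\mV_n$ only at the endpoints of $e_i$. You bound the Hausdorff distance by $\max_i\Di(\cV_i(\mV_n))=o_p(n^{1/4})$. You couple a uniform corner with a uniform endpoint of $e_{i(c_n)}$, which has law $\nu_{\cS(\mV_n)}$, and conclude with \cite[Prop.~6]{MR2571957}.

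The paper gets an exact isometry on core vertices. Every $\cH$-component, including $H_1$ inside $\cV_1(\mV_n)$, contains an actual edge between the endpoints of $e_i$, so $\cS(\mV_n)$ is a subgraph of $\mV_n$. Your additive $O_p(1)$ slack is harmless but unnecessary.

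Where you genuinely differ is the diameter bound. You route it through an i.i.d.\ Boltzmann approximation of the fringe forest at $v^*$ and a moment bound $\Ex{\Di(\mH)^s}<\infty$ for $s<6$, as in Lemma~\ref{le:2corepart1}. The paper needs no forest approximation at all. From Proposition~\ref{pro:edsvm2} it takes only $\max_i\ed(\cV_i(\mV_n))=O_p(n^{2/3})$, and it uses that the components are conditionally uniform given their sizes. It then applies $\Pr{\Di(\mH_k)>k^{1/4+\epsilon}}\le\exp(-k^a)$ from \cite[Thm.~3.7]{MR3318042}, with a union bound over at most $n$ components of size at most $n^{2/3}\log n$. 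This gives $\max_i\Di(\cV_i(\mV_n))\le n^{\delta}$ for any $\delta>\frac23(\frac14+\epsilon)$.

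You already state both conditional uniformity and the size bound, so this direct argument lets you drop the TV approximation entirely. It is also what you would need anyway for the components the approximation leaves out, and these need not be ``the two largest''. With the tools available at this stage, their diameters are $O_p(n^{1/6+\epsilon})$, not $O_p(n^{1/6})$.

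Your embedding into $\mM_{3m}$ has two problems. It costs a factor $\Pr{\ed(\cV(\mM_{3m}))=m}^{-1}\asymp m^{2/3}$. Also, \cite[Thm.~3.5]{MR3318042} controls deviations at scale $m^{1/4+\epsilon}$. So it does not give a uniform tail $Ce^{-cx^\gamma}$ at scale $xm^{1/4}$. It does give the $m^{1/4+\epsilon}$-scale bound, which is all your moment computation actually uses, and \cite[Thm.~3.7]{MR3318042} supplies that bound directly.
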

\begin{proof}
	Recall that, by the second identity in~\eqref{eq:hcore}, deleting the non-root boundary edge of $\mH_{k+1}$ yields a uniform non-separable non-loop map with $k$ edges. Adding the deleted boundary edge cannot increase graph distances. Hence, the upper diameter bound in~\cite[Thm. 3.7]{MR3318042} for the diameter of $\mV_{k}$ implies that, for every sufficiently small constant $\epsilon>0$, there are constants $a>0$ and $k_0 \ge 1$ such that
	\begin{align}
		\label{eq:hdiameterdeviation}
		\Pr{\Di(\mH_k)>k^{1/4+\epsilon}}
		\le \exp(-k^a)
	\end{align}
	for all $k \ge k_0$.
	
	Propositions~\ref{pro:edsvm1} and~\ref{pro:edsvm2} yield
	\begin{align}
		\label{eq:simplecoresize}
		s_n=4n/5+O_p(n^{2/3}), \qquad \ed(\cV_1(\mV_n))=O_p(1), \qquad
		\max_{1 \le i \le s_n}\ed(\cV_i(\mV_n))=O_p(n^{2/3}).
	\end{align}
	Choose constants $\epsilon$ and $\delta$ such that
	\[
		0<\epsilon<1/8, \qquad \frac{2}{3}\left(\frac{1}{4}+\epsilon\right)<\delta<\frac{1}{4}.
	\]
	Let $\cE_n$ denote the event
	\[
		\max_{1 \le i \le s_n}\ed(\cV_i(\mV_n))\le \log(n) n^{2/3}.
	\]
	For all sufficiently large $n$ we have
	\begin{align}
		\label{eq:sch}
		(\log(n) n^{2/3})^{1/4 + \epsilon} \le n^{\delta}.
	\end{align}
	A connected map with $k$ edges has diameter at most $k$. Hence, using~\eqref{eq:hdiameterdeviation}, the union bound, and the deterministic inequality $s_n \le n$, we obtain for sufficiently large $n$ (so that~\eqref{eq:sch} holds and so that $n^\delta > k_0$)
	\begin{align*}
		\Prb{\max_{2 \le i \le s_n}\Di(\cV_i(\mV_n))>n^\delta,\ \cE_n}
		&\le n \max_{k_0 \le k \le \log(n) n^{2/3}} \Prb{\Di(\mH_k) > n^\delta} \\
		&= n \max_{n^{\delta} \le k \le \log(n) n^{2/3}} \Prb{\Di(\mH_k) > n^\delta} \\
		&\le n \max_{n^{\delta} \le k \le \log(n) n^{2/3}} \Prb{\Di(\mH_k) > k^{1/4 + \epsilon}} \\
		&\le n\exp(-n^{a \delta}) \to 0.
	\end{align*}
	By~\eqref{eq:simplecoresize} we have $\Prb{\cE_n} \to 1$ as $n \to \infty$ and  $\Di(\cV_1(\mV_n)) \le \ed(\cV_1(\mV_n)) = O_p(1)$. Hence
	\begin{align}
		\label{eq:smallsimpleattachments}
		\max_{1 \le i \le s_n}\Di(\cV_i(\mV_n))=o_p(n^{1/4}).
	\end{align}
	
	Each component $\cV_i(\mV_n)$ intersects the simple core in the edge $e_i$. Any path in $\mV_n$ between two vertices of $\cS(\mV_n)$ may therefore be transformed into a path in $\cS(\mV_n)$ of no greater length by replacing every maximal subpath contained in a component $\cV_i(\mV_n)$ by the edge $e_i$, or by a path of length zero if the subpath enters and leaves the component through the same endpoint of $e_i$. Since $\cS(\mV_n)$ is a subgraph of $\mV_n$, it follows that
	\begin{align}
		\label{eq:simplecoreisometric}
		d_{\mV_n}(x,y)=d_{\cS(\mV_n)}(x,y)
	\end{align}
	for all vertices $x,y \in \cS(\mV_n)$. 
	
	Every vertex of $\mV_n$ belongs to one of the components $\cV_i(\mV_n)$ and hence lies at graph distance at most $\max_{1 \le i \le s_n}\Di(\cV_i(\mV_n))$ from $\cS(\mV_n)$. Thus, by~\eqref{eq:smallsimpleattachments} and~\eqref{eq:simplecoreisometric}
	\begin{align}
		\label{eq:hsimplecore}
		d_{\mathrm{H}}\left( (\mV_n,cn^{-1/4}d_{\mV_n}), (\cS(\mV_n),cn^{-1/4}d_{\cS(\mV_n)}) \right) \convp 0.
	\end{align}
	
	Let $c_n$ denote a uniformly selected corner of $\mV_n$, and let $v_n$ denote its incident vertex. Conditionally on $\mV_n$, the vertex $v_n$ is distributed according to $\mu_{\mV_n}$. There is a unique index $i(c_n) \in \{1, \ldots, s_n\}$ such that the half-edge corresponding to $c_n$ belongs to $\cV_{i(c_n)}(\mV_n)$. Conditionally on $c_n$ and $\mV_n$, let $u_n$ be a uniformly selected endpoint of the edge $e_{i(c_n)}$. Hence
	\[
		\Pr{i(c_n)=i \mid \mV_n} = \frac{\ed(\cV_i(\mV_n))}{n}.
	\]
	Thus, conditionally on $\mV_n$, the vertex $u_n$ is distributed according to $\nu_{\cS(\mV_n)}$. Moreover, $v_n$ and $u_n$ both belong to $\cV_{i(c_n)}(\mV_n)$. It follows from~\eqref{eq:smallsimpleattachments} that
	\begin{align}
		\label{eq:couplingsimplecore}
		n^{-1/4}d_{\mV_n}(v_n,u_n)
		\le
		n^{-1/4}\max_{1 \le i \le s_n}\Di(\cV_i(\mV_n))
		\convp 0.
	\end{align}
	Using the characterisation of the Gromov--Hausdorff--Prokhorov distance from~\cite[Prop. 6]{MR2571957}, together with~\eqref{eq:hsimplecore} and~\eqref{eq:couplingsimplecore} yields the asserted convergence.
\end{proof}

\section{The simple non-separable core of unrestricted planar maps}

\subsection{Core size}

We use the notation
$
	\cS(\mM_n):=\cS(\cV(\mM_n))
$ for the simple non-separable core of $\mM_n$. Here we set $\cS(\mM_n)$ to some placeholder value in case that $\cV(\mM_n)$ is the loop map. Concatenating the local limit theorems from Propositions~\ref{pro:lltvcore} and~\ref{pro:edsvm1} yields a local limit theorem for the number of edges in $\cS(\mM_n)$. A similar argument was used in~\cite[Cor. 9.6]{zbMATH07665039}.

\begin{proposition}
	\label{pro:lltsourcesimplecore}
	Set
	\begin{align*}
		g_{\cS} := \left( \left(\frac{4}{5}g_\cM\right)^{3/2} + \left(3^{-2/3}g_\cV\right)^{3/2}\right)^{2/3} = \left(\frac{33856}{759375}\right)^{1/3}.
	\end{align*}
	We have
	\begin{align*}
		\Pr{\ed(\cS(\mM_n))=\ell} = \frac{1}{g_\cS n^{2/3}} \left(  h\left(\frac{4n/15-\ell}{g_\cS n^{2/3}}  \right) + o(1) \right)
	\end{align*}
	uniformly for all integers $\ell \ge 0$. Consequently, 
	\begin{align*}
		\frac{4n/15 - \ed(\cS(\mM_n))}{g_\cS n^{2/3}} \convd Z_{3/2}.
	\end{align*}
\end{proposition}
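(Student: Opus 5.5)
The plan is to condition on the size of the non-separable core and concatenate the two local limit theorems. The key structural fact is that, conditionally on $\ed(\cV(\mM_n))=m$, the core $\cV(\mM_n)$ is uniformly distributed among non-separable maps with $m$ edges. This follows from the block decomposition: the weight of a configuration factors into the core and the attached components, and the latter depend on the core only through its number of corners $2m$. Hence $\ed(\cS(\cV(\mM_n)))$ given $\ed(\cV(\mM_n))=m$ is distributed like $\ed(\cS(\mV_m))$. We will handle the loop-map and single-edge cases by the placeholder convention, since they have probability $o(n^{-2/3})$. We then write
\begin{align*}
	\Pr{\ed(\cS(\mM_n))=\ell} = \sum_{m} \Pr{\ed(\cV(\mM_n))=m}\,\Pr{\ed(\cS(\mV_m))=\ell}.
\end{align*}

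We split the sum into $m\le \delta n$ and $m>\delta n$, for a fixed small $\delta<1/3$. For $m\le\delta n$, Proposition~\ref{pro:lltvcore} bounds the total contribution by $O(n^{-2/3})$. This is not yet $o(n^{-2/3})$, so the small range needs a further split. For $m\le \epsilon n/\log n$, the contribution is $o(n^{-2/3})$ by Proposition~\ref{pro:lltvcore}. For $\epsilon n/\log n<m\le\delta n$, we use the uniform bound $\Pr{\ed(\cS(\mV_m))=\ell}=O(m^{-2/3})$ from Proposition~\ref{pro:edsvm1}. When $\ell$ is near $4n/15$ (the only relevant range), we have $\ell\ge 4\delta m/... $ far from $4m/5$, so the local limit theorem gives $m^{-2/3}(h(\text{large})+o(1))$. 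Since $h$ decays at $+\infty$, we only obtain $o(m^{-2/3})$, so we sum $\Pr{\ed(\cV(\mM_n))=m}\cdot o(1)$. This is $o(1)\cdot O(n^{-2/3})$ over $m\le\delta n$, and there is a factor $m^{-2/3}$ to account for. The cleanest fix is to note that on the event $\ell\approx 4n/15$ we need $\ed(\cS(\mV_m))\ge (4/15-o(1))n \ge \ell/\ldots$, which exceeds $4m/5$ by order $n$ when $m\le\delta n$ with $\delta<1/3$. We will then invoke a large-deviation bound for the maximal outdegree exceeding its mean by a linear amount; in the subcritical tree such a big jump is impossible beyond the tree size, and sizes up to $m$ are allowed. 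Choosing $\delta$ so that $4\delta/5<4/15$ turns this into a deterministic impossibility: $\ed(\cS(\mV_m))\le m\le\delta n<\ell$ once $\delta<4/15$. This removes the small-$m$ range entirely for $\ell$ in the bulk. For $\ell$ outside the bulk, both sides are $o(n^{-2/3})$ uniformly by tightness-type bounds, which we handle using the monotone decay of $h$.

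In the main range $m>\delta n$, we substitute both local limit theorems, writing $m=n/3-g_\cM n^{2/3}x$. Then $\Pr{\ed(\cS(\mV_m))=\ell}=\frac{1}{g_\cV m^{2/3}}\big(h(\frac{4m/5-\ell}{g_\cV m^{2/3}})+o(1)\big)$, with $m^{2/3}=3^{-2/3}n^{2/3}(1+o(1))$ uniformly for $x$ in compacts. The sum over $m$ becomes a Riemann sum converging to the convolution
\begin{align*}
	\frac{1}{n^{2/3}}\int h(x)\,\frac{1}{3^{-2/3}g_\cV}h\!\left(\frac{y-\tfrac45 g_\cM x}{3^{-2/3}g_\cV}\right)dx, \qquad y=\frac{4n/15-\ell}{n^{2/3}}.
\end{align*}
This is the density of $\frac45 g_\cM Z+3^{-2/3}g_\cV Z'$ with independent copies $Z,Z'$ of $Z_{3/2}$. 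By strict $3/2$-stability, this sum equals $g_\cS Z_{3/2}$ in law with the stated $g_\cS$, which gives the claimed formula.

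The main obstacle is uniformity: the $o(1)$ errors and the tails in $x$ must be controlled uniformly in $\ell$. We will truncate $|x|\le K$, use $\sup h<\infty$ together with the uniform $o(1)$ to bound the tail contributions by $\|h\|_\infty\Pr{|Z_{3/2}|>K}+o(1)$, and use uniform continuity of $h$ for the Riemann-sum step. The central limit statement then follows by summing the local limit theorem.
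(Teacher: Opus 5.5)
Your skeleton matches the paper's proof. It uses the mixture identity over $m=\ed(\cV(\mM_n))$ and the uniform bound $\sup_{\ell}\Pr{\ed(\cS(\mV_m))=\ell}=O(m^{-2/3})$ to control $m>\delta n$ outside a window of width $O(n^{2/3})$ around $n/3$. It then passes to a Riemann sum converging to the convolution of two rescaled copies of $h$, and identifies the constant via strict $3/2$-stability.

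The gap is in the range $m\le\delta n$. The statement is uniform over all integers $\ell\ge0$, so $\ell$ near $4n/15$ is not ``the only relevant range''. The argument you finally commit to, $\ed(\cS(\mV_m))\le m\le\delta n<\ell$, only applies when $\ell>\delta n$. For $0\le\ell\le\delta n$ the target $\frac{1}{g_\cS n^{2/3}}h\bigl(\frac{4n/15-\ell}{g_\cS n^{2/3}}\bigr)$ is $o(n^{-2/3})$, so you need
\[
n^{2/3}\sup_{0\le\ell\le\delta n}\sum_{m\le\delta n}\Pr{\ed(\cV(\mM_n))=m}\Pr{\ed(\cS(\mV_m))=\ell}\to0.
\]
``Tightness-type bounds'' cannot deliver this: tightness controls the mass of sets, not point masses at scale $n^{-2/3}$. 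If the inner probability is only bounded by $1$, the bound $\Pr{\ed(\cV(\mM_n))\le\delta n}=O(n^{-2/3})$ from Proposition~\ref{pro:lltvcore} leaves an $O(1)$ term after multiplying by $n^{2/3}$. The decay of $h$ does not help with this contribution.

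You had already written down both ingredients that close this. You read the factor $m^{-2/3}$ as something still ``to account for'', when it is precisely the gain. For $\epsilon n/\log n<m\le\delta n$, the uniform local limit theorem of Proposition~\ref{pro:edsvm1} gives $\sup_{\ell\in\ndZ}\Pr{\ed(\cS(\mV_m))=\ell}\le C_0m^{-2/3}\le C_0(\epsilon n/\log n)^{-2/3}$. This range therefore contributes at most $C_0(\epsilon n/\log n)^{-2/3}\Pr{\ed(\cV(\mM_n))\le\delta n}=O(n^{-4/3}\log^{2/3}n)$. The range $m\le\epsilon n/\log n$ contributes at most $\Pr{\ed(\cV(\mM_n))\le\epsilon n/\log n}=o(n^{-2/3})$.

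Both bounds hold uniformly in every $\ell\in\ndZ$. This is the paper's estimate~\eqref{eq:wdy}; it disposes of the whole range $m\le\delta n$ at once and makes your deterministic argument unnecessary. With this repair the remaining steps go through as in the paper. You should also carry out the arithmetic $(\frac45g_\cM)^{3/2}+(3^{-2/3}g_\cV)^{3/2}=\frac{184}{225\sqrt{15}}$, which confirms the stated closed form of $g_\cS$.
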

\begin{proof}
	Conditionally on $\ed(\cV(\mM_n))=j$, the map $\cV(\mM_n)$ is a uniform non-separable planar map with $j$ edges. Hence
	\begin{align*}
		\Pr{\ed(\cS(\mM_n))=\ell} = \sum_{j=1}^{n} \Pr{\ed(\cV(\mM_n))=j} \Pr{\ed(\cS(\mV_j))=\ell}.
	\end{align*}
	Since the density $h$ is bounded, the uniform local limit
	theorem in Proposition~\ref{pro:edsvm1} yields a constant
	$C_0>0$ such that for all $j\ge1$
	\begin{align*}
		\sup_{\ell\in\ndZ} \Pr{\ed(\cS(\mV_j))=\ell} \le C_0j^{-2/3}.
	\end{align*}
	By Proposition~\ref{pro:lltvcore} we may select a sufficiently small constant $\epsilon>0$ and any constant $0 < \delta < 1/3$ so that
	\begin{align}
		\label{eq:wdy}
		&\sup_{\ell\in\ndZ} \sum_{1\le j\le\delta n} \Pr{\ed(\cV(\mM_n))=j} \Pr{\ed(\cS(\mV_j))=\ell} \\
		&\qquad\le \Pr{ \ed(\cV(\mM_n)) \le \frac{\epsilon n}{\log(n)}} + C_0 \left(\frac{\epsilon n}{\log(n)} \right)^{-2/3} \Pr{\ed(\cV(\mM_n))\le\delta n} \nonumber \\
		&\qquad= o(n^{-2/3}) + O\left( n^{-4/3}\log(n)^{2/3} \right) \nonumber\\
		&\qquad= o(n^{-2/3}). \nonumber
	\end{align}
	For $M>0$, set
	\begin{align*}
		\cI_n(M) := \{ j \in \ndZ \mid  |j-n/3| \le Mn^{2/3} \}.
	\end{align*}
	For fixed $M>0$ and all sufficiently large $n$, every $j\in\cI_n(M)$ satisfies $j>\delta n$. Hence using~\eqref{eq:wdy}
	\begin{multline*}
		n^{2/3}	\sup_{\ell\in\ndZ} \sum_{j\notin\cI_n(M)} \Pr{\ed(\cV(\mM_n))=j} \Pr{\ed(\cS(\mV_j))=\ell} \\
		\le o(1) + C_0\delta^{-2/3}	\Prb{ \left|\ed(\cV(\mM_n)) - n/3 \right| > Mn^{2/3}}.
	\end{multline*}
	The convergence in distribution in Proposition~\ref{pro:lltvcore} therefore implies
	\begin{align}
		\label{eq:buzz}
		\lim_{M\to\infty} \limsup_{n\to\infty} n^{2/3} \sup_{\ell\in\ndZ} \sum_{j\notin\cI_n(M)} \Pr{\ed(\cV(\mM_n))=j}\Pr{\ed(\cS(\mV_j))=\ell} = 0.
	\end{align}
	
	Fix $\eta>0$. By~\eqref{eq:buzz}, we may choose $M_1>0$ sufficiently large such that
	\begin{align}
		\label{eq:hamm}
		\limsup_{n\to\infty} n^{2/3} \sup_{\ell\in\ndZ} \sum_{j\notin\cI_n(M_1)} \Pr{\ed(\cV(\mM_n))=j} \Pr{\ed(\cS(\mV_j))=\ell} \le \eta.
	\end{align}
	With foresight, we enlarge $M_1$ further so that
	\begin{align}
		\label{eq:rex}
		\frac{\|h\|_\infty}{3^{-2/3}g_\cV} \int_{\ndR\setminus [-M_1/g_\cM,M_1/g_\cM]} h(u)\,\mathrm{d}u \le \eta.
	\end{align}
	For $M_2>0$, set
	\begin{align*}
		\cJ_n(M_2) := \left\{ \ell\in\ndZ \mid  \left| \ell-\frac{4n}{15}
		\right| \le M_2n^{2/3} \right\}.
	\end{align*}
	If $j\in\cI_n(M_1)$ and $\ell\notin\cJ_n(M_2)$, then
	\begin{align*}
		\left| \frac{4j}{5}-\ell \right| &\ge \left| \frac{4n}{15}-\ell\right| - \frac{4}{5} \left| j-\frac{n}{3} \right| \\
		&> \left( M_2-\frac{4M_1}{5} \right)n^{2/3}. 
	\end{align*}
	Moreover, $j/n\to1/3$ uniformly for $j\in\cI_n(M_1)$. By Proposition~\ref{pro:edsvm1} it follows that we may choose $M_2>4M_1/5$ sufficiently large so that
	\begin{align}
		\label{eq:sourceconditionaltail}
		\limsup_{n\to\infty} n^{2/3} \sup_{\substack{j\in\cI_n(M_1)\\ \ell\notin\cJ_n(M_2)}} \Pr{\ed(\cS(\mV_j))=\ell} \le	\eta.
	\end{align}
	With foresight, we  enlarge $M_2$ further so that
	\begin{align}
		\label{eq:slin}
		\frac{1}{g_\cS} \sup_{|x|\ge M_2/g_\cS} h(x) \le \eta.
	\end{align}
	Equations~\eqref{eq:hamm}, and \eqref{eq:sourceconditionaltail} imply
	\begin{align}
		\label{eq:potat}
		\limsup_{n\to\infty} n^{2/3} \sup_{\ell\notin\cJ_n(M_2)} \Pr{\ed(\cS(\mM_n))=\ell} \le 2\eta.
	\end{align}
	
	It remains to consider $\ell\in\cJ_n(M_2)$. Uniformly for $j\in\cI_n(M_1)$, we have
	\begin{align}
		\label{eq:sourceconditionalscaling}
		j^{2/3} = 3^{-2/3}n^{2/3}(1+o(1)).
	\end{align}
	Furthermore,
	\begin{align}
		\label{eq:sourceconditionalargument}
		\frac{4j/5-\ell}{g_\cV j^{2/3}} = \frac{ (4n/15-\ell)/n^{2/3} - \frac{4}{5}g_\cM \frac{n/3-j}{g_\cM n^{2/3}} }{g_\cV(j/n)^{2/3}}.
	\end{align}
	The density $h$ is bounded and uniformly continuous. Consequently, the uniform local limit theorems in Propositions~\ref{pro:lltvcore} and~\ref{pro:edsvm1}, together with~\eqref{eq:rex} ~\eqref{eq:sourceconditionalscaling} and
	\eqref{eq:sourceconditionalargument}, give 	uniformly for $\ell\in\cJ_n(M_2)$
	\begin{align*}
		&n^{2/3} \sum_{j\in\cI_n(M_1)} \Pr{\ed(\cV(\mM_n))=j} \Pr{\ed(\cS(\mV_j))=\ell} \nonumber \\
		&\qquad= \sum_{j\in\cI_n(M_1)} \frac{1}{g_\cM n^{2/3}} \frac{n^{2/3}}{g_\cV j^{2/3}} h\left(\frac{n/3-j}{g_\cM n^{2/3}}\right)	h\left(	\frac{4j/5-\ell}{g_\cV j^{2/3}}\right) + o(1) \\
		&\qquad = \frac{1}{3^{-2/3}g_\cV} \sum_{j\in\cI_n(M_1)} \frac{1}{g_\cM n^{2/3}} h\left( \frac{n/3-j}{g_\cM n^{2/3}} \right) h\left( \frac{ (4n/15-\ell)/n^{2/3} - \frac{4}{5}g_\cM \frac{n/3-j}{g_\cM n^{2/3}}	}{3^{-2/3}g_\cV} \right) + o(1)	\\
		&\qquad= \frac{1}{3^{-2/3}g_\cV} \int_{-M_1/g_\cM}^{M_1/g_\cM} h(u) h\left( \frac{(4n/15-\ell)/n^{2/3} - \frac{4}{5}g_\cM u } {3^{-2/3}g_\cV} \right) \,\mathrm{d}u + o(1) \\
		&\qquad= \frac{1}{3^{-2/3}g_\cV} \int_\ndR h(u) h\left( \frac{(4n/15-\ell)/n^{2/3} - \frac{4}{5}g_\cM u	}{3^{-2/3}g_\cV} \right) \,\mathrm{d}u + R +  o(1)
	\end{align*}
	for an error term $R$ satisfying $|R| \le \eta$.
	
	Let $Z(1)$ and $Z(2)$ be independent copies of $Z_{3/2}$. By the definition of $g_\cS$ and strict $3/2$-stability of $Z_{3/2}$,
	\begin{align*}
		\frac{4}{5}g_\cM Z(1) + 3^{-2/3}g_\cV Z(2) \eqdist g_\cS Z_{3/2}.
	\end{align*}
	Comparing densities yields, for every $y\in\ndR$,
	\begin{align*}
		\frac{1}{3^{-2/3}g_\cV}	\int_{\ndR} h(u) h\left( \frac{	y-\frac{4}{5}g_\cM u }{3^{-2/3}g_\cV}\right)\,\mathrm{d}u
		= \frac{1}{g_\cS} h\left( \frac{y}{g_\cS} \right).
	\end{align*}
	By \eqref{eq:hamm} it follows that
	\begin{align*}
		\limsup_{n\to\infty} \sup_{\ell\in\cJ_n(M_2)} \left|n^{2/3}	\Pr{\ed(\cS(\mM_n))=\ell} - \frac{1}{g_\cS}	h\left( \frac{4n/15-\ell}{g_\cS n^{2/3}} \right) \right| \le 2\eta.
	\end{align*}
	For $\ell\notin\cJ_n(M_2)$, Equations~\eqref{eq:potat} and
	\eqref{eq:slin} give
	\begin{align*}
		\limsup_{n\to\infty} \sup_{\ell\notin\cJ_n(M_2)} \left| n^{2/3} \Pr{\ed(\cS(\mM_n))=\ell} - \frac{1}{g_\cS}	h\left(	\frac{4n/15-\ell}{g_\cS n^{2/3}} \right) \right| \le 3\eta.
	\end{align*}
	Since $\eta>0$ was arbitrary, it follows that
	\begin{align*}
		\Pr{\ed(\cS(\mM_n))=\ell} = \frac{1}{g_\cS n^{2/3}} \left(  h\left(\frac{4n/15-\ell}{g_\cS n^{2/3}}  \right) + o(1) \right)
	\end{align*}
	uniformly for all integers $\ell \ge 0$. Using the values of $g_\cM$ and $g_\cV$ from Propositions~\ref{pro:lltvcore} and~\ref{pro:edsvm1}, respectively, we obtain
	\begin{align*}
		\left( \frac{4}{5}g_\cM \right)^{3/2} + \left( 3^{-2/3}g_\cV \right)^{3/2}
		= \frac{32}{45\sqrt{15}} + \frac{8}{75\sqrt{15}}
		= \frac{184}{225\sqrt{15}}.
	\end{align*}
	Hence
	\begin{align*}
		g_\cS^3 = \left( \frac{184}{225\sqrt{15}} \right)^2 = \frac{33856}{759375},
	\end{align*}
	which verifies the stated explicit value of $g_\cS$.
\end{proof}

\subsection{Scaling limit}
\label{sec:sourcesimplecorescaling}

For ease of notation, set
\[
	k_n=\ed(\cV(\mM_n)), \qquad	s_n=\ed(\cS(\mM_n)).
\]
Let $e_{n,1}, \ldots, e_{n,s_n}$ denote the edges of $\cS(\mM_n)$ in the canonical order of the decomposition of $\cV(\mM_n)$ from Subsection~\ref{sec:approximationthroughcore}, and let
\[
	\cV_{n,i}:=\cV_i(\cV(\mM_n)), \qquad W_{n,i}:=\ed(\cV_{n,i}), \qquad 1 \le i \le s_n.
\]
Thus,
\begin{align}
	\label{eq:atta}
	\sum_{i=1}^{s_n}W_{n,i}=k_n.
\end{align}
For a vertex $v \in \cS(\mM_n)$ let $I_n(v) \subset \{1, \ldots, s_n\}$ denote the set of indices $i$ for which $v$ is incident to $e_{n,i}$. As in Subsection~\ref{sec:approximationthroughcore}, we let $\nu_{\cS(\mM_n)}$ denote the probability measure on $\cS(\mM_n)$ given by
\[
	\nu_{\cS(\mM_n)}(\{v\}) = \frac{1}{2k_n} \sum_{i \in I_n(v)}W_{n,i}.
\]
We emphasise that this measure is obtained from the decomposition of the non-separable core $\cV(\mM_n)$ and hence does not take into account the components attached to $\cV(\mM_n)$ in the block-decomposition of $\mM_n$.

Conditionally on $k_n=k$, the simple core of $\cV(\mM_n)$ is distributed like the simple core of $\mV_k$. Proposition~\ref{pro:lltvcore} yields $k_n / n \convp 1/3$. By Corollary~\ref{eq:corfppvcore} and Lemma~\ref{le:simplecorepart1} we readily obtain:

\begin{lemma}
	\label{le:sourcesimplecorepart1}
	As $n \to \infty$,
	\begin{align*}
		\left( \cS(\mM_n), (8n/9)^{-1/4}d_{\cS(\mM_n)}, \nu_{\cS(\mM_n)} \right)
		\convd
		(\mathbf{M},d_{\mathbf{M}},\mu_{\mathbf{M}})
	\end{align*}
	in the Gromov--Hausdorff--Prokhorov sense.
\end{lemma}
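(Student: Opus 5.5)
The idea is to route the convergence of Corollary~\ref{eq:corfppvcore} through Lemma~\ref{le:simplecorepart1}, applied conditionally on the size $k_n=\ed(\cV(\mM_n))$. First I check that the measures match. Conditionally on $k_n=k$, the pair $(\cV(\mM_n),\cS(\mM_n))$ is distributed like $(\mV_k,\cS(\mV_k))$; the loop-map case has probability tending to zero and can be ignored. Under this identification, $\nu_{\cS(\mM_n)}$ is exactly the measure $\nu_{\cS(\mV_k)}$ of Subsection~\ref{sec:approximationthroughcore}, with normalisation $2k$. Also, $\mu_{\cV(\mM_n)}$ corresponds to $\mu_{\mV_k}$, because both are stationary distributions depending only on the core map.

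Next I upgrade Lemma~\ref{le:simplecorepart1} to a bound uniform over the relevant range of $k$. Fix $c>0$ and $\epsilon>0$, and write
\[
p_k := \Prb{d_{\mathrm{GHP}}\left((\mV_k,ck^{-1/4}d_{\mV_k},\mu_{\mV_k}),(\cS(\mV_k),ck^{-1/4}d_{\cS(\mV_k)},\nu_{\cS(\mV_k)})\right)>\epsilon}.
\]
Lemma~\ref{le:simplecorepart1} gives $p_k\to 0$ as $k\to\infty$. Conditioning on $k_n$ then yields
\[
\Prb{d_{\mathrm{GHP}}(\ldots)>\epsilon} \le \Prb{k_n<n/4} + \sup_{k\ge n/4} p_k \to 0,
\]
where I use $k_n/n\convp 1/3$ from Proposition~\ref{pro:lltvcore}.

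The remaining issue is that the distances are rescaled by $k_n^{-1/4}$, while we need $(8n/9)^{-1/4}$. I take $c=3^{-1/4}(8/9)^{-1/4}$, so that $ck_n^{-1/4}=(8n/9)^{-1/4}\,(n/(3k_n))^{1/4}$ and the ratio $(n/(3k_n))^{1/4}$ tends to $1$ in probability. Multiplying a metric by a factor $\lambda$ changes the GHP distance between a space and itself (with the same measure) by at most $|\lambda-1|$ times its diameter. The diameters $(8n/9)^{-1/4}\Di(\cV(\mM_n))$ are tight, because Corollary~\ref{eq:corfppvcore} gives convergence in distribution. So the rescaling error vanishes in probability. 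The same argument shows that the core and the simple core, both rescaled by $(8n/9)^{-1/4}$, are GHP-close in probability.

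Finally I combine this closeness with Corollary~\ref{eq:corfppvcore} and apply the triangle inequality for $d_{\mathrm{GHP}}$ (a Slutsky-type argument). This gives convergence of $\left(\cS(\mM_n),(8n/9)^{-1/4}d_{\cS(\mM_n)},\nu_{\cS(\mM_n)}\right)$ to $(\mathbf{M},d_{\mathbf{M}},\mu_{\mathbf{M}})$. The only mildly delicate step is turning the pointwise-in-$k$ convergence in probability into the uniform bound $\sup_{k\ge n/4}p_k\to0$ over the random size. This is immediate once one notes that $p_k\to0$ along all integers $k$, so its tail supremum tends to zero.
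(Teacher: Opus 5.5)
Your argument is correct and follows the same route as the paper. The paper notes that, conditionally on $k_n=\ed(\cV(\mM_n))=k$, the triple $(\cV(\mM_n),\cS(\mM_n),\nu_{\cS(\mM_n)})$ has the law of $(\mV_k,\cS(\mV_k),\nu_{\cS(\mV_k)})$, uses $k_n/n\convp1/3$, and combines Lemma~\ref{le:simplecorepart1} with Corollary~\ref{eq:corfppvcore}, all in a single ``readily obtain'' sentence; your uniformity-in-$k$ and rescaling steps are exactly the details that sentence omits. The one point you leave implicit is tightness of the rescaled diameter of $\cS(\mM_n)$, which holds because the simple core is isometrically embedded in $\cV(\mM_n)$ by~\eqref{eq:simplecoreisometric}.
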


The next lemma replaces the measure induced by the attached components with the stationary distribution on the simple core. The proof is analogous to~\cite[Lem. 5.3 and Cor. 6.2]{MR3729639}.

\begin{lemma}
	\label{le:sourcesimplecorepart2}
	The Prokhorov distance of the two measures $\nu_{\cS(\mM_n)}$ and $\mu_{\cS(\mM_n)}$ on the rescaled simple core $\left(\cS(\mM_n),(8n/9)^{-1/4}d_{\cS(\mM_n)}\right)$ satisfies
	\begin{align*}
		d_{\mathrm{P}}\left( \nu_{\cS(\mM_n)}, \mu_{\cS(\mM_n)} \right) \convp 0.
	\end{align*}
\end{lemma}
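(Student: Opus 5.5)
Conditionally on $k_n=k$ the pair consisting of $\cS(\mM_n)$ and its decorations $(\cV_{n,i})_{1\le i\le s_n}$ is distributed like $\cS(\mV_k)$ with $(\cV_i(\mV_k))_i$. Given the core and its size $s_n$, the sizes $(W_{n,i})_{i\ge2}$ are exchangeable: in the enriched tree encoding the fringe trees $F_i$, $2\le i\le \Delta$, hanging from the children of $v^*$ form an exchangeable family, independent of the decoration $\beta(v^*)=\cS(\mM_n)$. The first component $W_{n,1}=O_p(1)$ is negligible. The plan is to adapt \cite[Lem.~5.3]{MR3729639}. Exchangeable weights placed on the edges of a graph produce a measure close in Prokhorov distance to the one giving equal weights to the edges, as soon as no single weight is macroscopic and the weights obey a law of large numbers. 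The measure with equal edge weights is exactly the stationary distribution $\mu_{\cS(\mM_n)}$, since each edge gives mass to both endpoints.

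\textbf{Step 1.} We first obtain quantitative control of the weights. Proposition~\ref{pro:edsvm2}, combined with $k_n/n\convp 1/3$ and $s_n/k_n\convp 4/5$, gives $\max_i W_{n,i}=O_p(n^{2/3})=o_p(s_n)$ and $\sum_i W_{n,i}=k_n$. From the subcritical condensation picture \cite[Thm.~1.2]{MR4144886}, the $W_{n,i}$ behave like i.i.d.\ copies of $|\mT^{\cS}|$ conditioned on their sum. This yields $\sum_i W_{n,i}^2=o_p(s_n^2)$, which is the only input we need.

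\textbf{Step 2.} Next we reduce to a covering argument. Fix $\epsilon>0$. By Lemma~\ref{le:sourcesimplecorepart1} the rescaled cores are tight in the Gromov--Hausdorff sense. Hence with probability at least $1-\epsilon$ we may cover $\cS(\mM_n)$ by $N=N(\epsilon)$ balls of radius $\epsilon$, chosen measurably from the core alone, and partition the vertices into the corresponding cells $A_1,\dots,A_N$. Let $E_j$ be the set of edges indexed by an endpoint in $A_j$, counted with multiplicity as half-edges. It suffices to show that for each $j$
\[
\nu_{\cS(\mM_n)}(A_j)-\mu_{\cS(\mM_n)}(A_j)=\frac{1}{2k_n}\sum_{i}c_{j,i}\Bigl(W_{n,i}-\frac{k_n}{s_n}\Bigr)\convp 0,
\]
where $c_{j,i}\in\{0,1,2\}$ counts the endpoints of $e_{n,i}$ in $A_j$. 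If this holds, the Prokhorov distance is at most $\epsilon+\sum_j|\nu(A_j)-\mu(A_j)|$, which tends to zero in probability after sending $\epsilon\to0$.

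\textbf{Step 3.} This is the main step. Conditionally on the core, which fixes the coefficients $c_{j,i}$, and on $k_n$, the family $(W_{n,i})_{i\ge2}$ is exchangeable with fixed sum. The standard variance bound for sampling without replacement gives
\[
\mathrm{Var}\Bigl(\sum_i c_{j,i}W_{n,i}\,\Big|\,\cdot\Bigr)\le 4\sum_i \Bigl(W_{n,i}-\frac{k_n}{s_n}\Bigr)^2,
\]
up to a constant factor. By Step 1 the right-hand side is $o_p(k_n^2)$, and Chebyshev's inequality completes the argument. The expected obstacle is justifying the exchangeability and independence from the core, together with the second-moment bound $\sum_i W_{n,i}^2=o_p(n^2)$. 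My first attempt would be to deduce it from $\max_i W_{n,i}\le n^{2/3}\log n$ with high probability, since then $\sum_i W_{n,i}^2\le \max_i W_{n,i}\cdot k_n=o_p(n^2)$, so the crude bound already suffices.
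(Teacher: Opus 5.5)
Your proposal is correct and follows essentially the same route as the paper's proof. The steps match: GH-tightness from Lemma~\ref{le:sourcesimplecorepart1} gives boundedly many small-diameter cells determined by the core, exchangeability of the components $\cV_{n,i}$, $i\ge2$, gives a sampling-without-replacement variance bound controlled by $\max_i W_{n,i}/k_n\convp 0$, and Chebyshev together with the negligibility of $W_{n,1}$ finishes. One point needs tightening: $\sum_{i\ge2}W_{n,i}=k_n-W_{n,1}$ is not fixed given only the core and $k_n$, and your variance bound has a random right-hand side, so you should condition on the core, on $\cV_{n,1}$, and on the unordered family of the remaining components (the paper's $\sigma$-field $\cF_n$), under which the assignment to $e_{n,2},\ldots,e_{n,s_n}$ is a uniform permutation.
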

\begin{proof}
	 Proposition~\ref{pro:edsvm2}, together with Proposition~\ref{pro:lltvcore}, yields
	\begin{align}
		\label{eq:attasize}
		W_{n,1}=O_p(1),	\qquad	\max_{1 \le i \le s_n}W_{n,i}=O_p(n^{2/3}).
	\end{align}
	Moreover, Propositions~\ref{pro:lltvcore} and~\ref{pro:lltsourcesimplecore} yield
	\begin{align}
		\label{eq:sicosi}
		k_n=\frac{n}{3}+O_p(n^{2/3}), \qquad s_n=\frac{4n}{15}+O_p(n^{2/3}).
	\end{align}
	In particular, $s_n \ge 3$ with probability tending to $1$ as $n \to \infty$. Hence throughout the rest of the proof, we may safely ignore the case $s_n \le 2$ and set any term that would be ill-defined for $s_n \le 2$ to some adequate placeholder value.
	
	It follows from~\eqref{eq:atta},~\eqref{eq:attasize}, and~\eqref{eq:sicosi} that 
	\begin{align}
		\label{eq:sousim}
		\frac{\left(\sum_{i=2}^{s_n}W_{n,i}^2\right)^{1/2}}{\sum_{i=2}^{s_n}W_{n,i}}
		\le \frac{\left((\max_{1 \le i \le s_n}W_{n,i})\sum_{i=2}^{s_n}W_{n,i}\right)^{1/2}}{\sum_{i=2}^{s_n}W_{n,i}}  
		= \left( \frac{ \max_{1 \le i \le s_n}W_{n,i}}{k_n-W_{n,1}}\right)^{1/2} \convp 0.
	\end{align}
	Let $\nu_n^\circ$ and $\mu_n^\circ$ denote the probability measures on $\cS(\mM_n)$ given by
	\begin{align*}
		\nu_n^\circ(\{v\}) &= \frac{1}{2(k_n-W_{n,1})} \sum_{\substack{i \in I_n(v)\\2 \le i \le s_n}}W_{n,i},
		\\
		\mu_n^\circ(\{v\}) &= \frac{1}{2(s_n-1)} \left|\{i \in I_n(v)\mid 2 \le i \le s_n\}\right|.
	\end{align*}
	Fix constants $\epsilon,\delta>0$. Let $K_n$ denote the minimal number of balls of radius $\epsilon$ required to cover $\left(\cS(\mM_n), (8n/9)^{-1/4} d_{\cS(\mM_n)}\right)$. By Lemma~\ref{le:sourcesimplecorepart1}, the sequence of compact metric spaces
	\[
		\left(\cS(\mM_n), (8n/9)^{-1/4} d_{\cS(\mM_n)}\right)
	\]
	is tight with respect to the Gromov--Hausdorff metric. Hence there is a compact family $\cK$ of compact metric spaces such that, for all sufficiently large $n$, the rescaled simple core belongs to $\cK$ with probability at least $1-\delta$. By compactness of $\cK$ and \cite[Prop.~7.4.11(2)]{MR1835418}, the cardinalities of minimal $\epsilon$-nets are uniformly bounded over $\cK$.	Hence there exists an integer $K \ge 1$ such that for large enough $n$
	\begin{align}
		\label{eq:kbound}
		\Prb{K_n \le K} \ge 1 - \delta.
	\end{align}
	We may canonically select a partition
	\[
		A_{n,1},\ldots,A_{n,K_n}
	\]
	of the vertex set of $\left(\cS(\mM_n), (8n/9)^{-1/4} d_{\cS(\mM_n)}\right)$ into non-empty classes, each having diameter at most $2\epsilon$ in the rescaled metric.  For $1\le j\le K_n$ and $2\le i\le s_n$, let
	\[
		b_{n,i}^{(j)} := \frac{1}{2} \left|\{\text{endpoints of }e_{n,i}\text{ that belong to }A_{n,j}\}\right|.
	\]
	Thus $0\le b_{n,i}^{(j)}\le1$, and
	\begin{align*}
		\nu_n^\circ(A_{n,j}) &=\frac{ \sum_{i=2}^{s_n}W_{n,i}b_{n,i}^{(j)}}{\sum_{i=2}^{s_n}W_{n,i}},\\
		\mu_n^\circ(A_{n,j}) &= \frac{1}{s_n-1} \sum_{i=2}^{s_n}b_{n,i}^{(j)}.
	\end{align*}

	Conditionally on the simple core and on the distinguished component $\cV_{n,1}$,  the components $\cV_{n,2},\ldots,\cV_{n,s_n}$ are exchangeable. Let $\cF_n$ denote the sigma-field generated by the simple core, the distinguished component $\cV_{n,1}$, and the unordered family of the remaining components. Consequently, conditionally on $\cF_n$ the assignment of the unordered family of the remaining components (that is, all except $\cV_{n,1}$) to the edges $e_{n,2},\ldots,e_{n,s_n}$ is induced by a uniform random permutation.

	In particular, we have for every $2\le i\le s_n$,
	\[
	\Exb{W_{n,i} \mid \cF_n}
	=
	\frac{1}{s_n-1}
	\sum_{k=2}^{s_n}W_{n,k}.
	\]
	Hence for every $1 \le j \le K_n$
	\begin{align}
		\label{eq:fm}
		\Exb{\nu_n^\circ(A_{n,j})\mid\cF_n}
		&= \frac{1}{\sum_{k=2}^{s_n}W_{n,k}} \sum_{i=2}^{s_n} b_{n,i}^{(j)} \Exb{W_{n,i}\mid\cF_n} 
		\\
		&= \frac{1}{s_n-1} \sum_{i=2}^{s_n}b_{n,i}^{(j)} \nonumber\\
		&= \mu_n^\circ(A_{n,j}). \nonumber
	\end{align}
	Moreover, the variance formula for sampling without replacement yields
	\begin{align*}
		\Va{\nu_n^\circ(A_{n,j})\mid\cF_n} = \frac{ \sum_{i=2}^{s_n} \left( W_{n,i} - \frac{\sum_{k=2}^{s_n}W_{n,k}}{s_n-1} \right)^2
		}{\left(\sum_{i=2}^{s_n}W_{n,i}\right)^2}
		\frac{ \sum_{i=2}^{s_n} \left( b_{n,i}^{(j)} - \frac{\sum_{k=2}^{s_n}b_{n,k}^{(j)}}{s_n-1} \right)^2}{s_n-2}.
	\end{align*}
	Since $0\le b_{n,i}^{(j)}\le1$, we have with $\overline{b} := \frac{\sum_{k=2}^{s_n}b_{n,k}^{(j)}}{s_n-1}$ that
	\begin{align*}
		\sum_{i=2}^{s_n} \left(b_{n,i}^{(j)}-\overline{b}\right)^2
		&= \sum_{i=2}^{s_n} \left(b_{n,i}^{(j)}\right)^2 - (s_n-1)\overline{b}^{2} \\
		&\le \sum_{i=2}^{s_n}b_{n,i}^{(j)} - (s_n-1)\overline{b}^{2}\\
		&= (s_n-1)\overline{b}(1-\overline{b})\\
		&\le \frac{s_n-1}{4}.
	\end{align*}
	Hence, 
	\[
		\frac{\sum_{i=2}^{s_n} \left(b_{n,i}^{(j)}-\overline{b}\right)^2}{s_n-2} \le 1.
	\]
	Likewise,
	\[
		\sum_{i=2}^{s_n}\left(W_{n,i}-\frac{\sum_{k=2}^{s_n}W_{n,k}}{s_n-1}\right)^2 = \sum_{i=2}^{s_n}W_{n,i}^2 - (s_n - 1) \left( \frac{\sum_{k=2}^{s_n}W_{n,k}}{s_n-1}\right)^2 \le \sum_{i=2}^{s_n}W_{n,i}^2.
	\]
	Consequently,
	\begin{align}
		\label{eq:sm}
		\Va{\nu_n^\circ(A_{n,j})\mid\cF_n} &\le \frac{\sum_{i=2}^{s_n}W_{n,i}^2}{\left(\sum_{i=2}^{s_n}W_{n,i}\right)^2	}.
	\end{align}

	Note that $K_n$, the sets $A_{n,j}$, and the coefficients $b_{n,i}^{(j)}$ are $\cF_n$-measurable. For every constant $\eta>0$,~\eqref{eq:fm},~\eqref{eq:sm}, the conditional Chebyshev inequality and a union bound give 
	\begin{align*}
		\Prb{
			\max_{1\le j\le K_n} \left| \nu_n^\circ(A_{n,j}) - \mu_n^\circ(A_{n,j}) \right| > \eta  \middle| \cF_n } &\le \sum_{j=1}^{K_n} \Prb{ \left| \nu_n^\circ(A_{n,j}) - \mu_n^\circ(A_{n,j}) \right| > \eta \middle| \cF_n	} \\
		&\le \frac{K_n}{\eta^2}	\frac{\sum_{i=2}^{s_n}W_{n,i}^2}{\left(\sum_{i=2}^{s_n}W_{n,i}\right)^2}.
	\end{align*}
	By~\eqref{eq:sousim} and the fact that $\frac{\sum_{i=2}^{s_n}W_{n,i}^2}{\left(\sum_{i=2}^{s_n}W_{n,i}\right)^2} \le 1$ it follows that
	\begin{align}
		\label{eq:copro}
		\Prb{ K_n\le K,\, \max_{1\le j\le K_n} \left| \nu_n^\circ(A_{n,j}) - \mu_n^\circ(A_{n,j}) \right| > \eta } \le \frac{K}{\eta^2} \Exb{ \frac{\sum_{i=2}^{s_n}W_{n,i}^2}{\left(\sum_{i=2}^{s_n}W_{n,i}\right)^2}} \to 0.
	\end{align}
	
	Let $B$ be any subset of the vertex set of the simple core $\cS(\mM_n)$, and let $B^{3\epsilon}$ denote its $3\epsilon$-neighbourhood in the rescaled metric. If $A_{n,j}\cap B\neq\emptyset$, then $A_{n,j}\subset B^{3\epsilon}$, since $A_{n,j}$ has diameter at most $2\epsilon$. Hence, on the event $K_n\le K$,
	\begin{align*}
		\nu_n^\circ(B) &\le
		\sum_{\substack{1\le j\le K_n\\A_{n,j}\cap B\neq\emptyset}}
		\nu_n^\circ(A_{n,j})
		\\
		&\le \sum_{\substack{1\le j\le K_n\\A_{n,j}\cap B\neq\emptyset}} \mu_n^\circ(A_{n,j})
		+
		\sum_{j=1}^{K_n} \left|\nu_n^\circ(A_{n,j})	- \mu_n^\circ(A_{n,j})\right|\\
		&\le \mu_n^\circ(B^{3\epsilon}) + K \max_{1\le j\le K_n} \left|	\nu_n^\circ(A_{n,j})
		-
		\mu_n^\circ(A_{n,j})\right|.
	\end{align*}
	The same argument with $\nu_n^\circ$ and $\mu_n^\circ$ interchanged gives
	\[
		\mu_n^\circ(B) \le \nu_n^\circ(B^{3\epsilon}) + K \max_{1\le j\le K_n} \left| \nu_n^\circ(A_{n,j}) - \mu_n^\circ(A_{n,j})\right|.
	\]
	Consequently, by the definition of the Prokhorov distance,
	\begin{align*}
		\Prb{ K_n\le K,\,
			d_{\mathrm{P}}\left(\nu_n^\circ,\mu_n^\circ	\right)>3\epsilon}	\le \Prb{ K_n\le K,\,\max_{1\le j\le K_n}\left|			\nu_n^\circ(A_{n,j}) -	\mu_n^\circ(A_{n,j})\right| > \frac{3\epsilon}{K}}
		\to 0
	\end{align*}
	by~\eqref{eq:copro}. It follows from~\eqref{eq:kbound} that
	\begin{align*}
		\limsup_{n\to\infty}
		\Prb{d_{\mathrm{P}}\left( \nu_n^\circ, \mu_n^\circ \right)>3\epsilon } &\le \limsup_{n\to\infty}\Prb{K_n>K} \le\delta.
	\end{align*}
	Since $\epsilon,\delta>0$ were arbitrary, this proves
	\begin{align}
		\label{eq:proko} 
		d_{\mathrm{P}}\left(\nu_n^\circ, \mu_n^\circ \right) \convp 0.
	\end{align}

	Let $\eta_{e_{n,1}}$ denote the probability measure that assigns mass $1/2$ to each endpoint of $e_{n,1}$. We have
	\begin{align*}
		\nu_{\cS(\mM_n)}
		&=
		\frac{W_{n,1}}{k_n}\eta_{e_{n,1}}
		+
		\frac{k_n-W_{n,1}}{k_n}\nu_n^\circ,
		\\
		\mu_{\cS(\mM_n)}
		&=
		\frac{1}{s_n}\eta_{e_{n,1}}
		+
		\frac{s_n-1}{s_n}\mu_n^\circ.
	\end{align*}
	Consequently, by~\eqref{eq:attasize} and~\eqref{eq:sicosi}
	\begin{align*}
		d_{\mathrm{P}}(\nu_{\cS(\mM_n)},\nu_n^\circ)
		&\le
		\frac{W_{n,1}}{k_n} \convp 0,
		\\
		d_{\mathrm{P}}(\mu_{\cS(\mM_n)},\mu_n^\circ)
		&\le
		\frac{1}{s_n} \convp 0.
	\end{align*}
	By~\eqref{eq:proko} it follows that
	\begin{align*}
		d_{\mathrm{P}}\left(
		\nu_{\cS(\mM_n)},
		\mu_{\cS(\mM_n)}
		\right) \le o_p(1) + d_{\mathrm{P}}(\nu_n^\circ,\mu_n^\circ)
		\convp 0.
	\end{align*}
\end{proof}

Combining Lemmas~\ref{le:sourcesimplecorepart1} and~\ref{le:sourcesimplecorepart2} yields the scaling limit of the simple core equipped with its stationary distribution:

\begin{corollary}
	\label{co:corfppsimplecore}
	As $n \to \infty$,
	\begin{align*}
		\left( \cS(\mM_n), (8n/9)^{-1/4}d_{\cS(\mM_n)}, \mu_{\cS(\mM_n)} \right)
		\convd
		(\mathbf{M},d_{\mathbf{M}},\mu_{\mathbf{M}})
	\end{align*}
	in the Gromov--Hausdorff--Prokhorov sense.
\end{corollary}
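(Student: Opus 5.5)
The plan is to combine Lemma~\ref{le:sourcesimplecorepart1} and Lemma~\ref{le:sourcesimplecorepart2} through a single elementary comparison. Both measures live on the same rescaled compact metric space
\[
X_n := \left(\cS(\mM_n), (8n/9)^{-1/4} d_{\cS(\mM_n)}\right).
\]
Take the identity map of $X_n$ as the correspondence (or isometric embedding) in the definition of $d_{\mathrm{GHP}}$. The Gromov--Hausdorff part then contributes zero, and the Prokhorov part is exactly $d_{\mathrm{P}}(\nu_{\cS(\mM_n)}, \mu_{\cS(\mM_n)})$. Hence
\[
d_{\mathrm{GHP}}\left( (X_n,\nu_{\cS(\mM_n)}), (X_n,\mu_{\cS(\mM_n)}) \right) \le d_{\mathrm{P}}\left( \nu_{\cS(\mM_n)}, \mu_{\cS(\mM_n)} \right).
\]
By Lemma~\ref{le:sourcesimplecorepart2}, the right-hand side converges to $0$ in probability.

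Next I will apply a Slutsky-type argument in the Polish space of isometry classes of compact measured metric spaces with the GHP metric. Lemma~\ref{le:sourcesimplecorepart1} gives $(X_n,\nu_{\cS(\mM_n)}) \convd (\mathbf{M}, d_{\mathbf{M}}, \mu_{\mathbf{M}})$. A sequence at GHP distance $o_p(1)$ from a sequence converging in distribution converges to the same limit. To see this, take a bounded Lipschitz test function $F$ of GHP-isometry classes and bound
\[
|\Ex{F(A_n)} - \Ex{F(B_n)}| \le \Ex{\min(2\|F\|_\infty, \mathrm{Lip}(F)\, d_{\mathrm{GHP}}(A_n,B_n))} \to 0,
\]
where $A_n$ and $B_n$ denote the two measured versions of $X_n$. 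The Portmanteau theorem then yields the claim.

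There is no real obstacle; the substantive work was done in the two lemmas. The only point needing care is that both measures are supported on the same vertex set with the same metric, which is ensured because $\nu_{\cS(\mM_n)}$ was defined directly on the vertices of $\cS(\mM_n)$. The degenerate events where $\cS(\mM_n)$ is a placeholder, or where $s_n \le 2$, have probability tending to zero by Proposition~\ref{pro:lltsourcesimplecore}, so they can be ignored.
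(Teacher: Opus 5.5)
Your proposal is correct and takes the paper's route: the paper obtains this corollary simply by combining Lemma~\ref{le:sourcesimplecorepart1} with Lemma~\ref{le:sourcesimplecorepart2}. You have spelled out the two routine steps the paper leaves implicit, namely the bound $d_{\mathrm{GHP}} \le d_{\mathrm{P}}$ for two measures on the same metric space and the Slutsky-type transfer of convergence in distribution.
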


\section{Transfer between different mixtures}

Our aim in this section is to transfer the scaling limit of $\cS(\mM_n)$ in  Corollary~\ref{co:corfppsimplecore} to a scaling limit of~$\cS(\mV_n)$.

\begin{lemma}
	\label{lem:stabletr}
	Let $h:\ndR\to[0,\infty)$ be a probability density that is continuous and whose Fourier transform has no zeros. Let $E$ be a Polish space. For every $n\ge 1$ and $k\in\ndZ$, let $Z_{n,k}$ be an $E$-valued random variable.
	
	Let $(A_n)_{n \ge 1}$ and $(B_n)_{n\ge 1}$ be integer-valued random
	variables, independent of $(Z_{n,k})_{n \ge 1, k \in \ndZ}$. Assume that there are constants $\mu_A,\mu_B,g_A,g_B>0$ such that for every $C>0$,
	\begin{align}
		\label{eq:lltaaa}
		\sup_{\substack{k\in\ndZ\\ |k - \mu_A n|\le C n^{2/3}}} \left| g_A n^{2/3}\Prb{A_n=k} - h\left( \frac{\mu_A n - k}{g_A n^{2/3}} \right) \right| \to 0,
	\end{align}
	and
	\begin{align}
		\label{eq:lltbbb}
		\sup_{\substack{k\in\ndZ\\ |k-\mu_B n|\le C n^{2/3}}} \left| g_B n^{2/3}\Prb{B_n=k} - h\left( \frac{\mu_B n-k}{g_B n^{2/3}} \right) \right| \to 0
	\end{align}
	as $n\to\infty$. Set $r = \frac{\mu_B}{\mu_A}$. For $\theta \in \ndR$ and all sufficiently large $n$, define
	\begin{align*}
		m_n(\theta) = \left\lfloor rn + \frac{\theta}{\mu_A}n^{2/3} \right\rfloor.
	\end{align*}
	Suppose that there is an $E$-valued random variable $Z$ such that, for every fixed $\theta \in \ndR$,
	\[
	Z_{n,A_{m_n(\theta)}} \convd Z.
	\]
	Then
	\[
	Z_{n,B_n} \convd Z.
	\]
\end{lemma}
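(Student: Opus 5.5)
The plan is to test against bounded continuous functions, convert both mixtures into integrals of one bounded function of the rescaled index against two continuous densities, and then use Wiener's Tauberian theorem. Fix a bounded continuous $F:E\to\ndR$ and set $f_n(k):=\Exb{F(Z_{n,k})}$, so that $|f_n|\le\|F\|_\infty$. By independence of $(A_n)$, $(B_n)$ from $(Z_{n,k})$,
\[
\Exb{F(Z_{n,B_n})}=\sum_{k}\Prb{B_n=k}f_n(k),\qquad \Exb{F(Z_{n,A_{m_n(\theta)}})}=\sum_k\Prb{A_{m_n(\theta)}=k}f_n(k).
\]
Next I put everything on the common scale $x=(\mu_Bn-k)/n^{2/3}$. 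Define the step function $\varphi_n(x):=f_n(k)$ for $x\in[(\mu_Bn-k)/n^{2/3},(\mu_Bn-k+1)/n^{2/3})$. Since $\mu_A m_n(\theta)=\mu_Bn+\theta n^{2/3}+O(1)$ and $g_A m_n(\theta)^{2/3}=\sigma n^{2/3}(1+o(1))$ with $\sigma:=g_Ar^{2/3}$, the hypotheses suggest the following.
\begin{itemize}
\item The law of $x$ under $A_{m_n(\theta)}$ is close to the density $p_\theta(x):=\sigma^{-1}h((x+\theta)/\sigma)$.
\item The law of $x$ under $B_n$ is close to $q(x):=g_B^{-1}h(x/g_B)$.
\end{itemize}

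The first concrete step is to show
\[
\Exb{F(Z_{n,A_{m_n(\theta)}})}=\int\varphi_n p_\theta\,\mathrm{d}x+o(1),\qquad \Exb{F(Z_{n,B_n})}=\int\varphi_n q\,\mathrm{d}x+o(1).
\]
For this I truncate to a window $|x|\le C$. Inside the window, the uniform local limit theorems \eqref{eq:lltaaa} and \eqref{eq:lltbbb} replace point probabilities by $n^{-2/3}$ times the density, with $o(1)$ total error. Uniform continuity of $h$ on compacts absorbs the changes $\sigma_n\to\sigma$ and the $O(n^{-2/3})$ shifts inside the arguments of $h$.

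Outside the window, both the discrete mass and the density mass are small for large $C$. Indeed, the local limit theorems give that the discrete mass inside the window is $\int_{|x|\le C}(\text{density})+o(1)$, which tends to $1$ as $C\to\infty$ because $h$ is a probability density. Together with $|\varphi_n|\le\|F\|_\infty$, this controls the tails. This Riemann-sum bookkeeping is routine but is where care is needed, since only uniform statements on windows of width $Cn^{2/3}$ are available.

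The hypothesis $Z_{n,A_{m_n(\theta)}}\convd Z$ now says that with $c:=\Exb{F(Z)}$ we have $\int(\varphi_n-c)p_\theta\,\mathrm{d}x\to0$ for every $\theta$, using $\int p_\theta=1$. The target is $\int(\varphi_n-c)q\,\mathrm{d}x\to0$.

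The key step, which I expect to be the conceptual heart, is Wiener's Tauberian theorem. The translates $p_\theta$ are translates of the single function $p_0\in L^1(\ndR)$, and the Fourier transform of $p_0$ is a rescaling of that of $h$, so it has no zeros. Hence the linear span of $\{p_\theta:\theta\in\ndR\}$ is dense in $L^1(\ndR)$. Given $\varepsilon>0$, I choose finitely many $\theta_1,\dots,\theta_N$ and reals $a_1,\dots,a_N$ with $\|q-\sum_i a_ip_{\theta_i}\|_{L^1}\le\varepsilon$. Then
\[
\limsup_{n}\Bigl|\int(\varphi_n-c)q\,\mathrm{d}x\Bigr|\le \limsup_n\sum_i|a_i|\Bigl|\int(\varphi_n-c)p_{\theta_i}\,\mathrm{d}x\Bigr|+2\|F\|_\infty\varepsilon=2\|F\|_\infty\varepsilon.
\]
Here the uniform bound $\|\varphi_n-c\|_\infty\le2\|F\|_\infty$ is what lets a merely $L^1$-approximation suffice, and the coefficients $a_i$ are fixed before $n\to\infty$, so possible signs or large sizes of the $a_i$ do no harm. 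Letting $\varepsilon\to0$ gives $\Exb{F(Z_{n,B_n})}\to\Exb{F(Z)}$ for all bounded continuous $F$, which is the claimed convergence in distribution.
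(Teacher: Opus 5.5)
Your proposal is correct and follows essentially the same route as the paper. Both pass to the common scale $(k-\mu_B n)/n^{2/3}$, identify the limiting laws as the translates $p_\theta$ and the density $q$, and combine Wiener's $L^1$ approximation theorem with the uniform bound $2\|F\|_\infty$ on $\Exb{F(Z_{n,k})}-\Exb{F(Z)}$. The only difference is bookkeeping: the paper gets $L^1$ convergence of piecewise-constant densities from pointwise convergence via Scheff\'e's lemma and then projects to $\ell^1$, whereas you truncate to windows and integrate a step-function version of $\Exb{F(Z_{n,k})}$ against the limiting densities, which amounts to the same thing.
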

\begin{proof}
	Fix $\theta\in\ndR$. Since $m_n(\theta) / n \to r > 0$, the integer $m_n(\theta)$ is positive for all sufficiently large $n$. Set
	\[
		a = g_A r^{2/3}, \qquad b = g_B,
	\]
	and define
	\[
		p_\theta(u) = \frac{1}{a} h\left( \frac{\theta-u}{a} \right), \qquad
		q(u) = \frac{1}{b} h\left(-\frac{u}{b}\right).
	\]
	Both $p_\theta$ and $q$ are probability densities. Moreover,
	\[
		p_\theta(u) = p_0(u - \theta),
	\]
	so the family $(p_\theta)_{\theta\in\ndR}$ is precisely the family of translates of $p_0$. For $k \in \ndZ$ set
	\[
		u_{n,k} = \frac{k-\mu_B n}{n^{2/3}}
	\]
	and the interval
	\[
		I_{n,k} = \left[ u_{n,k}-\frac12n^{-2/3}, u_{n,k}+\frac12n^{-2/3} \right[.
	\]
	The intervals $(I_{n,k})_{k\in\ndZ}$ form a partition of $\ndR$, and
	each interval has length $n^{-2/3}$. Write
	\[
		p_{n,\theta}(k) := \Prb{A_{m_n(\theta)}=k}, \qquad q_n(k) := \Prb{B_n=k}.
	\]
	Define piecewise constant functions $P_{n,\theta}$ and $Q_n$ by
	\[
		P_{n,\theta}(u) = n^{2/3} p_{n,\theta}(k), \qquad Q_n(u) = n^{2/3}q_n(k), \qquad u \in I_{n,k}.
	\]
	These are probability densities. Indeed,
	\[
		\int_{I_{n,k}}P_{n,\theta}(u)\,\mathrm{d}u = p_{n,\theta}(k), \qquad
		\int_{I_{n,k}}Q_n(u)\,\mathrm{d}u = q_n(k),
	\]
	and hence
	\[
		\int_{\ndR}P_{n,\theta}(u)\,\mathrm{d}u = \int_{\ndR}Q_n(u)\,\mathrm{d}u = 1.
	\]
	Fix $u \in \ndR$, and let $k = k(n,u)$ be the unique integer such that
	$u \in I_{n,k}$. By the definition of $I_{n,k}$,
	\[
		|u_{n,k}-u| \le \frac{1}{2} n^{-2/3},
	\]
	and therefore
	\[
		u_{n,k} \to u
	\]
	as $n \to \infty$. The definition of $m_n(\theta)$ gives
	\[
		m_n(\theta) = rn + \frac{\theta}{\mu_A} n^{2/3} + O(1).
	\]
	Multiplying by $\mu_A$ and using $\mu_A r=\mu_B$, we obtain
	\begin{align}
		\label{eq:trace}
		\mu_A m_n(\theta) = \mu_B n + \theta n^{2/3} + O(1).
	\end{align}
	Also, because $m_n(\theta)/n\to r$,
	\begin{align}
		\label{eq:trasca}
		m_n(\theta)^{2/3} = r^{2/3}n^{2/3}(1+o(1)).
	\end{align}
	Since
	\[
		k = \mu_B n + u_{n,k} n^{2/3},
	\]
	Equation \eqref{eq:trace} implies
	\[
		\mu_A m_n(\theta) - k = (\theta-u_{n,k})n^{2/3} + O(1).
	\]
	Together with \eqref{eq:trasca}, this gives
	\begin{equation}
		\label{eq:traama}
		\frac{\mu_A m_n(\theta)-k}
		{g_A m_n(\theta)^{2/3}} \to \frac{\theta-u}{g_A r^{2/3}} = \frac{\theta-u}{a},
	\end{equation}
	and
	\begin{equation}
		\label{eq:traapre}
		\frac{n^{2/3}}{g_A m_n(\theta)^{2/3}} \to \frac{1}{g_A r^{2/3}} = \frac{1}{a}.
	\end{equation}
	For fixed $u$ and $\theta$, the sequence in \eqref{eq:traama} remains bounded. Therefore the local limit estimate \eqref{eq:lltaaa} applies. Using also the continuity of  $h$, we obtain
	\begin{align*}
		P_{n,\theta}(u) &= n^{2/3}\Prb{A_{m_n(\theta)}=k} \\
		&= \frac{n^{2/3}}{g_A m_n(\theta)^{2/3}} \left(
		h\left( \frac{\mu_A m_n(\theta)-k}{g_A m_n(\theta)^{2/3}}\right) + o(1)
		\right)
		\\
		&\to
		\frac1a h\left(\frac{\theta-u}{a}\right)
		=
		p_\theta(u).
	\end{align*}
	Moreover, we have
	\[
		\frac{\mu_B n-k}{g_B n^{2/3}} = - \frac{u_{n,k}}{b} \to -\frac{u}{b}.
	\]
	Hence by \eqref{eq:lltbbb} 
	\begin{align*}
		Q_n(u)
		&=
		n^{2/3}\Prb{B_n=k}
		\\
		&= \frac{1}{g_B} \left(
		h\left( \frac{\mu_B n-k}{g_B n^{2/3}}\right) + o(1)
		\right)
		\\
		&\to \frac{1}{b} h\left(-\frac{u}{b}\right) = q(u).
	\end{align*}
	We have proved that, for every fixed $\theta$,
	\[
		P_{n,\theta}(u) \to p_\theta(u), \qquad Q_n(u) \to q(u)
	\]
	for every $u\in\ndR$. All the functions involved are probability densities. Hence we may apply Scheff\'e's
	lemma, yielding
	\begin{equation}
		\label{eq:profi}
		\|P_{n,\theta}-p_\theta\|_{L^1} \to 0,
		\qquad \|Q_n-q\|_{L^1} \to 0.
	\end{equation}
	For $g\in L^1(\ndR)$, define
	\[
		(\Pi_n g)(k) = \int_{I_{n,k}}g(u)\,\mathrm{d}u, \qquad k \in \ndZ.
	\]
	The map $\Pi_n: L^1(\ndR) \to \ell^1(\ndZ)$ is a contraction, since
	\begin{align*}
		\|\Pi_n g\|_{\ell^1} =
		\sum_{k\in\ndZ} \left| \int_{I_{n,k}}g(u)\,\mathrm{d}u \right|
		\le \sum_{k\in\ndZ} \int_{I_{n,k}}|g(u)|\,\mathrm{d}u = \|g\|_{L^1}.
	\end{align*}
	By construction,
	\[
		p_{n,\theta} = \Pi_nP_{n,\theta}, \qquad q_n = \Pi_nQ_n.
	\]
	It follows from \eqref{eq:profi} that
	\begin{align}
		\label{eq:daa}
		\|p_{n,\theta}-\Pi_np_\theta\|_{\ell^1} = \|\Pi_n(P_{n,\theta}-p_\theta)\|_{\ell^1}  \le \|P_{n,\theta}-p_\theta\|_{L^1}
		\to 0,
	\end{align}
	and
	\begin{align}
		\label{eq:dbb}
		\|q_n-\Pi_nq\|_{\ell^1} &= \|\Pi_n(Q_n-q)\|_{\ell^1} \le \|Q_n-q\|_{L^1} \to 0.
	\end{align}
	Let $\widehat{h}(x) = \int_{\ndR} h(t) e^{itx} \,\mathrm{d}t$ denote the Fourier transform of $h$. Since
	\[
		p_0(u) = \frac{1}{a} h\left(-\frac{u}{a}\right),
	\]
	the Fourier transform of $p_0$ is given by
	\[
		\widehat{p}_0(u) = \int_{\ndR} \frac{1}{a} h\left(-\frac{t}{a}\right) e^{itu} \,\mathrm{d}t = \widehat{h}(-au).
	\]
	We assumed that $\widehat{h}$ has no zeros, hence $\widehat{p}_0$ has no zeros. By Wiener's $L^1$ approximation theorem \cite{zbMATH03004303}, the linear span of the translates of $p_0$ is dense in $L^1(\ndR)$. In other words, since
	\[
		p_\theta(x) = p_0(x - \theta),
	\]
	it follows that the linear span of $(p_\theta)_{\theta \in \ndR}$ is dense in $L^1(\ndR)$. Hence for every $\epsilon>0$, there exist $\theta_1,\ldots,\theta_J\in\ndR$	and	$\alpha_1,\ldots,\alpha_J\in\ndR$ such that
	\begin{equation}
		\label{eq:wcaaa}
		\left\| q-\sum_{j=1}^J\alpha_jp_{\theta_j} \right\|_{L^1} <\epsilon.
	\end{equation}
	We have
	\begin{align*}
		\left\| q_n-\sum_{j=1}^J\alpha_jp_{n,\theta_j} \right\|_{\ell^1} &\le 
		\|q_n-\Pi_nq\|_{\ell^1} + \left\| \Pi_n\left( q-\sum_{j=1}^J\alpha_jp_{\theta_j} \right) \right\|_{\ell^1} \\
		&\quad\,+ \sum_{j=1}^J |\alpha_j| \|\Pi_np_{\theta_j}-p_{n,\theta_j}\|_{\ell^1}.
	\end{align*}
	The first summand tends to zero by~\eqref{eq:dbb}. The second summand is at most $\epsilon$ by \eqref{eq:wcaaa}. Every summand in the third term tends to zero by~\eqref{eq:daa}. Hence
	\begin{align}
		\label{eq:lattw}
		\limsup_{n\to\infty}\left\|q_n-\sum_{j=1}^J\alpha_jp_{n,\theta_j}\right\|_{\ell^1}\le \epsilon.
	\end{align}
	Let $F: E \to \ndR$ be bounded and continuous. Set
	\[
		b_{n,k} = \Ex{F(Z_{n,k})} - \Ex{F(Z)}.
	\]
	Then
	\begin{align}
		\label{eq:bnk}
		|b_{n,k}| \le 2\|F\|_\infty.
	\end{align}
	Since $A_{m_n(\theta)}$ is independent of $(Z_{n,k})_{n \ge 1, k \in \ndZ}$, we have
	\begin{align*}
		\sum_{k\in\ndZ}p_{n,\theta}(k)b_{n,k} &= \sum_{k\in\ndZ} \Prb{A_{m_n(\theta)}=k}	\bigl(\Ex{F(Z_{n,k})}-\Ex{F(Z)}\bigr)\\
		&=\Ex{F(Z_{n,A_{m_n(\theta)}})}	- \Ex{F(Z)}.
	\end{align*}
	We assumed that
	\[
		Z_{n,A_{m_n(\theta)}} \convd Z
	\]
	for every fixed $\theta$. Since $F$ is bounded and continuous, it follows that
	\begin{equation}
		\label{eq:stco}
		\sum_{k\in\ndZ}p_{n,\theta}(k)b_{n,k}
		\to 0
	\end{equation}
	as $n \to \infty$ for every fixed $\theta\in\ndR$. Analogously,  independence of $B_n$ from $(Z_{n,k})_{n \ge 1, k \in \ndZ}$ yields
	\[
		\Ex{F(Z_{n,B_n})}-\Ex{F(Z)} = \sum_{k\in\ndZ}q_n(k)b_{n,k}.
	\]
	Using~\eqref{eq:bnk} we obtain
	\begin{align*}
		\left|\sum_{k\in\ndZ}q_n(k)b_{n,k}\right|&\le \left|\sum_{k\in\ndZ}	\left(q_n(k) - \sum_{j=1}^J \alpha_jp_{n,\theta_j}(k)	\right)b_{n,k}\right|+\sum_{j=1}^J|\alpha_j|\left|\sum_{k\in\ndZ}p_{n,\theta_j}(k)b_{n,k}\right|\\
		&\le2\|F\|_\infty \left\|q_n-\sum_{j=1}^J\alpha_jp_{n,\theta_j}	\right\|_{\ell^1}+\sum_{j=1}^J|\alpha_j|\left| \sum_{k\in\ndZ}	p_{n,\theta_j}(k)b_{n,k}\right|.
	\end{align*}
	By \eqref{eq:stco} and \eqref{eq:lattw} it follows that
	\[
		\limsup_{n\to\infty} \left| \sum_{k\in\ndZ}q_n(k)b_{n,k}\right| \le 2\|F\|_\infty\epsilon.
	\]
	Since $\epsilon>0$ is arbitrary, this shows that
	\[
		\sum_{k\in\ndZ}q_n(k)b_{n,k} \to 0.
	\]
	Equivalently,
	\[
		\Ex{F(Z_{n,B_n})} \to \Ex{F(Z)}.
	\]
	This holds for every bounded continuous function $F:E\to\ndR$. Hence
	\[
		Z_{n,B_n}\convd Z.
	\]
	This completes the proof.
\end{proof}

\begin{corollary}
	\label{co:targetsico}
	As $n\to\infty$,
	\begin{align*}
		\left(\cS(\mV_n),	\left(\frac{3}{8n}\right)^{1/4}d_{\cS(\mV_n)}, \mu_{\cS(\mV_n)}\right)	\convd		(\mathbf{M},d_{\mathbf{M}},\mu_{\mathbf{M}}).
	\end{align*}
\end{corollary}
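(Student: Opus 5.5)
We apply Lemma~\ref{lem:stabletr} with $E$ the Polish space of isometry classes of compact measured metric spaces equipped with $d_{\mathrm{GHP}}$. For $k \ge 1$ let $\mS_k$ denote a uniform simple non-separable planar map with $k$ edges. We set
\[
	Z_{n,k} := \left(\mS_k, \left(\tfrac{3}{8n}\right)^{1/4} d_{\mS_k}, \mu_{\mS_k}\right),
\]
with an arbitrary placeholder for $k \le 0$, and take $(\mS_k)_k$ independent of everything else. We choose $B_n = \ed(\cS(\mV_n))$ and $A_m = \ed(\cS(\mM_m))$, both independent of $(Z_{n,k})$.

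The first step is to check the mixture representation. Conditionally on $\ed(\cS(\mV_n)) = k$, the core $\cS(\mV_n)$ is uniform among simple non-separable maps with $k$ edges, because the decoration $\beta^\cS_{n+1}(v^*)$ is uniform given the outdegree of $v^*$. Since $\cV(\mM_m)$ is conditionally uniform given its size, the same holds for $\cS(\mM_m)$. Hence $Z_{n,B_n}$ has the law of the target in Corollary~\ref{co:targetsico}, and $Z_{n,A_m}$ has the law of $\cS(\mM_m)$ rescaled by $(3/(8n))^{1/4}$.

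Next we verify the local limit hypotheses. By Proposition~\ref{pro:edsvm1}, \eqref{eq:lltbbb} holds with $\mu_B = 4/5$ and $g_B = g_\cV$. By Proposition~\ref{pro:lltsourcesimplecore}, \eqref{eq:lltaaa} holds with $\mu_A = 4/15$ and $g_A = g_\cS$, with the same density $h$ of $Z_{3/2}$. This $h$ is continuous. Its Fourier transform is $\exp(\psi)$ for a Lévy--Khintchine exponent $\psi$, since a stable law is infinitely divisible, so it has no zeros. This gives $r = 3$ and $m_n(\theta) = 3n + O(n^{2/3})$.

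It remains to check $Z_{n,A_{m_n(\theta)}} \convd \mathbf{M}$ for each fixed $\theta$. By Corollary~\ref{co:corfppsimplecore} applied along the sequence $m = m_n(\theta)$,
\[
	\left(\cS(\mM_m), (8m/9)^{-1/4} d, \mu\right) \convd \mathbf{M}.
\]
Since $m/(3n) \to 1$, we have $(8m/9)^{-1/4} = (8n/3)^{-1/4}(1+o(1)) = (3/(8n))^{1/4}(1+o(1))$. Rescaling the metric by a factor $1+o(1)$ changes the GHP distance by at most $o(1)$ times the diameter, and the diameter is tight. Lemma~\ref{lem:stabletr} then gives the claim.

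The main obstacle is the perturbation step: we need the source scaling limit to be insensitive to the $n^{2/3}$ shift in $m_n(\theta)$. This is handled because Corollary~\ref{co:corfppsimplecore} holds along any sequence $m \to \infty$, and the rescaling error is controlled by tightness of the diameters.
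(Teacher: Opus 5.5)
Your proof is correct and follows the paper's argument essentially step for step. You use the same mixture representation through independent uniform simple maps $\mS_k$, the same choices $A_m=\ed(\cS(\mM_m))$ and $B_n=\ed(\cS(\mV_n))$ with $r=3$ in Lemma~\ref{lem:stabletr}, and the same rescaling $(8m_n(\theta)/9)^{-1/4}\sim(3/(8n))^{1/4}$ to feed in Corollary~\ref{co:corfppsimplecore}. The only differences are cosmetic: you justify that $\widehat h$ has no zeros via infinite divisibility rather than the explicit formula $\exp((-it)^{3/2})$, and $\mS_2$ should also be a placeholder, since no simple non-separable map has exactly two edges.
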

\begin{proof}
	For every integer $k \in \ndN \setminus \{2\}$, let $\mS_k$ denote a uniform rooted simple non-separable planar map with $k$ edges. We take the family $(\mS_k)_{k \in \ndN \setminus \{2\}}$ to be independent, and independent of the sequences of maps $(\mM_n)_{n \ge 1}$ and $(\mV_n)_{n \ge 1}$.
	
	Let $E$ be the Polish space of isometry classes of compact measured metric spaces equipped with the Gromov--Hausdorff--Prokhorov metric. For all $n\ge 1$ and $k \in \ndN \setminus \{2\}$, define the $E$-valued random variable
	\begin{align*}
		Z_{n,k}:= \left( \mS_k, \left(\frac{3}{8n}\right)^{1/4} d_{\mS_k}, \mu_{\mS_k} \right).
	\end{align*}
	We set $Z_{n,k}$ to an arbitrary placeholder value when $k=2$ or when $k \le 0$.
	
	We verify the assumptions of Lemma~\ref{lem:stabletr}. The density $h$ of the stable random variable $Z_{3/2}$ is continuous. Its Fourier transform is given by
	\begin{align*}
		\widehat{h}(t) = \Ex{\exp(itZ_{3/2})} = \exp((-it)^{3/2}),
		\qquad t\in\ndR,
	\end{align*}
	where the principal branch of  $z \mapsto z^{3/2}$ is used. Hence $\widehat{h}$ is integrable and has no zeros. 
	
	Proposition~\ref{pro:lltsourcesimplecore} and Proposition~\ref{pro:edsvm1} ensure that~\eqref{eq:lltaaa} and~\eqref{eq:lltbbb} hold with
	\begin{align*}
		A_n=\ed(\cS(\mM_n)),
		\quad
		B_n=\ed(\cS(\mV_n)),
		\quad
		\mu_A=\frac{4}{15},
		\quad
		\mu_B=\frac{4}{5},
		\quad
		g_A=g_{\cS},
		\quad
		g_B=g_{\cV}.
	\end{align*}
	Thus
	\begin{align*}
		r:=\frac{\mu_B}{\mu_A}=3,
	\end{align*}
	and, for each fixed $\theta\in\ndR$, the index appearing in Lemma~\ref{lem:stabletr} is
	\begin{align*}
		m_n(\theta) = \left\lfloor rn + \frac{\theta}{\mu_A}n^{2/3} \right\rfloor = \left\lfloor 3n+\frac{15\theta}{4}n^{2/3} \right\rfloor.
	\end{align*}
	Conditionally on their  number of edges, $\cS(\mM_n)$ and $\cS(\mV_n)$ are uniform non-separable simple planar maps. That is, for all $n, m$ for which the models are well-defined
	\begin{align*}
		\left( \cS(\mV_n) \mid \ed(\cS(\mV_n))=m \right) &\eqdist \mS_m, \\
		\left( \cS(\mM_n) \mid \ed(\cS(\mM_n))=m \right) &\eqdist \mS_m.
	\end{align*}
	Consequently, 
	\begin{align*}
		Z_{n, A_m} &\eqdist \left( \cS(\mM_m), \left(\frac{3}{8n}\right)^{1/4} d_{\cS(\mM_m)}, \mu_{\cS(\mM_m)} \right), \\
		Z_{n, B_n} &\eqdist \left( \cS(\mV_n), \left(\frac{3}{8n}\right)^{1/4} d_{\cS(\mV_n)}, \mu_{\cS(\mV_n)} \right).
	\end{align*}
	Fix $\theta \in \ndR$. We have $m_n(\theta) \sim 3n$ and hence
	\[
		(3/(8n))^{1/4} \sim (9/(8m_n(\theta)))^{1/4}.
	\]
	By Corollary~\ref{co:corfppsimplecore} we know that
	\begin{align*}
		\left( \cS(\mM_{m_n(\theta)}), (9 / (8m_n(\theta)))^{1/4}d_{\cS(\mM_{m_n(\theta)})}, \mu_{\cS(\mM_{m_n(\theta)})} \right) \convd (\mathbf{M},d_{\mathbf{M}},\mu_{\mathbf{M}})
	\end{align*}
	and hence
	\begin{align*}
		Z_{n,A_{m_n(\theta)}} \eqdist \left( \cS(\mM_{m_n(\theta)}), (3/(8n))^{1/4}d_{\cS(\mM_{m_n(\theta)})}, \mu_{\cS(\mM_{m_n(\theta)})} \right)
		\convd
		(\mathbf{M},d_{\mathbf{M}},\mu_{\mathbf{M}}).
	\end{align*}
	By Lemma~\ref{lem:stabletr} it follows that
	\begin{align*}
		Z_{n, B_n} \convd (\mathbf{M},d_{\mathbf{M}},\mu_{\mathbf{M}}).
	\end{align*}
	This completes the proof.
\end{proof}

\begin{proof}[Proof of Theorem~\ref{te:main1}]
	By Corollary~\ref{co:targetsico} we have
	\begin{align*}
		\left( \cS(\mV_n), \left(\frac{3}{8n}\right)^{1/4}d_{\cS(\mV_n)}, \mu_{\cS(\mV_n)} \right) \convd (\mathbf{M},d_{\mathbf{M}},\mu_{\mathbf{M}}).
	\end{align*}
	In particular, $(\cS(\mV_n),
	\left(\frac{3}{8n}\right)^{1/4}d_{\cS(\mV_n)})$ is tight with respect to the Gromov--Hausdorff metric. Hence the proof of Lemma~\ref{le:sourcesimplecorepart2} may be reproduced verbatim, yielding
	\begin{align*}
		d_{\mathrm{P}}\left( \nu_{\cS(\mV_n)}, \mu_{\cS(\mV_n)} \right) \convp 0.
	\end{align*}
	Hence
	\begin{align*}
		\left( \cS(\mV_n), \left(\frac{3}{8n}\right)^{1/4}d_{\cS(\mV_n)}, \nu_{\cS(\mV_n)} \right) \convd (\mathbf{M},d_{\mathbf{M}},\mu_{\mathbf{M}}).
	\end{align*}
	By Lemma~\ref{le:simplecorepart1} it follows that
	\begin{align*}
		\left(\mV_n, \left(\frac{3}{8n}\right)^{1/4} d_{\mV_n}, \mu_{\mV_n}\right) \convd (\mathbf{M}, d_{\mathbf{M}}, \mu_{\mathbf{M}}).
	\end{align*}
\end{proof}

\section{Diameter bounds for non-separable planar maps}

\subsection{Diameter moments of uniform planar maps}

\begin{lemma}
	\label{le:cudiam}
	For every $p>0$ there is a constant $C_p>0$ such that for all $n \ge 1$
	\begin{align*}
		\Ex{\Di(\mM_n)^p} \le C_p n^{p/4}.
	\end{align*}
\end{lemma}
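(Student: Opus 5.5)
We plan to bound the diameter of $\mM_n$ through a bijective encoding by a labelled tree. Under the Ambjørn--Budd / Bouttier--Di Francesco--Guitter bijection (as used in~\cite{MR3256874}), a uniform rooted planar map with $n$ edges, together with a marked vertex, corresponds to a uniform well-labelled mobile of size $n$. In this correspondence, distances to the marked vertex are expressed through the labels, so that
\[
	\Di(\mM_n) \le 2\left(\max \ell - \min \ell\right) + O(1),
\]
where $\ell$ ranges over the labels of the mobile. Equivalently, we may use the Tutte/Cori--Vauquelin--Schaeffer correspondence with quadrangulations having $n$ faces. There $\mM_n$ is the quadrangulation restricted to one colour class of vertices, and the distances are comparable. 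Indeed, $d_{\mM_n}(x,y)\le d_Q(x,y)$ up to a factor: each map edge is a diagonal of a face and corresponds to a path of length $2$ in $Q$. Conversely, $d_Q\le 2 d_{\mM_n}$. Hence $\Di(\mM_n)\le \Di(Q_n)$ up to a constant factor, where $Q_n$ is a uniform quadrangulation with $n$ faces.

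The first key step is therefore this reduction: using Tutte's bijection, it suffices to show $\Ex{\Di(Q_n)^p}\le C_p n^{p/4}$ for the uniform quadrangulation $Q_n$ with $n$ faces. Rooting is harmless, since the bijection is $1$-to-$1$ between rooted objects. The second step applies the Cori--Vauquelin--Schaeffer bijection. Pointed rooted quadrangulations correspond, in a $2$-to-$1$ fashion, to well-labelled trees with $n$ edges. The pointing biases the law by the number of vertices, $n+2$, which is deterministic, so no bias arises. In this correspondence, $\Di(Q_n)\le 2(\max\ell-\min\ell)+2$, exactly as in the bound for $\mathbf M$ recalled in Section~\ref{sec:background}.

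The third step bounds moments of the label range. We write $\max\ell-\min\ell\le 2\max_v|\ell_v-\ell_{\mathrm{root}}|$. Conditionally on the uniform plane tree $T_n$, the label of a vertex is a sum of $\he(v)$ i.i.d.\ bounded centred increments, uniform on $\{-1,0,1\}$, conditioned only through the positivity constraint. To avoid dealing with that constraint, we use the unconstrained labelled tree version, which is equivalent for pointed maps via the rerooting/Chassaing--Schaeffer correspondence. There labels are plain random walks along branches. By a Hoeffding/Azuma bound, $\Pr{|\ell_v|>x\sqrt{\he(v)}}\le 2e^{-x^2/2}$. A union bound over the $n+1$ vertices combined with a chaining argument gives, conditionally on $T_n$,
\[
	\Exb{\max_v|\ell_v|^p \mid T_n}\le C_p \left(\He(T_n)\log n\right)^{p/2}.
\]
This carries a logarithmic loss. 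To remove the loss, we apply a Kolmogorov-type chaining over the tree metric: labels are sub-Gaussian with variance proxy equal to the tree distance, $\Pr{|\ell_u-\ell_v|>x\sqrt{d_T(u,v)}}\le 2e^{-x^2/2}$. Dudley/Garsia--Rodemich--Rumsey bounds for $\ell$ indexed by $(T_n,d_T)$ then require covering-number bounds of $T_n$. Alternatively, and more simply, we may cite the known uniform tail bound for the Brownian-snake discretisation: Chassaing--Schaeffer, and Janson--Marckert~\cite[Thm.~3]{zbMATH02055268}, show that $n^{-1/4}\max|\ell|$ has uniformly bounded moments of all orders. Our plan is to invoke this result directly. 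Together with the sub-Gaussian tails of $\He(T_n)/\sqrt n$ for uniform plane trees, this yields $\Ex{(\max\ell-\min\ell)^p}\le C_p n^{p/4}$, and consequently $\Ex{\Di(\mM_n)^p}\le C_p n^{p/4}$ for all $n$ after enlarging $C_p$ to cover small $n$.

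The main obstacle is the third step, namely obtaining uniform-in-$n$ moment bounds of order $n^{p/4}$ for the label range without a logarithmic loss. If the reference~\cite{zbMATH02055268} is not stated in exactly the required uniform form, we would prove it by chaining. The ingredients are the sub-Gaussian increments above together with the tail bound $\Pr{\He(T_n)>x\sqrt n}\le Ce^{-cx^2}$. One then decomposes the tree dyadically along its contour, and Kolmogorov's continuity criterion applied to the discrete snake, using $\Ex{|\ell_{c(i)}-\ell_{c(j)}|^{q}}\le C_q\Ex{d_T(c(i),c(j))^{q/2}}\le C'_q |i-j|^{q/4}$, gives the bound.
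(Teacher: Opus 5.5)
\textbf{There is a genuine gap: the reduction in your first key step is false.} Under Tutte's bijection the vertices of $M$ are the black vertices of $Q$. Two of them are at $Q$-distance $2$ as soon as they lie on a common face of $M$, however far apart they are in $M$. Your justification (every edge of $M$ is a diagonal of a face of $Q$) only gives $d_Q\le 2d_M$. That bounds $\Di(Q)$ by $\Di(M)$, which is the wrong direction; the two inequalities you state are in fact the same one. The direction you need fails deterministically. For $M$ the $n$-cycle, $\Di(M)=\lfloor n/2\rfloor$, but both face-vertices of $Q$ are adjacent to every black vertex, so $\Di(Q)=2$. For the random map you could at best show that $d_M$ is at most $d_Q$ times a constant multiple of the maximal face degree, and that degree grows like $\log n$. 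So your label bounds for quadrangulations never reach $\mM_n$ without a logarithmic loss.

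\textbf{What is missing is the distance-preserving bijection, plus a de-biasing step.} The bijection you mention only in passing is the right one: the Ambj{\o}rn--Budd bijection, in the form of \cite[Prop.~2.3]{MR3256874}.
\begin{itemize}
\item It sends a uniform rooted pointed quadrangulation with $n$ faces, together with a sign, to a uniform rooted pointed map $(\mM_n^\bullet,v^\star)$ with $n$ edges.
\item It preserves distances to the marked vertex: $d_{\mM_n^\bullet}(v,v^\star)=d_{\mQ_n}(v,v^\star)$.
\item Hence $\Di(\mM_n^\bullet)\le 2\max_v d_{\mQ_n}(v,v^\star)\le 4\He(\mQ_n)$.
\item Your third step then finishes the bound for $\mM_n^\bullet$ by simply citing \cite[Cor.~3]{zbMATH02055268}; no chaining is needed.
\end{itemize}

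However, your remark that pointing introduces no bias holds only for quadrangulations. The map $\mM_n^\bullet$ is $\mM_n$ biased by the random quantity $\ve(\mM_n)$. Using $\Ex{\ve(\mM_n)}=(n+2)/2$, you get $\Ex{F(\mM_n)}=\Exb{\frac{n+2}{2\ve(\mM_n^\bullet)}F(\mM_n^\bullet)}$. The trivial bound $\ve\ge1$ loses a factor $n$. The paper removes the bias with two ingredients:
\begin{itemize}
\item the exponential lower-tail bound $\Pr{\ve(\mM_n)\le n/4}\le Ce^{-cn}$, obtained from a quasi-powers estimate;
\item the deterministic bound $\Di(\mM_n)\le n$ on the exceptional event.
\end{itemize}
Your proposal contains neither the distance-to-a-point isometry for general maps nor this de-biasing step.
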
 
\begin{proof}
	Let $(\mM_n^\bullet,v_n^\star)$ be uniformly distributed among all (rooted) planar maps with $n$ edges and a marked vertex. This way, $\mM_n^\bullet$ assumes a given planar map $M$ with probability proportional to $\ve(M)$.
	
	The inverse Ambj{\o}rn--Budd bijection introduced in~\cite{MR3090757}, in the precise form of~\cite[Prop.~2.3]{MR3256874}, sends $(\mM_n^\bullet,v_n^\star)$ together with an independent uniform sign to a uniform rooted pointed quadrangulation $(\mQ_n,v_n^\star)$ with $n$ faces. In this coupling, any vertex of $\mM_n^\bullet$ is canonically identified with a vertex of $\mQ_n$, and the marked vertices get identified with each other.
	
	Since every such quadrangulation has $n+2$ vertices, forgetting the distinguished vertex of $(\mQ_n,v_n^\star)$ makes the resulting quadrangulation  $\mQ_n$ uniformly distributed among all quadrangulations with $n$ faces. Moreover,
	\begin{align*}
		d_{\mM_n^\bullet}(v,v_n^\star) = d_{\mQ_n}(v,v_n^\star)
	\end{align*}
	for every vertex $v$ of $\mM_n^\bullet$, see~\cite[Eq.~(2.1)]{MR3256874}. It follows that
	\begin{align*}
		\Di(\mM_n^\bullet) \le  2 \max_{v \in \mQ_n} d_{\mQ_n}(v,v_n^\star) \le 2\Di(\mQ_n) \le 4 \He(\mQ_n).
	\end{align*}
	Since all moments of $n^{-1/4}\He(\mQ_n)$ converge by~\cite[Cor.~3]{zbMATH02055268} it follows that for each $p>0$ there exists a constant $C_p'>0$ such that
	\begin{align}
		\label{eq:dipointed}
		\Ex{\Di(\mM_n^\bullet)^p} \le C_p' n^{p/4}
	\end{align}
	for all $n \ge 1$.
	
	The quasi-powers estimate for $\ve(\mM_n)$ in~\cite[Sec.~5]{MR4304258} allows us to apply~\cite[Lem.~7]{MR3071845}, yielding constants $c,C>0$ such that for all $n \ge 1$
	\begin{align}
		\label{eq:vbound}
		\Prb{\ve(\mM_n) \le n/4} \le C\exp(-cn).
	\end{align}
	A well-known consequence of Euler's formula and the dual map construction is that
	\begin{align*}
		\Ex{\ve(\mM_n)} =\frac{n+2}{2}.
	\end{align*}
	Thus, with $m_n$ denoting the number of planar maps with $n$ edges we have for any planar map $M$ with $n$ edges
	\begin{align*}
		\Prb{\mM_n^\bullet = M} = \frac{\ve(M)}{m_n \Exb{\ve(\mM_n)}} = \frac{2 \ve(M)}{m_n(n+2)}.
	\end{align*}
	It follows that for every non-negative functional $F$ of rooted planar maps we have
	\begin{align*}
		\Ex{F(\mM_n)} = \Exb{ \frac{n+2}{2\ve(\mM_n^\bullet)} F(\mM_n^\bullet) }.
	\end{align*}
	Applying this for
	\[
		F(M) = \Di(M)^p \one_{\{\ve(M)>n/4\}},
	\]
	and using~\eqref{eq:vbound} and the deterministic bound $\Di(M) \le \ed(M)$, we obtain
	\begin{align*}
		\Ex{\Di(\mM_n)^p} &= \Ex{\Di(\mM_n)^p\one_{\{\ve(\mM_n)>n/4\}}} + \Ex{\Di(\mM_n)^p\one_{\{\ve(\mM_n)\le n/4\}}} \\
		&\le 4\Ex{\Di(\mM_n^\bullet)^p} + n^p\Prb{\ve(\mM_n)\le n/4} \\
		&\le 4C_p' n^{p/4} + n^p C\exp(-cn).
	\end{align*}
	It follows that there exists a constant $C_p>0$ such that
	\begin{align*}
		\Ex{\Di(\mM_n)^p} \le C_p n^{p/4}
	\end{align*}
	for all $n \ge 1$.
\end{proof}

\subsection{Averaged diameter moments of non-separable maps}

\begin{proposition}
	\label{pro:andm}
	For every $p>0$ there is a constant $E_p>0$ such that for all $n\ge 1$
	\begin{align*}
		\sum_{m=1}^{n} \Ex{\Di(\mV_m)^p} \le E_p n^{1+p/4}.
	\end{align*}
\end{proposition}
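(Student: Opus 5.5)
We transfer the diameter moment bound of Lemma~\ref{le:cudiam} from $\mM_N$ to its non-separable core and then average over the random core size. The key observation is that $\cV(\mM_N)$ is isometrically embedded in $\mM_N$: blocks are attached at cut vertices, so any path between two core vertices may be shortened to a path inside the core. Hence
\[
\Di(\cV(\mM_N)) \le \Di(\mM_N).
\]
Moreover, conditionally on $\ed(\cV(\mM_N))=m$, the core is distributed like $\mV_m$. We take $N$ of order $3n$, say $N = 3n$ (or a fixed multiple of it). Then
\begin{align*}
\sum_{m=1}^{N} \Pr{\ed(\cV(\mM_N))=m}\, \Ex{\Di(\mV_m)^p} = \Ex{\Di(\cV(\mM_N))^p} \le C_p N^{p/4}.
\end{align*}
By itself this only gives a weighted average over a window of width $N^{2/3}$ around $N/3$. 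So we will use the whole family of sources $\mM_N$, $1\le N\le Cn$, and sum over $N$.

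The key step is a lower bound on the summed weights: for each fixed $m$ with $m_0\le m\le n$,
\[
w_n(m) := \sum_{N\le Cn} \Pr{\ed(\cV(\mM_N))=m} \ge c > 0
\]
uniformly. By the local limit theorem of Proposition~\ref{pro:lltvcore}, for $N$ with $|N/3-m|\le N^{2/3}$ each term is at least a constant times $N^{-2/3}$, since $h$ is bounded below on compacts. There are about $3\cdot 3^{2/3}m^{2/3}$ such $N$, all of order $m$, so the sum is bounded below by a positive constant for all large $m$. Finitely many small $m$ contribute $O(1)$ each, because $\Di(\mV_m)\le m$.

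Summing the displayed inequality over $N\le Cn$ then gives
\[
c\sum_{m\le n} \Ex{\Di(\mV_m)^p} \le \sum_{N\le Cn} C_p N^{p/4} + O(1) = O\bigl(n^{1+p/4}\bigr),
\]
which is the claim.

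The main obstacle is the uniformity in the local limit theorem. It is uniform in $\ell$ for each $N$, with an $o(1)$ error as $N\to\infty$, so for large $m$ (hence large $N$) the lower bound on the density term dominates the error. One should also check that the relevant $N$ satisfy $N\le Cn$, which holds with $C=4$, and that small $m$ can be handled by the trivial bound $\Di(\mV_m)\le m$. The edge case of the loop map is harmless.
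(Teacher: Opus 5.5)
Your proof is correct and takes essentially the paper's route. It uses the isometric embedding of the core together with Lemma~\ref{le:cudiam}, then the local limit lower bound $\Pr{\ed(\cV(\mM_N))=m}\gtrsim N^{-2/3}$ on a window of width of order $N^{2/3}$, combined over many sources $\mM_N$. The only difference is bookkeeping: you sum the core inequality over all $N\le 4n$ and bound each total weight $w_n(m)$ from below, whereas the paper covers each dyadic block $[r,2r]$ by $O(r^{1/3})$ selected windows $I_{N_j}$ and then sums over dyadic scales.

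One small point: bounding $h$ below on $[-1/g_\cM,1/g_\cM]$ needs strict positivity of the $3/2$-stable density. That is true, but it is an extra input. The paper avoids it by centring the window at any $x_0$ with $h(x_0)>0$, and you can make the same shift in your argument.
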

\begin{proof}
	Every block of a connected graph is isometrically embedded, since a path that leaves the block through some cutvertex can only return to the block through the same cutvertex. Thus, $\Di(\cV(M)) \le \Di(M)$ for any planar map $M$. It follows that for any $N \ge 1$ by Lemma~\ref{le:cudiam} 
	\begin{align}
		\label{eq:codiam}
		\sum_{m=1}^{N} \Prb{\ed(\cV(\mM_N))=m}\Ex{\Di(\mV_m)^p} = \Ex{\Di(\cV(\mM_N))^p} \le \Ex{\Di(\mM_N)^p} \le C_pN^{p/4}.
	\end{align}
	Let $h$ and $g_{\cM}$ be as in Proposition~\ref{pro:lltvcore}. Since $h$ is continuous and integrates to one, there are constants $x_0 \in \ndR$ and $a,c_0>0$ such that
	\begin{align*}
		h(x) \ge c_0 \qquad \text{whenever } |x-x_0|\le 2a.
	\end{align*}
	Set
	\begin{align*}
		I_N := \left\{ m\in\ndZ \ \Big\vert\ \left| \frac{N/3-m}{g_{\cM}N^{2/3}}-x_0 \right| \le a \right\}.
	\end{align*}
	By the local limit theorem in Proposition~\ref{pro:lltvcore} it follows that there is a constant $c>0$ such that for all sufficiently large $N$,
	\begin{align*}
		\inf_{m\in I_N} \Prb{\ed(\cV(\mM_N))=m} \ge cN^{-2/3}.
	\end{align*}
	By~\eqref{eq:codiam} it follows that for all sufficiently large $N$
	\begin{align}
		\label{eq:wimwid}
		\sum_{m\in I_N}\Ex{\Di(\mV_m)^p} \le c^{-1} C_p N^{2/3+p/4}.
	\end{align}
	The centre and half-width of $I_N$ are given by
	\[
		c_N := \frac{N}{3}-g_{\cM}x_0N^{2/3}, \qquad w_N := a g_{\cM}N^{2/3}.
	\]
	Since
	\[
		c_{N+1}-c_N = \frac{1}{3} + O(N^{-1/3}),
	\]
	the sequence $(c_N)_N$ is strictly increasing for all sufficiently large $N$, with increments bounded away from zero and infinity. For all sufficiently large $r$ we have $c_{2r}<r$ and $c_{7r}>2r$ and $w_N \asymp r^{2/3}$  for $N\in[2r,7r]$. It follows that one may select integers $N_1, \ldots, N_J \in [2r,7r]$ so that $J=O(r^{1/3})$ and
	\begin{align}
		[r,2r]\cap\ndZ \subset \bigcup_{j=1}^{J}I_{N_j}.
	\end{align}
	By~\eqref{eq:wimwid} it follows that for all sufficiently large $r$
	\begin{align*}
		\sum_{r\le m\le2r} \Ex{\Di(\mV_m)^p} &\le \sum_{j=1}^{J} \sum_{m\in I_{N_j}}\Ex{\Di(\mV_m)^p} \le O(1) r^{1/3}r^{2/3+p/4} = O(r^{1+p/4})
	\end{align*}
	with uniform $O$-terms. Hence, taking $j_0$ sufficiently large we have with uniform $O$-terms
	\[
		\sum_{m=2^{j_0}}^{n} \Ex{\Di(\mV_m)^p} \le \sum_{j_0 \le j \le \log_2(n)} O(2^{j + jp/4}) \le O(n^{1 + p/4}).
	\]
	Taking into account the first $2^{j_0}$ summands it follows that there exists a constant $E_p>0$  such that for all $n \ge 1$
	\begin{align*}
		\sum_{m=1}^{n} \Ex{\Di(\mV_m)^p} \le E_p n^{1+p/4}.
	\end{align*}
\end{proof}

\section{Negligibility of small-degree decorations}

\label{sec:negli}

The purpose of this section is to verify a diameter condition required in the decorated stable tree regime of Theorem~\ref{te:main2}. We state this condition in the more general setting of marked trees, since it is of independent interest and may be applied in other contexts as well.

For a tree $T$ with root $o$ and a vertex $v\in T$, let $[o,v]_T$ denote the set of vertices on the unique path from $o$ to $v$, including both endpoints.  Let $\mT$ be a 	Bienaym\'e--Galton--Watson tree with offspring distribution $\xi$.  For every $N\ge1$ satisfying $\Pr{|\mT|=N}>0$, let $\mT_N$ have the conditional law of $\mT$ given $|\mT|=N$. Let $o$ denote its root. For each $k\ge0$, let $\mD_k$ be a non-negative random variable, with $\mD_0=0$.  Conditionally on $\mT_N$, assign independent marks $(\mD_v)_{v\in\mT_N}$ such that $\mD_v$ is distributed like $\mD_{d_{\mT_N}^+(v)}$ for each $v \in \mT_N$.  

Recall that for $\nu \ge 1$ we defined the size-biased random variable $\widehat\xi$ with 
\[
	\Prb{\widehat{\xi}=k} = k \Pr{\xi=k}
\]
for all integers $k \ge 0$.

\begin{proposition}
	\label{pro:promise}
	Suppose that $\nu=1$ and that condition~\eqref{eq:bwregvar} holds for some $1<\alpha<2$. Conditionally on $\widehat\xi=k$, let $\mD_{\widehat\xi}$ have the law of $\mD_k$. Define $\mD_\xi$ analogously.  Suppose that there are constants $C, \gamma_{\mathrm{dec}}>0$ and an integer $p\ge 2$ with
	\begin{align*}
		\sum_{k=1}^{m}\Ex{\mD_k^p} \le 	Cm^{1+p\gamma_{\mathrm{dec}}}
	\end{align*}
	for all large enough integers $m$. Then
	\begin{align*}
		\sup_{\substack{r>0\\\Prb{r/2<\widehat\xi\le r}>0}}
		\Exb{ \left(\frac{\mD_{\widehat\xi}}{r^{\gamma_{\mathrm{dec}}}}\right)^p \ \middle|\ r/2<\widehat\xi\le r } &<\infty,
		\\
		\sup_{\substack{r>0\\\Prb{r/2<\xi\le r}>0}} \Exb{ \left(\frac{\mD_\xi}{r^{\gamma_{\mathrm{dec}}}}\right)^p \ \middle|\ r/2<\xi\le r } &<\infty. 
	\end{align*}
\end{proposition}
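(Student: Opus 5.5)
We expand the conditional expectation directly in terms of the offspring law and exploit the regular variation in~\eqref{eq:bwregvar}. Writing $p_k := \Pr{\xi=k}$, we have
\[
	\Exb{\mD_{\widehat\xi}^p \ \middle|\ r/2<\widehat\xi\le r} = \frac{\sum_{r/2<k\le r} k p_k \Ex{\mD_k^p}}{\sum_{r/2<k\le r} k p_k},
\]
and analogously for $\xi$ without the factor $k$. The plan is to bound the numerator by $\max_{r/2<k\le r} (kp_k) \cdot \sum_{k\le r}\Ex{\mD_k^p}$, and then use the assumed averaged moment bound to estimate the latter sum by $C r^{1+p\gamma_{\mathrm{dec}}}$ for large $r$. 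It then remains to show that the denominator is of order at least $r\cdot \max_{r/2<k\le r}(kp_k)$, so that the ratio is $O(r^{p\gamma_{\mathrm{dec}}})$ uniformly.

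For the comparison of numerator and denominator, I would use~\eqref{eq:bwregvar}. Since $\xi$ is supported on multiples of $2d_\omega$, I write $k=2jd_\omega$, so that $p_k \sim L(j)j^{-1-\alpha}$. By the uniform convergence theorem for slowly varying functions, $L(j)/L(j')\to 1$ uniformly for $j,j'\in[r/(4d_\omega), r/(2d_\omega)]$. Hence for large $r$ all nonzero $p_k$ in the window $r/2<k\le r$ are comparable up to a factor bounded by some $K$ (for instance $K=2^{2+\alpha}$). The window also contains at least $c r$ lattice points $k\equiv 0 \bmod 2d_\omega$ with $p_k>0$ once $r\ge r_0$. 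It follows that the denominator is at least $c' r\max_{r/2<k\le r} kp_k$, and likewise $\sum_{r/2<k\le r}p_k \ge c' r \max_{r/2<k\le r} p_k$. Combining this with the numerator bound gives
\[
	\Exb{\mD_{\widehat\xi}^p \mid r/2<\widehat\xi\le r} \le \frac{C r^{1+p\gamma_{\mathrm{dec}}}}{c' r} = O(r^{p\gamma_{\mathrm{dec}}})
\]
for $r\ge r_0$. The case of $\xi$ is identical. Note that the moment hypothesis is only required for large $m$, which is harmless since $\sum_{k\le m}$ is monotone in $m$.

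For small $r<r_0$, only finitely many values of $k$ are involved, and these satisfy $k\le r_0$. The conditional expectation is then a weighted average of the finitely many quantities $\Ex{\mD_k^p}$, $k\le r_0$. Each of these is finite because it is dominated by $\sum_{k\le m}\Ex{\mD_k^p}<\infty$ for some large $m$. Moreover, $r^{-p\gamma_{\mathrm{dec}}}$ is bounded once $r$ is bounded below, and $r$ is bounded below automatically: the conditioning event is non-empty only if $r\ge 1$, or $r\ge 2d_\omega$ for $\widehat\xi$ apart from the value $k$ near zero. The case $k=0$ contributes $\mD_0=0$ and lies outside the window $r/2<k$ anyway. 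So the supremum over $r<r_0$ is finite.

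I expect the main obstacle to be the uniform comparability of the point masses $p_k$ within a dyadic window, in particular handling the lattice restriction $d_\omega$ and the slowly varying factor $L$. I would settle this via the uniform convergence theorem (Potter-type bounds) applied on the compact ratio interval $[1/2,1]$. Everything else is bookkeeping.
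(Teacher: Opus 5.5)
Your proposal is correct and follows essentially the paper's argument: you bound the window-restricted numerator by the maximal point mass in the dyadic window times $\sum_{k\le r}\Ex{\mD_k^p}\le Cr^{1+p\gamma_{\mathrm{dec}}}$, and you compare this with the window probability using the uniform convergence theorem for $L$. The only differences are cosmetic. You lower-bound the window probability by counting lattice points whose masses are comparable, rather than quoting Karamata's theorem. You also treat small $r$ explicitly, which the paper leaves implicit.
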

\begin{proof}
	 By~\eqref{eq:bwregvar} we have
	\begin{align}
		\label{eq:xidensity}
		\Prb{\xi=j} \sim (2d_\omega)^{1+\alpha} L\left(j\right)j^{-1-\alpha}
		\qquad j \to \infty, \qquad j \in 2d_\omega \ndN.
	\end{align}
	By Karamata's theorem for sums over lattices it follows that as $r \to \infty$
	\begin{align*}
		\Pr{r/2<\widehat\xi\le r} \sim (2d_\omega)^\alpha \frac{2^{\alpha-1}-1}{\alpha-1} L(r)r^{1-\alpha}, \quad\quad
		\Pr{r/2<\xi\le r} \sim (2d_\omega)^\alpha \frac{2^\alpha-1}{\alpha} L(r)r^{-\alpha}.
	\end{align*}
	By the uniform convergence theorem for slowly varying functions it follows that there is a constant $C_1>0$ such that for all sufficiently large~$r$
	\begin{align*}
		\sup_{\substack{r/2<k\le r\\k\in2d_\omega\ndN}} \Prb{\widehat\xi=k} &\le C_1L(r)r^{-\alpha}, & \sup_{\substack{r/2<k\le r\\k\in2d_\omega\ndN}} \Prb{\xi=k} &\le C_1L(r)r^{-1-\alpha}.
	\end{align*}
	Hence, using $\sum_{k=1}^{m}\Ex{\mD_k^p} \le Cm^{1+p\gamma_{\mathrm{dec}}}$ we obtain
	\begin{align*}
		\Exb{ \left(\frac{\mD_{\widehat\xi}}{r^{\gamma_{\mathrm{dec}}}}\right)^p \one_{\{r/2<\widehat\xi\le r\}} }
		&= r^{-p\gamma_{\mathrm{dec}}} \sum_{\substack{r/2<k\le r\\k\in2d_\omega\ndN}} k\Prb{\xi=k}\Ex{\mD_k^p} \\
		&\le C_1 L(r)r^{-\alpha-p\gamma_{\mathrm{dec}}} \sum_{k\le r}\Ex{\mD_k^p} \\
		&\le C C_1 L(r)r^{1-\alpha}.
	\end{align*}
	An analogous calculation yields
	\[
	\Exb{\left(\frac{\mD_\xi}{r^{\gamma_{\mathrm{dec}}}}\right)^p \one_{\{r/2<\xi\le r\}} } \le C C_1 L(r)r^{-\alpha}.
	\]
	It follows that
	\begin{align*}
		\sup_{r>0}
		\Exb{\left(\frac{\mD_{\widehat\xi}}{r^{\gamma_{\mathrm{dec}}}}\right)^p \ \middle|\ r/2<\widehat\xi\le r } <\infty, \qquad 
		\sup_{r>0} \Exb{ \left(\frac{\mD_\xi}{r^{\gamma_{\mathrm{dec}}}}\right)^p \ \middle|\ r/2<\xi\le r } <\infty,
	\end{align*}
	with $r$ restricted to values with $\Prb{r/2<\xi\le r}>0$.
\end{proof}

The next lemma is a version of~\cite[Prop.~3.2]{zbMATH07790315} with less strict requirements. The main difference is that instead of uniform bounds on $\Exb{\mD_k^p}$ for a specific $p>0$ we assume bounds on the averages $m^{-1} \sum_{k=1}^{m}\Ex{\mD_k^p}$. Proposition~\ref{pro:promise} ensures that the proof may be carried out fully analogously. We omit the details.
\begin{lemma}
	\label{le:smalldeco} 
	Suppose that $\nu=1$ and that condition~\eqref{eq:bwregvar} holds for some $1<\alpha<2$.  Let $\gamma_{\mathrm{dec}}>\alpha-1$.  Suppose that for some integer $p$ with
	\begin{align*}
		p >	\max\left(2, \frac{4\alpha}{2\gamma_{\mathrm{dec}}+1-\alpha} \right)
	\end{align*}
	there is a constant $C>0$ such that
	\begin{align*}
		\sum_{k=1}^{m}\Ex{\mD_k^p} \le 	Cm^{1+p\gamma_{\mathrm{dec}}}
	\end{align*}
	for all $m\ge1$.  Then, for every $\epsilon>0$,
	\begin{align*}
		\lim_{\delta\downarrow0}
		\sup_{\substack{N\ge1\\\Pr{|\mT|=N}>0}}
		\Prb{\max_{v\in\mT_N} \sum_{u\in[o,v]_{\mT_N}} \mD_u \one_{\{d_{\mT_N}^+(u)\le\delta b_N\}} > \epsilon b_N^{\gamma_{\mathrm{dec}}} } = 0.
	\end{align*}
\end{lemma}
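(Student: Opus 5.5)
We follow the proof of \cite[Prop.~3.2]{zbMATH07790315}, using Proposition~\ref{pro:promise} wherever that argument relies on a uniform bound $\sup_k k^{-p\gamma_{\mathrm{dec}}}\Ex{\mD_k^p}<\infty$. The first step is to apply Proposition~\ref{pro:promise}. It yields a constant $K_p$ such that, for every dyadic scale $r$, the conditional $p$-th moments of $\mD_{\widehat\xi}/r^{\gamma_{\mathrm{dec}}}$ and $\mD_\xi/r^{\gamma_{\mathrm{dec}}}$, given that the degree lies in $]r/2,r]$, are at most $K_p$. This is exactly the form in which moment bounds enter the spine argument: along the path from the root to a fixed vertex, the outdegrees are approximately i.i.d.\ copies of $\widehat\xi$, and the marks are drawn given those outdegrees. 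Bounds in dyadic blocks therefore suffice, and no pointwise bound in $k$ is needed.

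Next we decompose the sum $\sum_{u\in[o,v]}\mD_u\one_{\{d^+(u)\le\delta b_N\}}$ by dyadic scale. For $j\ge0$ we consider the vertices $u$ with $d^+(u)\in]2^{-j-1}\delta b_N,2^{-j}\delta b_N]$ and write $r_j=2^{-j}\delta b_N$. We then bound the contribution of scale $j$ along the worst path. The expected number of spine vertices at scale $j$ is governed by $\Prb{r_j/2<\widehat\xi\le r_j}\asymp L(r_j)r_j^{1-\alpha}$, and the height is of order $N/b_N$. We consider the vertex $v$ chosen uniformly, or a bounded number of them, via the usual re-rooting/spine decomposition of $\mT_N$ (\cite{zbMATH07790315} uses this together with the absolute continuity of the first $N/2$ vertices with respect to the unconditioned tree or Kesten's tree). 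This gives a conditional $p$-th moment bound on the scale-$j$ contribution of the form $\bigl(r_j^{\gamma_{\mathrm{dec}}}\cdot \text{count}_j\bigr)^p$. Combined with Rosenthal's inequality, this gives roughly $r_j^{p\gamma_{\mathrm{dec}}}\bigl((N/b_N)r_j^{1-\alpha}\bigr)^{p}$, plus a lower-order term. Since $N/b_N\asymp b_N^{\alpha-1}$, up to slowly varying factors, the scale-$j$ contribution relative to $b_N^{\gamma_{\mathrm{dec}}}$ is of order $(2^{-j}\delta)^{\gamma_{\mathrm{dec}}+1-\alpha}$. This is summable in $j$ and tends to $0$ as $\delta\downarrow0$, because $\gamma_{\mathrm{dec}}>\alpha-1$.

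To pass from a typical vertex to the maximum over all $v\in\mT_N$, we use the union bound over $N$ vertices and Markov's inequality with the $p$-th moment. This costs a factor $N\approx b_N^{\alpha}$. The fluctuation part of the $p$-th moment gains a factor $b_N^{-p(2\gamma_{\mathrm{dec}}+1-\alpha)/4}$, or of similar size, and this gain beats $N$ exactly when $p>4\alpha/(2\gamma_{\mathrm{dec}}+1-\alpha)$. This is where the hypothesis on $p$ enters, and the exponent bookkeeping is identical to \cite{zbMATH07790315}. Small $N$ are handled trivially, since then $b_N$ is bounded, $\delta b_N<1$ for small $\delta$, and the indicator only picks leaves, which have $\mD_0=0$.

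The main obstacle is the maximum over vertices, that is, controlling all ancestral lines at once with only averaged moment bounds. We would first check that in \cite{zbMATH07790315} every use of the decoration moments occurs after conditioning on the degree falling into a dyadic window and averaging over the (size-biased) offspring law within that window. If some step instead uses a single fixed degree $k$, we would replace it by a sum over the window and apply Proposition~\ref{pro:promise}; the local bounds $\Prb{\xi=k}\le C_1L(r)r^{-1-\alpha}$ make this loss harmless.
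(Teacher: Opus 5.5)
Your reduction is the same as the paper's. The paper's justification of this lemma consists only of the remark that the argument of \cite[Prop.~3.2]{zbMATH07790315} goes through once the uniform bound on $k^{-p\gamma_{\mathrm{dec}}}\Ex{\mD_k^p}$ is replaced by the dyadic-window bounds of Proposition~\ref{pro:promise} for $\xi$ and $\widehat\xi$, and it omits the details. At that level your proposal and the paper agree. In both cases the substance lies in the cited argument.

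You should not, however, treat your sketch of that argument as a proof, because the step controlling the maximum over vertices fails as written.

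\begin{itemize}
\item \textbf{Top scales give no decay in $N$.} Take dyadic scales $r_j$ comparable to $\delta b_N$. By your own estimate, the scale-$j$ contribution along one ancestral line has $p$-th moment of order $(2^{-j}\delta)^{p(\gamma_{\mathrm{dec}}+1-\alpha)}b_N^{p\gamma_{\mathrm{dec}}}$. Its centred part is of order $(2^{-j}\delta)^{p(2\gamma_{\mathrm{dec}}+1-\alpha)/2}b_N^{p\gamma_{\mathrm{dec}}}$. Neither decays in $N$, so Markov's inequality combined with a union bound over $N\approx b_N^{\alpha}$ vertices gives nothing at these scales.
\item \textbf{Where your quoted gain actually comes from.} The factor $b_N^{-p(2\gamma_{\mathrm{dec}}+1-\alpha)/4}$ appears only after the degrees are capped at about $b_N^{1/2}$. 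With $h\approx N/b_N$ and $Y=\mD_{\widehat\xi}\one_{\{\widehat\xi\le b_N^{\beta}\}}$, one finds
\[
(h\,\Va{Y})^{p/2}\,b_N^{-p\gamma_{\mathrm{dec}}}\approx b_N^{-(1-\beta)p(2\gamma_{\mathrm{dec}}+1-\alpha)/2}.
\]
For $\beta=1/2$, beating the factor $N\approx b_N^{\alpha}$ is exactly the condition $p>4\alpha/(2\gamma_{\mathrm{dec}}+1-\alpha)$.
\item \textbf{What remains to be supplied.} Degrees between roughly $b_N^{1/2}$ and $\delta b_N$ need a separate argument. There are only about $b_N^{\alpha/2}$ such vertices, but the ancestral lines through them must be controlled jointly, not one at a time by a union bound. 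You have to take that part from \cite{zbMATH07790315} as well. You must then check that it, too, uses the decoration moments only through window averages of the kind Proposition~\ref{pro:promise} provides.
\end{itemize}
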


\section{Scaling limits of block-weighted planar maps}
\label{sec:bwscaling}

Throughout this section, all limits are taken as $n \to \infty$, restricted to $n \equiv 0 \mod d_\omega$, and we write
\[
	N := 2n + 1.
\]
A $\cV$-enriched tree $(T, \beta_T)$ is a pair of a finite plane tree $T$ and a function $\beta_T$ that assigns to each vertex $v \in T$ a non-separable planar map $\beta_T(v)$ with $d^+_T(v)$ corners. Any such tree corresponds bijectively to a planar map. It is constructed by starting with the non-separable planar map $\beta_T(o)$ corresponding to the root $o$ of $T$. The corners of $\beta_T(o)$ may be listed in a canonical order, and we attach the map corresponding to the $\cV$-enriched fringe subtree at the $i$th child of $o$ to the $i$th corner of $\beta_T(o)$. 

Given $T$, we may form the canonical $\cV$-decoration by assigning to each vertex  $v \in T$ an independent uniformly at random selected non-separable planar map with $d^+_T(v)$ many corners. This makes the tree $T$ with its canonical $\cV$-decoration a random $\cV$-enriched tree.

Let $\mT_N$ denote the simply generated tree corresponding to the weight-sequence $[z^i]\Phi(z)$, $i \ge 0$. Let $\beta_N$ denote its canonical decoration. The random $\cV$-enriched plane tree $(\mT_N, \beta_N)$ corresponds to the planar map $\mM_n^\omega$, see~\cite[Lem. 6.1]{MR4132643} and~\cite{zbMATH07004718}.

When $\nu>0$ the tree $\mT_N$ is distributed like the $\xi$-Bienaym\'e--Galton--Watson tree $\mT$ conditioned on having $N$ vertices. We let $\beta$ denote the canonical decoration on $\mT$. This way,  $(\mT, \beta)$ corresponds to a planar map that we denote by $\mM^\omega$.

Every non-root vertex of $\mT_N$ corresponds canonically to one corner of $\mM_n^\omega$. Let
\[
	\pi_N:\mT_N\longrightarrow\mM_n^\omega
\]
map a non-root tree vertex to the map vertex incident to the corresponding corner, and map the root $o$ of $\mT_N$ to the vertex incident to the root corner. Define the probability measure
\begin{align}
	\mu_N^{\mathrm{corn}} := \frac{1}{N-1} \sum_{v\in\mT_N\setminus\{o\}}\delta_v.
\end{align}
Because the $N-1=2n$ non-root tree vertices correspond bijectively to the corners of $\mM_n^\omega$, the stationary distribution $\mu_{\mM_n^\omega}$ is the push-forward measure of $\mu_N^{\mathrm{corn}}$ along $\pi_N$. The total variation distance between $\mu_N^{\mathrm{corn}}$ and the uniform probability measure on all $N$ vertices of $\mT_N$ is equal to $1/N$ and hence tends to zero as $n \to \infty$. 

In the case $\nu\ge1$, recall that we defined a random non-separable planar map $\mV_{\widehat\xi}$ so that conditionally on $\widehat{\xi}=2m$ we have that $\mV_{\widehat{\xi}}$ is uniformly distributed among all non-separable planar maps with $m$ edges.

\subsection{The finite variance and stable case}

The following lemma is an extension of~\cite[Thm. 6.62]{MR4132643}.

\begin{lemma}
	\label{le:blot}
	Suppose that \eqref{eq:bwfvv} holds, or that $\nu=1$ and Condition~\eqref{eq:bwregvar} holds for some $5/4<\alpha\le2$.
	Then $\kappa = \Exb{d(\mV_{\widehat\xi})} \in ]0, \infty[$ and for $r_N:=\frac{N}{b_N}$
	\begin{align*}
		d_{\mathrm{GHP}} \left( \left( \mM_n^\omega, r_N^{-1}d_{\mM_n^\omega}, \mu_{\mM_n^\omega} \right), \left( \mT_N, \kappa r_N^{-1}d_{\mT_N}, \mu_N^{\mathrm{corn}} \right) \right) \convp 0.
	\end{align*}
\end{lemma}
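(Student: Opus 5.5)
We follow the strategy of \cite[Thm.~6.62]{MR4132643}: compare distances in $\mM_n^\omega$ with distances in $\mT_N$ along ancestral lines, using a law of large numbers for the block contributions. For vertices $u,v\in\mT_N$, the $\mM_n^\omega$-distance between $\pi_N(u)$ and $\pi_N(v)$ is a sum over the vertices $w$ on the tree path between them. Each $w$ contributes the distance, inside the block $\beta_N(w)$, between the corner through which the path enters and the corner through which it leaves. Blocks are isometrically embedded, as observed in the proof of Proposition~\ref{pro:andm}, so this sum equals the map distance. Hence it suffices to show
\[
\sup_{v\in\mT_N}\Bigl|\sum_{w\in[o,v]_{\mT_N}}\delta_w(v)-\kappa\,\he_{\mT_N}(v)\Bigr| = o_p(r_N),
\]
where $\delta_w(v)$ denotes the distance in $\beta_N(w)$ between its root corner and the corner leading towards $v$. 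The general pair case then follows from the ancestral case, up to one block at the most recent common ancestor. Given this bound, the map $\pi_N$ is a correspondence of distortion $o_p(1)$ after rescaling. It also pushes $\mu_N^{\mathrm{corn}}$ forward to $\mu_{\mM_n^\omega}$, so \cite[Prop.~6]{MR2571957} yields the GHP statement.

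\textbf{Step 1: finiteness of $\kappa$.} We have $\kappa=\sum_k \Pr{\widehat\xi=k}\Ex{d(\mV_{k/2})}$. Summing over dyadic blocks $r/2<k\le r$, Proposition~\ref{pro:andm} with $p=1$ gives $\sum_{k\le r}\Ex{\Di(\mV_{k/2})}=O(r^{5/4})$. Under \eqref{eq:bwregvar} we have $\Pr{\widehat\xi=k}\le C_1L(r)r^{-\alpha}$ on each block, as in the proof of Proposition~\ref{pro:promise}. The dyadic contributions are therefore $O(L(r)r^{5/4-\alpha})$, which is summable precisely because $\alpha>5/4$. Under \eqref{eq:bwfvv}, $\widehat\xi$ has a finite mean and the same dyadic bound with $\alpha=2$ (or finite exponential-type tails when $\nu>1$) gives finiteness. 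Positivity of $\kappa$ is clear.

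\textbf{Step 2: the spine law of large numbers.} Along the ancestral line of a uniform vertex, the size-biased spine decomposition of $\mT_N$ shows that the outdegrees are asymptotically i.i.d.\ copies of $\widehat\xi$. The exit corners are uniform among corners, so the $\delta_w$ are asymptotically i.i.d.\ copies of $d(\mV_{\widehat\xi})$. Heights are of order $r_N=N/b_N$, so the weak law of large numbers gives $\sum_w\delta_w=\kappa\he+o_p(r_N)$ for a single typical vertex.

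\textbf{Step 3: uniformity over all vertices (the main obstacle).} We truncate at a level $K$. For blocks with outdegree at most $K$, the increments are bounded, and we use uniform concentration over all ancestral lines. This follows \cite{MR4132643} in the finite-variance case; in general, the number of vertices is polynomial in $N$ and exponential bounds on bounded sums apply along each line, via the spine decomposition and a union bound. For large blocks we must show
\[
\max_v\sum_{w\in[o,v]}\Di(\beta_N(w))\one_{\{d^+(w)>K\}}\le \eta\, r_N
\]
with high probability, for $K$ large. First, truncation in expectation: Step 1's dyadic tail estimate bounds $\Ex{d(\mV_{\widehat\xi})\one_{\{\widehat\xi>K\}}}$ by a quantity tending to $0$. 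Second, the largest degrees are of order $b_N$, and by Proposition~\ref{pro:andm} the corresponding diameters are $O_p(b_N^{1/4})=o(r_N)$ since $\alpha>5/4$. To pass from a typical vertex to the maximum, we mimic Lemma~\ref{le:smalldeco} with $\gamma_{\mathrm{dec}}=1/4$ and the averaged moment bound from Proposition~\ref{pro:andm}. The bound for intermediate degrees is the delicate point; I expect it to require the $p$-th moment control of Proposition~\ref{pro:promise} combined with a union bound over the $N$ leaves.
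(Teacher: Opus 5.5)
Your reduction is sound and matches the paper. Root-based ancestral sums suffice, because sums starting at an ancestor are differences of root-based sums. The most recent common ancestor costs at most $2\max_v\Di(\beta_N(v))$, and $\pi_N$ then yields a correspondence to which \cite[Prop.~6]{MR2571957} applies. The fixed-$K$ bounded part of Step~3 is also fine. The genuine gap is the heavy part of Step~3, which is the heart of the lemma and which you leave open.

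\textbf{Why your suggested routes fail.} Lemma~\ref{le:smalldeco} requires $\gamma_{\mathrm{dec}}>\alpha-1$, so $\gamma_{\mathrm{dec}}=1/4$ is admissible only for $\alpha<5/4$, the opposite regime. Its conclusion at scale $b_N^{1/4}$ is in fact false here, since $b_N^{1/4}=o(r_N)$ while the line sums are of order $\kappa r_N$. It also does not cover \eqref{eq:bwfvv}.

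A $p$-th moment bound combined with a union bound fails as well. The spine variables $d(\mV_{\widehat\xi})$ only have moments of order $p<4(\alpha-1)$, so a single line gives at best a bound of order $r_N^{1-p}$. The spinal decomposition, however, costs $\Prb{|\mT|=N}^{-1}\asymp Nb_N$ times $\asymp r_N$ heights. Since $Nb_Nr_N\cdot r_N^{1-p}=N^2r_N^{1-p}\to\infty$ for every $p\le 4$, this cannot close.

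Knowing only that the largest blocks have diameter $o_p(r_N)$ is not enough either. Truncating at a level that is merely $o(r_N)$ gives Bernstein exponents that stay bounded, so they cannot absorb polynomial factors.

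\textbf{What is missing: a polynomial gap for all decorations, then truncation at that level.}

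First show $\max_{v\in\mT_N}\Di(\beta_N(v))=o_p(r_N^q)$ for some $q<1$. To do this, combine the maximal-outdegree bound ($o_p(r_N)$ under \eqref{eq:bwfvv}, $O_p(b_N)$ in the stable case) with a union bound over the $N$ vertices. Also use the stretched-exponential tail $\Prb{\Di(\mV_m)>m^{1/4+\epsilon}}\le c_1\exp(-c_2m^{c_3})$ from \cite[Thm.~3.7]{MR3318042}. This needs $q>1/(4(\alpha-1))$, which is possible exactly because $\alpha>5/4$.

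On this event, truncate every spine increment at $r_N^q$; no split by outdegree is needed. Fix $1<p<\min(2,4(\alpha-1))$. The truncated increments $(D-\kappa)\one_{\{D\le r_N^q\}}$ then have mean $O(r_N^{-q(p-1)})$ and variance $O(r_N^{q(2-p)})$. Bernstein's inequality gives a bound of order $\exp(-c\,r_N^{\min(1-q,\,1-q(2-p))})$, which beats the factor $O(Nb_N)(Ar_N)^2$. Together with $\He(\mT_N)=O_p(r_N)$, this concludes the argument.

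\textbf{A smaller point in Step~1.} Under \eqref{eq:bwfvv} with $\nu=1$ there is no regular variation. So no dyadic pointwise bound on $\Prb{\widehat\xi=k}$ is available, and the averaged estimate of Proposition~\ref{pro:andm} alone does not give $\kappa<\infty$. Use instead the pointwise bound $\sup_m m^{-1/4-\epsilon}\Ex{\Di(\mV_m)}<\infty$, which follows from the same tail estimate, together with $\Exb{\widehat\xi}=\Exb{\xi^2}<\infty$.
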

\begin{proof}
	For ease of notation, we refer to Condition~\eqref{eq:bwfvv} as the \emph{finite variance case}  and to the case $\nu=1$ with Condition~\eqref{eq:bwregvar} for some $5/4<\alpha\le2$ and additionally $\Va{\xi}=\infty$ as the \emph{stable case}. Recall that in the finite variance case we set $\alpha=2$.
	
	We use the following consequence of~\cite[Thm.~3.7]{MR3318042}: for every sufficiently small $\epsilon>0$, there are constants $c_1,c_2,c_3>0$ such that
	\begin{align}
		\label{eq:diamn}
		\Prb{ \Di(\mV_m)>m^{1/4+\epsilon} } \le c_1 \exp(-c_2 m^{c_3})
	\end{align}
	for all $m \ge 1$. In particular, since $\Di(\mV_m) \le m$, we have for any $p \ge 1$ and any $\epsilon>0$
	\[
		\sup_{m \ge 1} \Ex{\Di(\mV_m)^p} m^{- p/4 - \epsilon} < \infty.
	\]
	Consequently, for any $1< p<4 (\alpha-1)$ we have
	\begin{align}
		\label{eq:pelk}
		\Exb{d(\mV_{\widehat\xi})^p} < \infty. 	
	\end{align}
	Indeed, in the finite variance case this follows from $\Ex{\widehat{\xi}} < \infty$ and in the  stable case this follows from  $\Pr{\widehat{\xi}=2k}$ varying regularly with index~$-\alpha$. In particular, $0<\kappa<\infty$ in both cases. Set
	\[
		\Delta_N^{\mathrm{out}} := \max_{v\in\mT_N}d_{\mT_N}^+(v), \qquad \Delta_N^{\mathrm{dec}} := \max_{v\in\mT_N}\Di(\beta_N(v)).
	\]
	Then there exists $0<q<1$ with
	\begin{align}
		\label{eq:oprn}
		\Delta_N^{\mathrm{dec}} = o_p(r_N^q).
	\end{align}
	Indeed, in the finite variance case, we have $\Delta_N^{\mathrm{out}} = o_p(r_N)$ by~\cite[Eq. (9.4)]{MR2908619}. The tree $\mT_N$ has $N$ vertices, hence by a union bound and~\eqref{eq:diamn} it follows that~\eqref{eq:oprn} holds for any $1/2<q<1$. In the stable case, the functional convergence established in~\cite{MR1964956} for the \L{}ukasiewicz path of~$\mT_N$ rescaled by $b_N^{-1}$ implies $\Delta_N^{\mathrm{out}} = O_p(b_N)$. Hence for any $\epsilon_1, \epsilon_2 >0$ we may choose $M>0$ sufficiently large such that for all large enough $N$ and all $1 > q >1 / (4(\alpha-1))$ (possible since $5/4<\alpha\le2$) we have
	\begin{align*}
		\Prb{\Delta_N^{\mathrm{dec}} \ge \epsilon_1 r_N^q} &\le \epsilon_2 + \Prb{\Delta_N^{\mathrm{dec}} \ge \epsilon_1 r_N^q, \Delta_N^{\mathrm{out}} \le M b_N} \\
		&\le \epsilon_2 + N \sup_{1 \le m \le M b_N / 2} \Prb{\Di(\mV_{m}) \ge \epsilon_1 r_N^q} \\
		&= \epsilon_2 + N \sup_{\epsilon_1 r_N^q \le m \le M b_N / 2} \Prb{\Di(\mV_{m}) \ge \epsilon_1 r_N^q}.
	\end{align*}
	Since $b_N$ is regularly varying with index $1/\alpha$ and $r_N=N/b_N$, it follows by $5/4 < \alpha \le 2$ and~\eqref{eq:diamn} that 
	\[
		 N \sup_{\epsilon_1 r_N^q \le k \le M b_N} \Prb{\Di(\mV_k) \ge \epsilon_1 r_N^q} \to 0.
	\]
	Since $\epsilon_1, \epsilon_2>0$ were arbitrary, this verifies~\eqref{eq:oprn}.

	For a vertex $v\in\mT_N$ at height $h$, write
	\[
		o = v_0, v_1, \ldots, v_h =v
	\]
	for the vertices on the path $[o,v]_{\mT_N}$ from the root $o$ to $v$. Any geodesic in $\mM_n^\omega$ from $\pi_N(o)$ to $\pi_N(v)$ passes through the blocks $\beta_N(v_0), \ldots, \beta_N(v_{h-1})$, possibly via paths of length $0$. Let $J_0(v), \ldots, J_{h-1}(v) \ge 0$ denote the lengths of these paths. This way,
	\begin{align}
		\label{eq:dov}
		d_{\mM_n^\omega}(\pi_N(o), \pi_N(v)) = \sum_{j=0}^{h-1} J_j(v).
	\end{align}
	Set 
	\begin{align*}
		R_N := \max_{v\in\mT_N\setminus\{o\}} \max_{0\le i < \he_{\mT_N}(v)} \left| \sum_{j=i}^{\he_{\mT_N}(v)-1} \left(J_j(v)-\kappa\right)\right|.
	\end{align*}
	We are going to prove
	\begin{align}
		\label{eq:proveme}
		R_N=o_p(r_N).
	\end{align}
	Dwass' formula and the local limit theorem~\cite[Theorem~4.2.1]{MR0322926} yield
	\begin{align}
		\label{eq:tnbound}
		\Prb{|\mT|=N} = \frac{1}{N}	\Prb{\xi_1+\ldots+\xi_N=N-1} \asymp(N b_N)^{-1}.
	\end{align}
	By the functional convergence established in~\cite{MR1964956} for the height process of $\mT_N$ we have
	\begin{align}
		\label{eq:htnbound}
		\He(\mT_N)=O_p(r_N).
	\end{align}
	For every non-negative measurable functional $F$ of a random vector of length $h$, the spinal decomposition (see for example~\cite[Appendix~A.3]{zbMATH07790315}) yields
	\begin{align}
		\label{eq:spinal}
		\Exb{ \sum_{\substack{v\in\mT\\ |v|=h}} F\bigl((J_j(v))_{0\le j<h}\bigr) } = \Exb{ F\bigl((D_j)_{0\le j<h}\bigr) },
	\end{align}
	where $D_0,D_1,\ldots$ denote independent copies of $d(\mV_{\widehat\xi})$. Hence, with $0<q<1$ as in~\eqref{eq:oprn} and $1< p< 2$ as in~\eqref{eq:pelk} (possible since $\alpha>5/4$) we have for any $\epsilon>0$ and $A>0$ by~\eqref{eq:tnbound}
	\begin{align}
		\label{eq:mlbound}
		&\Prb{ R_N > \epsilon r_N, \He(\mT_N) \le A r_N, \Delta_N^{\mathrm{dec}}\le r_N^q } \\
		&\qquad \le \frac{1}{\Prb{|\mT|=N}} \sum_{1 \le h\le Ar_N} \Prb{ \max_{0\le i<h} \left| \sum_{j=i}^{h-1}(D_j-\kappa) \right|	> \epsilon r_N, \max_{0\le j<h}D_j\le r_N^q} \nonumber \\
		&\qquad \le O(N b_N) \sum_{1 \le h\le Ar_N} \Prb{ \max_{1\le i \le h} \left| \sum_{j=1}^{i}(D_j-\kappa) \one_{D_j \le r_N^q} \right|	> \epsilon r_N}.  \nonumber
	\end{align}
	Note that
	\[
		0 = \Exb{ d(\mV_{\widehat\xi}) - \kappa } = \Exb{ (d(\mV_{\widehat\xi})-\kappa)\one_{\{d(\mV_{\widehat\xi})\le r_N^q\}} } + \Exb{(d(\mV_{\widehat\xi})-\kappa)\one_{\{d(\mV_{\widehat\xi}) > r_N^q\}}}.
	\]
	On the event $d(\mV_{\widehat\xi}) > r_N^q$ we have $d(\mV_{\widehat\xi})^{p-1} > r_N^{q(p-1)}$  and hence $d(\mV_{\widehat\xi}) < d(\mV_{\widehat\xi})^p / r_N^{q(p-1)}$. Thus, for large enough $N$
	\begin{align*}
		\left|\Exb{ (d(\mV_{\widehat\xi})-\kappa)\one_{\{d(\mV_{\widehat\xi})\le r_N^q\}} } \right|
		&= \Exb{(d(\mV_{\widehat\xi})-\kappa)\one_{\{d(\mV_{\widehat\xi}) > r_N^q\}}}\\
		&\le \Exb{ d(\mV_{\widehat\xi})\one_{\{d(\mV_{\widehat\xi}) > r_N^q\}} } \\
		&\le r_N^{-q(p-1)}\Ex{d(\mV_{\widehat\xi})^p}.
	\end{align*}
	Likewise, 
	\begin{align*}
		\Va{ (d(\mV_{\widehat\xi})-\kappa)\one_{\{d(\mV_{\widehat\xi})\le r_N^q\}} } &\le \Exb{(d(\mV_{\widehat\xi})-\kappa)^2\one_{\{d(\mV_{\widehat\xi})\le r_N^q\}}} \\
		&\le \Exb{	d(\mV_{\widehat\xi})^2\one_{\{d(\mV_{\widehat\xi})\le r_N^q\}} } +\kappa^2 \\
		&\le r_N^{q(2-p)}\Ex{d(\mV_{\widehat\xi})^p} +\kappa^2.
	\end{align*}
	Finally, $(d(\mV_{\widehat\xi})-\kappa)\one_{\{d(\mV_{\widehat\xi})\le r_N^q\}}$ lies in the interval $[- \kappa, r_N^q]$ and so does its first moment. Hence 
	\begin{align*}
		\left| (d(\mV_{\widehat\xi})-\kappa)\one_{\{d(\mV_{\widehat\xi})\le r_N^q\}} - \Exb{ (d(\mV_{\widehat\xi})-\kappa)\one_{\{d(\mV_{\widehat\xi})\le r_N^q\}} } \right| \le 2 \max(\kappa, r_N^q)
	\end{align*}
	almost surely.
	
	By Bernstein's inequality  it follows that for all large enough $N$ we have uniformly  for all $1 \le h\le Ar_N$ and $1 \le i \le h$ 
	\begin{align*}
		&\Prb{ \left| \sum_{j=1}^{i}(D_j-\kappa) \one_{D_j \le r_N^q} \right|	> \epsilon r_N} \\
		&\le \Prb{ \left| \sum_{j=1}^{i}\left( (D_j-\kappa) \one_{D_j \le r_N^q} - \Exb{ (d(\mV_{\widehat\xi})-\kappa)\one_{\{d(\mV_{\widehat\xi})\le r_N^q\}}} \right)\right|	> \epsilon r_N - i r_N^{-q(p-1)}\Ex{d(\mV_{\widehat\xi})^p}} \\
		&\le 2 \exp\left( - \frac{(1/2+o(1)) \epsilon^2 r_N^2}{r_N^{1+q(2-p)}(\Ex{d(\mV_{\widehat\xi})^p}+o(1)) + (2/3 + o(1))r_N^{q+1} \epsilon   } \right).
	\end{align*}
	By~\eqref{eq:mlbound} and a union bound it follows that
	\begin{align*}
		\Prb{ R_N > \epsilon r_N, \He(\mT_N) \le A r_N, \Delta_N^{\mathrm{dec}}\le r_N^q } \to 0.
	\end{align*}
	Since $\epsilon>0$ was arbitrary, it follows by~\eqref{eq:oprn} and~\eqref{eq:htnbound} that $R_N = o_p(r_N)$. This completes the verification of~\eqref{eq:proveme}.
		
	Let $u,v\in\mT_N$, and let $w$ be their most recent common ancestor. Any path in $\mM_n^\omega$ from $\pi_N(u)$ to $\pi_N(v)$ passes through the block $\beta_N(w)$. It follows that
	\[
		d_{\mM_n^\omega}(\pi_N(u), \pi_N(v)) = d_{\mM_n^\omega}(\pi_N(w), \pi_N(u)) + d_{\mM_n^\omega}(\pi_N(w), \pi_N(v)) + R
	\]
	for an error $R$ satisfying $|R| \le 2 \Delta_N^{\mathrm{dec}}$. By~\eqref{eq:proveme} and~\eqref{eq:oprn} it follows that
	\[
		\sup_{u,v \in \mT_N} |d_{\mM_n^\omega}(\pi_N(u), \pi_N(v)) - \kappa d_{\mT_N}(u,v)| =  o_p(r_N).
	\]
	Since the stationary distribution $\mu_{\mM_n^\omega}$ is the push-forward measure of $\mu_N^{\mathrm{corn}}$ along $\pi_N$, it follows that
	\[
		d_{\mathrm{GHP}} \left( \left( \mM_n^\omega, r_N^{-1}d_{\mM_n^\omega}, \mu_{\mM_n^\omega} \right), \left( \mT_N, \kappa r_N^{-1}d_{\mT_N}, \mu_N^{\mathrm{corn}} \right) \right) \convp 0.
	\]
\end{proof}

\begin{corollary}
	Suppose that \eqref{eq:bwfvv} holds, or that $\nu=1$ and Condition~\eqref{eq:bwregvar} holds for some $5/4<\alpha\le2$. Then
	\[
	\left( \mM_n^\omega, \frac{2^{1/\alpha-1}}{\kappa}\frac{b_{n}}{n} d_{\mM_n^\omega}, \mu_{\mM_n^\omega} \right) \convd (\mathbf{T}_\alpha, d_{\mathbf{T}_\alpha}, \mu_{\mathbf{T}_\alpha}).
	\]
\end{corollary}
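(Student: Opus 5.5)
The plan is to combine Lemma~\ref{le:blot} with the known scaling limit of the conditioned Bienaym\'e--Galton--Watson tree $\mT_N$, and then convert the normalisation from $N=2n+1$ to $n$. Lemma~\ref{le:blot} states that the Gromov--Hausdorff--Prokhorov distance between $(\mM_n^\omega, r_N^{-1}d_{\mM_n^\omega}, \mu_{\mM_n^\omega})$ and $(\mT_N, \kappa r_N^{-1} d_{\mT_N}, \mu_N^{\mathrm{corn}})$ tends to zero in probability, where $r_N = N/b_N$. The total variation distance between $\mu_N^{\mathrm{corn}}$ and the uniform measure on the vertices of $\mT_N$ is $1/N$, so $\mu_N^{\mathrm{corn}}$ may be replaced by the uniform measure. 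This only changes the Prokhorov component by $O(1/N)$.

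Next I would invoke the stable tree scaling limit of Duquesne~\cite{MR1964956}, and in the finite variance case Aldous' theorem. These give convergence of the height process of $\mT_N$, rescaled by $b_N/N$ in time-normalised form, towards $H^{\mathrm{exc},(\alpha)}$. With the uniform measure this yields
\[
	\left(\mT_N, c_\alpha \frac{b_N}{N} d_{\mT_N}, \mu_{\mT_N}\right) \convd (\mathbf{T}_\alpha, d_{\mathbf{T}_\alpha}, \mu_{\mathbf{T}_\alpha}).
\]
Here $c_\alpha$ is a constant that must be read off from the normalisations. Our definitions of $b_n$ are chosen so that the rescaled \L{}ukasiewicz path $b_N^{-1}W_{\lfloor Nt\rfloor}$ converges to $X^{\mathrm{exc},(\alpha)}$ with Laplace exponent $\lambda^\alpha$. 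In the finite variance case $b_n=\sqrt{\Va{\xi}n/2}$ gives the limit $\sqrt2\,\mathbf{e}$, and by Duquesne's theorem the height process then scales as $(N/b_N)$ times $H^{\mathrm{exc},(\alpha)}$, which suggests $c_\alpha = 1$. Converting height process convergence into Gromov--Hausdorff--Prokhorov convergence of the coded real trees is standard: the contour/height coding map is Lipschitz from $\cC([0,1])$ to GHP space, and the uniform measure on vertices matches the push-forward of Lebesgue measure up to $o(1)$.

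Finally, I would combine the two statements. Since $\kappa\in]0,\infty[$, the map is $\kappa$ times the tree metric up to $o_p(r_N)$, so $(\mM_n^\omega, \kappa^{-1} (b_N/N) d_{\mM_n^\omega}, \mu_{\mM_n^\omega})$ converges to the $\alpha$-stable tree. It remains to express $b_N/N$ through $n$. Because $b$ is regularly varying with index $1/\alpha$ and $N\sim 2n$, we have $b_N/N \sim 2^{1/\alpha}b_n/(2n) = 2^{1/\alpha-1}b_n/n$, which matches the stated factor $\frac{2^{1/\alpha-1}}{\kappa}\frac{b_n}{n}$. Multiplying a metric by a factor tending to one perturbs the GHP distance by $o(1)$ times the diameter, and the diameter is tight.

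The main obstacle is pinning down the exact constant $c_\alpha$. This requires checking Duquesne's normalisation of the height process against our Laplace exponent convention $\exp(t\lambda^\alpha)$ and our choice of $b_n$. In particular the boundary subcases (regularly varying with infinite variance at $\alpha=2$, and the lattice span $2d_\omega$) must be handled carefully, so that $b_n$ is exactly the scaling for which the \L{}ukasiewicz path converges to $X^{(\alpha)}$. I would verify this first in the finite variance case against Aldous' $\frac{\sigma}{2}$ normalisation, where the result should be $\sqrt2\,\mathbf{e}$, and then extend by the same bookkeeping to the stable case.
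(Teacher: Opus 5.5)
Your proposal is correct and follows the paper's proof: Duquesne's invariance principle gives $\bigl(\mT_N, \frac{b_N}{N} d_{\mT_N}, \mu_N^{\mathrm{corn}}\bigr) \convd (\mathbf{T}_\alpha, d_{\mathbf{T}_\alpha}, \mu_{\mathbf{T}_\alpha})$, Lemma~\ref{le:blot} transfers this to $\mM_n^\omega$ with metric scaled by $\frac{b_N}{\kappa N}$, and $b_N/b_n \to 2^{1/\alpha}$ finishes the argument. The constant you flag is $c_\alpha=1$ with no further work needed, because the paper's definitions of $b_n$ are chosen to match the Laplace exponent $\lambda^\alpha$: the limiting Gaussian variance is $2$ when $\alpha=2$, and the L\'evy tail is $x^{-\alpha}/|\Gamma(1-\alpha)|$ when $1<\alpha<2$.
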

\begin{proof}
	 The invariance principles for conditioned Bienaym\'e--Galton--Watson trees~\cite{MR1964956} yield
	\begin{align*}
		\left(\mT_N,\frac{b_N}{N}d_{\mT_N},\mu_N^{\mathrm{corn}}\right) \convd (\mathbf{T}_\alpha,d_{\mathbf{T}_\alpha},\mu_{\mathbf{T}_\alpha}).
	\end{align*}
	By Lemma~\ref{le:blot} it follows that $\kappa <\infty$ and
	\begin{align*}
		\left(\mM_n^\omega,\frac{b_N}{\kappa N}d_{\mM_n^\omega},\mu_{\mM_n^\omega}\right) \convd (\mathbf{T}_\alpha,d_{\mathbf{T}_\alpha},\mu_{\mathbf{T}_\alpha}).
	\end{align*}
	The sequence $(b_n)_n$ is regularly varying with index $1/\alpha$. Consequently, $b_N / b_n \to2^{1/\alpha}$ since $N \sim 2n$. This implies the asserted scaling limit.
\end{proof}

Thus, in the present regime the planar map $\mM_n^\omega$ has asymptotically a tree-like shape. Let us also mention that Brownian tree limits of face-weighted models of planar maps have been observed in pioneering work~\cite{MR3342658} via condensation phenomena of simply generated trees.

\subsection{The $\alpha=5/4$ phase transition}

\begin{lemma}
	\label{le:crit54}
	Suppose that $\nu=1$, that Condition~\eqref{eq:bwregvar} holds with $\alpha=5/4$, and that
	\[
	c_\omega := \lim_{k \to \infty} L(k) \in ]0,\infty[.
	\]
	Set
	\[
	\kappa_{5/4} := 2d_\omega^{5/4}c_\omega \left(\frac{8}{3}\right)^{1/4}\mathfrak d_{\mathbf{M}}.
	\]
	Then
	\begin{align*}
		\Exb{d(\mV_{\widehat\xi})\one_{\{\widehat\xi\le x\}}} \sim \kappa_{5/4}\log x, \qquad x\to\infty.
	\end{align*}
\end{lemma}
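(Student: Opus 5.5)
The plan is to write the truncated expectation as a sum over block sizes and feed in two asymptotics: the tail of $\widehat\xi$ from \eqref{eq:bwregvar}, and the first-moment growth of the typical distance in $\mV_m$. Since $\widehat\xi$ only takes values $2m$ with $2m \in 2d_\omega\ndN$, i.e. $m\in d_\omega\ndN$, we have
\begin{align*}
	\Exb{d(\mV_{\widehat\xi})\one_{\{\widehat\xi\le x\}}} = \sum_{\substack{m \le x/2\\ m\in d_\omega\ndN}} 2m\Prb{\xi=2m}\,\Exb{d(\mV_m)}.
\end{align*}
By \eqref{eq:bwregvar} with $\alpha=5/4$ and $L(k)\to c_\omega$, writing $2m = 2d_\omega k$ we get $\Prb{\xi = 2m} \sim c_\omega k^{-9/4} = c_\omega d_\omega^{9/4} m^{-9/4}$. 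Hence $2m\Prb{\xi=2m}\sim 2c_\omega d_\omega^{9/4} m^{-5/4}$ along $m\in d_\omega\ndN$.

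The second input is the asymptotic
\begin{align*}
	\Exb{d(\mV_m)} \sim \mathfrak d_{\mathbf M}\left(\frac{8m}{3}\right)^{1/4}, \qquad m\to\infty.
\end{align*}
For this I would use Theorem~\ref{te:main1}. The root vertex and the vertex of a uniform corner are, by rerooting invariance of $\mV_m$, distributed like two independent $\mu_{\mV_m}$-samples. Gromov--Hausdorff--Prokhorov convergence then gives $(3/(8m))^{1/4} d(\mV_m) \convd d_{\mathbf M}(U_1,U_2)$.

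To upgrade this to convergence of first moments I need uniform integrability, which follows from $\sup_m \Exb{(m^{-1/4}\Di(\mV_m))^2}<\infty$. That bound comes from the exponential tail bound \eqref{eq:diamn} together with the deterministic bound $\Di(\mV_m)\le m$, exactly as used in the proof of Lemma~\ref{le:blot}.

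One caveat is that \eqref{eq:diamn} only gives $m^{1/4+\epsilon}$. So I would instead invoke the sharper deviation bound of \cite[Thm.~3.7]{MR3318042}, namely tails of $\Di(\mV_m)/m^{1/4}$ that are uniformly stretched-exponential at scale $m^{1/4}$. Alternatively I would use the coupling with $\cV(\mM_N)$ and Lemma~\ref{le:cudiam}, as in the proof of Proposition~\ref{pro:andm}, which gives averaged moment bounds of the right order. An averaged bound is enough here, because only Cesàro-type sums enter. \textbf{This uniform integrability step is the main obstacle.} If only averaged bounds are available, I would handle it by showing that the summands with atypically large $d(\mV_m)$ contribute $o(\log x)$, using Proposition~\ref{pro:andm} with $p=2$ and summation by parts over dyadic blocks.

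Combining the two inputs, the summand is asymptotically
\begin{align*}
	2c_\omega d_\omega^{9/4}\mathfrak d_{\mathbf M}\left(\tfrac{8}{3}\right)^{1/4} m^{-1}.
\end{align*}
Summing over $m\in d_\omega\ndN$ with $m\le x/2$ gives
\begin{align*}
	\sum_{k\le x/(2d_\omega)} (d_\omega k)^{-1} \sim d_\omega^{-1}\log x.
\end{align*}
Since a divergent sum of terms $\sim a_m$ is asymptotic to the sum of the $a_m$, this yields the total $2c_\omega d_\omega^{5/4}(8/3)^{1/4}\mathfrak d_{\mathbf M}\log x = \kappa_{5/4}\log x$, as claimed.
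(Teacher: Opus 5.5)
Your constant bookkeeping is correct and matches the paper: $\Prb{\widehat\xi=2m}\sim 2c_\omega d_\omega^{9/4}m^{-5/4}$ on $m\in d_\omega\ndN$, the resulting $\kappa_{5/4}$, and the use of Theorem~\ref{te:main1} with re-rooting. Your stated fallback is exactly the paper's proof.

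The primary route, however, does not work as written. \cite[Thm.~3.7]{MR3318042} controls $\Di(\mV_m)$ only at scale $m^{1/4+\epsilon}$, which is precisely the form \eqref{eq:diamn} in which the paper uses it. It does not give tails of $\Di(\mV_m)/m^{1/4}$ at scale $m^{1/4}$. The coupling with $\cV(\mM_N)$ yields only averaged bounds. So the pointwise asymptotic $\Exb{d(\mV_m)}\sim\mathfrak d_{\mathbf M}(8m/3)^{1/4}$, on which your final paragraph rests, is never established, and the paper does not need it.

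The last step has to be run with a fixed truncation level $A$ instead:
\begin{itemize}
\item Convergence in distribution gives $\Exb{\min(A,(3/(8m))^{1/4}d(\mV_m))}\to\Exb{\min(A,d_{\mathbf M}(U_1,U_2))}$, so the Toeplitz lemma handles the truncated weighted sum.
\item For the excess, use $(y-A)_+\le y^pA^{1-p}$ with some $p>1$. Proposition~\ref{pro:andm} on dyadic blocks gives $\sum_{2^j\le m<2^{j+1}}m^{-1}\Exb{((3/(8m))^{1/4}d(\mV_m))^p}=O(1)$ uniformly in $j$.
\item The excess is therefore $O(A^{1-p}\log x)$, not $o(\log x)$.
\end{itemize}
Then divide by $\log x$, take $\limsup$ and $\liminf$, and let $A\to\infty$.
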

\begin{proof}
	By Theorem~\ref{te:main1} and the re-rooting invariance of non-separable planar maps it follows that
	\[
		\left(\frac{3}{8k}\right)^{1/4}d(\mV_k) \convd d_{\mathbf{M}}(U_1,U_2), \qquad k\to\infty.
	\]
	In particular, for any $A>0$
	\[
		\Exb{ \min\left(A, \left(\frac{3}{8k}\right)^{1/4}d(\mV_k)\right) } \to \Exb{\min\left(A, d_{\mathbf{M}}(U_1,U_2)\right)}, \qquad k \to \infty.
	\]
	By Condition~\eqref{eq:bwregvar} and $L(k) \to c_\omega$ we have
	\[
		\Prb{\xi=2k} \sim c_\omega d_\omega^{9/4}k^{-9/4}, \qquad k\to\infty, \qquad k\equiv0 \mod d_\omega.
	\]
	Consequently,
	\[
		\sum_{\substack{1\le k\le m\\k\equiv0 \mod d_\omega}}k^{5/4}\Prb{\xi=2k} \sim c_\omega d_\omega^{5/4}\log m.
	\]
	By the Toeplitz lemma it follows that as $m \to \infty$
	\begin{multline}
		\label{eq:eltopa}
		\frac{1}{\log m}\sum_{\substack{1\le k\le m\\k\equiv0 \mod d_\omega}}k^{5/4}\Prb{\xi=2k}\Exb{ \min\left(A, \left(\frac{3}{8k}\right)^{1/4}d(\mV_k)\right) } \\ \to c_\omega d_\omega^{5/4}\Exb{\min\left(A, d_{\mathbf{M}}(U_1,U_2)\right)}.
	\end{multline}
	Fix $p>1$. Since $d(\mV_k)\le\Di(\mV_k)$, Proposition~\ref{pro:andm} yields, uniformly for $j\ge0$,
	\begin{align*}
		\sum_{\substack{2^j\le k<2^{j+1}\\k\equiv0 \mod d_\omega}} \frac{1}{k}\Exb{\left(\left(\frac{3}{8k}\right)^{1/4}d(\mV_k)\right)^p} &\le  (2^{-j})^{1 + p/4} \left(\frac{3}{8}\right)^{p/4} \sum_{\substack{2^j\le k<2^{j+1}\\k\equiv0 \mod d_\omega}} \Exb{\Di(\mV_k)^p}\\
		&\le (3/8)^{p/4} 2^{1+ p/4} E_p .
	\end{align*}
	Hence, as $m \to \infty$
	\[	
		\sum_{\substack{1\le k\le m\\k\equiv0 \mod d_\omega}} k^{5/4}\Prb{\xi=2k}\Exb{\left(\left(\frac{3}{8k}\right)^{1/4}d(\mV_k)\right)^p} = O(\log m).
	\]
	For any $x \ge A$ it holds that $x - A \le x = x A^{p-1} A^{1-p}  \le x^p A^{1-p}$. It follows that
	\[
		\limsup_{m\to\infty}\frac{1}{\log m}\sum_{\substack{1\le k\le m\\k\equiv0 \mod d_\omega}}k^{5/4}\Prb{\xi=2k}\Exb{\max\left(0, \left(\frac{3}{8k}\right)^{1/4}d(\mV_k)-A\right)} \le O(A^{1-p})
	\]
	with an $O$-term that does not depend on $A$. Note that for all $x \ge 0$ we have $x = \min(x,A) + \max(0, x-A)$. Hence, summing the last display with~\eqref{eq:eltopa} yields 
	\begin{multline*}
		\limsup_{m\to\infty} \frac{1}{\log m}\sum_{\substack{1\le k\le m\\k\equiv0 \mod d_\omega}}k^{5/4}\Prb{\xi=2k}\Exb{\left(\frac{3}{8k}\right)^{1/4}d(\mV_k)} \\ \le c_\omega d_\omega^{5/4}\Exb{\min\left(A, d_{\mathbf{M}}(U_1,U_2)\right)} + CA^{1-p}.
	\end{multline*}
	for a constant $C>0$ that does not depend on $A$. By~\eqref{eq:eltopa} we also have
	\begin{multline*}
		\liminf_{m\to\infty} \frac{1}{\log m}\sum_{\substack{1\le k\le m\\k\equiv0 \mod d_\omega}}k^{5/4}\Prb{\xi=2k}\Exb{\left(\frac{3}{8k}\right)^{1/4}d(\mV_k)} \\ \ge  c_\omega d_\omega^{5/4}\Exb{\min\left(A, d_{\mathbf{M}}(U_1,U_2)\right)}.
	\end{multline*}
	Since $A>0$ was arbitrary, and since the diameter of the Brownian sphere has a finite average, it follows that as $m \to \infty$
	\[
		\frac{1}{\log m}\sum_{\substack{1\le k\le m\\k\equiv0 \mod d_\omega}}k^{5/4}\Prb{\xi=2k}\Exb{\left(\frac{3}{8k}\right)^{1/4}d(\mV_k)} \to c_\omega d_\omega^{5/4}\mathfrak d_{\mathbf{M}}.
	\]
	Since, conditionally on $\widehat\xi=2k$, the random variable $d(\mV_{\widehat\xi})$ is distributed like $d(\mV_k)$, it follows that
	\begin{align*}
		\Exb{d(\mV_{\widehat\xi})\one_{\{\widehat\xi\le2m\}}} &= \sum_{\substack{1\le k\le m\\k\equiv0 \mod d_\omega}}2k\Prb{\xi=2k}\Exb{d(\mV_k)} \\
		&= 2\left(\frac{8}{3}\right)^{1/4}\sum_{\substack{1\le k\le m\\k\equiv0 \mod d_\omega}}k^{5/4}\Prb{\xi=2k}\Exb{\left(\frac{3}{8k}\right)^{1/4}d(\mV_k)} \\
		&\sim 2d_\omega^{5/4}c_\omega\left(\frac{8}{3}\right)^{1/4}\mathfrak d_{\mathbf{M}}  \log(m)
	\end{align*}
	as $m \to \infty$. Since the logarithm is slowly varying this completes the proof.
\end{proof}

\begin{lemma}
	\label{le:crti54b}
	Suppose that $\nu=1$, that Condition~\eqref{eq:bwregvar} holds with $\alpha=5/4$, and that
	\[
		c_\omega = \lim_{k \to \infty} L(k) \in ]0,\infty[.
	\]
	Then
	\begin{align*}
		d_{\mathrm{GHP}}\Bigg( \left( \mM_n^\omega, \frac{b_N}{N\log b_N}d_{\mM_n^\omega}, \mu_{\mM_n^\omega} \right), \left( \mT_N, \frac{\kappa_{5/4}b_N}{N}d_{\mT_N}, \mu_N^{\mathrm{corn}} \right) \Bigg) \convp 0.
	\end{align*}
\end{lemma}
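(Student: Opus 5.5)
The plan is to mimic the proof of Lemma~\ref{le:blot}, replacing the law of large numbers with mean $\kappa$ by a law of large numbers with a truncation-dependent mean given by Lemma~\ref{le:crit54}. Write $r_N := N/b_N$, which is regularly varying with index $1/5$, and note that $\He(\mT_N)=O_p(r_N)$ by~\cite{MR1964956}. By the \L{}ukasiewicz path convergence we have $\Delta_N^{\mathrm{out}}=O_p(b_N)$. By~\eqref{eq:diamn} and a union bound it follows that $\Delta_N^{\mathrm{dec}}=O_p(b_N^{1/4+\epsilon})$, which is $o_p(r_N\log b_N)$ because $b_N^{1/4}\asymp r_N$ up to slowly varying factors; with $L\to c_\omega$ these are in fact constants. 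The target is then
\[
\sup_{u,v\in\mT_N}\Bigl|d_{\mM_n^\omega}(\pi_N(u),\pi_N(v))-\kappa_{5/4}\log(b_N)\, d_{\mT_N}(u,v)\Bigr| = o_p(r_N\log b_N).
\]
Given this, the GHP statement follows exactly as at the end of Lemma~\ref{le:blot}, by decomposing at the most recent common ancestor $w$ with error at most $2\Delta_N^{\mathrm{dec}}$. Multiplying by $b_N/(N\log b_N)$ then gives the claim.

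The core estimate is $R_N' := \max_v\max_i|\sum_{j=i}^{h-1}(J_j(v)-\kappa_{5/4}\log b_N)| = o_p(r_N\log b_N)$. I would use~\eqref{eq:tnbound} and the spinal decomposition~\eqref{eq:spinal} as in~\eqref{eq:mlbound}, paying a factor $O(Nb_N)$ times a sum over $h\le Ar_N$. This reduces matters to i.i.d.\ copies $D_j$ of $d(\mV_{\widehat\xi})$, with $\widehat\xi$ of the same size-biased law. A single truncation at $r_N^q$ is no longer appropriate, since the mean diverges and the relevant truncation is at the level $\widehat\xi\le Mb_N$. On the tree side, I restrict to the event $\Delta_N^{\mathrm{out}}\le Mb_N$, which has probability at least $1-\epsilon_2$. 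Under the spinal measure this amounts to replacing $D_j$ by $D_j\one_{\{\widehat\xi_j\le Mb_N\}}$, which is legitimate since the event is monotone. Lemma~\ref{le:crit54} gives the truncated mean
\[
\Exb{D\one_{\{\widehat\xi\le Mb_N\}}}=\kappa_{5/4}\log b_N\,(1+o(1)),
\]
uniformly in fixed $M$, because $\log(Mb_N)\sim\log b_N$.

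For the concentration step I would bound the second moment of the truncated variable, using Proposition~\ref{pro:andm} with $p=2$ on dyadic blocks as in the proof of Lemma~\ref{le:crit54}:
\[
\Exb{D^2\one_{\{\widehat\xi\le Mb_N\}}}\lesssim\sum_{2^j\le Mb_N}2^{j(-5/4+1/2)}\cdot 2^{j}=O(b_N^{1/4}).
\]
Doob's (Kolmogorov's) maximal inequality over $h\le Ar_N$ summands then gives a fluctuation of order $\sqrt{r_N b_N^{1/4}}\asymp r_N$, which is $o(r_N\log b_N)$.

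\textbf{Main obstacle.} The Chebyshev-type bound above is far too weak to absorb the prefactor $Nb_N$ from~\eqref{eq:tnbound}. I would therefore try Bernstein's inequality with an additional truncation of each $D_j$ at $r_N^{1+\epsilon'}$-free level $\eta r_N\log b_N/\log N$, arguing that the mass lost by this truncation is controlled by diameter tails~\eqref{eq:diamn}. Here $D_j\le\Di(\mV_{\widehat\xi_j/2})\le(Mb_N)^{1/4+\epsilon}$ with stretched-exponential failure probability, and Bernstein with range $b_N^{1/4+\epsilon}$ and variance $O(r_Nb_N^{1/4})$ yields a bound $\exp(-c(\log b_N)^2 r_N^{1-\ldots})$. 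This is not obviously summable against $Nb_N$ when $\epsilon>0$. If it fails, I would avoid the $\frac{1}{\Pr{|\mT|=N}}$ transfer and work instead along a bounded number of ancestral lines of uniformly sampled vertices. Since GHP convergence in the compact setting only needs finitely many sampled points together with tightness, the required tightness would come from Lemma~\ref{le:smalldeco}, controlling small-degree decorations, combined with the large-degree blocks, of which there are only $O_p(1)$ along any path.
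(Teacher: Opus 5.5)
\textbf{There is a genuine gap.} Your skeleton matches the paper's: the spinal decomposition with the prefactor $O(Nb_N)$, the truncated mean from Lemma~\ref{le:crit54}, Bernstein's inequality, and the decomposition at the most recent common ancestor. However, the two estimates that actually carry the proof are missing.

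First, the claim $\Delta_N^{\mathrm{dec}}=O_p(b_N^{1/4+\epsilon})=o_p(r_N\log b_N)$ is false. Since $b_N\asymp N^{4/5}$ and $r_N\asymp N^{1/5}$, you have $b_N^{1/4+\epsilon}\asymp r_N N^{4\epsilon/5}$, and no polynomial factor is $o(\log b_N)$. At $\alpha=5/4$ the block diameters and the tree height live on the same polynomial scale, and only the logarithm separates main term from error. So the $k^{\epsilon}$ loss built into~\eqref{eq:diamn} cannot be afforded anywhere in this lemma. You must work with the averaged moments of Proposition~\ref{pro:andm} instead; this is how the paper obtains $\max_{v}\Di(\beta_N(v))=o_p(N^{1/5}\log N)$.

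Second, the obstacle you flag is real and your remedies do not remove it. If the summands are truncated only through $\widehat\xi\le Mb_N$, their range (even after invoking~\eqref{eq:diamn}) is $b_N^{1/4+\epsilon}$. Bernstein's exponent is then of order $\log N/N^{4\epsilon/5}\to0$, and Chebyshev or higher-moment bounds give only powers of $1/\log N$. None of this survives the factor $O(Nb_N)(Ar_N)^2$.

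The missing idea has two parts:
\begin{itemize}
\item Cap the summands at a level of order at most $r_N$.
\item Control the mass removed by the cap \emph{globally}, summed over all vertices of $\mT_N$, by a first-moment bound from Proposition~\ref{pro:andm}.
\end{itemize}
This cannot be done path by path via~\eqref{eq:diamn}. Blocks of outdegree comparable to $b_N$ typically have diameter of order $r_N$, and they do occur along typical ancestral lines.

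The paper proceeds as follows.
\begin{itemize}
\item It discards all vertices of outdegree larger than $K_N=N^{4/5}/(\log N)^{1/2}$. Their blocks have total diameter $O_p(N^{1/5}(\log N)^{1/2})=o_p(N^{1/5}\log N)$, which is affordable only because of the logarithm in the scale.
\item A $12$th-moment bound shows that with high probability every remaining block has diameter at most $t_N=N^{1/5}/(\log N)^{1/16}$.
\item Since $\log K_N\sim\log b_N$, the truncated mean stays at $\frac45\kappa_{5/4}\log N(1+o(1))$.
\item Bernstein with range $t_N$ and variance $O(N^{2/5}(\log N)^{-1/8})$ then gives $\exp(-c(\log N)^{17/16})$, which beats any polynomial prefactor.
\end{itemize}
The level of order $\eta r_N$ you mention would also make Bernstein work, with exponent of order $(\epsilon/\eta)\log N$, provided the excess above it is controlled globally in this way rather than by~\eqref{eq:diamn}.

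Your fallback fails too. Lemma~\ref{le:smalldeco} requires $\gamma_{\mathrm{dec}}>\alpha-1$, which is violated exactly for $\gamma_{\mathrm{dec}}=1/4$ and $\alpha=5/4$. Its conclusion is also false there: the small-degree blocks along a path contribute of order $b_N^{1/4}\log b_N$, and they are precisely the $\kappa_{5/4}\log b_N$ term. So tightness at scale $r_N\log b_N$ would again need the uniform-over-paths estimate you were trying to avoid.
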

\begin{proof}
	Recall that $N = 2n+1$. We restrict $n$ to multiples of $d_\omega$. 	Proposition~\ref{pro:andm} implies that for every $p>1$,
	\begin{align}
		\label{eq:piano1}
		\sum_{k\le x}k^{-5/4}\Exb{\Di(\mV_k)^p} &\le \sum_{0\le j\le\lfloor\log_2x\rfloor}2^{-5j/4}\sum_{2^j\le k<2^{j+1}}\Exb{\Di(\mV_k)^p} \\
		&\le E_p2^{1+p/4}\sum_{0\le j\le\lfloor\log_2x\rfloor}2^{j(p-1)/4} \nonumber\\
		&=O\left(x^{(p-1)/4}\right).\nonumber
	\end{align}
	Analogous calculations yield  for every $p>5$,
	\begin{align}
		\label{eq:piano2}
		\sum_{k\le x}k^{-9/4}\Exb{\Di(\mV_k)^p} = O\left(x^{(p-5)/4}\right).
	\end{align}
	and
	\begin{align}
		\label{eq:piano3}
		\sum_{k>x}k^{-9/4}\Exb{\Di(\mV_k)} \le O(x^{-1}).
	\end{align} 
	Karamata's theorem yields
	\[
		\Prb{\xi\ge x} \sim \frac{4c_\omega}{5}(2d_\omega)^{5/4}x^{-5/4}, \qquad x\to\infty.
	\]
	Hence
	\begin{align}
		\label{eq:symps}
		b_N \sim 2d_\omega\left(\frac{16c_\omega\Gamma(3/4)}{5}\right)^{4/5}N^{4/5}.
	\end{align}
	Set
	\[
		K_N:=\frac{N^{4/5}}{(\log N)^{1/2}}, \qquad t_N:=\frac{N^{1/5}}{(\log N)^{1/16}}.
	\]
	Let $(\xi_i)_{i \ge 1}$ denote independent copies of $\xi$ and set $S_j = \sum_{i=1}^j \xi_i$ for all $j \ge 0$. Using Dwass' formula,  the local limit theorem~\cite[Theorem~4.2.1]{MR0322926}, Markov's inequality and~\eqref{eq:piano2} and~\eqref{eq:symps} it follows that
	\begin{align}
		\label{eq:diaam1}
		&\Prb{\max_{\substack{v\in\mT_N\\d_{\mT_N}^+(v)\le K_N}}\Di(\beta_N(v))>t_N} \\
		&\qquad \le \Exb{\sum_{v\in\mT_N}\one_{\{d_{\mT_N}^+(v)\le K_N\}}\one_{\{\Di(\beta_N(v))>t_N\}}} \nonumber \\
		&\qquad \le \Prb{S_N = N-1}^{-1} N \sum_{\substack{0 \le k \le K_N, \\ k \equiv 0 \mod 2}} \Pr{\xi=k} \Prb{\Di(\mV_{k/2}) \ge t_N} \Prb{S_{N-1} = N-1 - k} \nonumber \\
		&\qquad \le O(N) t_N^{-12}\sum_{k\le K_N/2}k^{-9/4}\Exb{\Di(\mV_k)^{12}} \nonumber\\
		&\qquad \le O(N) K_N^{7/4} t_N^{-12} \nonumber\\
		&\qquad = O\left((\log N)^{-1/8}\right).\nonumber
	\end{align}
	Likewise, using~\eqref{eq:piano3} we obtain
	\[
		\Exb{\sum_{\substack{v\in\mT_N\\d_{\mT_N}^+(v)>K_N}}\Di(\beta_N(v))} \le O(N) \sum_{k>K_N/2}k^{-9/4}\Exb{\Di(\mV_k)} = O\left(N^{1/5} (\log N)^{1/2}\right).
	\]
	Consequently,
	\begin{align}
		\label{eq:diaam2}
		\frac{1}{N^{1/5} \log N}\sum_{\substack{v\in\mT_N\\d_{\mT_N}^+(v)>K_N}}\Di(\beta_N(v)) \convp 0.
	\end{align}
	By~\eqref{eq:diaam1} it also follows that
	\begin{align}
		\label{eq:diaam3}
		\max_{v\in\mT_N}\Di(\beta_N(v)) = o_p(N^{1/5} \log N).
	\end{align}
	
	Set
	\[
		\kappa_N := \Exb{ \min\left(d(\mV_{\widehat\xi})\one_{\{\widehat\xi\le K_N\}}, t_N\right)}.
	\]
	Lemma~\ref{le:crit54} and $\log K_N \sim \frac{4}{5} \log N$ yield
	\[
		\Exb{d(\mV_{\widehat\xi})\one_{\{\widehat\xi\le K_N\}}} = \frac{4}{5} \kappa_{5/4}  \log N + o(\log N).
	\]
	Since $\Prb{\widehat\xi=2k}=2k\Prb{\xi=2k}=O(k^{-5/4})$, Equation~\eqref{eq:piano1} with $p=12$ gives
	\begin{align*}
		0 &\le \Exb{d(\mV_{\widehat\xi})\one_{\{\widehat\xi\le K_N\}}}-\kappa_N \\
		&\le \Exb{\max\left(d(\mV_{\widehat\xi})-t_N, 0\right)\one_{\{\widehat\xi\le K_N\}} } \\
		&\le t_N^{-11}\Exb{d(\mV_{\widehat\xi})^{12}\one_{\{\widehat\xi\le K_N\}}} \\
		&\le O(t_N^{-11})\sum_{k\le K_N/2}k^{-5/4}\Exb{\Di(\mV_k)^{12}} \\
		&=O(t_N^{-11}K_N^{11/4}) \\
		&=O\left((\log N)^{-11/16}\right).
	\end{align*}
	Here we have used $\max(x-y,0) \le x^{12} / y^{11} $ for all $x,y>0$. Thus
	\begin{align}
		\label{eq:ccm}
		\kappa_N = \frac{4}{5}\kappa_{5/4} \log N + o(\log N).
	\end{align}
	Note also that by~\eqref{eq:piano1}
	\begin{align}
		\label{eq:ccv}
		\Exb{ \min\left(d(\mV_{\widehat\xi})\one_{\{\widehat\xi\le K_N\}}, t_N\right)^2} 	&\le \Exb{d(\mV_{\widehat\xi})^2\one_{\{\widehat\xi\le K_N\}}} \\
							&\le O(1)\sum_{k\le K_N/2}k^{-5/4}\Exb{\Di(\mV_k)^2} \nonumber \\
							&=O(K_N^{1/4}) \nonumber \\
							&=O\left(\frac{N^{1/5}}{(\log N)^{1/8}}\right). \nonumber 
	\end{align}
	For each $N$, let $D_0,D_1,\ldots$ denote independent copies of
	\[
		\min\left(d(\mV_{\widehat\xi})\one_{\{\widehat\xi\le K_N\}},t_N\right).
	\]
	Fix $A,\epsilon>0$. By~\eqref{eq:ccv} we have uniformly for $1\le h\le AN^{1/5}$ and $0\le i<h$, 
	\[
		\sum_{j=i}^{h-1}\Va{D_j}\le h\Exb{D_0^2}=O\left(\frac{N^{2/5}}{(\log N)^{1/8}}\right).
	\]
	Moreover, almost surely $|D_j-\kappa_N|\le t_N$. Bernstein's inequality therefore gives, uniformly for $1\le h\le AN^{1/5}$ and $0\le i<h$,
	\begin{align*}
		&\Prb{\left|\sum_{j=i}^{h-1}(D_j-\kappa_N)\right|>\epsilon N^{1/5}\log N} \\
		&\qquad\le 2\exp\left(-\frac{\epsilon^2N^{2/5}\log^2N}{2\left(C_AN^{2/5}(\log N)^{-1/8}+\epsilon t_NN^{1/5}\log N/3\right)}\right) \\
		&\qquad\le 2\exp\left(-c_{A,\epsilon}(\log N)^{17/16}\right)
	\end{align*}
	for a constant $c_{A,\epsilon}>0$ that does not depend on $N$. A union bound over $0\le i<h$ consequently yields
	\begin{align}
		\label{eq:tbo}
		\Prb{\max_{0\le i<h}\left|\sum_{j=i}^{h-1}(D_j-\kappa_N)\right|>\epsilon N^{1/5}\log N}\le 2h\exp\left(-c_{A,\epsilon}(\log N)^{17/16}\right).
	\end{align}
	For $v\in\mT_N$ with ancestral line
	\[
		o=v_0,v_1,\ldots,v_h=v,
	\]
	use the passage lengths $J_j(v)$ from the proof of Lemma~\ref{le:blot}. Thus $J_j(v)$ is the distance traversed inside the decoration $\beta_N(v_j)$ when passing from $v_j$ to $v_{j+1}$, and
	\[
		d_{\mM_n^\omega}(\pi_N(o),\pi_N(v))=\sum_{j=0}^{h-1}J_j(v).
	\]
	We use the same definition for the unconditioned enriched tree $(\mT,\beta)$. Set
	\[
		R_N := \max_{v\in\mT_N \setminus\{o\}}\max_{0\le i<\he_{\mT_N}(v)} \left|\sum_{j=i}^{\he_{\mT_N}(v)-1} \left( \min\left(J_j(v)\one_{\{d_{\mT_N}^+(v_j)\le K_N\}}, t_N\right)-\kappa_N\right)\right|.
	\]
	Using~\eqref{eq:spinal} analogously as in~\eqref{eq:mlbound} we obtain
	\begin{align*}
		&\Prb{R_N>\epsilon N^{1/5}\log N,\ \He(\mT_N)\le AN^{1/5}} \\
		&\qquad\le\Prb{|\mT|=N}^{-1}\sum_{1\le h\le AN^{1/5}}2h\exp\left(-c_{A,\epsilon}(\log N)^{17/16}\right) \\
		&\qquad\le O(Nb_N)\exp\left(-c_{A,\epsilon}(\log N)^{17/16}\right) ( AN^{1/5} )^2 \\
		&\qquad\to 0,
	\end{align*}
	since Dwass' formula and the lattice local limit theorem~\cite[Theorem~4.2.1]{MR0322926}, applied along the admissible values of $N$, give
	\[
		\Prb{|\mT|=N}=\frac{1}{N}\Prb{S_N=N-1}=\Theta((Nb_N)^{-1}).
	\]
	By the functional limits of encoding processes of $\mT_N$~\cite{MR1964956} we have
	\begin{align}
		\label{eq:hmtno}
		\He(\mT_N)=O_p(N/b_N)=O_p(N^{1/5}).
	\end{align}
	It follows that
	\begin{align}
		\label{eq:cts}
		R_N = o_p(N^{1/5}\log N).
	\end{align}
	
	By~\eqref{eq:diaam1} it holds with probability tending to one that
	\[
		\max_{\substack{v\in\mT_N\\d_{\mT_N}^+(v)\le K_N}}\Di(\beta_N(v))\le t_N.
	\]
	On this event, whenever $d_{\mT_N}^+(v_j)\le K_N$ we have
	\[
		J_j(v)\le\Di(\beta_N(v_j))\le t_N,
	\]
	and hence
	\[
		\min\left(J_j(v)\one_{\{d_{\mT_N}^+(v_j)\le K_N\}}, t_N\right) = J_j(v).
	\]
	Consequently, for every $v\in\mT_N$ and $0\le i<\he_{\mT_N}(v)$,
	\begin{align*}
		0 & \le\sum_{j=i}^{\he_{\mT_N}(v)-1}J_j(v) - \sum_{j=i}^{\he_{\mT_N}(v)-1} \min\left(J_j(v)\one_{\{d_{\mT_N}^+(v_j)\le K_N\}}, t_N\right) \\
		&\le \sum_{\substack{x\in\mT_N\\d_{\mT_N}^+(x)>K_N}} \Di(\beta_N(x)).
	\end{align*}
	Combining this with~\eqref{eq:diaam2} and~\eqref{eq:cts} yields
	\[
		\max_{v\in\mT_N\setminus\{o\}}\max_{0\le i<\he_{\mT_N}(v)}\left|\sum_{j=i}^{\he_{\mT_N}(v)-1}J_j(v)-\kappa_N\bigl(\he_{\mT_N}(v)-i\bigr)\right|=o_p(N^{1/5}\log N).
	\]
	By~\eqref{eq:ccm} and~\eqref{eq:hmtno} we have
	\[
		\left|\kappa_N - \frac{4}{5}\kappa_{5/4}\log N\right|\He(\mT_N)=o_p(N^{1/5}\log N).
	\]
	Therefore,
	\begin{align}
		\label{eq:diaam4}
		\max_{v\in\mT_N \setminus\{o\}}\max_{0\le i<\he_{\mT_N}(v)}\left|\sum_{j=i}^{\he_{\mT_N}(v)-1}\left(J_j(v)-\frac{4}{5}\kappa_{5/4}\log N\right)\right| = o_p(N^{1/5}\log N).
	\end{align}
	
	Let $u,v\in\mT_N$ and let $w$ be their most recent common ancestor. Since any path in $\mM_n^\omega$ from $\pi_N(u)$ to $\pi_N(v)$ passes through the block $\beta_N(w)$ we have
	\[
		d_{\mM_n^\omega}(\pi_N(u), \pi_N(v)) = d_{\mM_n^\omega}(\pi_N(w), \pi_N(u)) + d_{\mM_n^\omega}(\pi_N(w), \pi_N(v)) + R
	\]
	for an error $|R| \le 2 \max_{v\in\mT_N}\Di(\beta_N(v))$. By~\eqref{eq:diaam3} and~\eqref{eq:diaam4} it follows that
	\[
		\sup_{u,v\in\mT_N}\left|d_{\mM_n^\omega}(\pi_N(u),\pi_N(v))- \frac{4}{5} \kappa_{5/4}\log N d_{\mT_N}(u,v)\right| = o_p(N^{1/5}\log N).
	\]
	The stationary distribution $\mu_{\mM_n^\omega}$ is the push-forward measure of $\mu_N^{\mathrm{corn}}$ along $\pi_N$. Using~\eqref{eq:symps} it follows that
	\[
		d_{\mathrm{GHP}}\Bigg(\left(\mM_n^\omega,\frac{b_N}{N\log b_N}d_{\mM_n^\omega},\mu_{\mM_n^\omega}\right),\left(\mT_N,\frac{\kappa_{5/4}b_N}{N}d_{\mT_N},\mu_N^{\mathrm{corn}}\right)\Bigg)\convp0.
	\]
\end{proof}

\begin{corollary}
	Suppose that $\nu=1$, that Condition~\eqref{eq:bwregvar} holds with $\alpha=5/4$, and that
	\[
	c_\omega := \lim_{k \to \infty} L(k) \in ]0,\infty[.
	\]
	Then
	\[
	\left( \mM_n^\omega, \frac{1}{\mathfrak d_{\mathbf{M}}}\left(\frac{6}{d_\omega}\right)^{1/4}\left(\frac{5\Gamma(3/4)^4}{c_\omega}\right)^{1/5}\frac{1}{n^{1/5}\log n}d_{\mM_n^\omega}, \mu_{\mM_n^\omega} \right) \convd \left( \mathbf T_{5/4}, d_{\mathbf T_{5/4}}, \mu_{\mathbf T_{5/4}} \right).
	\]
\end{corollary}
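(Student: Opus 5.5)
The plan is to combine Lemma~\ref{le:crti54b} with the stable-tree invariance principle of~\cite{MR1964956} for $\mT_N$, and then convert the normalisation $\frac{b_N}{N\log b_N}$ into the explicit constant stated in the corollary.

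\emph{Step 1: tree limit.} Since $\alpha = 5/4$, the tree $\mT_N$ is a conditioned Bienaym\'e--Galton--Watson tree in the domain of attraction of a $5/4$-stable law, normalised by $b_N$. The invariance principle gives
\[
\left(\mT_N,\frac{b_N}{N}d_{\mT_N},\mu_N^{\mathrm{corn}}\right)\convd(\mathbf T_{5/4},d_{\mathbf T_{5/4}},\mu_{\mathbf T_{5/4}}),
\]
exactly as in the proof of the corollary to Lemma~\ref{le:blot}. Here $\mu_N^{\mathrm{corn}}$ may replace the uniform measure because the total variation distance between them is $1/N$. Scaling the metric by $\kappa_{5/4}$, we get
\[
\left(\mT_N,\frac{\kappa_{5/4}b_N}{N}d_{\mT_N},\mu_N^{\mathrm{corn}}\right)\convd(\mathbf T_{5/4},\kappa_{5/4}d_{\mathbf T_{5/4}},\mu_{\mathbf T_{5/4}}).
\]
Lemma~\ref{le:crti54b} then yields
\[
\left(\mM_n^\omega,\frac{b_N}{N\log b_N}d_{\mM_n^\omega},\mu_{\mM_n^\omega}\right)\convd(\mathbf T_{5/4},\kappa_{5/4}d_{\mathbf T_{5/4}},\mu_{\mathbf T_{5/4}}).
\]
Dividing the metric by $\kappa_{5/4}$ gives convergence to $\mathbf T_{5/4}$ with scale factor $\frac{b_N}{\kappa_{5/4}N\log b_N}$.

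\emph{Step 2: constants.} Since $N=2n+1$, we have $\log b_N\sim\frac45\log n$. By~\eqref{eq:symps},
\[
b_N\sim 2d_\omega\left(\frac{16c_\omega\Gamma(3/4)}{5}\right)^{4/5}(2n)^{4/5},
\]
so $N/b_N\sim C n^{1/5}$ for an explicit constant $C$. Substituting $\kappa_{5/4}=2d_\omega^{5/4}c_\omega(8/3)^{1/4}\mathfrak d_{\mathbf M}$ and simplifying the product should produce
\[
\frac{1}{\mathfrak d_{\mathbf M}}\left(\frac{6}{d_\omega}\right)^{1/4}\left(\frac{5\Gamma(3/4)^4}{c_\omega}\right)^{1/5}\frac{1}{n^{1/5}\log n}.
\]
Changing the scale by a factor that tends to $1$ does not affect Gromov--Hausdorff--Prokhorov convergence, because the diameters are tight.

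\emph{Main obstacle.} The argument has no conceptual difficulty; the real risk is the constant bookkeeping. The points that need care are:
\begin{itemize}
\item the lattice factor $d_\omega$, since $\xi$ lives on $2d_\omega\ndN$;
\item the exact definition of $b_N$ in case~a), which involves $|\Gamma(1-\alpha)|=|\Gamma(-1/4)|=4\Gamma(3/4)$;
\item the conversion between $\log b_N$ and $\log n$, where the factor $4/5$ must be tracked.
\end{itemize}
I would first recompute $b_N$ from Karamata's tail asymptotics $\Pr{\xi\ge x}\sim\frac{4c_\omega}{5}(2d_\omega)^{5/4}x^{-5/4}$ and check it against~\eqref{eq:symps}. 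Only then would I collect the powers of $2$, $3$, $5$ and $d_\omega$ to match the stated constant.
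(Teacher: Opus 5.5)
Your proposal is correct and follows the paper's proof step for step: the invariance principle of~\cite{MR1964956} for $\mT_N$, then Lemma~\ref{le:crti54b} to pass to $\mM_n^\omega$ with scale factor $b_N/(\kappa_{5/4}N\log b_N)$, then \eqref{eq:symps} together with $N\sim 2n$. The constant bookkeeping you left as ``should produce'' does work out: using $\log b_N\sim\frac45\log n$, the numerical factor is $(16/5)^{4/5}\,2^{-1/5}(3/8)^{1/4}\cdot\frac54=6^{1/4}5^{1/5}$, which gives exactly the stated normalisation.
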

\begin{proof}
	By the main result of~\cite{MR1964956} it follows that
	\[
		\left( \mT_N, \frac{b_N}{N}d_{\mT_N}, \mu_N^{\mathrm{corn}} \right) \convd \left( \mathbf T_{5/4}, d_{\mathbf T_{5/4}}, \mu_{\mathbf T_{5/4}} \right).
	\]
	Combining this convergence with Lemma~\ref{le:crti54b} gives
	\[
		\left( \mM_n^\omega, \frac{b_N}{\kappa_{5/4}N\log b_N}d_{\mM_n^\omega}, \mu_{\mM_n^\omega} \right) \convd \left( \mathbf T_{5/4}, d_{\mathbf T_{5/4}}, \mu_{\mathbf T_{5/4}} \right).
	\]
	Using~\eqref{eq:symps} and $N \sim 2n$ it follows that
	\[
		\frac{b_N}{\kappa_{5/4}N\log b_N} \sim \frac{1}{\mathfrak d_{\mathbf{M}}}\left(\frac{6}{d_\omega}\right)^{1/4}\left(\frac{5\Gamma(3/4)^4}{c_\omega}\right)^{1/5}\frac{1}{n^{1/5}\log n}.
	\]
	This completes the proof.
\end{proof}

\subsection{The Brownian sphere decorated stable tree case}

\begin{corollary}
	Suppose that $\nu=1$ and Condition~\eqref{eq:bwregvar}  holds with $1<\alpha<5/4$. Then
	\[
	\left( \mM_n^\omega, \left( \frac{3}{2^{2 + 1/\alpha} b_{n}} \right)^{1/4} d_{\mM_n^\omega}, \mu_{\mM_n^\omega} \right) \convd \left(\mathbf{T}^{\mathrm{dec}}_\alpha(\mathbf{M}), d_{\mathbf{T}^{\mathrm{dec}}_\alpha(\mathbf{M})}, \mu_{\mathbf{T}^{\mathrm{dec}}_\alpha(\mathbf{M})}\right).
	\]
\end{corollary}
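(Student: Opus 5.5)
We will deduce the statement from the invariance principle for decorated trees of S\'enizergues, Stef\'ansson and Stufler~\cite{zbMATH07790315}, applied to the $\cV$-enriched tree $(\mT_N,\beta_N)$ that encodes $\mM_n^\omega$. Here $\mT_N$ is a $\xi$-Bienaym\'e--Galton--Watson tree conditioned on $N=2n+1$ vertices. Each vertex $v$ carries the decoration $\beta_N(v)$, a uniform non-separable map with $d^+_{\mT_N}(v)/2$ edges. It is equipped with the metric $d_{\beta_N(v)}$, an inner root (the corner at which it hangs below its parent) and outer roots (the corners where the children are attached). The blocks are glued along cut vertices, so the metric of $\mM_n^\omega$ is exactly the decorated-tree metric of~\cite{zbMATH07790315}. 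By the remark preceding Lemma~\ref{le:blot}, the measure $\mu_{\mM_n^\omega}$ is the push-forward of $\mu_N^{\mathrm{corn}}$, which is within total variation $1/N$ of the uniform measure on $\mT_N$. This matches the measure used in~\cite{zbMATH07790315}.

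\textbf{Convergence of decorations.} We verify the hypotheses of the invariance principle in the following order. First, the decorations converge: with $\gamma_{\mathrm{dec}}=1/4$, Theorem~\ref{te:main1} gives, for a block with $k$ corners (so $k/2$ edges),
\[
\left(\beta, \left(\tfrac{3}{4k}\right)^{1/4} d_\beta, \mu_\beta\right) \convd (\mathbf{M},d_{\mathbf{M}},\mu_{\mathbf{M}}).
\]
The corners are exchangeable given the map, by re-rooting invariance. Hence the inner root together with finitely many uniformly chosen outer roots converge jointly to independent $\mu_{\mathbf{M}}$-points. Second, $\gamma_{\mathrm{dec}}=1/4>\alpha-1$ holds exactly because $\alpha<5/4$. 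Third, the \L{}ukasiewicz path of $\mT_N$ rescaled by $b_N$ converges to $X^{\mathrm{exc},(\alpha)}$ by~\cite{MR1964956}. Consequently the large degrees, rescaled by $b_N$, converge to the jump sizes $\Delta_v$, and a block of degree about $\Delta_v b_N$ gets its metric scaled by $(\Delta_v b_N)^{1/4}$. This yields the factor $\Delta_v^{1/4}$ in the limit and the global normalisation $b_N^{-1/4}$. Tracking the constants, $(3/(4 b_N))^{1/4}$ together with $b_N\sim 2^{1/\alpha} b_n$ (since $N\sim 2n$ and $b$ varies regularly with index $1/\alpha$) gives the stated factor $\bigl(3/(2^{2+1/\alpha} b_n)\bigr)^{1/4}$.

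\textbf{Negligibility of small decorations (main obstacle).} The condition in~\cite{zbMATH07790315} that the cumulative contribution of small-degree decorations along every ancestral line is negligible is handled by Lemma~\ref{le:smalldeco}. We take $\mD_k=\Di(\mV_{k/2})$ for even $k$ (and $0$ otherwise), $\gamma_{\mathrm{dec}}=1/4$, and an integer $p>\max\bigl(2,4\alpha/(3/2-\alpha)\bigr)$. The required averaged moment bound
\[
\sum_{k\le m}\Ex{\mD_k^p}\le Cm^{1+p/4}
\]
is exactly Proposition~\ref{pro:andm}, after reindexing $k\mapsto k/2$. This is why the averaged version is needed: uniform moment bounds for each $\Ex{\Di(\mV_k)^p}$ are not available. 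We also need the moment/tightness conditions that~\cite{zbMATH07790315} imposes on the limiting decoration. These follow from the finiteness of all moments of $\Di(\mathbf M)$, already noted in Section~\ref{sec:background}.

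With these inputs, \cite[Thm.~1.?]{zbMATH07790315} (their invariance principle) yields GHP convergence of the rescaled $\mM_n^\omega$ to $\mathbf{T}^{\mathrm{dec}}_\alpha(\mathbf{M})$. The step I expect to require the most care is checking that their precise hypotheses match the present setting, in particular that a uniform choice of outer roots among corners is compatible with their convention. Replacing $\mu_N^{\mathrm{corn}}$ by the uniform measure on $\mT_N$ is a total-variation-negligible change.
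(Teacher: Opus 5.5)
Your proposal is correct and follows the paper's proof essentially step for step. The paper likewise applies Lemma~\ref{le:smalldeco} with $\mD_{2m}\eqdist\Di(\mV_m)$, $\mD_{2m-1}=0$, $\gamma_{\mathrm{dec}}=1/4>\alpha-1$ and an integer $p>4\alpha/(3/2-\alpha)$, verifies the averaged moment bound via Proposition~\ref{pro:andm}, and invokes the invariance principle \cite[Thm.~5.1]{zbMATH07790315} (this is the result you left as ``Thm.~1.?''), whose only non-obvious hypothesis, Condition B1, is exactly the small-decoration negligibility; this gives convergence under $b_N^{-1/4}$ to the decorated tree with metric multiplied by $(4/3)^{1/4}$, and $b_N\sim 2^{1/\alpha}b_n$ finishes the normalisation exactly as you computed.
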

\begin{proof}
	Lemma~\ref{le:smalldeco} applies to the diameters of non-separable planar maps as marks: Consider the case where $\mD_0=0$, and, for $m\ge1$,  $\mD_{2m} \eqdist \Di(\mV_m)$ and $\mD_{2m-1} := 0$.  Choose any integer $p > \frac{4\alpha}{3/2-\alpha}>2$ and set $\gamma_{\mathrm{dec}} :=1/4 > \alpha-1$. By Proposition~\ref{pro:andm},
	\begin{align*}
		\sum_{k=0}^{m}\Ex{\mD_k^p} \le \sum_{j=1}^{\lfloor m/2\rfloor}\Ex{\Di(\mV_j)^p} \le Cm^{1+p/4}.
	\end{align*}
	Hence the requirements of Lemma~\ref{le:smalldeco} are satisfied, yielding for each $\epsilon>0$
	\begin{align}
		\label{eq:nonobvious}
		\lim_{\delta \downarrow 0} \sup_{\substack{N \ge 1 \\ \Pr{|\mT|=N}>0}}
		\Prb{\max_{v\in\mT_N} \sum_{u\in[o,v]_{\mT_N}} \Di(\beta_N(u)) \one_{\{d_{\mT_N}^+(u)\le\delta b_N\}} > \epsilon b_N^{1/4} } = 0.
	\end{align}
	Hence we may apply the invariance principle~\cite[Thm.~5.1]{zbMATH07790315} for decorated stable trees to $\mM_n^\omega$. The only non-obvious Condition B1 is precisely~\eqref{eq:nonobvious}. We obtain
	\begin{align*}
		\left( \mM_n^\omega, b_N^{-1/4}d_{\mM_n^\omega}, \mu_{\mM_n^\omega} \right) \convd \left(\mathbf{T}^{\mathrm{dec}}_\alpha(\mathbf{M}),\left(\frac{4}{3}\right)^{1/4}d_{\mathbf{T}^{\mathrm{dec}}_\alpha(\mathbf{M})},\mu_{\mathbf{T}^{\mathrm{dec}}_\alpha(\mathbf{M})} \right).
	\end{align*}
	Since $N\sim2n$ and $b_n$ varies regularly with index $1/\alpha$, we have $b_N \sim 2^{1/\alpha}b_n$. This yields the asserted normalisation.
\end{proof}

\subsection{The $0\le\nu<1$ case}
\label{sec:subcond}

Let
\[
	\Delta_N:=\max_{v\in\mT_N}d_{\mT_N}^+(v),
\]
and let $v_N^*$ denote the lexicographically first vertex of $\mT_N$ whose outdegree equals $\Delta_N$. Set
\[
	\Delta_N^{(2)}:=\max_{v\in\mT_N\setminus\{v_N^*\}}d_{\mT_N}^+(v).
\]
Delete the edges between $v_N^*$ and its children. Let $F_{N,0}$ denote the resulting component containing the root of $\mT_N$, and let $F_{N,1},\ldots,F_{N,\Delta_N}$ denote the fringe subtrees rooted at the children of $v_N^*$, listed according to their plane order. Set
\[
	W_{N,i}:=|F_{N,i}|, \qquad 0\le i\le\Delta_N.
\]

\begin{lemma}
	\label{le:subcond}
	Suppose that either $0<\nu<1$ and Condition~\eqref{eq:bwregvar} holds with $\alpha\ge1$, or that $\nu=0$ and Condition~\eqref{eq:bwcc14} holds. We have
	\begin{align}
		\label{eq:subdeg}
		\frac{\Delta_N}{(1-\nu)N}\convp1, \qquad \frac{\Delta_N^{(2)}}{N}\convp0.
	\end{align}
	Moreover,
	\begin{align}
		\label{eq:subfr}
		\frac{\max_{0\le i\le\Delta_N}W_{N,i}}{N}\convp0.
	\end{align}
	In the $\nu=0$ and~\eqref{eq:bwcc14} case we additionally have
	\begin{align}
		\label{eq:subcc}
		N-1-\Delta_N=o_p(N^{1/4}).
	\end{align}
\end{lemma}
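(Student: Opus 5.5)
The plan is to treat the two regimes separately and, in each, import known condensation results for simply generated trees. The fringe-subtree statement \eqref{eq:subfr} is then derived from the degree statements together with exchangeability of the fringe subtrees.

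\emph{Case $0<\nu<1$.} Here $\mT_N$ is a $\xi$-Bienaym\'e--Galton--Watson tree conditioned on $N$ vertices with $\Ex{\xi}=\nu<1$ (subcritical). Condition~\eqref{eq:bwregvar} says that $\Pr{\xi=2kd_\omega}$ is regularly varying with index $-1-\alpha$, and $\alpha\ge1$. This is exactly the setting of condensation in subcritical conditioned trees~\cite{MR2908619,MR3335012,MR4144886}. I would invoke~\cite[Thm.~19.34]{MR2908619} (or the local limit theorem~\cite[Thm.~1.1]{MR4144886}) to obtain $\Delta_N=(1-\nu)N+o_p(N)$ and $\Delta_N^{(2)}=o_p(N)$. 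Next I would use~\cite[Thm.~1.2, Cor.~1.10]{MR4144886}, as in Proposition~\ref{pro:edsvm2}. These give that $W_{N,0}=O_p(1)$ and that, conditionally on $\Delta_N$, the fringe subtrees $F_{N,1},\ldots,F_{N,\Delta_N}$ are asymptotically independent copies of the unconditioned tree $\mT$ up to a negligible final segment. Since $\Pr{|\mT|\ge m}$ decays regularly with index $-\alpha$ and there are at most $N$ subtrees, a union bound gives a maximal size of order $O_p(N^{1/\alpha})$ when $\alpha>1$. When $\alpha=1$ the relevant scale is $o(N)$ by the same tail estimate, because the total mass outside $v_N^*$ is $\nu N+o_p(N)$ and is spread over $\Theta(N)$ i.i.d.\ pieces with a finite mean. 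In either case this yields \eqref{eq:subfr}.

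\emph{Case $\nu=0$.} Here $\rho_\Phi=0$, so there is no Bienaym\'e--Galton--Watson representation, and I would work directly with the simply generated tree via its \L{}ukasiewicz path. The tree $\mT_N$ is determined, up to the cyclic lemma, by an outdegree sequence $(d_1,\ldots,d_N)$ drawn proportionally to $\prod_i [z^{d_i}]\Phi(z)$ subject to $\sum d_i=N-1=2n$. The probability that all outdegrees are at most $2n-\lfloor\epsilon n^{1/4}\rfloor$ is comparable, by the cyclic lemma, to the ratio in \eqref{eq:bwcc14}. That ratio involves $N=2n+1$ factors of $\Phi$ and coefficient index $2n$, so it tends to zero by assumption. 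Hence $\Delta_N> 2n-\epsilon n^{1/4}$ with high probability, which is \eqref{eq:subcc}. Then $\Delta_N/N\to1=1-\nu$, and all other vertices together have total outdegree at most $N-1-\Delta_N=o_p(N^{1/4})$. This gives $\Delta_N^{(2)}=o_p(N)$ and also $\max_i W_{N,i}\le 1+(N-1-\Delta_N)=o_p(N^{1/4})$, because each fringe subtree size is bounded by one plus its total outdegree.

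The main obstacle I expect is \eqref{eq:subfr} at the boundary $\alpha=1$ of the $0<\nu<1$ regime. There the maximum of the fringe subtrees must be controlled using only a first-moment-type tail $\Pr{|\mT|\ge m}\sim L(m)m^{-1}$ without extra moments. My first attempt would be the coupling~\eqref{eq:s1}--\eqref{eq:s2}-style statement of~\cite[Cor.~1.10]{MR4144886}, combined with the elementary fact that the maximum of $\Theta(N)$ i.i.d.\ variables with tail $L(m)/m$ is $O_p(N\tilde L(N))$. Because $\Pr{\mathrm{maximum}\ge\eta N}\approx N L(\eta N)/(\eta N)$, this quantity does not vanish, so I would need the finer fact that the combined size constraint forces the excess mass into the single hub $v_N^*$. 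That forcing is precisely the content of the uniqueness-of-condensate statement $\Delta_N^{(2)}=o_p(N)$ transferred from degrees to subtree sizes via \cite{MR3335012}.
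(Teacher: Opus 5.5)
Your overall route matches the paper's. You cite condensation theorems for \eqref{eq:subdeg}, approximate the fringe subtrees at $v_N^*$ by i.i.d.\ copies of $\mT$ with a union bound for \eqref{eq:subfr}, and use the balls-in-boxes/cyclic-lemma representation for $\nu=0$. The $\nu=0$ part is correct and coincides with the paper; the probability there is in fact exactly the ratio in \eqref{eq:bwcc14}, not merely comparable to it.

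\textbf{The gap is the case $0<\nu<1$, $\alpha=1$ of \eqref{eq:subfr}.} You assert that $N\Pr{|\mT|\ge\eta N}$ does not vanish and fall back on an unspecified ``finer fact''. That assertion is false, and the fallback is not an argument. \cite{MR3335012} only treats $\alpha>1$. Also, $\Delta_N^{(2)}=o_p(N)$ is a statement about outdegrees and by itself gives no control of fringe subtree sizes. As written, \eqref{eq:subfr} is not established at $\alpha=1$, and the proposal contradicts its own earlier remark about ``i.i.d.\ pieces with a finite mean''.

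\textbf{The missing observation} is that $\mT$ is subcritical, so $\Ex{|\mT|}=(1-\nu)^{-1}<\infty$. Hence, for every $\eta>0$ and regardless of $\alpha$,
\[
	N\Prb{|\mT|>\eta N}\le \eta^{-1}\Exb{|\mT|\one_{\{|\mT|>\eta N\}}}\to0 .
\]
In your language: if $\Pr{|\mT|\ge m}\sim L(m)/m$, summability of this tail forces $L(m)\to0$. With this, the union bound over approximately independent copies works uniformly for all $\alpha\ge1$, and the $O_p(N^{1/\alpha})$ estimate is unnecessary. This is exactly the paper's argument.

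\textbf{How the paper handles the remaining pieces.} It uses total-variation approximation only for the first $\lfloor\delta N\rfloor$ subtrees with $\delta<1-\nu$. It then gets $W_{N,0}/N\convp0$ from a law of large numbers, using $\Ex{|\mT|}=(1-\nu)^{-1}$. Finally, it covers the remaining children via invariance of $\mT_N$ under reversing sibling order, choosing $\delta>(1-\nu)/2$. If you instead rely on an ``all but a negligible final segment'' approximation together with $W_{N,0}=O_p(1)$, you must check that the results you cite actually cover $\alpha=1$. For both \eqref{eq:subdeg} and the fringe approximation, the paper has to use \cite{zbMATH08114289} in that case rather than \cite{MR3335012}.
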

\begin{proof}
	Suppose first that $0<\nu<1$ and Condition~\eqref{eq:bwregvar} holds with $\alpha\ge1$.
	
	If $\alpha>1$, Equation~\eqref{eq:subdeg} follows from~\cite[Thm.~1]{MR3335012}. If $\alpha=1$, it follows from~\cite[Thm.~1.1(i)]{zbMATH08114289}.
	
	Let $(\mT(i))_{i\ge1}$ denote independent copies of the unconditioned $\xi$-Bienaym\'e--Galton--Watson tree $\mT$. For every fixed $0<\delta<1-\nu$, we have
	\begin{align}
		\label{eq:subforest}
		(F_{N,i})_{1\le i\le\lfloor\delta N\rfloor}\atv(\mT(i))_{1\le i\le\lfloor\delta N\rfloor}.
	\end{align}
	For $\alpha>1$, this is~\cite[Cor.~2.7]{MR3335012}, and for $\alpha=1$ it is~\cite[Prop.~4.2]{zbMATH08114289}.
	
	Since $\Ex{|\mT|}=(1-\nu)^{-1}$, the law of large numbers and~\eqref{eq:subforest} yield
	\begin{align}
		\label{eq:sublln}
		\frac{1}{N}\sum_{i=1}^{\lfloor\delta N\rfloor}W_{N,i}\convp\frac{\delta}{1-\nu}.
	\end{align}
	For every $\epsilon>0$,
	\[
	\Prb{\max_{1\le i\le\lfloor\delta N\rfloor}|\mT(i)|>\epsilon N}\le\delta N\Prb{|\mT|>\epsilon N}\to0.
	\]
	Hence, by~\eqref{eq:subforest},
	\begin{align}
		\label{eq:submaxfirst}
		\frac{1}{N}\max_{1\le i\le\lfloor\delta N\rfloor}W_{N,i}\convp0.
	\end{align}
	Since
	\[
	W_{N,0}+\sum_{i=1}^{\Delta_N}W_{N,i}=N,
	\]
	we have
	\[
	\frac{W_{N,0}}{N}\le1-\frac{1}{N}\sum_{i=1}^{\lfloor\delta N\rfloor}W_{N,i}
	\]
	with probability tending to one. Letting first $N\to\infty$ in~\eqref{eq:sublln} and then $\delta\uparrow1-\nu$ proves
	\[
	\frac{W_{N,0}}{N}\convp0.
	\]
	
	The distribution of $\mT_N$ is invariant under reversing the order of the children of every vertex. By~\eqref{eq:subdeg}, the maximal outdegree is attained by a unique vertex with probability tending to one. Consequently,~\eqref{eq:submaxfirst} also holds for the last $\lfloor\delta N\rfloor$ children of $v_N^*$. Choose $\delta$ such that $\frac{1-\nu}{2}<\delta<1-\nu$.	Since $\Delta_N/N\convp1-\nu$, the first and last $\lfloor\delta N\rfloor$ children cover all children of $v_N^*$ with probability tending to one. Hence
	\[
	\frac{1}{N}\max_{1\le i\le\Delta_N}W_{N,i}\convp0.
	\]
	This proves~\eqref{eq:subfr} when $0<\nu<1$.
	
	Suppose now that $\nu=0$ and~\eqref{eq:bwcc14} holds. By the balls-in-boxes representation of the outdegrees of simply generated trees~\cite[Sec. 15]{MR2908619} and~\eqref{eq:bwcc14}, for every fixed $0<\epsilon<1$,
	\[
		\Prb{N-1-\Delta_N\ge\lfloor\epsilon n^{1/4}\rfloor}=\frac{[z^{2n}]\left(\Phi_{\le2n-\lfloor\epsilon n^{1/4}\rfloor}(z)\right)^{2n+1}}{[z^{2n}]\Phi(z)^{2n+1}} \to 0.
	\]
	This verifies~\eqref{eq:subcc} and implies $\Delta_N/ N \convp 1$. Hence we have also verified the first convergence in~\eqref{eq:subdeg}. Since the sum of all outdegrees of $\mT_N$ equals $N-1$,
	\[
		\Delta_N^{(2)}\le\sum_{v\in\mT_N\setminus\{v_N^*\}}d_{\mT_N}^+(v)=N-1-\Delta_N=o_p(N^{1/4}).
	\]
	This proves the second convergence in~\eqref{eq:subdeg}. Finally,
	\[
		\sum_{i=0}^{\Delta_N}(W_{N,i}-1)=N-1-\Delta_N.
	\]
	All summands on the left-hand side are non-negative. Hence
	\[
		\max_{0\le i\le\Delta_N}W_{N,i}\le N-\Delta_N=o_p(N^{1/4}),
	\]
	which proves~\eqref{eq:subfr} and completes the proof.
\end{proof}

We next derive a moment estimate for the unconditioned block-weighted planar map $\mM^\omega$ encoded by $(\mT,\beta)$.

\begin{lemma}
	\label{le:submom}
	Suppose that $0<\nu<1$ and that Condition~\eqref{eq:bwregvar} holds with $\alpha \ge 1$. We have
	\[
	\Ex{\Di(\mM^\omega)^4}<\infty.
	\]
\end{lemma}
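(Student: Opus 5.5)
Our plan is to bound the diameter of $\mM^\omega$ through the tree structure of the enriched tree $(\mT,\beta)$. Every geodesic in $\mM^\omega$ between two vertices passes through the blocks along the tree path between the corresponding tree vertices. Hence
\[
	\Di(\mM^\omega) \le 2 \max_{v \in \mT} \sum_{u \in [o,v]_{\mT}} \Di(\beta(u)) \le 2 \sum_{v \in \mT} \Di(\beta(v)).
\]
The crude second bound is good enough, because in the subcritical regime $0<\nu<1$ the tree $\mT$ has exponentially-decaying-free but well-controlled size. It therefore suffices to show
\[
	\Exb{\Bigl(\sum_{v\in\mT}\Di(\beta(v))\Bigr)^4}<\infty.
\]
Writing $X_v := \Di(\beta(v))$, the marks are independent given $\mT$, and conditionally on $d^+_\mT(v)=2m$ each $X_v$ is distributed like $\Di(\mV_m)$.

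Expanding the fourth power of the sum reduces the task to mixed moments. These are of the form $\Ex{X_\xi^{a}}$ for $a\le 4$, where $X_\xi$ is the mark of a vertex with offspring $\xi$, combined with moments of the number of vertices of $\mT$ and of size-biased offspring counts. We plan a many-to-few or recursive decomposition. Set $Y := \sum_{v\in\mT}X_v$. It satisfies the distributional recursion
\[
	Y \eqdist X_{\xi} + \sum_{i=1}^{\xi} Y_i,
\]
with $Y_i$ i.i.d.\ copies of $Y$, independent of $(\xi, X_\xi)$. Since $\Ex{\xi}=\nu<1$, we can bound $\Ex{Y^k}$ inductively for $k=1,\dots,4$. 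The $k$-th moment of the right-hand side equals $\nu\Ex{Y^k}$ plus terms involving lower moments of $Y$. These terms are multiplied by the joint moments $\Ex{\xi^{j}X_\xi^{l}}$ with $j+l\le k$ and $(j,l)\neq(1,0)$ in the leading position. Solving gives $(1-\nu)\Ex{Y^k} \le$ a finite expression, provided those joint moments are finite. To make this rigorous without assuming $\Ex{Y^k}<\infty$ a priori, we will run the argument on trees truncated at height $H$ and let $H\to\infty$ by monotone convergence.

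The main obstacle is that $\xi$ is heavy-tailed: under \eqref{eq:bwregvar}, $\Pr{\xi=2k}\approx L(k)k^{-1-\alpha}$ with $\alpha\ge1$ possibly close to $1$. So $\Ex{\xi^j}$ is infinite already for $j\ge 2$ when $\alpha<2$, and the naive recursion needs $\Ex{\xi^4}$. We will avoid high moments of $\xi$ by exploiting condensation-free structure. Instead of expanding by $\xi$, we use the bound $\Di(\mM^\omega)\le 2\max_v \sum_{u\in[o,v]}X_u$ together with the spinal decomposition \eqref{eq:spinal}. This gives
\[
	\Exb{\max_v S_v^4} \le \sum_{h\ge 0}\Exb{\sum_{|v|=h} S_v^4} = \sum_{h\ge0}\nu^{h}\,\Exb{\Bigl(\sum_{j<h}\widetilde D_j\Bigr)^4},
\]
where $S_v$ denotes the path sum of marks, adjusted to include the terminal block. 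Here $\widetilde D_j$ are i.i.d.\ with the law of $\Di(\mV_{\widehat\xi})$ under the size-biased law normalised by $\nu$. The series converges geometrically since $(\sum_{j<h}\widetilde D_j)^4 \le h^3\sum_{j<h}\widetilde D_j^4$, provided $\Ex{\widetilde D^4}<\infty$ and the analogous unbiased moment $\Ex{\Di(\mV_{\xi/2})^4}<\infty$ holds for the terminal vertex.

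These last two moment conditions read $\sum_k k\Pr{\xi=2k}\Ex{\Di(\mV_k)^4}<\infty$. We would verify this with Proposition~\ref{pro:andm} for $p=4$: it gives $\sum_{k\le m}\Ex{\Di(\mV_k)^4}\le E_4 m^{2}$. Summation by parts against $k\Pr{\xi=2k}\lesssim L(k)k^{-\alpha}$ yields terms of order $\sum_j 2^{-j\alpha}L(2^j)2^{2j}$. This diverges for $\alpha\le2$, so this is the genuinely delicate step. We would first try to replace the fourth power by a fourth power of the sum of path increments $J_j$ (distances inside blocks). Then we would use the re-rooting invariance so that the needed quantity is $\Ex{d(\mV_{\widehat\xi})^4}$, combined with the sharper exponential tail \eqref{eq:diamn} for blocks of size up to a cutoff. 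Blocks above the cutoff would be handled by noting that subcriticality makes large blocks rare along a path. If this still fails for $\alpha$ near $1$, we would lower the exponent requirement by checking which moment is actually needed later in Section~\ref{sec:subcond}.
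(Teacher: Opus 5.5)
Your proposal does not reach a proof, and the route you settle on cannot be rescued by sharper block estimates.

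Bounding $\Exb{\max_v S_v^4}$ by $\sum_h \Exb{\sum_{|v|=h} S_v^4}$ and applying the many-to-one formula charges every ancestral block with the size-biased law. You therefore need $\sum_k k\Pr{\xi=2k}\Ex{\Di(\mV_k)^4}<\infty$. This is not an artefact of the upper bound in Proposition~\ref{pro:andm}. By Theorem~\ref{te:main1}, the rescaled diameter of $\mV_k$ converges to the almost surely positive diameter of the Brownian sphere, so $\Ex{\Di(\mV_k)^4}\ge ck$ for large $k$. The series is then comparable to $\sum_k L(k)k^{1-\alpha}$, which diverges for $\alpha<2$.

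The proposed fixes do not close this. Re-rooting to $d(\mV_{\widehat\xi})$ does not help, since $d(\mV_k)$ is also of order $k^{1/4}$ with non-vanishing probability. The cutoff idea is not worked out, and lowering the exponent would change the statement.

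Your first route fails as well. Conditioning on the root's offspring and applying Jensen to its children gives $\Ex{Y^4}\ge (\Ex{X_\xi})^4\,\Ex{\xi^4}$, which is infinite for $\alpha<4$. The lossy step in both routes is replacing a maximum over vertices by a sum. This multiplies the fourth power of a large block's diameter by its number of descendants.

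The missing idea is to recurse on the height and keep the root block outside the maximum over subtrees, so it is counted once and with the unbiased law. Let $\mR_h$ be the maximal distance from the root vertex in the map encoded by the first $h$ generations of $(\mT,\beta)$, with $\mR_0=0$. Then $\mR_h\le X_\xi+\max_{1\le i\le\xi}\mR_{h-1}(i)$ with independent copies $\mR_{h-1}(i)$. Choose $\eta>0$ with $(1+\eta)\nu<1$. The pointwise inequality $(x+y)^4\le C_\eta x^4+(1+\eta)y^4$ and the bound $\Exb{\max_{1\le i\le\xi}\mR_{h-1}(i)^4}\le\Ex{\xi}\,\Ex{\mR_{h-1}^4}$ give
\[
	\Ex{\mR_h^4}\le C_\eta\Ex{X_\xi^4}+(1+\eta)\nu\,\Ex{\mR_{h-1}^4}.
\]
So $\sup_h\Ex{\mR_h^4}<\infty$ by induction, and monotone convergence together with $\Di(\mM^\omega)\le 2\He(\mM^\omega)$ finishes the argument.

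No moments of $\xi$ beyond the first appear. The only input is the unbiased moment $\Ex{X_\xi^4}=\sum_k\Pr{\xi=2k}\Ex{\Di(\mV_k)^4}$, which is the second of the two conditions you listed. Redo your dyadic computation with Proposition~\ref{pro:andm}, but without the size-bias factor $k$. It bounds the contribution of $k\in[2^j,2^{j+1})$ by a constant times $L(2^j)2^{-j(\alpha-1)}$. This is summable for $\alpha>1$ by Potter's bounds. For $\alpha=1$ it is summable because $\Ex{\xi}<\infty$ forces $\sum_k L(k)/k<\infty$, hence $\sum_j L(2^j)<\infty$.

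You already had the recursive decomposition and the truncation in height. They need to be applied to the height rather than to the sum $Y$.
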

\begin{proof}
	Define $(\mD_k)_{k \ge 0}$ so that $\mD_0=0$ and for all $m \ge 1$
	\[
		\mD_{2m} \eqdist \Di(\mV_m), \qquad \mD_{2m-1}=0.
	\]
	By Proposition~\ref{pro:andm}, there is a constant $E_4>0$ such that
	\begin{align}
		\label{eq:subav}
		\sum_{k=0}^{r}\Ex{\mD_k^4}\le E_4r^2
	\end{align}
	for all $r\ge1$. Set $q:=2d_\omega$. By Condition~\eqref{eq:bwregvar}, the uniform convergence theorem for slowly varying functions, and~\eqref{eq:subav}, there are constants $C_1, C_2>0$ with
	\begin{align*}
		\sum_{2^j\le k<2^{j+1}}\Prb{\xi=qk}\Ex{\mD_{qk}^4}
		&\le  C_1 L(2^j)2^{-j(1+\alpha)}\sum_{k<2^{j+1}}\Ex{\Di(\mV_{d_\omega k})^4}\\
		&\le C_2 L(2^j)2^{-j(\alpha-1)}
	\end{align*}
	for all sufficiently large $j$.
	
	If $\alpha>1$, the series $ \sum_{j\ge0}L(2^j)2^{-j(\alpha-1)}$	converges by Potter's bounds. If $\alpha=1$, we have by $\Ex{\xi}<\infty$ that
	\[
		\sum_{k\ge1}\frac{L(k)}{k}<\infty.
	\]
	By the uniform convergence theorem for slowly varying functions,
	\[
		\sum_{2^j\le k<2^{j+1}}\frac{L(k)}{k}\asymp L(2^j)
	\]
	for all sufficiently large $j$. Hence $\sum_{j\ge0}L(2^j)<\infty$.
	
	 Define $\mD_\xi$ so that conditional on $\xi=k$ it is distributed like $\mD_k$.  We have therefore proved
	\begin{align}
		\label{eq:subrootmom}
		\Ex{\mD_\xi^4} < \infty.
	\end{align}
	
	For $h\ge0$, let $\mR_h$ denote the maximal distance from the root vertex in the map encoded by the first $h$ generations of $(\mT,\beta)$. We use the convention $\mR_0=0$. Conditionally on $\xi$, let $(\mR_{h-1}(i))_{1 \le i \le \xi}$ denote independent copies of $\mR_{h-1}$. This way,
	\[
	\mR_h\le\mD_\xi+\max_{1\le i\le\xi}\mR_{h-1}(i),
	\]
	where the maximum over the empty set is taken to be zero.
	
	For every $\eta>0$, there is a constant $C_\eta>0$ such that
	\[
	(x+y)^4\le C_\eta x^4+(1+\eta)y^4, \qquad x,y\ge0.
	\]
	Since $0<\nu<1$ there exists $\eta>0$ such that $(1+\eta)\nu<1$. It follows from~\eqref{eq:subrootmom} and $\Ex{\xi}=\nu$ that
	\begin{align*}
		\Ex{\mR_h^4}
		&\le C_\eta\Ex{\mD_\xi^4}+(1+\eta)\Exb{\max_{1\le i\le\xi}\mR_{h-1}(i)^4}\\
		&\le C_\eta\Ex{\mD_\xi^4}+(1+\eta)\nu\Ex{\mR_{h-1}^4}.
	\end{align*}
	Consequently,
	\[
	\sup_{h\ge0}\Ex{\mR_h^4}<\infty.
	\]
	The tree $\mT$ is almost surely finite, and hence $\mR_h \convas \He(\mM^\omega)$ as $h \to \infty$. By monotone convergence,
	\[
	\Ex{\He(\mM^\omega)^4}<\infty.
	\]
	Since $\Di(\mM^\omega)\le2\He(\mM^\omega)$, this completes the proof.
\end{proof}

Set
\[
	\mV_N^*:=\beta_N(v_N^*).
\]
For $1\le i\le\Delta_N$, let $M_{N,i}$ denote the planar map encoded by the enriched fringe subtree $(F_{N,i},\beta_N|_{F_{N,i}})$. Let $M_{N,0}$ denote the root-side component obtained from $(F_{N,0},\beta_N|_{F_{N,0}})$ by replacing the decoration at the marked vertex $v_N^*$ by a single vertex. Thus, the maps $M_{N,0},M_{N,1},\ldots,M_{N,\Delta_N}$ are attached to $\mV_N^*$ at specified corners to form $\mM_n^\omega$. Let $\varpi_N: \mM_n^\omega \to \mV_N^*$ fix $\mV_N^*$ pointwise and map every vertex of each $M_{N,i}$ to its attachment vertex in $\mV_N^*$. Set
\[
	R_N:=\max_{x\in\mM_n^\omega}d_{\mM_n^\omega}(x,\varpi_N(x)).
\]

\begin{lemma}
	\label{le:submet}
	Suppose that either $0<\nu<1$ and Condition~\eqref{eq:bwregvar} holds with $\alpha\ge1$, or that $\nu=0$ and Condition~\eqref{eq:bwcc14} holds. Then
	\[
		R_N=o_p(N^{1/4}).
	\]
\end{lemma}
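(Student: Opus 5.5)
Every vertex $x$ of $\mM_n^\omega$ lies in some attached map $M_{N,i}$ (or in $\mV_N^*$, where $\varpi_N(x)=x$), and $d_{\mM_n^\omega}(x,\varpi_N(x))$ is at most the height of $M_{N,i}$ seen from its attachment vertex. Hence $R_N\le\max_{0\le i\le\Delta_N}\He(M_{N,i})$, and for the root-side component $\He(M_{N,0})\le\Di(M_{N,0})$. The plan is to prove $\max_i \Di(M_{N,i})=o_p(N^{1/4})$ separately in the two regimes.

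\emph{Case $\nu=0$ under~\eqref{eq:bwcc14}.} Here I would use only a trivial bound. The diameter of a connected map is at most its number of edges. All components $M_{N,i}$ together carry the non-core part of the tree: the total number of edges outside $\mV_N^*$ is bounded by the sum of outdegrees outside $v_N^*$, which is $N-1-\Delta_N$. Hence
\[
R_N\le N-1-\Delta_N=o_p(N^{1/4})
\]
by~\eqref{eq:subcc}.

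\emph{Case $0<\nu<1$.} I would compare the fringe maps with i.i.d.\ unconditioned copies of $\mM^\omega$ and combine this with the fourth-moment bound of Lemma~\ref{le:submom}. Fix $\delta\in\left((1-\nu)/2,1-\nu\right)$. By the total variation approximation~\eqref{eq:subforest}, the enriched forest $(F_{N,i},\beta_N|_{F_{N,i}})_{1\le i\le\lfloor\delta N\rfloor}$ is close in total variation to i.i.d.\ copies of $(\mT,\beta)$. The decorations pass through because, conditionally on the trees, they are the canonical independent decorations. It follows that $(M_{N,i})_{i\le\delta N}$ is close in total variation to i.i.d.\ copies $\mM^\omega(i)$. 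A union bound with Markov's inequality and Lemma~\ref{le:submom} then gives
\[
\Prb{\max_{i\le\delta N}\Di(\mM^\omega(i))>\epsilon N^{1/4}}\le \delta N\,\frac{\Ex{\Di(\mM^\omega)^4}}{\epsilon^4N}.
\]
This is bounded but not small, so I would sharpen it: since $\Ex{\Di^4}<\infty$ we have $x^4\Prb{\Di>x}\to0$, and therefore $N\,\Prb{\Di(\mM^\omega)>\epsilon N^{1/4}}\to0$. The last $\lfloor\delta N\rfloor$ children are handled by the reversal symmetry already used in the proof of Lemma~\ref{le:subcond}, and by~\eqref{eq:subdeg} these two windows cover all children of $v_N^*$ with high probability.

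\emph{The root component $M_{N,0}$.} This is the main obstacle, since it is not covered by~\eqref{eq:subforest}. My first attempt would be a Kesten-type spine description of $F_{N,0}$ under condensation, as in~\cite{MR3335012,zbMATH08114289}. In that description, $F_{N,0}$ converges in total variation to a finite tree consisting of a spine of geometric length carrying size-biased outdegrees, with independent unconditioned $\xi$-trees hanging off it. One then checks that $\Di(M_{N,0})$ is tight, using that the spine decorations are a.s.\ finite maps. Tightness would give $\Di(M_{N,0})=O_p(1)=o_p(N^{1/4})$. If that description is unavailable with decorations, the fallback is cruder: show $W_{N,0}=O_p(1)$ directly from~\eqref{eq:subfr}-type arguments, and then bound $\Di(M_{N,0})$ by its number of edges, which is at most $W_{N,0}$ plus the sum of decoration sizes along it, i.e.\ $O_p(1)$.

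Combining the two windows with the bound for $M_{N,0}$ gives $R_N=o_p(N^{1/4})$ in this case as well.
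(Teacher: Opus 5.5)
Your proposal follows the paper's proof essentially step by step. In both, the case $\nu=0$ uses the edge-count bound $R_N\le N-1-\Delta_N$ together with \eqref{eq:subcc}. For $0<\nu<1$, both use the total-variation comparison \eqref{eq:subforest} with i.i.d.\ copies of $\mM^\omega$, the truncated fourth-moment tail bound from Lemma~\ref{le:submom}, sibling-reversal symmetry for the last children, and tightness of the root-side component $F_{N,0}$ via its convergence to an almost surely finite tree; the paper takes this last input from \cite[Sec. 20]{MR2908619}.

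One small correction: your suggested fallback does not work as stated, since \eqref{eq:subfr}-type arguments only give $W_{N,0}=o_p(N)$, not $O_p(1)$. Your primary spine-description route does not need it, so the proof stands.
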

\begin{proof}
	Suppose first that $0<\nu<1$ and~\eqref{eq:bwregvar}  with $\alpha\ge1$.	Let $(\mM^\omega(i))_{i\ge1}$ denote independent copies of $\mM^\omega$. By~\eqref{eq:subforest} we have for every fixed $0<\delta<1-\nu$,
	\begin{align}
		\label{eq:submapforest}
		(M_{N,i})_{1\le i\le\lfloor\delta N\rfloor}\atv(\mM^\omega(i))_{1\le i\le\lfloor\delta N\rfloor}.
	\end{align}
	By Lemma~\ref{le:submom}, for every $\epsilon>0$,
	\begin{align*}
		\Prb{\max_{1\le i\le\lfloor\delta N\rfloor}\Di(\mM^\omega(i))>\epsilon N^{1/4}}
		&\le\delta N\Prb{\Di(\mM^\omega)>\epsilon N^{1/4}}\\
		&\le\delta\epsilon^{-4}\Exb{\Di(\mM^\omega)^4\one_{\{\Di(\mM^\omega)>\epsilon N^{1/4}\}}}\\
		&\to 0.
	\end{align*}
	It follows from~\eqref{eq:submapforest} that
	\[
	\max_{1\le i\le\lfloor\delta N\rfloor}\Di(M_{N,i})=o_p(N^{1/4}).
	\]
	Since the distribution of $\mT_N$ is invariant under reversing the order of siblings, the same conclusion holds for the last $\lfloor\delta N\rfloor$ children of $v_N^*$. Using~\eqref{eq:subdeg} and the fact that $0<\delta<1-\nu$ was arbitrary, we obtain
	\begin{align}
		\label{eq:subchilddiam}
		\max_{1\le i\le\Delta_N}\Di(M_{N,i})=o_p(N^{1/4}).
	\end{align}
	
	By~\cite[Sec. 20]{MR2908619}, together with the first convergence in~\eqref{eq:subdeg}, the marked tree $F_{N,0}$ converges in distribution to an almost surely finite tree.   Consequently,
	\begin{align}
		\label{eq:subrootdiam}
		\Di(M_{N,0})=O_p(1).
	\end{align}
	Every component $M_{N,i}$ intersects $\mV_N^*$ only at its attachment vertex. Hence
	\[
		R_N\le\max_{0\le i\le\Delta_N}\Di(M_{N,i}).
	\]
	Equations~\eqref{eq:subchilddiam} and~\eqref{eq:subrootdiam} yield
	\[
		R_N = o_p(N^{1/4}).
	\]
	
	It remains to consider the case when $\nu=0$ and~\eqref{eq:bwcc14}. For any vertex $v \in \mT_N$ the decoration $\beta_N(v)$ has $d_{\mT_N}^+(v)/2$ edges. Hence the total number of edges contained in all decorations except $\mV_N^*$ equals
	\[
	\sum_{v\in\mT_N\setminus\{v_N^*\}}\ed(\beta_N(v))=\frac{1}{2}\sum_{v\in\mT_N\setminus\{v_N^*\}}d_{\mT_N}^+(v)=\frac{N-1-\Delta_N}{2}.
	\]
	Every path from a vertex of $\mM_n^\omega$ to its image under $\varpi_N$ may be chosen inside the union of these decorations. Therefore,
	\[
	R_N\le\frac{N-1-\Delta_N}{2}=o_p(N^{1/4})
	\]
	by~\eqref{eq:subcc}. This completes the proof.
\end{proof}

\begin{corollary}
	Suppose that either $0<\nu<1$ and Condition~\eqref{eq:bwregvar} holds for some $\alpha\ge1$, or $\nu=0$ and Condition~\eqref{eq:bwcc14} holds. Then
	\[
		\left(\mM_n^\omega,\left(\frac{3}{8(1-\nu)n}\right)^{1/4}d_{\mM_n^\omega},\mu_{\mM_n^\omega}\right) \convd (\mathbf{M},d_{\mathbf{M}},\mu_{\mathbf{M}}).
	\]
\end{corollary}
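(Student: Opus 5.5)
The plan is to reduce everything to the macroscopic block $\mV_N^* = \beta_N(v_N^*)$ and then apply Theorem~\ref{te:main1}.

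\textbf{Step 1: size of the block.} Conditionally on $\mT_N$, the map $\mV_N^*$ is uniform among non-separable maps with $\Delta_N$ corners, that is, with $m_N := \Delta_N/2$ edges. By Lemma~\ref{le:subcond} we have
\[
\frac{m_N}{(1-\nu)n} \convp 1,
\]
since $\Delta_N/((1-\nu)N)\convp 1$ and $N\sim 2n$. Because $m_N\to\infty$ in probability and $\mV_N^*$ is conditionally uniform given $m_N$, Theorem~\ref{te:main1} (applied along the random index, which is legitimate by conditioning) gives
\[
\left(\mV_N^*, \left(\frac{3}{8m_N}\right)^{1/4} d_{\mV_N^*}, \mu_{\mV_N^*}\right) \convd (\mathbf M, d_{\mathbf M}, \mu_{\mathbf M}).
\]
The concentration of $m_N$ then lets us replace $m_N$ by $(1-\nu)n$ in the scaling factor.

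\textbf{Step 2: geometry.} The block $\mV_N^*$ is isometrically embedded in $\mM_n^\omega$, since blocks of a connected graph are isometric. Every vertex of $\mM_n^\omega$ lies within distance $R_N$ of $\mV_N^*$ via $\varpi_N$, and $R_N=o_p(N^{1/4})$ by Lemma~\ref{le:submet}. Hence the Hausdorff distance between the two rescaled spaces tends to $0$ in probability.

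\textbf{Step 3: measures.} The measure $\mu_{\mM_n^\omega}$ is the push-forward of $\mu_N^{\mathrm{corn}}$. Pick a uniform non-root tree vertex $v$ and map it to $\varpi_N(\pi_N(v))$. This defines a measure $\nu_N$ on $\mV_N^*$, and by Step 2 the coupling moves points by at most $R_N$. In this coupling, corners of $\mV_N^*$ correspond to children $i$ of $v_N^*$, and the child $i$ carries mass proportional to $W_{N,i}$. The component $M_{N,0}$ carries mass $W_{N,0}/N$, which is $o_p(1)$ at least in the $\nu=0$ case; in the $0<\nu<1$ case I will need to argue that $F_{N,0}$ is $O_p(1)$ via the local limit from \cite[Sec. 20]{MR2908619}.

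It then remains to show
\[
d_{\mathrm P}(\nu_N, \mu_{\mV_N^*}) \convp 0 .
\]
This is the main obstacle. I would reproduce the exchangeability argument of Lemma~\ref{le:sourcesimplecorepart2}. Conditionally on $\mV_N^*$, on $F_{N,0}$ and on the unordered family of fringe trees, the fringe trees are assigned to the corners of $\mV_N^*$ by a uniform permutation. This uses the reversal/exchangeability of children in simply generated trees: the fringe subtrees of $v_N^*$ are exchangeable given their multiset, because the weights are products.

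The variance bound then requires
\[
\frac{\sum_i W_{N,i}^2}{\left(\sum_i W_{N,i}\right)^2} \le \frac{\max_i W_{N,i}}{\sum_i W_{N,i}} \convp 0,
\]
which follows from \eqref{eq:subfr} together with $\sum_{i\ge1} W_{N,i}\ge \Delta_N$. Tightness of $\mV_N^*$, needed for the uniform $\epsilon$-net bound, comes from Step 1.

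Combining the three steps via \cite[Prop. 6]{MR2571957} yields the claimed convergence with normalisation $\left(3/(8(1-\nu)n)\right)^{1/4}$.
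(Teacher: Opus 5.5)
Your proposal is correct and follows essentially the same route as the paper: Theorem~\ref{te:main1} applied to the macroscopic block $\mV_N^*$ with $\Delta_N/2\sim(1-\nu)n$ edges, Hausdorff closeness via Lemma~\ref{le:submet}, and the exchangeability/Chebyshev argument of Lemma~\ref{le:sourcesimplecorepart2} driven by $\max_i W_{N,i}/\sum_{i\ge1}W_{N,i}\convp 0$. The point you flagged as uncertain needs no extra work, because \eqref{eq:subfr} in Lemma~\ref{le:subcond} already includes $i=0$ and therefore gives $W_{N,0}/N\convp 0$ in both regimes.
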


\begin{proof}
	Conditionally on $\Delta_N=2m$, the map $\mV_N^*$ is a uniform non-separable planar map with $m$ edges. Hence $\ed(\mV_N^*)=\frac{\Delta_N}{2}$. By~\eqref{eq:subdeg} it follows that
	\[
		\frac{\ed(\mV_N^*)}{(1-\nu)N/2}\convp1.
	\]
	Hence by Theorem~\ref{te:main1}
	\begin{align*}
		\left(\mV_N^*,\left(\frac{3}{4(1-\nu)N}\right)^{1/4}d_{\mV_N^*},\mu_{\mV_N^*}\right)\convd(\mathbf{M},d_{\mathbf{M}},\mu_{\mathbf{M}}).
	\end{align*}
	By Lemma~\ref{le:submet},
	\begin{align*}
		d_{\mathrm{H}}\left(\left(\mM_n^\omega,\left(\frac{3}{4(1-\nu)N}\right)^{1/4}d_{\mM_n^\omega}\right),\left(\mV_N^*,\left(\frac{3}{4(1-\nu)N}\right)^{1/4}d_{\mV_N^*}\right)\right)\convp0.
	\end{align*}
	By Lemma~\ref{le:subcond} 
	\[
		\frac{\left(\sum_{i=1}^{\Delta_N}W_{N,i}^2\right)^{1/2}}{\sum_{i=1}^{\Delta_N}W_{N,i}}\le\left(\frac{\max_{1\le i\le\Delta_N}W_{N,i}}{N-W_{N,0}}\right)^{1/2}\convp0.
	\]
	Hence by fully analogous arguments as for Lemma~\ref{le:2corepart2}, it follows that 
	\begin{align*}
		d_{\mathrm{GHP}}\left(\left(\mM_n^\omega,\left(\frac{3}{4(1-\nu)N}\right)^{1/4}d_{\mM_n^\omega},\mu_{\mM_n^\omega}\right),\left(\mV_N^*,\left(\frac{3}{4(1-\nu)N}\right)^{1/4}d_{\mV_N^*},\mu_{\mV_N^*}\right)\right)\convp0
	\end{align*}
	and hence, using $N \sim 2n$, 
	\[
		\left(\mM_n^\omega,\left(\frac{3}{8(1-\nu)n}\right)^{1/4}d_{\mM_n^\omega},\mu_{\mM_n^\omega}\right)\convd(\mathbf{M},d_{\mathbf{M}},\mu_{\mathbf{M}}).
	\]
\end{proof}
\subsection{The critical Cauchy case}
\label{sec:cauchy}

Throughout this subsection we focus on   $\nu=1$ with Condition~\eqref{eq:bwregvar} holding for $\alpha=1$. We retain the notation $\Delta_N$, $\Delta_N^{(2)}$, $v_N^*$, $(F_{N,i}, M_{N,i}, W_{N,i})_{0\le i\le\Delta_N}$,  $\mV_N^*$, $\varpi_N$, and $R_N$ introduced above. Define
\[
	\widetilde L(x):=(2d_\omega)^2L(x/(2d_\omega)), \qquad x>0.
\]
Then Condition~\eqref{eq:bwregvar} yields
\[
	\Prb{\xi=k}\sim\widetilde L(k)k^{-2}, \qquad k\to\infty, \qquad k\in2d_\omega\ndN.
\]

Let $u_{N,0},u_{N,1},\ldots,u_{N,N-1}$ denote the vertices of $\mT_N$ in lexicographic order, and let $(\cW_j(\mT_N))_{0\le j\le N}$ be its \L{}ukasiewicz path defined by
\[
	\cW_0(\mT_N):=0, \qquad \cW_{j+1}(\mT_N):=\cW_j(\mT_N)+d_{\mT_N}^+(u_{N,j})-1, \qquad 0\le j<N.
\]
Let $I_N$ be the index satisfying $u_{N,I_N}=v_N^*$. Let $(X_i)_{i\ge1}$ be independent copies of $X=\xi-1$, and set
\[
	S_0:=0, \qquad S_j:=\sum_{i=1}^jX_i, \qquad \underline S_j:=\min_{0\le k\le j}S_k, \qquad j\ge1.
\]
For a finite walk $w=(w_j)_{0\le j\le m}$ with $w_0=0$, write $y_j:=w_j-w_{j-1}$ for $1\le j\le m$, and let
\[
	k(w):=\min\left\{0\le j\le m\mid w_j=\min_{0\le i\le m}w_i\right\}.
\]
The \emph{Vervaat transform} $\operatorname{Ver}(w)$ is the walk starting at zero whose increments, in order, are
\[
	y_{k(w)+1},\ldots,y_m,y_1,\ldots,y_{k(w)},
\]
where an initial or terminal string is omitted when empty. Thus, the increments of $w$ are read in cyclic order, starting immediately after the first time at which $w$ attains its overall minimum. Set
\begin{align}
	\label{eq:vervaa}
	\cZ^{(N)}:=\operatorname{Ver}\bigl(S_0,S_1,\ldots,S_{N-1},-1\bigr).
\end{align}
The walk to which the transform is applied has increment sequence
\[
	X_1,\ldots,X_{N-1},-1-S_{N-1}.
\]
We refer to the last element in this list as the appended increment. We collect several results and consequences of~\cite{MR3914558}.

\begin{proposition}
	\label{pro:catree}
	We have
	\begin{align}
		\label{eq:cadeg}
		\Delta_N=b_N+O_p(a_N), \qquad \Delta_N^{(2)}=O_p(a_N).
	\end{align}
	In particular,
	\[
	\frac{\Delta_N}{b_N}\convp1, \qquad \frac{\Delta_N^{(2)}}{b_N}\convp0,
	\]
	and the maximal outdegree is attained by a unique vertex with probability tending to one. Moreover,
	\[
	\frac{I_N}{N}\convp0.
	\]
	Furthermore,
	\begin{align}
		\label{eq:catv}
		\bigl(\cW_j(\mT_N):0\le j\le N\bigr)\atv\bigl(\cZ_j^{(N)}:0\le j\le N\bigr).
	\end{align}
	With probability tending to one, the appended increment $-1-S_{N-1}$ is strictly larger than $X_1,\ldots,X_{N-1}$. We also have
	\[
	S_{N-1}=-b_N+O_p(a_N), \qquad  \underline S_{N-1}=-b_N+O_p(a_N).
	\]
	Setting $\cW_j(\mT_N):=0$ for $j>N$, we have 
	\begin{align}
		\label{eq:capost}
		\sup_{0\le t \le 1}\left|\frac{\cW_{I_N+1+\lfloor Nt\rfloor}(\mT_N)-\cW_{I_N+1}(\mT_N)}{b_N}+t\right|\convp0.
	\end{align}
	Finally,
	\[
		\frac{\max_{0\le i\le\Delta_N}W_{N,i}}{N}\convp0.
	\]
\end{proposition}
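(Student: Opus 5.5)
The plan is to derive every assertion from the condensation theory for critical Cauchy-domain Bienaym\'e--Galton--Watson trees in~\cite{MR3914558}, and then to obtain the post-$I_N$ and fringe statements as consequences of the Vervaat representation.

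\emph{Step 1: the Vervaat representation.} By the cycle lemma, $\mT_N$ is encoded by the Vervaat transform of a walk with $N$ i.i.d.\ increments distributed like $X$, conditioned to end at $-1$. The main result of~\cite{MR3914558} states that, conditionally on $S_N=-1$, the sequence $(X_1,\ldots,X_N)$ is close in total variation to $(X_1,\ldots,X_{N-1},-1-S_{N-1})$ after a uniform cyclic shift. A uniform cyclic shift does not change the Vervaat transform, so this gives~\eqref{eq:catv}.

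\emph{Step 2: the unconditioned sums.} The one-big-jump estimates for $S_{N-1}$ come from the Cauchy-domain limit theorem. Centering $X$ gives $S_{N-1}=-b_N+O_p(a_N)$, because $(S_{N-1}+b_N)/a_N$ converges to a $1$-stable law. The fluctuations of the whole path $(S_j)_{j<N}$ around the drift $-jb_N/N$ are $O_p(a_N)$ uniformly, and $a_N=o(b_N)$. The minimum therefore lies within $O_p(a_N)$ of the terminal value, which gives $\underline S_{N-1}=-b_N+O_p(a_N)$.

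\emph{Step 3: the degree and index statements.} The appended increment equals $b_N-1+O_p(a_N)$, whereas $\max_{i<N}X_i=O_p(a_N)$ by the definition of $a_N$. Hence with high probability the appended increment is strictly the largest, and this yields~\eqref{eq:cadeg} and uniqueness of the maximiser. In the Vervaat transform, the appended increment sits at position $N-k(w)$, where $k(w)$ is the first time the walk attains its minimum. The drift is negative linear with fluctuations $o(b_N)$ on $[0,N-1]$, and the final increment jumps upward. So the minimum is attained at some time $N-1-o_p(N)$, and consequently $I_N=o_p(N)$.

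\emph{Step 4: the post-$I_N$ path.} After the jump, the transformed path continues with the increments $y_{k(w)+1},\ldots,y_{N-1}$, which are few, followed by $X_1,X_2,\ldots$. Each of these has drift $-b_N/N$ per step, up to $O_p(a_N)$ uniformly. This gives~\eqref{capost}, with the convention $\cW_j=0$ beyond $N$ being consistent since the path returns to about $0$ at time $N$.

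\emph{Step 5: the fringe sizes.} The sizes $W_{N,i}$ of the fringe subtrees at the children of $v_N^*$ are the lengths of the excursion intervals of the post-$I_N$ path above its running infimum, measured from the level $\cW_{I_N+1}$. By~\eqref{capost}, that path decreases at linear speed $b_N$ per unit of rescaled time, up to uniform $o(b_N)$ errors. An excursion interval of length $\epsilon N$ would force the path to stay above a level it must cross at linear speed, which contradicts uniform convergence. Hence $\max_{i\ge1}W_{N,i}=o_p(N)$. Finally, $W_{N,0}\le I_N+1+(\text{subtrees after } v_N^*\text{ not below it})$. Both pieces are $o_p(N)$: the first by $I_N=o_p(N)$, the second because the path returns to level $\cW_{I_N}$ only at time $N-o_p(N)$.

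I expect the main obstacle to be Step 1, namely stating precisely the total variation approximation from~\cite{MR3914558} in the form needed here and checking that the Vervaat transform commutes with the cyclic shift. I would first rely on the corresponding theorem in~\cite{MR3914558}, and fall back on a direct local limit argument for $\Prb{S_N=-1}$ if that is needed.
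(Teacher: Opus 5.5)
Your proposal is correct, but it divides the work differently from the paper.

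\textbf{How the paper does it.} The paper obtains almost every item by citing a separate result of~\cite{MR3914558}:
\begin{itemize}
\item \eqref{eq:cadeg} from Theorems~1 and~3;
\item \eqref{eq:catv} from Proposition~8 and Theorem~21;
\item the dominance of the appended increment from Corollary~22(i);
\item the walk estimates from Eq.~(8);
\item $I_N/N\convp0$ from Corollary~25(ii);
\item \eqref{eq:capost} from the argument in the proof of Theorem~23, a Skorokhod limit towards the deterministic function $-t$ that is then upgraded to uniform convergence.
\end{itemize}
Only the bounds on $W_{N,i}$ are derived by hand. That part is exactly your Step~5: $J_N/N\convp1$ bounds $W_{N,0}$, and the excursion identity gives $|W_{N,i}/N-1/b_N|\le 2\sup_t|\cdots|$ for the other components.

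\textbf{How you do it.} You take only two inputs: the one-big-jump approximation \eqref{eq:catv} and the classical functional limit theorem for the unconditioned walk. From the explicit form of the Vervaat transform you then read off the degree asymptotics, the uniqueness of the maximiser, the position $I_N=N-1-k(w)=O_p(Na_N/b_N)$, and \eqref{eq:capost}. This is more self-contained and more transparent, and it even gives a rate for $I_N$. The paper's route is shorter and comes with distributional limits for $\Delta_N$ and $\Delta_N^{(2)}$, which are not needed here.

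\textbf{Correction to Step~4.} You have the order of the increments reversed. By your own definition, the increments $y_{k(w)+1},\ldots,y_{N-1}=X_{k(w)+1},\ldots,X_{N-1}$ come \emph{before} the appended jump. There are $N-1-k(w)=I_N$ of them, which matches your Step~3. After the jump come $X_1,\ldots,X_{k(w)}$. Hence
\[
\cZ^{(N)}_{I_N+1+j}-\cZ^{(N)}_{I_N+1}=S_j, \qquad 0\le j\le k(w).
\]
Together with $\max_{j<N}|S_j+jb_N/N|=O_p(a_N)$ and $k(w)/N\convp1$, this gives \eqref{eq:capost} directly. For $\lfloor Nt\rfloor>k(w)$ the zero convention applies, and the corresponding term is $(1+S_{k(w)})/b_N+t=o_p(1)$.

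\textbf{Minor points.}
\begin{itemize}
\item In Step~5 the relevant hitting level is $\cW_{I_N}(\mT_N)-1$, not $\cW_{I_N}(\mT_N)$.
\item The cyclic-shift invariance in Step~1 requires all increments to be at least $-1$, that is $S_{N-1}\le0$. This holds with high probability.
\end{itemize}
None of these points affects your conclusions.
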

\begin{proof}
	In the notation of~\cite{MR3914558}, the centring sequence is negative and its absolute value is the positive sequence $b_N$ used here. The main results~\cite[Thms.~1 and 3]{MR3914558} give~\eqref{eq:cadeg}. Since $a_N=o(b_N)$, it readily follows that $\Delta_N / b_N\convp1$ and $\Delta_N^{(2)}/b_N \convp0$. In particular, $\Delta_N^{(2)} = o_p(\Delta_N)$, yielding that with probability tending to one the maximal outdegree is attained by a unique vertex.
	
	Combining~\cite[Prop.~8]{MR3914558} and~\cite[Thm.~21]{MR3914558} gives~\eqref{eq:catv}. The assertion concerning the appended increment follows from~\cite[Cor.~22(i)]{MR3914558}. The functional limit~\cite[Eq.~(8)]{MR3914558} implies
	\[
		\sup_{0\le t\le1}\left|\frac{S_{\lfloor Nt\rfloor}+tb_N}{a_N}\right|=O_p(1),
	\]
	and therefore
	\[
	\max_{0\le j\le N-1}\left|S_j+\frac{j}{N}b_N\right|=O_p(a_N).
	\]
	Since $b_N=o(N)$ and $a_N\to\infty$, we have $b_N/N=o(a_N)$. It follows that
	\[
		S_{N-1}=-b_N+O_p(a_N), \qquad \underline S_{N-1}=-b_N+O_p(a_N).
	\]
	The convergence $I_N/N\convp0$ is~\cite[Cor.~25(ii)]{MR3914558}.
	
	Let
	\[
		K_N:=\min\left\{0\le j<N\mid\cZ_{j+1}^{(N)}-\cZ_j^{(N)}=\max_{0\le i<N}\bigl(\cZ_{i+1}^{(N)}-\cZ_i^{(N)}\bigr)\right\}.
	\]
	Extend $\cZ^{(N)}$ by setting $\cZ_j^{(N)}:=0$ for $j>N$. The argument in the proof of~\cite[Thm.~23]{MR3914558} yields
	\[
		\left(\frac{\cZ_{K_N+1+\lfloor Nt\rfloor}^{(N)}-\cZ_{K_N+1}^{(N)}}{b_N}:0\le t\le1\right)\convd(-t:0\le t\le1)
	\]
	in $\mathbb D([0,1],\ndR)$ equipped with the Skorokhod $J_1$ topology. By~\eqref{eq:catv} it follows that likewise
	\[
		\left(\frac{\cW_{I_N+1+\lfloor Nt\rfloor}(\mT_N)-\cW_{I_N+1}(\mT_N)}{b_N}:0\le t\le1\right)\convd(-t:0\le t\le1)
	\]
	in $\mathbb D([0,1],\ndR)$ equipped with the Skorokhod $J_1$ topology. Since the limiting function is deterministic and continuous, this implies convergence in probability with respect to  the uniform norm. In particular,~\eqref{eq:capost} follows.
	
	Set
	\[
	J_N:=\inf\{j>I_N\mid\cW_j(\mT_N)=\cW_{I_N}(\mT_N)-1\}.
	\]
	The vertices with lexicographic indices in $[I_N,J_N[$ are precisely the vertices of the fringe subtree rooted at $v_N^*$. Since
	\[
		\cW_{I_N+1}(\mT_N)=\cW_{I_N}(\mT_N)+\Delta_N-1,
	\]
	we have by~\eqref{eq:cadeg}
	\[
		\frac{\cW_{I_N}(\mT_N)-1-\cW_{I_N+1}(\mT_N)}{b_N}=-\frac{\Delta_N}{b_N}\convp-1.
	\]
	Hence by~\eqref{eq:capost} it follows that
	\[
		\frac{J_N}{N} \convp 1.
	\]
	The component $F_{N,0}$ contains only $v_N^*$, the vertices preceding $v_N^*$ in lexicographic order, and the vertices following the fringe subtree rooted at $v_N^*$. Consequently,
	\[
	W_{N,0}\le I_N+N-J_N+1,
	\]
	and hence $W_{N,0}/N\convp0$.
	
	For $1\le i\le\Delta_N$, let $s_{N,i}$ denote the lexicographic index of the root of $F_{N,i}$. The vertices of $F_{N,i}$ form a consecutive interval of length $W_{N,i}$ in lexicographic order, and the corresponding excursion of the \L{}ukasiewicz path satisfies
	\[
		\cW_{s_{N,i}+W_{N,i}}(\mT_N)=\cW_{s_{N,i}}(\mT_N)-1, \qquad \cW_{s_{N,i}+j}(\mT_N) \ge \cW_{s_{N,i}}(\mT_N) \quad\text{for}\quad 1 \le j < W_{N,i}.
	\]
	Hence,  for all $1\le i\le\Delta_N$
	\begin{align*}
		\left|\frac{W_{N,i}}{N}-\frac{1}{b_N}\right| &= \Bigg|\left(\frac{\cW_{s_{N,i}+W_{N,i}}(\mT_N)-\cW_{I_N+1}(\mT_N)}{b_N}+\frac{s_{N,i}+W_{N,i}-I_N-1}{N}\right) \\
		&\quad -\left(\frac{\cW_{s_{N,i}}(\mT_N)-\cW_{I_N+1}(\mT_N)}{b_N}+\frac{s_{N,i}-I_N-1}{N}\right)\Bigg| \\
		&\le2\sup_{0\le t \le 1}\left|\frac{\cW_{I_N+1+\lfloor Nt\rfloor}(\mT_N)-\cW_{I_N+1}(\mT_N)}{b_N}+t\right|.
	\end{align*}
	By~\eqref{eq:capost} it follows that
	\[
		\frac{\max_{1\le i\le\Delta_N}W_{N,i}}{N} \convp 0.
	\]
	This completes the proof.
\end{proof}

For the remainder of this subsection, specialise the marks $(\mD_k)_{k\ge0}$ from Section~\ref{sec:negli} by setting $\mD_0:=0$, $\mD_{2m}\eqdist\Di(\mV_m)$ for $m\ge1$, and $\mD_{2m-1}:=0$. Conditionally on $\xi$, let $\mD_\xi$ have the law of $\mD_k$ on the event $\{\xi=k\}$.

\begin{lemma}
	\label{le:camom}
	For every $p>4$ there exists a constant $G_p>0$ such that for all sufficiently large $r$, 
	\[
		\Exb{(1+\mD_\xi)^p\one_{\{\xi\le r\}}}\le G_p\widetilde L(r)r^{p/4-1}.
	\]
	Moreover, there is a constant $G>0$ such that for all sufficiently large $r$,
	\[
		\Exb{\xi(1+\mD_\xi)\one_{\{\xi\le r\}}}\le G\widetilde L(r)r^{1/4}.
	\]
\end{lemma}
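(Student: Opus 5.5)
The plan is a dyadic decomposition of the range $\{\xi\le r\}$, combining the averaged moment bound of Proposition~\ref{pro:andm} with the regular variation of $\Prb{\xi=k}\sim\widetilde L(k)k^{-2}$ on the lattice $2d_\omega\ndN$, in the same spirit as the proofs of Proposition~\ref{pro:promise} and Lemma~\ref{le:submom}.

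For the first bound, we write
\[
\Exb{(1+\mD_\xi)^p\one_{\{\xi\le r\}}}=\sum_{k\le r}\Prb{\xi=k}\Exb{(1+\mD_k)^p}.
\]
Since $(1+x)^p\le 2^{p-1}(1+x^p)$ and $\mD_k=0$ unless $k$ is even, this is at most $2^{p-1}\bigl(1+\sum_{m\le r/2}\Prb{\xi=2m}\Ex{\Di(\mV_m)^p}\bigr)$. We split the sum over $m$ into dyadic blocks $2^j\le m<2^{j+1}$. On each block, the uniform convergence theorem for slowly varying functions gives $\Prb{\xi=2m}\le C\widetilde L(2^{j})2^{-2j}$ for all $j$ large, with a fixed constant absorbing the finitely many small $j$. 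Proposition~\ref{pro:andm} bounds $\sum_{m<2^{j+1}}\Ex{\Di(\mV_m)^p}$ by $E_p2^{(j+1)(1+p/4)}$. Each block therefore contributes $O(\widetilde L(2^j)2^{j(p/4-1)})$.

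Because $p>4$, the exponent $p/4-1$ is positive, so $x\mapsto \widetilde L(x)x^{p/4-1}$ is regularly varying with positive index. Potter's bounds then let us sum the geometric-type series over $j\le\log_2 r$; it is dominated by its last term, which gives $O(\widetilde L(r)r^{p/4-1})$. The additive constant $2^{p-1}$ is absorbed since $\widetilde L(r)r^{p/4-1}\to\infty$. The hypothesis $p>4$ is exactly what makes the sum top-heavy; for $p\le 4$ a logarithmic or summable regime would arise instead.

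For the second bound, we write
\[
\Exb{\xi(1+\mD_\xi)\one_{\{\xi\le r\}}}=\Exb{\xi\one_{\{\xi\le r\}}}+\sum_{m\le r/2}2m\Prb{\xi=2m}\Ex{\Di(\mV_m)}.
\]
The first term is $\sum_{k\le r}\widetilde L(k)k^{-1}$, which is a slowly varying function. It is finite as $r\to\infty$ since $\Ex{\xi}=1$, hence bounded by a constant and thus $o(\widetilde L(r)r^{1/4})$. For the second term we use the same dyadic blocks. Proposition~\ref{pro:andm} with $p=1$ together with the block bound $2m\Prb{\xi=2m}\le C\widetilde L(2^j)2^{-j}$ gives a contribution of $O(\widetilde L(2^j)2^{j/4})$ per block. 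Summing via Potter's bounds (index $1/4>0$) yields $O(\widetilde L(r)r^{1/4})$.

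The main obstacle is the bookkeeping in the Potter summation. We must ensure that $\sum_{j\le \log_2 r}\widetilde L(2^j)2^{j\gamma}\le C\widetilde L(r)r^\gamma$ for $\gamma>0$. This follows from the bound $\widetilde L(2^j)/\widetilde L(r)\le C(r/2^j)^{\gamma/2}$, valid for $2^j\ge j_0$, which reduces the sum to a geometric one. The finitely many small $j$ are handled by a constant, which is absorbed because $\widetilde L(r)r^\gamma\to\infty$.
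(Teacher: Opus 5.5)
Your proof is correct and follows essentially the same route as the paper. Both combine the uniform convergence theorem for $\widetilde L$ on dyadic blocks, Proposition~\ref{pro:andm} for the block sums of diameter moments, Potter's bounds to sum the resulting geometric series (using $p/4-1>0$, resp.\ $1/4>0$), and $(1+x)^p\le 2^{p-1}(1+x^p)$. The only differences are that the paper indexes the dyadic blocks downward from $r$ rather than upward, and it dismisses the second bound as ``fully analogous'' where you write it out, including the split-off term $\Exb{\xi\one_{\{\xi\le r\}}}\le\Ex{\xi}=1$.
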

\begin{proof}
	By the uniform convergence theorem for slowly varying functions, there exist $C_1>0$ and $r_0>0$ such that for all $r \ge r_0$
	\[
		\sup_{r/2<k\le r} \Prb{\xi=k} \le C_1 \widetilde{L}(r)r^{-2}.
	\]
	By Proposition~\ref{pro:andm} it follows that for every $p>0$,
	\[
		\Exb{\mD_\xi^p\one_{\{r/2<\xi\le r\}}}\le C_1 \widetilde L(r)r^{-2}\sum_{k\le r}\Ex{\mD_k^p}\le C_1 E_p\widetilde L(r)r^{p/4-1}.
	\]
	If $p>4$, decomposing $[r_0,r]$ into dyadic intervals gives
	\[
		\Exb{\mD_\xi^p\one_{\{\xi\le r\}}}\le \Exb{\mD_\xi^p\one_{\{\xi\le r_0\}}} + C_1 E_p\sum_{\substack{j\ge0\\2^{-j}r\ge r_0}}\widetilde L(2^{-j}r)(2^{-j}r)^{p/4-1}.
	\]
	Fix $0<\delta<p/4-1$. By Potter's bounds, there is a constant $C_2>0$ such that for all large enough $r$
	\[
		\widetilde L(2^{-j}r)(2^{-j}r)^{p/4-1} \le C_2 \widetilde{L}(r)r^{p/4-1}2^{-j(p/4-1-\delta)}.
	\]
	We may increase $r_0$ so that this holds for all $r \ge r_0$. Hence
	\begin{align*}
		\Exb{\mD_\xi^p\one_{\{\xi\le r\}}}  \le \Exb{\mD_\xi^p\one_{\{\xi\le r_0\}}} + C_1 C_2 E_p\widetilde L(r)r^{p/4-1} \frac{1}{1-2^{-(p/4-1-\delta)}}.
	\end{align*}
	For every $x\ge0$, we have $(1+x)^p\le2^{p-1}(1+x^p)$, hence we obtain
	\begin{align*}
		\Exb{(1+\mD_\xi)^p\one_{\{\xi\le r\}}} \le O(\widetilde L(r)r^{p/4-1})
	\end{align*}
	as $r \to \infty$. By fully analogous arguments, we obtain
	\begin{align*}
		\Exb{\xi(1+\mD_\xi)\one_{\{\xi\le r\}}}\le O(\widetilde L(r)r^{1/4}).
	\end{align*}
	This completes the proof.
\end{proof}

For any $\cV$-enriched tree $(T, \beta_T)$ define
\[
	K(T,\beta_T):=\max_{v\in T}\sum_{u\in[o,v]_{T}}\left(1+\Di(\beta_T(u))\right),
\]
with $o$ denoting the root of $T$. This way, the map $M$ encoded by $(T, \beta_T)$ satisfies $\He(M) \le K(T, \beta_T)$.

\begin{lemma}
	\label{le:caradius}
	We have
	\[
		x^4\Prb{K(\mT,\beta)>x}\to0, \qquad x \to \infty.
	\]
\end{lemma}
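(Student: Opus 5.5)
The plan is to split the event $\{K(\mT,\beta)>x\}$ according to three sources: the tree is tall, some vertex has huge outdegree, or a path of moderate length with only moderate-degree blocks accumulates diameter $x$. Throughout I use the marks $\mD_k$ fixed for this subsection, so that $K(\mT,\beta)=\max_{v}\sum_{u\in[o,v]_{\mT}}(1+\mD_u)$. Fix a small $\epsilon>0$ and set the degree truncation level $r=r(x):=\epsilon x^4$ and a height level $h_0=h_0(x)$, to be tuned later with $h_0=o(x)$. Then
\[
	\Prb{K(\mT,\beta)>x}\le \Prb{\He(\mT)>h_0}+\Prb{\exists v\in\mT,\ \he_{\mT}(v)\le h_0,\ d^+_{\mT}(v)>r}+\Prb{K_r(\mT,\beta)>x,\ \He(\mT)\le h_0},
\]
where $K_r$ is defined like $K$ but with summands $(1+\mD_u)\one_{\{d^+_{\mT}(u)\le r\}}$. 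On the event that no vertex of height at most $h_0$ has outdegree exceeding $r$, and $\He(\mT)\le h_0$, the quantities $K$ and $K_r$ coincide.

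For the first term I would use that in the critical Cauchy regime the height of the unconditioned tree has a rapidly decaying tail. The generating-function recursion $\Prb{\He(\mT)>h}=1-f^{\circ h}(0)$, with $f$ the offspring generating function, satisfies $1-f(1-s)=s-s\,\ell(1/s)$ for a slowly varying $\ell$ tending to zero. This should give $\Prb{\He(\mT)>h}$ decaying faster than any power of $h$ up to slowly varying corrections, since the $\alpha$-stable exponent $1/(\alpha-1)$ blows up as $\alpha\downarrow1$. I would then choose $h_0=x^{\gamma}$ with a small $\gamma>0$ so that this term is $o(x^{-4})$.

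For the second term I would apply the many-to-one formula~\eqref{eq:spinal}: the expected number of vertices at height $h$ with outdegree $>r$ equals $\Prb{\xi>r}\sim \widetilde L(r)/r$, uniformly in $h$. The second term is therefore at most $(h_0+1)\widetilde L(r)r^{-1}=O(h_0\widetilde L(\epsilon x^4)x^{-4})$. Here I would use that $\Ex{\xi}<\infty$ with $\alpha=1$ forces $\sum_k \widetilde L(k)/k<\infty$. Combining this with the uniform convergence theorem, exactly as in the proof of Lemma~\ref{le:submom}, gives $\widetilde L(r)\to0$, and in fact fast enough along dyadic scales to absorb the polynomially small factor $h_0=x^\gamma$. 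If this is too crude, I would instead refine the count to vertices whose outdegree is the maximal one on their ancestral line.

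For the third term I would again use~\eqref{eq:spinal}. The ancestors of a vertex $v$ at height $h$ carry independent size-biased truncated marks $Y^{\widehat\xi}_j=(1+\mD_{\widehat\xi})\one_{\{\widehat\xi\le r\}}$, and $v$ itself carries an unbiased truncated mark. Summing over $h\le h_0$ and applying Markov's inequality with a large power $p$ gives
\[
	\Prb{K_r>x,\ \He(\mT)\le h_0}\le \sum_{h\le h_0}x^{-p}\,\Exb{\Bigl(\textstyle\sum_{j<h}Y^{\widehat\xi}_j+Y^\xi\Bigr)^p}.
\]
The moments are controlled by Lemma~\ref{le:camom}: the unbiased mark has $p$-th moment $\le G_p\widetilde L(r)r^{p/4-1}$, the size-biased mean is $\le G\widetilde L(r)r^{1/4}$, and the size-biased $p$-th moments are bounded analogously via Proposition~\ref{pro:andm}. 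Rosenthal's inequality then bounds the $p$-th moment of the spine sum by
\[
	O\bigl(h^p(\widetilde L(r)r^{1/4})^p+h\,\widetilde L(r)r^{p/4}\bigr)=O\bigl(x^p(h^p\epsilon^{p/4}\widetilde L(r)^p+h\,\widetilde L(r)\epsilon^{p/4})\bigr).
\]
The factor $h\le h_0$ in front of the mean term is the dangerous part. It forces $h_0$ to be at most of order a power of $\widetilde L(r)^{-1}$, so that the spine mean $h_0\widetilde L(r)\epsilon^{1/4}x$ stays below $x/2$. That constraint is in tension with the requirement from the first term. I would therefore center the spine sum and only apply Markov's inequality to the fluctuation once the mean is below $x/2$.

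The main obstacle is reconciling these two requirements on $h_0$: the height tail of the Cauchy tree must be $o(x^{-4})$ at a level $h_0$ small enough that the spine mean, of order $h_0\widetilde L(\epsilon x^4)x$, stays below $x/2$. If a direct height-tail estimate is too weak, I would replace the fixed cut-off by a recursive bound in the spirit of the proof of Lemma~\ref{le:submom}. The quantity $\mR_h$ satisfies $\mR_h\le (1+\mD_\xi)+\max_{i\le\xi}\mR_{h-1}(i)$, and I would truncate at degree $r$ and control $\Prb{\max_i\mR_{h-1}(i)>y}\le \Ex{\xi\one_{\{\xi\le r\}}}\Prb{\mR_{h-1}>y}$. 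Here $\Ex{\xi\one_{\{\xi\le r\}}}=1-\Theta(\int_r^\infty \widetilde L(t)t^{-1}\,\mathrm{d}t)<1$ supplies a geometric contraction per generation, which replaces the strict subcriticality used in Lemma~\ref{le:submom}.
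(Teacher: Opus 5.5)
There is a genuine gap. Your decomposition rests on first-moment (many-to-one) counts of vertices, and such counts cannot reach $o(x^{-4})$ here. The obstruction is not the tension between $h_0$ and the spine mean that you identify.

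\textbf{Why the first-moment counts fail.} Under the size-biased spine law, $\Prb{\widehat\xi>k}=\Exb{\xi\one_{\{\xi>k\}}}=\ell(k)$ is slowly varying. So with $r=\epsilon x^4$, your own Rosenthal bound contains, for every single $h\ge1$, the term $x^{-p}h\,\Exb{(Y^{\widehat\xi})^p}$. This term is of order $h\,\widetilde L(\epsilon x^4)\epsilon^{p/4}$, which is slowly varying in $x$ and hence not $o(x^{-4})$, whatever $h_0$ and $p$ are. Centering the spine sum does not remove it.

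This is not merely a lossy inequality. Already at height one, the expected count $\Exb{\xi\one_{\{r/2<\xi\le r\}}\one_{\{\mD_\xi>x\}}}$ is of order $\widetilde L(\epsilon x^4)$ times a constant depending on $\epsilon$. Indeed, a root block with about $\epsilon x^4/4$ edges has diameter above $x$ with probability bounded below, by Theorem~\ref{te:main1}, and such a root has order $x^4$ children that all inherit its large mark.

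Your second term has the same defect in its crude form. The quantity $(h_0+1)\Prb{\xi>r}$ is of order $x^{\gamma-4}\widetilde L(\epsilon x^4)/\epsilon$, which is not $o(x^{-4})$: no slowly varying function absorbs a factor $x^{\gamma}$, and summability of $\widetilde L(2^j)$ does not change this. Your refinement, counting only the first vertex of outdegree $>r$ on each ancestral line, does repair that term. It gives $\Prb{\xi>r}/\ell(r)=o(r^{-1})$, because the ancestors then carry the factor $(1-\ell(r))^h$. What is missing is an analogous stopping rule for the marks.

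\textbf{Why the fallback does not close, and what the paper does.} Your last paragraph points toward the paper's mechanism, and $\Exb{\xi\one_{\{\xi\le r\}}}=1-\ell(r)$ is the right contraction. But as stated it does not close. The event $\mR_h>y$ only forces $\max_i\mR_{h-1}(i)>y-(1+\mD_\xi)$, and the event $\xi>r$ is not handled. So a recursion at a fixed level, or over generations, is not available, and the contraction alone does not say how the accumulated shifts are paid for.

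The paper instead inducts on the threshold $m$. From $K\eqdist Y+\max_{i\le\xi}K_i$, it bounds $\Prb{K>m}$ by two pieces:
\begin{itemize}
\item the stopping terms $\Prb{\xi>m^4}+\Prb{Y>m/2,\ \xi\le m^4}=O(m^{-4}\widetilde L(m^4))$, which are computed under the unbiased law because the large mark sits at the current root, before any union bound over children (this is exactly what circumvents the size-biasing problem above);
\item the term $\Exb{\xi\, g(m-Y)\one_{\{\xi\le m^4,\,Y\le m/2\}}}$, with the ansatz $g(m)=C_0\eta(m)m^{-4}$ and $\eta(x)\sim\sqrt{\widetilde L(x^4)/\ell(x^4)}$.
\end{itemize}
Since $g(m-y)\le g(m)(1+Cy/m)$, the shift costs only $\frac{C}{m}\Exb{\xi Y\one_{\{\xi\le m^4\}}}\le CG\widetilde L(m^4)$, by Lemma~\ref{le:camom}. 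This cost is dominated by the gain $\ell(m^4)$ because $\widetilde L=o(\ell)$. The same comparison absorbs the stopping terms into $\frac12 g(m)\ell(m^4)$, and $\eta\to0$ then gives the little-$o$. The comparison $\widetilde L=o(\ell)$, used inside an induction on the threshold rather than on generations, is the key idea your proposal lacks.
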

\begin{proof}
	Set
	\[
		\ell(x):=\Exb{\xi\one_{\{\xi>x\}}}, \qquad x\ge0.
	\]
	Karamata's theorem for sums over lattices gives
	\[
		\Prb{\xi>x}=O\left(\frac{\widetilde L(x)}{x}\right), \qquad \widetilde L(x)=o(\ell(x)), \qquad x \to \infty.
	\]
	The function $\ell$ is slowly varying. By the smooth variation theorem, see~\cite[Sec. 1.8.1, Thm. 1.8.2]{zbMATH00043570} there is a positive smooth function~$\eta$ such that
	\[
		\eta(x)\sim \sqrt{\frac{\widetilde L(x^4)}{\ell(x^4)}}, \qquad \frac{x\eta'(x)}{\eta(x)}\to0,   \qquad x \to \infty.
	\]
	Consequently,
	\[
		\eta(x) \to 0,  \qquad \frac{\widetilde L(x^4)}{\ell(x^4)}=o(\eta(x)).
	\]

	Let $m_0, C_0 \ge 1$ denote constants to be determined later. Set
	\[
		g(x):=C_0\eta(x)x^{-4}, \qquad x>0.
	\]
	Since $x\eta'(x)/\eta(x) \to 0$ as $x \to \infty$, for all sufficiently large $x$ and all $1\le y\le x/2$,
	\[
		\log\left(\frac{\eta(x-y)}{\eta(x)}\right)\le\int_{x-y}^x\frac{|\eta'(s)|}{\eta(s)}\,\mathrm{d}s\le\int_{x-y}^x\frac{\mathrm{d}s}{s}=\log\left(\frac{x}{x-y}\right).
	\]
	It follows that
	\[
	\frac{g(x-y)}{g(x)}=\frac{\eta(x-y)}{\eta(x)}\left(\frac{x}{x-y}\right)^4\le\left(\frac{x}{x-y}\right)^5.
	\]
	It follows that there  is a constant $C>0$ such that
	\begin{align}
		\label{eq:cagshift}
		g(x-y)\le g(x)\left(1+C\frac{y}{x}\right)
	\end{align}
	for all sufficiently large $x$ and all $1\le y\le x/2$.
	
	Write
	\[
		K:=K(\mT,\beta), \qquad Y:=1+\mD_\xi.
	\]
	Let $( K_i)_{i\ge1}$ be independent copies of $ K$, independent of $(\xi,Y)$. The branching property of the Bienaym\'e--Galton--Watson tree $\mT$ gives
	\[
		K\eqdist Y+\max_{1\le i\le\xi} K_i,
	\]
	where the maximum over the empty set is zero. Using the inequality $1-(1-z)^k\le kz$ we obtain for $k,r \ge 0$
	\begin{align}
		\label{eq:doit}
		\Pr{K>m \mid \xi = k, Y = r} = 1 - (1 - \Pr{K>m-r})^k \le k \Pr{K > m-r}.
	\end{align}
	
	We will prove by induction that
	\[
		\Pr{K>m} \le g(m)
	\]
	for every $m\in\ndN$.  Suppose that $m>m_0$ and that $\Pr{K>j} \le g(j)$ for every $1 \le j < m$.  On the event $\{\xi\le m^4,\ Y\le m/2\}$ we have $1\le m-Y<m$, so the induction hypothesis and~\eqref{eq:doit} and~\eqref{eq:cagshift} yield
	\begin{align*}
		&\Pr{K>m}\\&\le\Prb{\xi>m^4}+\Prb{Y>m/2,\ \xi\le m^4}+\Exb{\xi g(m-Y)\one_{\{\xi\le m^4,\ Y\le m/2\}}} \\
		&\le\Prb{\xi>m^4}+\Prb{Y>m/2,\ \xi\le m^4}+g(m)\left(\Exb{\xi\one_{\{\xi\le m^4\}}}+\frac{C}{m}\Exb{\xi Y\one_{\{\xi\le m^4\}}}\right).
	\end{align*}
	Since $\Ex{\xi}=1$ and $\ell(x)=\Exb{\xi\one_{\{\xi>x\}}}$, we have
	\[
		\Exb{\xi\one_{\{\xi\le m^4\}}}=1-\ell(m^4).
	\]
	Moreover, Lemma~\ref{le:camom} yields
	\[
		\frac{1}{m}\Exb{\xi Y\one_{\{\xi\le m^4\}}}\le G\widetilde L(m^4).
	\]
	Fix $p>4$. By Karamata's theorem there is a constant $C'>0$ with
	\[
		\Prb{\xi>m^4}\le C'm^{-4}\widetilde L(m^4)
	\]
	uniformly in $m$. By Markov's inequality and Lemma~\ref{le:camom}, there is a constant $C_p>0$ such that uniformly in $m$
	\begin{align*}
		\Prb{Y>m/2, \xi\le m^4}&\le\left(\frac{2}{m}\right)^p\Exb{Y^p\one_{\{\xi\le m^4\}}}\\
		&\le C_p m^{-4} \widetilde L(m^4).
	\end{align*}
	Hence we arrive at
	\[
		\Pr{K>m}\le g(m)\left(1-\ell(m^4)+CG\widetilde L(m^4)\right)+ (C' + C_p) m^{-4}\widetilde L(m^4).
	\]
	Since $\widetilde L(x)=o(\ell(x))$, for all sufficiently large $m$,
	\[
		1-\ell(m^4)+CG\widetilde L(m^4)\le1-\frac{1}{2}\ell(m^4).
	\]
	Furthermore, since $C_0 \ge 1$
	\[
		\frac{m^{-4}\widetilde L(m^4)}{g(m)\ell(m^4)}=\frac{1}{C_0}\frac{\widetilde L(m^4)}{\eta(m)\ell(m^4)} \le \frac{\widetilde L(m^4)}{\eta(m)\ell(m^4)} \to0
	\]
	by the choice of $\eta$. Hence, for all sufficiently large $m$ (without any dependence on $C_0$),
	\[
		(C' + C_p) m^{-4}\widetilde L(m^4)\le\frac{1}{2}g(m)\ell(m^4).
	\]
	We may choose $m_0\ge 1$ so that all preceding estimates for large enough $m$ hold  whenever $m \ge m_0$. This may be done without any dependence on $C_0$. We may next select $C_0 \ge 1$ depending on $m_0$ so that $\Pr{K>k} \le g(k)$ for all $1 \le k \le m_0$. This way, we arrive at 
	\[
		\Pr{K>m}\le g(m),
	\]
	completing the induction. We have therefore proved
	\[
		m^4\Prb{ K>m} \le C_0\eta(m)\to0, \qquad m \to \infty.
	\]
	This readily implies the analogous statement with a real-valued variable $x$ in place  of~$m$.
\end{proof}

\begin{lemma}
	\label{le:cadist}
	We have
	\[
		R_N = o_p(b_N^{1/4}).
	\]
\end{lemma}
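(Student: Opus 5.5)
We reduce the claim, as in the proof of Lemma~\ref{le:submet}, to bounding the diameters of the attached components. Every $M_{N,i}$ meets $\mV_N^*$ only at its attachment vertex, so
\[
	R_N\le\max_{0\le i\le\Delta_N}\Di(M_{N,i}).
\]
For $1\le i\le\Delta_N$ we will use $\Di(M_{N,i})\le 2K(F_{N,i},\beta_N|_{F_{N,i}})$. The root-side component $M_{N,0}$ is handled separately. The aim is to show that both contributions are $o_p(b_N^{1/4})$.

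\textbf{Children of $v_N^*$.} Here we use the \L{}ukasiewicz path representation from Proposition~\ref{pro:catree}. Via \eqref{eq:catv} and the Vervaat transform \eqref{eq:vervaa}, the fringe subtrees hanging from the children of $v_N^*$ correspond, up to total variation $o(1)$, to the excursions of the unconditioned walk $(S_j)$ cut at successive new minima. With high probability the appended increment is the unique maximal one, and since $\underline S_{N-1}=-b_N+O_p(a_N)$, these excursions form a prefix of a sequence of independent copies of $(\mT,\beta)$. Concretely, the family $(F_{N,i},\beta_N|_{F_{N,i}})_{1\le i\le\Delta_N}$ (up to cyclic relabelling) is close in total variation to the first roughly $b_N$ trees of an i.i.d.\ forest with canonical decorations. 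We fix $M>0$ and work on the event that at most $Mb_N$ trees are involved; its probability tends to one by \eqref{eq:cadeg}. A union bound combined with Lemma~\ref{le:caradius} then gives
\[
	\Prb{\max_{i\le Mb_N}K(\mT(i),\beta(i))>\epsilon b_N^{1/4}}\le Mb_N\Prb{K(\mT,\beta)>\epsilon b_N^{1/4}}=o(1).
\]
Hence $\max_{1\le i\le\Delta_N}\Di(M_{N,i})=o_p(b_N^{1/4})$. Note that $M_{N,i}$ is encoded by the enriched fringe tree, and the decorations are conditionally independent uniform given outdegrees, so the transfer from the tree identity to the decorated forest is immediate.

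\textbf{Root-side component $M_{N,0}$.} This component is encoded by $F_{N,0}$ with the decoration at $v_N^*$ replaced by a point. By the Vervaat picture it corresponds to the walk pieces before $I_N$ and after $J_N$, which together have $o_p(N)$ vertices by Proposition~\ref{pro:catree}. I expect this to be the main obstacle, because $F_{N,0}$ is not a forest of independent trees. It is a spine from the root to $v_N^*$ with independent trees hanging off it.

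The first plan is to use the spinal decomposition \eqref{eq:spinal} together with the Kesten-type limit for the condensation tree. The spine length $\he(v_N^*)$ should be tight (as in the Cauchy condensation results \cite{MR3914558}), so the spine contributes $O_p(1)$ blocks. The spine decorations have outdegree at most $\Delta_N^{(2)}=O_p(a_N)$. Their diameters are thus at most $O_p(a_N^{1/4+\epsilon})$ by \eqref{eq:diamn} and a union bound, and this is $o(b_N^{1/4})$ provided $a_N\le b_N$ up to slowly varying factors. This last point needs care; if it fails, the fallback is to use Proposition~\ref{pro:andm} with size-biased spine offspring, via Lemma~\ref{le:camom}.

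The trees hanging off the spine are, up to total variation, independent copies of $(\mT,\beta)$. There are $O_p(a_N)$ of them, so the union bound with Lemma~\ref{le:caradius} again gives $o_p(b_N^{1/4})$. Combining the two contributions yields $R_N=o_p(b_N^{1/4})$.
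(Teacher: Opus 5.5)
Your treatment of the child-side components is essentially the paper's. On the coupling event of \eqref{eq:catv} where the appended increment is the unique largest one, the fringe subtrees at the children of $v_N^*$ are exactly the excursions $\mT[1],\ldots,\mT[\Delta_N]$ of the unconditioned walk. Here $\Delta_N=-S_{N-1}\le 2b_N$ with high probability, and a union bound with Lemma~\ref{le:caradius} finishes.

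The root-side part, however, has a genuine gap. Your premise that $\he_{\mT_N}(v_N^*)$ is tight is false. Since $\nu=1$, the local limit of $\mT_N$ is Kesten's size-biased tree (Janson's local limit theorem in \cite{MR2908619}), which has an infinite spine and only finite outdegrees. Because $\Delta_N\to\infty$, the vertex $v_N^*$ must escape every fixed generation, i.e.\ $\he_{\mT_N}(v_N^*)\convp\infty$. So $d_{\mM_n^\omega}(\pi_N(o),\pi_N(v_N^*))$ is a sum over an unbounded number of spine blocks, and controlling it is delicate. Spine degrees are size-biased, and $\Prb{\widehat\xi>y}=\Exb{\xi\one_{\{\xi>y\}}}$ is slowly varying, so a size-biased block has infinite expected diameter. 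In $\mT_N$ the spine degrees are only truncated at $\Delta_N^{(2)}=O_p(a_N)$.

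Your single-block estimate also fails. \eqref{eq:diamn} yields only $O_p(a_N^{1/4+\epsilon})$. Since $a_N$ and $b_N$ are both regularly varying with index $1$, the ratio $a_N^{1/4+\epsilon}/b_N^{1/4}$ is regularly varying with index $\epsilon>0$ and tends to infinity. The slowly varying gain $a_N/b_N\to0$ cannot absorb a polynomial loss. The fallback via Proposition~\ref{pro:andm} and Lemma~\ref{le:camom} is not carried out, and it is exactly where the difficulty sits.

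The missing idea is the paper's re-rooting argument, which avoids the spine altogether. The weight~\eqref{eq:bw} does not depend on the root, so re-rooting $\mM_n^\omega$ at a uniformly chosen corner $c_N$ preserves its law. The root-side component $M_{N,0}$ has $W_{N,0}-1$ corners and $\mV_N^*$ has $\Delta_N$ corners. Both counts are $o_p(N)$ by Proposition~\ref{pro:catree}, using $b_N=o(N)$. Hence with high probability $c_N$ lies in some child-side component $M_{N,i}$. Since the largest block is unique with high probability, that component becomes the root-side component of the re-rooted map, with the same attachment vertex on $\mV_N^*$. Consequently $\max_{x\in M_{N,0}}d_{\mM_n^\omega}(x,\varpi_N(x))$ has the same law as a variable which, outside an event of probability $o(1)$, is bounded by $\max_{1\le i\le\Delta_N}\max_{x\in M_{N,i}}d_{\mM_n^\omega}(x,\varpi_N(x))$. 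This is $o_p(b_N^{1/4})$ by the child-side step you already have.
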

\begin{proof}
	For $i\ge0$, set
	\[
		\tau_i:=\inf\{j\ge0\mid S_j=-i\}.
	\]
	Since the increments of $S$ belong to $\{-1,0,1,\ldots\}$ and have mean zero, the stopping times $\tau_i$ are almost surely finite. For every $i\ge1$, define the finite walk
	\[
		S^{(i)}_k:=S_{\tau_{i-1}+k}-S_{\tau_{i-1}}, \qquad 0\le k\le\tau_i-\tau_{i-1}.
	\]
	Then
	\[
		S^{(i)}_0=0,\qquad S^{(i)}_k\ge0\quad\text{for}\quad0\le k<\tau_i-\tau_{i-1},\qquad S^{(i)}_{\tau_i-\tau_{i-1}}=-1.
	\]
	Thus, $S^{(i)}$ is the \L{}ukasiewicz path of a finite plane tree, which we denote by $\mT[i]$. By the strong Markov property at the times $(\tau_i)_{i\ge0}$, the trees $(\mT[i])_{i\ge1}$ are independent copies of the unconditioned $\xi$-Bienaym\'e--Galton--Watson tree $\mT$.
	
	For each $i \ge 1$ let $\beta[i]$ denote the canonical  $\cV$-decoration on $\mT[i]$. This way,
	\[
		(\mT[i],\beta[i]), \qquad i\ge1,
	\]
	are independent copies of $(\mT,\beta)$. Define
	\[
		K_i = K(\mT[i],\beta[i]), \qquad i \ge 1.
	\]
	Define the event
	\[
		\cB_N := \left\{-1-S_{N-1}>\max_{1\le i\le N-1}X_i\right\}.
	\]
	On $\cB_N$, all increments of the Vervaat transform $\cZ^{(N)}$ in~\eqref{eq:vervaa} belong to $\{-1,0,1,\ldots\}$ and have sum $-1$. Hence $\cZ^{(N)}$ is the \L{}ukasiewicz path of a plane tree, which we denote by $\mT_N^\circ$. Let $\beta_N^\circ$ denote the canonical $\cV$-decoration on $\mT_N^\circ$. On the complementary event $\cB_N^c$, define $(\mT_N^\circ,\beta_N^\circ)$ arbitrarily.
	
	By~\eqref{eq:catv} and the fact that $\Prb{\cB_N}\to1$,  we have
	\[
		(\mT_N,\beta_N)\atv(\mT_N^\circ,\beta_N^\circ).
	\]
	We may therefore assume that the two $\cV$-enriched trees are coupled so that
	\[
		\Prb{(\mT_N,\beta_N)=(\mT_N^\circ,\beta_N^\circ)}\to1.
	\]
	Let $\cE_N$ denote the event that this equality holds, and that $\cB_N$ holds, and that $S_{N-1}<0$. Proposition~\ref{pro:catree} gives
	\[
		\Prb{\cE_N} \to 1.
	\]

	On $\cE_N$, the appended increment $-1-S_{N-1}$ is the unique largest increment of $\cZ^{(N)}$ and corresponds to $v_N^*$. Since a \L{}ukasiewicz increment is equal to the corresponding outdegree minus one,
	\[
		\Delta_N=-S_{N-1}.
	\]
	By the definition of the Vervaat transform, the first $\Delta_N$ excursions following this increment are $S^{(1)},\ldots,S^{(\Delta_N)}$. Consequently, under the enriched coupling,
	\[
		(F_{N,i},\beta_N|_{F_{N,i}})=(\mT[i],\beta[i]), \qquad 1\le i\le\Delta_N.
	\]
	By the definition of $K_i$, it follows that
	\[
		\max_{x\in M_{N,i}}d_{\mM_n^\omega}(x,\varpi_N(x))\le K_i, \qquad 1\le i\le\Delta_N.
	\]
	Fix $\epsilon>0$. By a union bound,
	\begin{multline*}
		\Prb{\max_{1\le i\le\Delta_N}\max_{x\in M_{N,i}}d_{\mM_n^\omega}(x,\varpi_N(x))>\epsilon b_N^{1/4}}\\
		\le\Prb{\cE_N^c}+\Prb{\Delta_N>2b_N}+2b_N\Prb{K(\mT,\beta)>\epsilon b_N^{1/4}}.
	\end{multline*}
	By Lemma~\ref{le:caradius},
	\[
		b_N\Prb{K(\mT,\beta)>\epsilon b_N^{1/4}} \to 0.
	\]
	Hence, since $\Delta_N/b_N\convp1$ by Proposition~\ref{pro:catree},
	\begin{align}
		\label{eq:cachildradius}
		\max_{1\le i\le\Delta_N}\max_{x\in M_{N,i}}d_{\mM_n^\omega}(x,\varpi_N(x))=o_p(b_N^{1/4}).
	\end{align}
		
	We now control the root-side component by re-rooting. Conditionally on $\mM_n^\omega$, let $c_N$ be a uniformly selected corner, and let $\widehat{\mM}_n^\omega$ be the same map re-rooted at $c_N$. Since the weight of a map does not depend on its root corner, we have
	\[
		\widehat{\mM}_n^\omega\eqdist\mM_n^\omega.
	\]
	Let $\widehat R_{N,0}$ denote the maximal distance from a vertex of the root-side component of $\widehat{\mM}_n^\omega$ to its attachment point on its largest block. Then
	\[
		\widehat R_{N,0}\eqdist\max_{x\in M_{N,0}}d_{\mM_n^\omega}(x,\varpi_N(x)).
	\]
		
	A $\cV$-enriched tree with $m$ vertices encodes a map with $m-1$ corners. Consequently, $M_{N,0}$ has $W_{N,0}-1$ corners, while $\mV_N^*$ has $\Delta_N$ corners. On the event that the largest block is unique, if $c_N$ is a corner of neither $M_{N,0}$ nor $\mV_N^*$, then it belongs to a unique child-side component $M_{N,i}$, and this component becomes the root-side component of $\widehat{\mM}_n^\omega$. Therefore, by Proposition~\ref{pro:catree} and~\eqref{eq:cachildradius}
	\begin{align*}
		\Prb{\max_{x\in M_{N,0}}d_{\mM_n^\omega}(x,\varpi_N(x))>\epsilon b_N^{1/4}}
		&=\Prb{\widehat R_{N,0}>\epsilon b_N^{1/4}}\\
		&\le\Prb{\max_{1\le i\le\Delta_N}\max_{x\in M_{N,i}}d_{\mM_n^\omega}(x,\varpi_N(x))>\epsilon b_N^{1/4}} + o(1) \\
		&= o(1).
	\end{align*}
	Combining this with~\eqref{eq:cachildradius} gives
	\[
		R_N=o_p(b_N^{1/4}).
	\]
\end{proof}

\begin{corollary}
	We have
	\[
	\left(\mM_n^\omega,\left(\frac{3}{8b_n}\right)^{1/4}d_{\mM_n^\omega},\mu_{\mM_n^\omega}\right)\convd(\mathbf{M},d_{\mathbf{M}},\mu_{\mathbf{M}}).
	\]
\end{corollary}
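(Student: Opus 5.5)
The proof follows the template of the $0<\nu<1$ corollary, with Lemma~\ref{le:cadist} playing the role of Lemma~\ref{le:submet}. We first record that, conditionally on $\Delta_N=2m$, the block $\mV_N^*=\beta_N(v_N^*)$ is a uniform non-separable planar map with $m$ edges. By Proposition~\ref{pro:catree} we have $\Delta_N/b_N\convp1$, so $\ed(\mV_N^*)/(b_N/2)\convp1$. Theorem~\ref{te:main1}, applied conditionally on $\Delta_N$ and using that $\Delta_N$ tends to infinity in probability, then gives
\[
\left(\mV_N^*,\left(\frac{3}{4b_N}\right)^{1/4}d_{\mV_N^*},\mu_{\mV_N^*}\right)\convd(\mathbf M,d_{\mathbf M},\mu_{\mathbf M}).
\]
The block $\mV_N^*$ is isometrically embedded in $\mM_n^\omega$, as noted in the proof of Proposition~\ref{pro:andm}. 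Lemma~\ref{le:cadist} says every vertex lies within distance $R_N=o_p(b_N^{1/4})$ of its image under $\varpi_N$. Together these yield Hausdorff closeness of $\mM_n^\omega$ and $\mV_N^*$ under the rescaling $b_N^{-1/4}$.

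For the measures we pass through the intermediate measure $\nu_N^*:=\mu_{\mM_n^\omega}\circ\varpi_N^{-1}$ on $\mV_N^*$. Drawing a uniform corner $c_N$ and mapping its vertex $v$ by $\varpi_N$ couples $\mu_{\mM_n^\omega}$ with $\nu_N^*$ at distance at most $R_N$. By \cite[Prop.~6]{MR2571957}, the GHP distance between $(\mM_n^\omega,b_N^{-1/4}d,\mu_{\mM_n^\omega})$ and $(\mV_N^*,b_N^{-1/4}d,\nu_N^*)$ then tends to zero in probability. The mass $\nu_N^*$ puts on a vertex is, up to normalisation $1/(2n)$, a sum of $W_{N,i}$ (corners of attached components plus one) over the corners at that vertex. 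This has exactly the structure of $\nu_{\cV(\mM_n)}$ in Lemma~\ref{le:2corepart2}.

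The main step is to show $d_{\mathrm P}(\nu_N^*,\mu_{\mV_N^*})\convp0$. I would reproduce the exchangeable-decoration argument of Lemma~\ref{le:sourcesimplecorepart2}, which needs three inputs. First, conditionally on $\Delta_N$, on $\mV_N^*$, on $F_{N,0}$ and on the unordered family $\{F_{N,i}\}_{i\ge1}$, the assignment of the fringe subtrees to the $\Delta_N$ corners is a uniform permutation. This holds because the simply generated tree weight is invariant under permuting the children of $v_N^*$, and the decoration of $v_N^*$ is independent of the subtrees. Second, $\max_i W_{N,i}/N\convp0$, which is the last statement of Proposition~\ref{pro:catree}. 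Third, the root-side mass $W_{N,0}/N\convp0$, shown in the same proof, so the distinguished component carries negligible mass. The ratio $\bigl(\sum_{i\ge1}W_{N,i}^2\bigr)^{1/2}/\sum_{i\ge1}W_{N,i}$ then tends to zero. GH tightness of the rescaled $\mV_N^*$, obtained from the first paragraph, bounds the covering numbers $K_n$. The Chebyshev and Prokhorov steps then go through verbatim, with corners of $\mV_N^*$ in place of edges. A minor point is that $\mu_{\mV_N^*}$ weights a vertex by its number of corners, which matches $\mu_n^\circ$ with corners replacing edge endpoints.

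Combining the three comparisons gives
\[
\left(\mM_n^\omega,\left(\frac{3}{4b_N}\right)^{1/4}d_{\mM_n^\omega},\mu_{\mM_n^\omega}\right)\convd(\mathbf M,d_{\mathbf M},\mu_{\mathbf M}).
\]
Finally, $b_N\sim 2b_n$ because $N\sim2n$ and $b$ is regularly varying with index $1$. Hence $(3/(4b_N))^{1/4}\sim(3/(8b_n))^{1/4}$, which gives the stated normalisation. The obstacle I expect is the careful justification of conditional exchangeability, together with uniqueness of the maximal-degree vertex, which holds with high probability by Proposition~\ref{pro:catree}.
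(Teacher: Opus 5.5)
Your proposal is correct and follows the paper's proof. Both apply Theorem~\ref{te:main1} to the block $\mV_N^*$ using $\Delta_N/b_N\convp1$, get Hausdorff closeness from Lemma~\ref{le:cadist}, use Proposition~\ref{pro:catree} (namely $\max_{i}W_{N,i}/N\convp0$ and $W_{N,0}/N\convp0$) in the exchangeable-decoration argument of Lemma~\ref{le:sourcesimplecorepart2} to compare the measures, and finish with $b_N\sim 2b_n$; you additionally spell out the intermediate measure and the conditional exchangeability that the paper only invokes as ``fully analogous''.
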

\begin{proof}
	Since $\ed(\mV_N^*)=\Delta_N/2$ and $\Delta_N/b_N\convp1$, Theorem~\ref{te:main1} yields
	\[
		\left(\mV_N^*,\left(\frac{3}{4b_N}\right)^{1/4}d_{\mV_N^*},\mu_{\mV_N^*}\right)\convd(\mathbf{M},d_{\mathbf{M}},\mu_{\mathbf{M}}).
	\]
	By Lemma~\ref{le:cadist} we have
	\begin{align*}
		d_{\mathrm{H}}\left(\left(\mM_n^\omega,\left(\frac{3}{4b_N}\right)^{1/4}d_{\mM_n^\omega}\right),\left(\mV_N^*,\left(\frac{3}{4b_N}\right)^{1/4}d_{\mV_N^*}\right)\right)\convp0.
	\end{align*}
	Proposition~\ref{pro:catree} gives
	\[
		\frac{\left(\sum_{i=1}^{\Delta_N}W_{N,i}^2\right)^{1/2}}{\sum_{i=1}^{\Delta_N}W_{N,i}}\le\left(\frac{\max_{1\le i\le\Delta_N}W_{N,i}}{N-W_{N,0}}\right)^{1/2}\convp0.
	\]
	Hence, by fully analogous arguments as for Lemma~\ref{le:2corepart2}, it follows that 
	\begin{align*}
		d_{\mathrm{GHP}}\left(\left(\mM_n^\omega,\left(\frac{3}{4b_N}\right)^{1/4}d_{\mM_n^\omega},\mu_{\mM_n^\omega}\right),\left(\mV_N^*,\left(\frac{3}{4b_N}\right)^{1/4}d_{\mV_N^*},\mu_{\mV_N^*}\right)\right)\convp0.
	\end{align*}
	We have $N \sim 2n$ and hence $b_N \sim 2 b_n$. Hence the proof is complete.
\end{proof}

\subsection{The full phase diagram}

The proof of Theorem~\ref{te:main2} is immediate from the corollaries at the end of the preceding subsections which cover the individual regimes. Corollary~\ref{co:ma} is a consequence of Theorem~\ref{te:main2}:

\begin{proof}[Proof of Corollary~\ref{co:ma}]
	The scaling limits themselves follow directly from Theorem~\ref{te:main2}.  The expressions of the scaling constants are immediate since
	\[
		\Phi_u(z) = 1 + u(z^2 + \cV^*(z^2))
	\]
	yields for $\nu_u \ge 1$
	\[
		\tau_u = r_u \sqrt{1-r_u}
	\]
	and
	\[
		\Prb{\widehat{\xi}_u=2m} = r_u \frac{(3m-3)!}{(m-1)!(2m-1)!} (r_u^2(1-r_u))^{m-1}, \qquad m \ge 1
	\]
	and for $\nu_u > 1$
	\[
		\Va{\xi_u} = \frac{\tau_u^2\Phi_u''(\tau_u)} {\Phi_u(\tau_u)} = \frac{r_u}{3r_u-2}.
	\]
\end{proof}

\bibliographystyle{abbrv}
\bibliography{blank}

\end{document}